\pgfplotsset{compat=1.13} 
\newtheorem{theorem}{Theorem}[section]
\newtheorem{lemma}[theorem]{Lemma}
\newtheorem{corollary}[theorem]{Corollary} 
\newtheorem{proposition}[theorem]{Proposition} 
\newtheorem{conjecture}[theorem]{Conjecture} 
\theoremstyle{definition}
\newtheorem{definition}[theorem]{Definition}
\theoremstyle{remark}
\newtheorem{remark}[theorem]{Remark}
\numberwithin{equation}{section}
\newcommand*{\LongArxivOrShortJournalVersion}[2]{#1} 
\newcommand*{\RR}{\mathbb{R}}
\newcommand*{\CC}{\mathbb{C}}
\newcommand*{\NN}{\mathbb{N}}
\newcommand*{\NNzero}{\mathbb{N}\cup\{0\}}
\newcommand*{\NNone}{\mathbb{N}}
\newcommand*{\ind}[1]{\mathbf{1}_{#1}}
\newcommand*{\indbr}[1]{\ind{\{#1\}}}
\newcommand*{\BA}{\mathcal{B}}
\newcommand*{\weakconst}[1]{w(#1)} 
\newcommand*{\weakconstrestricted}[2]{w(#1|#2)} 
\newcommand*{\weakconstrestrictedbig}[2]{w\bigl(#1\big|#2\bigr)} 
\newcommand*{\fclass}[1]{\mathcal{E}_{#1}} 
\newcommand*{\fclassspec}[1]{\mathcal{F}_{#1}}
\newcommand*{\fclassStar}[1]{\mathcal{E}_{#1}^*} 
\newcommand*{\fclassspecStar}[1]{\mathcal{F}_{#1}^*}
\newcommand*{\fZero}{g}
\newcommand*{\fOne}{h}
\newcommand*{\fStar}{f_{*}}
\newcommand*{\fZeroStar}{g_{*}}
\newcommand*{\fOneStar}{h_{*}}
\newcommand*{\aStar}{a_*}
\newcommand*{\bStar}{b_*}
\newcommand*{\cStar}{c_*}
\newcommand*{\dStar}{d_*}
\newcommand*{\dZero}{B}
\newcommand*{\dOne}{D}
\newcommand*{\dOneStar}{D_*}
\newcommand*{\dZeroStar}{B_*}
\newcommand*{\bGain}{\widehat{b}}
\newcommand*{\dGain}{\widehat{d}}
\newcommand*{\bGainStar}{\widehat{b}_*}
\newcommand*{\dGainStar}{\widehat{d}_*}
\newcommand*{\tZeroStar}{t_{0*}}
\newcommand*{\specialbd}[1]{\Omega_{#1}}
\newcommand*{\specialbdStar}[1]{\Omega_{#1}^*}
\newcommand*{\XXXspecialbd}{W}
\newcommand*{\XXXspecialbdStar}{W_*}
\newcommand*{\XXXspecialbdDenominator}{V}
\newcommand*{\XXXspecialbdDenominatorStar}{V_*}
\newcommand*{\bMin}{b_{\min}}
\newcommand*{\bMax}{b_{\max}}
\newcommand*{\dMin}{d_{\min}}
\newcommand*{\dMax}{d_{\max}}
\newcommand*{\dOpt}{d_{\mathrm{opt}}}
\newcommand*{\bMinStar}{b_{*\!\min}}
\newcommand*{\bMaxStar}{b_{*\!\max}}
\newcommand*{\dMinStar}{d_{*\!\min}}
\newcommand*{\dMaxStar}{d_{*\!\max}}
\newcommand*{\dOptStar}{d_{*\!\operatorname{opt}}}
\newcommand*{\bSh}{b_{\operatorname{sp}}}
\newcommand*{\bShStar}{b_{*\!\operatorname{sp}}}
\newcommand*{\bMaxTilde}{\widetilde{b}_{\max}}
\newcommand*{\bMinTildeStar}{\widetilde{b}_{*\!\min}}
\newcommand*{\bStarWvsWStar}{\mathfrak{b}_*} 
\newcommand*{\xStar}{x}
\newcommand*{\yStar}{y}
\newcommand*{\zStar}{z}
\newcommand*{\hStar}{h}
\newcommand*{\xGain}{\hat{x}}
\newcommand*{\yGain}{\hat{y}}
\newcommand*{\XXXasymptoticDenominator}{V}
\newcommand*{\XXXasymptoticNominator}{U}
\newcommand*{\xOptInfty}{x_\infty}
\newcommand{\zCaseTwo}{3/2}
\newcommand{\zCaseThree}{1/2}
\tikzset{%
	MyLineHelp/.style={%
		black!50!white, loosely dotted}}
\tikzset{%
	MyLineCases/.style={%
		black, densely dashed}}
\tikzset{%
	MyLinePush/.style={%
		black, densely dotted}}
\tikzset{%
	MyLineOpt/.style={%
		very thick}}
\tikzset{->-/.style={decoration={
			markings,
			mark=at position .5 with 
			{\arrow{>}}},postaction={decorate}}}
\tikzset{-<-/.style={decoration={
			markings,
			mark=at position .5 with {\arrow{<}}},postaction={decorate}}}
\tikzset{
	MyArrowsR/.style ={
		decoration={
			markings,
			mark=at position #1 with {\arrow{>}}
		},
		postaction={decorate}
	}
}
\tikzset{
	MyArrowsL/.style ={
		decoration={
			markings,
			mark=at position #1 with {\arrow{<}}
			},
		postaction={decorate}
	}
}
\tikzset{
	MyArrowsRR/.style n args={2}{
		decoration={
			markings,
			mark=at position #1 with {\arrow{>}},
			mark=at position #2 with {\arrow{>}}
		},
		postaction={decorate}
	}
}
\tikzset{
	MyArrowsRL/.style n args={2}{
		decoration={
			markings,
			mark=at position #1 with {\arrow{>}},
			mark=at position #2 with {\arrow{<}}
		},
		postaction={decorate}
	}
}
\tikzset{
	MyArrowsLR/.style n args={2}{
		decoration={
			markings,
			mark=at position #1 with {\arrow{<}},
			mark=at position #2 with {\arrow{>}}
		},
		postaction={decorate}
	}
}
\tikzset{
	MyArrowsLL/.style n args={2}{
		decoration={
		markings,
		mark=at position #1 with {\arrow{<}},
		mark=at position #2 with {\arrow{<}}
		},
		postaction={decorate}
	}
}
\tikzset{
	MyArrowsRRR/.style n args={3}{
		decoration={
			markings,
			mark=at position #1 with {\arrow{>}},
			mark=at position #2 with {\arrow{>}},
			mark=at position #3 with {\arrow{>}}
		},
		postaction={decorate}
	}
}
\tikzset{
	MyArrowsLRR/.style n args={3}{
		decoration={
			markings,
			mark=at position #1 with {\arrow{<}},
			mark=at position #2 with {\arrow{>}},
			mark=at position #3 with {\arrow{>}}
		},
		postaction={decorate}
	}
}
\tikzset{
	MyArrowsRLL/.style n args={3}{
		decoration={
			markings,
			mark=at position #1 with {\arrow{>}},
			mark=at position #2 with {\arrow{<}},
			mark=at position #3 with {\arrow{<}}
		},
		postaction={decorate}
	}
}
\begin{document}
\title[]{Lower bounds for the weak-type constants\\
of the operators $\Lambda_m$}

\author[M. Strzelecki]{Micha\l{} Strzelecki}
\address{University of Warsaw, Institute of Mathematics, Banacha 2, 02--097 Warsaw, Poland.}
\email{michalst@mimuw.edu.pl}

\subjclass[2020]{Primary: 26D10. 
Secondary: 42B20. 
}
\date{May 9, 2025.}

\keywords{Beurling--Ahlfors transform, Hardy operator, power weight, radial function, weak-type constant.}

\begin{abstract}	
The operators $\Lambda_m$ ($m\in\mathbb{N}\cup \{0\}$)
arise when one studies
the  action of the Beurling--Ahlfors transform
on certain radial function subspaces.
It is known that the weak-type $(1,1)$
constant of $\Lambda_0$ is equal to $1/\ln(2)\approx 1.44$.
We construct examples showing that
the weak-type $(1,1)$ constant of $\Lambda_1$ is larger than $1.38$ 
and that 
the weak-type $(1,1)$ constant of $\Lambda_m$ does not tend to $1$ when $m\to\infty$. 
This disproves a~conjecture of Gill [Mich.\ Math.\ J.\ 59 (2010), No.\ 2, 353--363].
We also prove a~companion result for the adjoint operators.
This is the arXiv version of the paper -- it includes some additional discussion in the appendices.
\end{abstract}

\maketitle

\tableofcontents

\section{Introduction}

\subsection{Setting and notation}

We are interested in the value of the weak-type $(1,1)$ constant
of some operators acting on $L^1([0,\infty))$
which arise when one studies the  action of the Beurling--Ahlfors transform
on certain radial function subspaces.
Below we briefly introduce the most important definitions.

Recall that for an operator $T$ acting on $L^1([0,\infty))$ its weak-type $(1,1)$ constant
is the smallest number $\weakconst{T}$ such that the inequality
\begin{equation}
\label{eq:weak-type-def}
\bigl|\{t\in[0,\infty) : |T f(t)|\geq 1\} \bigr| \leq \weakconst{T} \|f\|_{L^1([0, \infty))}
\end{equation}
holds for all functions $f \in {L^1([0, \infty))}$. 
Here $|\cdot|$ stands for the Lebesgue measure.
For a class of functions $\mathcal{F}\subset L^1([0,\infty])$
it will be also convenient to denote
\begin{equation}
\label{eq:weak-type-def-restricted}
\weakconstrestricted{T}{\mathcal{F}} 
\coloneqq
\sup\Bigl\{ \frac{
\bigl|\{t\in[0,\infty) : |T f(t)|\geq 1\} \bigr|}{\|f\|_{L^1([0, \infty))}} :
f\in \mathcal{F},\ 
\|f\|_{L^1([0,\infty])}\neq 0 \Bigr\}.
\end{equation}
Clearly,
\[
\weakconst{T}
 = \weakconstrestrictedbig{T}{L^1([0,\infty])} \geq \weakconstrestricted{T}{\mathcal{F}}.
\]

The Beurling--Ahlfors transform, 
below denoted by $\BA$, is a singular operator 
which plays an important role in the study of quasiconformal mappings
 and partial differential equations
(we refer to, e.g., \cite{MR1294669, MR719167} for more details).
We remark that $\BA$ is bounded on $L^p(\CC)$ for $1<p<\infty$
and of weak-type $(1,1)$, which means that
\begin{equation*}
\weakconst{\BA} 
\coloneqq \sup\Bigl\{ \frac{
	\bigl|\{z\in \CC : |\BA F(z)|\geq 1\} \bigr| }{\|F\|_{L^1(\CC)}} :
F\in L^1(\CC),\ 
\|F\|_{L^1(\CC)}\neq 0 \Bigr\} < \infty,
\end{equation*}
where in this context $|\cdot|$ denotes  the two-dimensional Lebesgue measure on the complex plane~$\CC$.

Let us also recall the following important property:
if $F\in L^1(\CC)$ is of the form 
$F(re^{i\theta}) = f(r^2) e^{-im\theta}$
for some $f\colon[0,\infty)\to\CC$ and $m\in\NNzero$, then
\begin{equation*}
\BA F(z) = - \frac{\bar{z}^2}{|z|^2} \Lambda_m f(|z|^2),
\quad z\in \CC,
\end{equation*}
where  
\begin{align*}
\Lambda_m f(t) \coloneqq \frac{1+m}{t^{1+m/2}} \int_0^t f(s) s^{m/2} ds - f(t),
\quad t\geq 0
\end{align*}
(see \cite[Theorem~2.1]{MR2595549}; this is a consequence of the fact that $\BA \partial_{\bar{z}} = \partial_{z}$, cf.\ \cite{MR0856530}). 
By performing a polar change of coordinates, one can check that 
$\| F \|_{L^1(\CC)} =\pi \| f\|_{L^1([0,\infty))}$ 
and 
\[
\bigl|\{z\in \CC : |\BA F(z)|\geq 1\} \bigr| 
= \pi \bigl|\{t\in[0,\infty) : |\Lambda_m(f)(t)|\geq 1\} \bigr|.
\]
Consequently, the weak-type $(1,1)$ constant of the full Beurling--Ahlfors acting on the whole space $L^1(\CC)$
satisfies
\begin{equation*}
\weakconst{\BA} \geq \sup \bigl\{\weakconst{\Lambda_m} : m\in\NNzero\bigr\}.
\end{equation*}

\subsection{Results}

Ba\~nuelos and Janakiraman~\cite{MR2595549} proved that 
\begin{equation}
\label{eq:BJ-ln2}
\weakconst{\Lambda_0}=1/\ln(2) \approx 1.44
\end{equation}
and conjectured  that also $\weakconst{\BA} = 1/\ln(2)$
(see \cite[Conjecture~2]{MR2595549}).
Their proof was based on a delicate, multi-step optimization procedure.

Gill \cite{MR2677626}  postulated that 
it is not possible
to obtain a better lower bound for $\weakconst{\BA}$ 
by considering the operators $\Lambda_m$ with $m\geq 1$. 
He specifically conjectured that
\begin{equation}
\label{eq:conjecture-Gill}
\weakconst{\Lambda_m} \overset{\text{?}}{=} \frac{m 2^{2/(2+m)}}{(2+m)(2-2^{2/(2+m)})}
\end{equation} 
for $m\in\NNone$ 
(see \cite[Conjecture~1]{MR2677626}; cf.\ also \eqref{eq:reason-for-Gill's-conjecture} below).
The right-hand side of~\eqref{eq:conjecture-Gill} decreases with $m$ 
and tends to $1$ as $m\to\infty$.
On the other hand,
its limit as $m\to 0^+$ is equal to $1/\ln(2)$, 
which agrees with~\eqref{eq:BJ-ln2}. 
For $m=1$ one gets the value 
 $2^{2/3}(3(2-2^{2/3}))^{-1} \approx 1.28$.

Our first main result states that conjecture \eqref{eq:conjecture-Gill} does not hold.

\begin{theorem}
\label{thm:main-lower-bounds}
For every $m\in\NN$ 
there exists a class of functions $\fclassspec{m}\subset L^1([0,\infty])$ 
such that
\begin{equation*}
\weakconstrestricted{\Lambda_1}{\fclassspec{1}}\geq 1.38
\quad \text{and} \quad
\liminf_{m\to \infty} \weakconstrestricted{\Lambda_m}{\fclassspec{m}} \geq 1.37.
\end{equation*}
\end{theorem}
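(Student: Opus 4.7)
My proposed approach is constructive: for each $m\in\NN$ I would define $\fclassspec{m}$ as an explicit finite-parameter family of signed piecewise-constant functions — say, of the form $f = \sum_{j=1}^{N} \alpha_j\,\ind{[a_{j-1},a_j]}$ with a small number $N$ of pieces and partition points $0=a_0<a_1<\cdots<a_N$ — and exhibit, by direct computation, a member whose weak-type ratio exceeds the stated bound.

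For any such $f$, evaluating $\int_0^t f(s)\,s^{m/2}\,ds$ piecewise shows that on each subinterval $(a_{j-1},a_j)$ the value $\Lambda_m f(t)$ takes the form $P_j + Q_j\,t^{-(1+m/2)}$, where $P_j,Q_j$ are explicit constants depending on $(\alpha_i,a_i)_{i\leq j}$. Consequently, on each piece, the super-level condition $|\Lambda_m f(t)|\geq 1$ reduces to a monotone inequality in $t^{1+m/2}$ and therefore cuts out an explicit subinterval whose endpoints are algebraic in the parameters. Summing the $N$ contributions and dividing by $\|f\|_{L^1}=\sum_j |\alpha_j|(a_j-a_{j-1})$ yields a closed-form ratio $R_m$; by the dilation invariance of~\eqref{eq:weak-type-def-restricted} one may further normalize by fixing one of the $a_j$. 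For the bound on $\Lambda_1$, I would start from a small value of $N$ (e.g.\ $N=2$ or $N=3$, with alternating signs of $\alpha_j$ so as to produce both a large-negative region via the pointwise $-f$ and a large-positive region via the Hardy-type average) and optimize the closed-form expression over the remaining parameters — symbolically where possible, numerically otherwise — until a configuration with $R_1>1.38$ is reached, then verify rigorously by substituting back into the closed form.

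For the asymptotic statement, the substitution $v = t^{1+m/2}$, $g(v) := f(v^{2/(2+m)})$ rewrites $\Lambda_m f(t) = c_m M g(v) - g(v)$, where $c_m := 2(1+m)/(2+m)$ and $Mg(v) := v^{-1}\int_0^v g(u)\,du$; crucially, both the numerator and the denominator of $R_m$ pick up the \emph{same} weight factor $\tfrac{2}{2+m}\,v^{-m/(2+m)}\,dv$, so $R_m$ equals the weighted weak-type ratio of the fixed-shape operator $c_m M - I$ against that weight. Since $c_m\to 2$ and the weight approaches the scale-invariant $dv/v$ as $m\to\infty$, the natural move is to rescale the parameters so that the $v$-variable profile $g$ stabilizes to an $m$-independent limit; passing to $m\to\infty$ then reduces matters to a concrete variational problem for $2M - I$ against $dv/v$, whose optimum — analytical or numerical — should exceed $1.37$.

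The main obstacle is precisely this asymptotic step: because $\Lambda_m f\to f$ pointwise for smooth $f$, any naive test function gives ratio tending to $1$, so the members of $\fclassspec{m}$ must become singular in a carefully controlled way as $m\to\infty$, and identifying the right $m$-dependence of the partition points and heights (equivalently, the limiting profile of $g$) is the delicate part. A secondary difficulty is that, even at fixed $m$, the level-set computation splits into several cases according to the relative ordering of the algebraic endpoints produced by the $N$ pieces; the unfavorable orderings must be either ruled out at the optimum or handled separately.
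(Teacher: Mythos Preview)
Your high-level plan---build an explicit finite-parameter family, evaluate the ratio in closed form, and optimize---is the right shape, and your asymptotic change of variables $v=t^{1+m/2}$ is essentially the paper's substitution $t=e^{2s/m}$ in disguise (both collapse the problem to a limiting operator $2M-I$ against a scale-invariant weight). But there is a genuine gap in the choice of test functions: piecewise \emph{constant} $f$ is the wrong ansatz, and you have given no evidence that $N=2$ or $N=3$ such pieces can reach $1.38$ for $\Lambda_1$.

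The paper's key structural move is to build the pieces out of eigenfunctions of $\Lambda_m$: each piece has the form $A+Bt^{m/2}$. Since $t\mapsto t^{m/2}$ lies in the kernel of $\Lambda_m$ while constants have eigenvalue $m/(2+m)$, one takes $A=\pm(2+m)/m$ so that $\Lambda_m(\text{piece})\equiv\pm1$, and then the coefficients $B$ are forced by the requirement that the running integral $\int_0^t f(s)s^{m/2}\,ds$ match up at the interface points. The upshot is that $\Lambda_m f$ equals \emph{exactly} $\pm1$ on the entire support $(1,d)$, the level-set measure is simply $d-1$, and the ratio becomes an explicit two-parameter function $W(b,d,m)$. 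Your anticipated case analysis of algebraic endpoint orderings disappears entirely.

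With piecewise constants this structure is lost: as you correctly note, $\Lambda_m f$ then has the form $P_j+Q_jt^{-(1+m/2)}$ on each piece and is not constant there, so the level set $\{|\Lambda_m f|\ge1\}$ is strictly smaller than the support and you pay in the ratio. A single constant piece $f=(2+m)/m\cdot\ind{(0,a]}$ gives ratio $\tfrac{m}{2+m}\bigl(\tfrac{2(1+m)}{m}\bigr)^{2/(2+m)}\approx 0.84$ at $m=1$; two or three pieces do better, but you have not shown they exceed $1.38$, and I do not think this is routine. In the asymptotic picture the mismatch is transparent: the paper's extremal profile limits (in your $v$-variable) to a piecewise \emph{linear} function---$g(v)=1-2v$ on one piece, $g(v)=-1+zv$ on the other---not a piecewise constant one, so restricting to constants forces either many more pieces or a weaker bound.
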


Numerical lower bounds for the weak-type constant of $\Lambda_m$ for $m\in\{1,2,3,4\}$
can be found in Table~\ref{table:small-m} on page~\pageref{table:small-m};
for a slightly more precise asymptotic estimate see Remark~\ref{rem:exact-asymptotic-bound}.
With some additional technical work it is also possible to prove that, e.g.,  $\weakconstrestricted{\Lambda_m}{\fclassspec{m}} \geq 1.34$ for every $m\in\NN$ (see \LongArxivOrShortJournalVersion{Appendix~\ref{APP:crunching}}{Appendix~B in the arXiv version of this paper}).
This lower bound of $1.34$  
is somewhat arbitrary --  
in particular, we believe that $\weakconst{\Lambda_m}$
is equal to $\weakconstrestricted{\Lambda_m}{\fclassspec{m}}$
and that it decreases with $m$
(see Conjecture~\ref{conj:mine} below).

One can also consider the adjoint operator $\Lambda_m^*$, $m\in\NNone$,
which
is defined by
\begin{equation}
\label{eq:def-Lambda_m^star}
\Lambda_m^* f(t) = (1+m) t^{m/2}  \int_t^\infty\frac{ f(s) ds}{s^{1+m/2}} -  f(t) , \quad t\geq 0.
\end{equation}
Since the Beurling--Ahlfors transform is a self-adjoint operator we have the lower bound
\begin{equation*}
\weakconst{\BA} \geq \sup \{\weakconst{\Lambda_m^*} : m\in\NNzero\}.
\end{equation*}

Gill~\cite{MR2677626} proved that
\[
\weakconst{\Lambda_0^*} = \weakconst{\Lambda_0} = 1/\ln(2).
\]
His proof 
was based on a clever reformulation of the problem for $\Lambda_0^*$ 
in terms of $\Lambda_0$ 
and we stress that there is no obvious reason for the values of the weak-type constants 
of $\Lambda_m$ and $\Lambda_m^*$ to be equal.
Before presenting 
our estimates of the weak-type constant 
of $\Lambda_m^*$ for $m\geq 1$, 
let us mention some other related results.

Ba\~{n}uelos and Osękowski~\cite{MR3018958}
gave a probabilistic proof of the equality~\eqref{eq:BJ-ln2}
and even found 
the value of the weak-type $(p,p)$ constant of $\Lambda_0$ for $1\leq p \leq 2$.
In a different article~\cite{MR3189475},
 the same authors
gave an alternative, probabilistic proof of Gill's result.
This proof was based on inequalities for pure-jump martingales 
and surprisingly required completely different methods than the proof
presented in their earlier paper~\cite{MR3018958}.

For the sake of completeness, we remark that the exact values of the $L^p$-norms
of the operators $\Lambda_m$ ($1<p<\infty$, $m\in\NNzero$) 
are also known, see \cite{MR3558516} and the citations therein.
Also, weak-type estimates for the  operator $\Lambda_0$
restricted to the classes of decreasing 
and positive decreasing functions have been recently studied in \cite{MR3868629}.

We continue with our second main result.
As in the articles \cite{MR2677626,MR3189475},
the estimates for $\Lambda_m^*$ do not follow immediately
from their counterparts for $\Lambda_m$
(e.g., by some kind of duality).

\begin{theorem}
	\label{thm:main-lower-bounds-star}
	For every $m\in\NN$ 
	there exists a class of functions $\fclassspecStar{m}\subset L^1([0,\infty])$ 
	such that
	\begin{equation*}
	\weakconstrestricted{\Lambda_1^*}{\fclassspecStar{1}}\geq 1.38
	\quad \text{and} \quad
	\liminf_{m\to \infty} \weakconstrestricted{\Lambda_m^*}{\fclassspecStar{m}} \geq 1.37.
	\end{equation*}
\end{theorem}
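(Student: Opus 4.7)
The plan is to construct the class $\fclassspecStar{m}$ explicitly and estimate $\weakconstrestricted{\Lambda_m^*}{\fclassspecStar{m}}$ from below by exhibiting near-extremal test functions, in direct analogy with the proof of Theorem~\ref{thm:main-lower-bounds}. Since that theorem has already done the hard analytic work for $\Lambda_m$, the natural first move is to hunt for a transformation that carries test functions for $\Lambda_m$ to test functions for $\Lambda_m^*$. The most promising candidate is the reciprocal substitution $\tilde f(u) \coloneqq u^{-2} f(1/u)$, which preserves the $L^1$ norm. Substituting $s = 1/w$ in the definition \eqref{eq:def-Lambda_m^star}, one computes
\[
\Lambda_m^* f(1/u) = (1+m)\, u^{-m/2} \int_0^u \tilde f(w)\, w^{m/2+1}\, dw \;-\; u^2\, \tilde f(u).
\]
This is \emph{not} $\Lambda_m \tilde f(u)$, but it has the same Hardy-type skeleton: an integral against a power weight on $[0,u]$ minus a pointwise term. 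The discrepancy (an extra factor $w$ inside the integral and a factor $u^2$ in front of the subtracted term) explains the author's warning that the estimates for $\Lambda_m^*$ do not follow by literal duality.

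Having this computation in hand, I would proceed in two stages. First, define $\fclassspecStar{m}$ as the image of $\fclassspec{m}$ (the class used in Theorem~\ref{thm:main-lower-bounds}) under $f\mapsto\tilde f$, possibly pre- or post-composed with a power-weight rescaling $f \mapsto s^{\alpha} f$ chosen so as to absorb the residual weights above. Second, for each such $f_*$ I would compute $\Lambda_m^* f_*$ explicitly by evaluating the inner integral on each piece of support, determine the super-level set $\{t\geq 0 : |\Lambda_m^* f_*(t)|\geq 1\}$ via a sign and monotonicity analysis, and compare its Lebesgue measure with $\|f_*\|_{L^1}$. The goal is to arrange matters so that the resulting ratio equals (or differs by a controllable factor from) the corresponding ratio for $f\in\fclassspec{m}$, thereby transferring the numerical lower bounds $1.38$ for $m=1$ and $1.37$ in the limit $m\to\infty$ without redoing the optimisation. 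Geometrically, because the substitution $s\mapsto 1/s$ swaps neighbourhoods of $0$ and $\infty$, the extremizers for $\Lambda_m^*$ should concentrate at infinity where those for $\Lambda_m$ concentrated near zero.

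The main obstacle I expect is precisely the mismatch displayed above: the reciprocal substitution does not intertwine $\Lambda_m^*$ with $\Lambda_m$ exactly, so there is no formal transfer. One must therefore either (i)~identify a modified substitution — presumably the reciprocal combined with a multiplicative weight of the form $s^{\alpha_m}$ for a suitable $\alpha_m$ — that does intertwine the two operators on the restricted class of interest, or (ii)~redo the parameter optimisation from scratch, with piecewise power-function test functions adapted to the ``integral-from-$t$-to-$\infty$'' structure of $\Lambda_m^*$. Option~(i) is more elegant but depends on a single weight working uniformly in~$m$; option~(ii) is conceptually cleaner and follows the same blueprint as Theorem~\ref{thm:main-lower-bounds}, at the cost of repeating a delicate numerical search. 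Either way, the heart of the argument is computational rather than conceptual, and the numerical thresholds $1.38$ and $1.37$ should emerge by arranging the extremizing parameters in bijective correspondence with those from the proof of Theorem~\ref{thm:main-lower-bounds}.
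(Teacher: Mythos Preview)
Your proposal is a plan, not a proof, and the plan has a real gap: you never identify the concrete ingredients needed to carry out either option. The reciprocal substitution $\tilde f(u)=u^{-2}f(1/u)$ does not intertwine $\Lambda_m^*$ with $\Lambda_m$ (as you correctly compute), and there is no weight $s^{\alpha_m}$ that repairs this uniformly --- the exponent structure simply does not match. So option~(i) is a dead end, and the paper does not pursue it.

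The paper takes exactly your option~(ii), but the missing idea is concrete: one must first find the eigenfunctions of $\Lambda_m^*$. These are $t\mapsto t^\alpha$ for $\alpha<m/2$ with eigenvalue $(1+\alpha+m/2)/(m/2-\alpha)$; in particular the constant $m/(2+m)$ is mapped to $1$ and $t^{-1-m/2}$ is annihilated (the analogs of $(2+m)/m$ and $t^{m/2}$ for $\Lambda_m$). The class $\fclassspecStar{m}$ is then built from piecewise combinations of these two eigenfunctions on nested intervals $(\dStar,\bStar]\cup(\bStar,1]$ with $0<\dStar<\bStar<1$, mirroring Definition~\ref{def:f-class-spec} but with the support moved toward zero rather than toward infinity (the integral in $\Lambda_m^*$ runs from $t$ to $\infty$, so mass must sit to the \emph{right} of the evaluation point). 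The coefficients are again determined by matching conditions ensuring $|\Lambda_m^* f_*|\equiv 1$ on the support. After that, the computation of $\|f_*\|_1$ and the level set, the passage to $\XXXspecialbdStar(\bStar,\dStar,m)$, and the asymptotic limit via the substitution $\bStar=e^{-2x/(2+m)}$, $\dStar=e^{-2(x+y)/(2+m)}$ all run in parallel to Section~\ref{SEC:Lambda}, yielding the \emph{same} limiting expression as in Corollary~\ref{cor:lower-bound-asymptotic-special-b-c-d-alternate-expression}. Without the eigenfunction identification and the explicit form of $\fclassspecStar{m}$, your proposal remains at the level of ``redo the argument'' without specifying what to redo it with.
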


\begin{remark}
	\label{rem:W-vs-W-star}
	The classes of functions we construct below satisfy, 
	for every $m\in\NNone$,
	\begin{equation*}
	\weakconstrestricted{\Lambda_m^*}{\fclassspecStar{m}} 	
	= 		\weakconstrestricted{\Lambda_m}{\fclassspec{m}}.	
	\end{equation*}
\end{remark}

\subsection{Organization of the article}

The proof of Theorem~\ref{thm:main-lower-bounds} is presented in Section~\ref{SEC:Lambda} 
and consists of two steps:
\begin{itemize}
\item 
In Section~\ref{sec:initial} 
we introduce the classes of functions  $\fclass{m}\subset L^1([0,\infty))$,
 $m\in\NN$.
They yield the initial lower bound
 $\weakconst{\Lambda_m} \geq\weakconstrestricted{\Lambda_m}{\fclass{m}}$,
but unfortunately the right-hand side is somewhat too complicated to evaluate directly.
 \item
In Section~\ref{sec:guesstimates} we make some educated guesses
 and introduce smaller classes of functions $\fclassspec{m} \subset \fclass{m}$, $m\in\NN$.
Due to some additional constraints  the 
quantity $\weakconstrestricted{\Lambda_m}{\fclassspec{m}}$ is easier to handle,
which allows us to prove the lower bounds from Theorem~\ref{thm:main-lower-bounds}. 
 \end{itemize}

In Section~\ref{sec:proof-thm-2} we present the proof of Theorem~\ref{thm:main-lower-bounds-star}
and a sketch of the proof of Remark~\ref{rem:W-vs-W-star}.
Note that  the statement of Remark~\ref{rem:W-vs-W-star} is quite strong
and in order to prove it a lot of precise technical calculations and observations are needed (\LongArxivOrShortJournalVersion{the full proof is therefore postponed to Appendix~\ref{APP:equality}}{the full proof can be found in Appendix~A of the arXiv version of this article}).

The author believes that
\[
\weakconst{\Lambda_m} 
\overset{?}{=} \weakconstrestricted{\Lambda_m}{\fclassspec{m}}
\overset{?}{=} \weakconst{\Lambda_m^*}
\]
and that this quantity is decreasing in $m$.
\LongArxivOrShortJournalVersion{%
We discuss this conjecture and the motivation behind it
in Section~\ref{sec:conclusions} 
and in the Appendices~\ref{APP:equality}, \ref{APP:crunching}, \ref{APP:interlude},
and  \ref{APP:push}.}{%
We discuss this conjecture and the motivation behind it
in Section~\ref{sec:conclusions} 
and in the appendices 
of the arXiv version of this paper, available on \url{https://arxiv.org/abs/2411.09340v2};
those appendices contain a complete and detailed proof of Remark~\ref{rem:W-vs-W-star}, the proof of the lower bound  $\weakconstrestricted{\Lambda_m}{\fclassspec{m}} \geq 1.34$ for every $m\in\NN$, and some additional asymptotic considerations.}

\subsection{Acknowledgements}

I would like to thank Adam Osękowski for several discussions about the operators $\Lambda_m$ and related topics.

\section{Proof of Theorem~\ref{thm:main-lower-bounds}}

\label{SEC:Lambda}

\subsection{Initial calculations}

\label{sec:initial}

Let us recall the eigenfunctions of $\Lambda_m$.

\begin{lemma}
\label{lem:weak-type-eigenfunctions}
For $m\in\NNzero$ and $\alpha> -1-m/2$, 
the function $t\mapsto t^\alpha$, $t>0$,
is an eigenfunction of $\Lambda_m$ with eigenvalue $(m/2 - \alpha)/(1+\alpha +m/2)$.
\end{lemma}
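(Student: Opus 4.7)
The plan is a direct substitution into the definition of $\Lambda_m$. Setting $f(s)=s^\alpha$, I would write
\[
\Lambda_m f(t) = \frac{1+m}{t^{1+m/2}}\int_0^t s^{\alpha + m/2}\,ds - t^\alpha,
\]
and first observe that the condition $\alpha > -1 - m/2$ is precisely what is needed for the integrand $s^{\alpha + m/2}$ to have a convergent primitive at $0$ (i.e., $\alpha + m/2 > -1$). This makes the hypothesis of the lemma transparent: it is exactly the integrability condition on $s^\alpha s^{m/2}$ near the origin.

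Next I would evaluate the integral as $t^{\alpha + m/2 + 1}/(\alpha + m/2 + 1)$, substitute back, and simplify. The factor $t^{1+m/2}$ in the denominator cancels with $t^{\alpha + m/2 + 1}$ in the numerator, leaving
\[
\Lambda_m f(t) = \Bigl(\frac{1+m}{1+\alpha + m/2} - 1\Bigr) t^\alpha.
\]
A short arithmetic step then gives
\[
\frac{1+m}{1+\alpha + m/2} - 1 = \frac{(1+m) - (1+\alpha + m/2)}{1+\alpha + m/2} = \frac{m/2 - \alpha}{1+\alpha + m/2},
\]
which is the asserted eigenvalue.

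There is no real obstacle here: the lemma is an elementary computation, and the only subtlety is recognizing that the stated range $\alpha > -1 - m/2$ is the sharp condition ensuring the integral defining $\Lambda_m$ makes sense on the monomial $s^\alpha$. I would therefore present the proof as a single displayed calculation with a one-line comment on the integrability condition.
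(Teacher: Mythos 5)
Your proof is correct and takes exactly the approach the paper intends: the paper's proof of this lemma is the single line ``This follows from a direct calculation (note that the function in question is locally integrable after multiplying by the weight $t^{m/2}$),'' and your writeup simply carries out that calculation explicitly, including the correct identification of $\alpha > -1-m/2$ as the local integrability condition.
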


\begin{proof}
This follows from a direct calculation (note that the function in question is locally integrable after multiplying by the weight $t^{m/2}$).
\end{proof}

In particular, 
for $m\in\NNone$ the constant function $(2+m)/m$ is mapped by $\Lambda_m$ to the constant function equal to $1$,
whereas the function $t\mapsto t^{m/2}$ vanishes under $\Lambda_m$.
By considering the optimal sum of those two functions
one obtains 
\begin{equation}
\label{eq:reason-for-Gill's-conjecture}
\weakconstrestrictedbig{\Lambda_m}{ \bigl\{ t\mapsto \bigl(\tfrac{2+m}{m} - t^{m/2}\bigr)\indbr{t\in[0,b]}  :  b > 0\bigr\} }
=\frac{m 2^{2/(2+m)}}{(2+m)(2-2^{2/(2+m)})} 
\end{equation} 
and arrives at conjecture~\eqref{eq:conjecture-Gill}.

It turns out that one can stitch together the functions $\pm (2+m)/m$ and $t\mapsto t^{m/2}$
differently and produce an example yielding a greater lower bound for the weak-type constant of $\Lambda_m$.
Let us introduce the following class of functions.

\begin{definition}
\label{def:weak-type-extremals-2}
For $m\in\NNone$ and $0<a<b\leq c<d$ 
define  the coefficients 
\begin{align*}
\dZero  &= \dZero(a, m) \coloneqq -\frac{2(1+m)}{ma^{m/2}},\\
\dOne &=  \dOne(a,b,c, m) \coloneqq  \frac{2(1+m)}{m c^{m/2}} +  \frac{2(1+m)}{m c^{m/2}} \Bigl(\frac{b}{c}\Bigr)^{1+m/2}+ \dZero\Bigl(\frac{b}{c}\Bigr)^{1+m}.
\end{align*}
We denote by $\fclass{m}$
the class of all functions $f\colon [0,\infty)\to \RR$
which are of the form
\begin{equation}
\label{eq:weak-type-extremals-2}
f(t)=
\bigl(\frac{2+m}{m} +\dZero t^{m/2}\bigr) \indbr{t\in (a,b]}
+ \bigl(-\frac{2+m}{m} + \dOne t^{m/2}\bigr) \indbr{t\in (c,d]}
\end{equation}
for some $0<a<b\leq c<d$ (where $\dZero  = \dZero(a, m)$,
$\dOne =  \dOne(a,b,c, m)$ are as above).
\end{definition}

We remark that one could also consider $a=0$ (which  is excluded in the formulation): in this case $\dZero$ can take any value and only the constraint on $\dOne$ remains. 

Note that below we usually suppress the dependence on $a, b, c, d, m$
and just write $\dZero$, $\dOne$, and $f$.
If it will be clear that, 
say, $m$, $a$, and $c$ are fixed throughout a proof or section,
then we will abuse the notation slightly and write $\dOne(b)$ 
(instead of $\dOne(a,b,c,m)$ or just $\dOne$)
to stress which parameters are of interest at the given moment.
We shall follow a similar convention for other objects as well.

The next lemma explains the motivation behind Definition~\ref{def:weak-type-extremals-2}.

\begin{lemma}
\label{lem:weak-type-extremals-2}
Suppose that $m\in\NNone$, $0<a<b\leq c<d$, 
and let $f\in\fclass{m}$ be the corresponding  function
of the form~\eqref{eq:weak-type-extremals-2}. 	
Then $\Lambda_m f(t)=1$ for $t\in(a,b)$ and $\Lambda_m f(t)=-1$ for $t\in(c,d)$.
\end{lemma}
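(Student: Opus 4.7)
The plan is to exploit Lemma~\ref{lem:weak-type-eigenfunctions} by recognizing that on each of the two bumps the function $f$ coincides with the restriction of an ``eigenfunction combination'' whose $\Lambda_m$-image is the constant $\pm 1$ on all of $(0,\infty)$. Concretely, I introduce the auxiliary functions
\[
\tilde{f}_+(t) \coloneqq \frac{2+m}{m} + \dZero\, t^{m/2},
\qquad
\tilde{f}_-(t) \coloneqq -\frac{2+m}{m} + \dOne\, t^{m/2},
\qquad t>0.
\]
Lemma~\ref{lem:weak-type-eigenfunctions} applied with $\alpha=0$ gives $\Lambda_m(1) \equiv m/(2+m)$, hence $\Lambda_m((2+m)/m) \equiv 1$, and with $\alpha = m/2$ gives $\Lambda_m(t^{m/2}) \equiv 0$. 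By linearity, $\Lambda_m \tilde{f}_+ \equiv 1$ and $\Lambda_m \tilde{f}_- \equiv -1$ on $(0,\infty)$. The whole point of Definition~\ref{def:weak-type-extremals-2} is that $\dZero$ and $\dOne$ are chosen so that the ``phantom'' pieces of $\tilde{f}_\pm$ outside of the support of $f$ contribute nothing to the Hardy-type average.

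For $t\in(a,b)$ the support of $f$ in $(0,t]$ is just $(a,t]$, on which $f = \tilde{f}_+$. I split
\[
\int_0^t f(s)\, s^{m/2}\,ds
= \int_0^t \tilde{f}_+(s)\, s^{m/2}\, ds - \int_0^a \tilde{f}_+(s)\, s^{m/2}\, ds.
\]
Substituting into the definition of $\Lambda_m f(t)$, the first integral combines with $-f(t) = -\tilde{f}_+(t)$ to produce $\Lambda_m \tilde{f}_+(t)=1$, and it suffices to verify that the correction $\int_0^a \tilde{f}_+(s)\, s^{m/2}\, ds$ vanishes. A direct computation gives $\frac{2 a^{1+m/2}}{m} + \dZero \frac{a^{1+m}}{1+m}$, which is indeed zero by the definition $\dZero = -2(1+m)/(m a^{m/2})$. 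Hence $\Lambda_m f(t) = 1$ on $(a,b)$.

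For $t\in(c,d)$ the same strategy applies on the second bump, but now there are two correction terms. Using that $f = \tilde{f}_+$ on $(a,b]$ and $f = \tilde{f}_-$ on $(c,t]$, together with the vanishing identity from the previous step, I write
\[
\int_0^t f(s)\, s^{m/2}\,ds
= \int_0^t \tilde{f}_-(s)\, s^{m/2}\, ds + K,
\]
where
\[
K = \int_0^b \tilde{f}_+(s)\, s^{m/2}\, ds - \int_0^c \tilde{f}_-(s)\, s^{m/2}\, ds.
\]
Plugging this into $\Lambda_m f(t)$ yields $\Lambda_m f(t) = -1 + \frac{(1+m) K}{t^{1+m/2}}$, so it remains to check that $K=0$. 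Computing both integrals explicitly converts $K=0$ into a linear equation for $\dOne$, and rearranging reproduces exactly the formula for $\dOne$ from Definition~\ref{def:weak-type-extremals-2} after dividing by $c^{1+m}$.

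The only real obstacle is the bookkeeping behind $K=0$: one must see that the formula for $\dOne$ is reverse-engineered to cancel the contribution of the $(a,b]$ bump at every $t \in (c,d)$, uniformly in $t$. This is the reason for the three terms in the definition of $\dOne$ --- one inherited from the ``self-part'' of $\tilde{f}_-$, one from the constant piece of $\tilde{f}_+$ (giving the factor $(b/c)^{1+m/2}$), and one from the $\dZero t^{m/2}$ piece of $\tilde{f}_+$ (giving the factor $(b/c)^{1+m}$). Beyond that, the proof is routine calculus.
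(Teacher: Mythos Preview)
Your proof is correct and follows essentially the same approach as the paper's: both introduce the auxiliary functions $\tilde f_\pm$ (called $g,h$ in the paper), use Lemma~\ref{lem:weak-type-eigenfunctions} to get $\Lambda_m\tilde f_\pm\equiv\pm1$, and then verify that the choices of $\dZero$ and $\dOne$ make the correction integrals $\int_0^a \tilde f_+(s)s^{m/2}\,ds$ and $K=\int_0^b\tilde f_+(s)s^{m/2}\,ds-\int_0^c\tilde f_-(s)s^{m/2}\,ds$ vanish. The only cosmetic difference is that the paper computes these integrals via the identity $\int_0^t \phi(s)s^{m/2}\,ds=\frac{t^{1+m/2}}{1+m}(\Lambda_m\phi(t)+\phi(t))$ rather than by direct antidifferentiation.
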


\begin{proof}
Fix  $m\in\NNone$ and $0<a<b\leq c<d$.
For $t>0$ denote
\begin{align*}
\fZero(t) &=  \frac{2+m}{m} +\dZero t^{m/2},\\
\fOne(t)  &=  -\frac{2+m}{m} + \dOne t^{m/2},
\end{align*}
where $\dZero$, $\dOne$ are as in Definition~\ref{def:weak-type-extremals-2}.
By Lemma~\ref{lem:weak-type-eigenfunctions} we have $\Lambda_m \fZero(t) = 1 = -\Lambda_m \fOne(t)$ for all $t>0$.

We first claim that $\int_0^a f(s) s^{m/2} ds =\int_0^a \fZero(s) s^{m/2} ds$.
Indeed,  we clearly have $\int_0^a f(s) s^{m/2}ds= 0$.
By Lemma~\ref{lem:weak-type-eigenfunctions} and due to our choice of $\dZero$ we also have
\begin{align*}
\int_0^a \fZero(s) s^{m/2} ds
= \frac{a^{1+m/2}}{1+m} \Bigl(\Lambda_m \fZero(a) + \fZero(a) \Bigr)
&= \frac{a^{1+m/2}}{1+m} \Bigl( 1 + \frac{2+m}{m} + \dZero a^{m/2} \Bigr)\\
&= \frac{2 a^{1+m/2}}{m} +\dZero \frac{a^{1+m}}{1+m} = 0.
\end{align*} 	
Since $f=\fZero$ on $(a,b)$, this first  claim immediately implies that $\int_0^t f(s) s^{m/2} ds =\int_0^t \fZero(s) s^{m/2} ds$ for $t\in [a,b]$
and thus $\Lambda_m f(t) = \Lambda_m \fZero(t) = 1$ for $t\in(a,b)$.

We also claim  that $\int_0^c f(s) s^{m/2} ds= \int_0^c \fOne(s) s^{m/2} ds$.
Indeed, using the preceding claim in the second equality,
we see that
\begin{align*}
\int_0^c f(s) s^{m/2} ds
=
\int_0^b f(s) s^{m/2} ds
&=
\int_0^b \fZero(s) s^{m/2} ds\\
&=
\frac{b^{1+m/2}}{1+m} \Bigl(\Lambda_m \fZero(b) + \fZero(b) \Bigr)\\
&=\frac{b^{1+m/2}}{1+m} \Bigl( \frac{2(1+m)}{m} + \dZero b^{m/2} \Bigr),
\end{align*}
while
\begin{align*}
\int_0^c \fOne(s) s^{m/2} ds
=
\frac{c^{1+m/2}}{1+m} \Bigl(\Lambda_m \fOne(c) + \fOne(c) \Bigr)
=\frac{c^{1+m/2}}{1+m} \Bigl( -\frac{2(1+m)}{m} + \dOne c^{m/2} \Bigr).
\end{align*}
Thus, the claim follows by our choice of $\dOne$. 
Consequently, since $f=\fOne$ on $(c,d)$, we have $ \Lambda_m f(t) = \Lambda_m \fOne(t) = -1$ for $t\in(c,d)$. 
This ends the proof of the lemma.
\end{proof}

\subsection{Guesstimates}

\label{sec:guesstimates}

Ideally,
we would like to find 
\[
\weakconstrestricted{\Lambda_m}{\fclass{m}}
=
\sup\Bigl\{ \frac{
	\bigl|\{t\in[0,\infty) : |\Lambda_m f(t)|\geq 1\} \bigr|}{\|f\|_{L^1([0, \infty))}} :
f\in\fclass{m}\Bigr\}.
\]
for all $m\in\NNone$.
Due to several parameters this task does not seem to be straightforward.
In particular, 
for a function $f\in\fclass{m}$ of the form~\eqref{eq:weak-type-extremals-2} 
it may happen that 
$|\Lambda_m f (t)| \geq 1$ 
also for some $t\notin (a,b) \cup (c,d)$.
An explicit, precise formula 
for $\weakconstrestricted{\Lambda_m}{\fclass{m}}$
can be found (see \LongArxivOrShortJournalVersion{Appendix~\ref{APP:interlude}}{Appendix~C in the arXiv version of this paper}),
but the quantities which appear in its formulation
 are somewhat cumbersome to work with.
Therefore,
we shall restrict our attention to a special case where:
\begin{itemize} 
\item $c=b$,
\item the function $f$ changes sign in the interval $(b,d)$ (in particular, $\dOne >0$, $\lim_{t\to b^+} f(t) < 0 < \lim_{t\to d^-} f(t)$),
\item $\lim_{t\to d^-} f(t) <2$.
\end{itemize}
(The motivation for these choices comes from the considerations of  \LongArxivOrShortJournalVersion{Appendix~\ref{APP:interlude}}{Appendix~C in the arXiv version of this paper}).
By scaling invariance, we can also assume that $a=1$ in~\eqref{eq:weak-type-extremals-2}.
A self-contained, precise definition reads as follows.

\begin{definition}
	\label{def:f-class-spec}
	Let $\fclassspec{m}\subset \fclass{m}$ denote the class of functions
	which 
	are of the form
	\[
	f(t) = \bigl(\frac{2+m}{m} -\frac{2(1+m)}{m} t^{m/2}\bigr) \indbr{t\in (1,b]}
	+  \bigl( -\frac{2+m}{m} + \dOne t^{m/2}\bigr)\indbr{t\in(b,d]},
	\]
	for some $1  < b < d$ 	
	which satisfy
	\begin{equation}
	\label{eq:constraint-for-special-b-d}
	-\frac{2+m}{m} + \dOne(b,m) b^{m/2} < 0 <  -\frac{2+m}{m} + \dOne(b,m) d^{m/2} < 2,
	\end{equation}
	where	
	\[
	\dOne = \dOne(b,m) \coloneqq \frac{2(1+m)}{m} \bigl(2 b^{-m/2}  - 1\bigr) > 0.
	\]
\end{definition}

In order to state the next result and to make other statements and calculations cleaner, 
we need the following notation.
For $m\in\NN$ denote by $\specialbd{m}$
the set of $(b,d)\in\RR^2$ which satisfy all the constraints of Definition ~\ref{def:f-class-spec} (in particular, $1<b<d$;
for the Reader's convenience
this set 
and some of the claims which follow
are depicted in Figure~\ref{fig:m-fixed-optimal-curve} on page~\pageref{fig:m-fixed-optimal-curve}).
The constraints 
$-(2+m)/m + \dOne(b,m) b^{m/2} <0$ and $\dOne(b,m) >0$
imply that $b\in (\bMin, \bMax)$, where
\begin{equation}
\label{eq:def-bMin-bMax}
\bMin = \bMin(m)\coloneqq \Bigl(\frac{2+3m}{2+2m}\Bigr)^{2/m},
\qquad
\bMax = \bMax(m) \coloneqq 2^{2/m}.
\end{equation}
Note that 
$-(2+m)/m + \dOne(\bMin,m) \bMin^{m/2} = 0$.

Additionally, the constraint \eqref{eq:constraint-for-special-b-d} yields
that for fixed $b\in (\bMin, \bMax)$ we must have
$d\in (\dMin, \dMax)$,
where
\begin{align}
\label{eq:def-dMin}
\dMin ^{m/2} &= \dMin(b,m)^{m/2} 
\coloneqq \frac{2+m}{ 2(1+m) (2b^{-m/2}  -1)},\\
\label{eq:def-dMax}
\dMax^{m/2} &= \dMax(b,m)^{m/2} \coloneqq \frac{2+3m}{ 2(1+m) (2b^{-m/2}  -1)}.
\end{align}
Clearly, the functions
$b\mapsto \dMin(b,m)$,
$b\mapsto \dMax(b,m)$
are increasing (in the above specified interval, cf.~Figure~\ref{fig:m-fixed-optimal-curve})
and the definitions also make sense for $b= \bMin$.

Finally, for $m\in\NN$ and $(b,d)\in\specialbd{m}$ 
we denote
\begin{multline}
\label{eq:a=1-b=c-would-like-to-optimize-technical}
\XXXspecialbd(b,d,m)\\
\coloneqq \frac{d-1}{ \frac{m}{2+m} -\frac{2+m}{m}d - \frac{2m}{2+m}b +
\frac{4(1+m)}{m(2+m)}(2b^{-m/2}  -1)d^{1+m/2} 
+2t_0(b,m)},
\end{multline}
where  $t_0 = t_0(b,m)$ is the unique number in $[b,d]$ satisfying
$f(t_0) = -(2+m)/m + \dOne(b,m) t_0 ^{m/2} = 0$,
i.e.,
\begin{equation}
\label{eq:def-t_0-explicit-formula}
t_0 =  t_0(b,m) = \Bigl(\frac{2+m}{2(1+m)}\Bigr)^{2/m} \bigl(2 b^{-m/2} -1\bigr)^{-2/m} = \dMin(b,m).
\end{equation}
The definition of $t_0$ makes sense also for $b=\bMin$.
Note also that it follows from the proof of Lemma~\ref{lem:lower-bound-fixed-m-special-b-c-d-alternate-expression-with-inequality}
below that the denominator on the right-hand side of~\eqref{eq:a=1-b=c-would-like-to-optimize-technical} is positive and that we can consider $(b,d)$ on the boundary of $\specialbd{m}$ as well.

We have the following lower bound
(note that actually equality  holds -- see \LongArxivOrShortJournalVersion{Appendix~\ref{APP:equality}}{Appendix~A in the arXiv version of this paper}).

\begin{lemma}
	\label{lem:lower-bound-fixed-m-special-b-c-d-alternate-expression-with-inequality}
	For $m\in\NNone$,
	\begin{equation*}
	\weakconstrestricted{\Lambda_m}{\fclassspec{m}} 
	  \geq 
	\sup\bigl\{
	\XXXspecialbd(b,d,m) : (b,d)\in\specialbd{m} \bigr\}.
	\end{equation*}
\end{lemma}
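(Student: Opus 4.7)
The plan is to exhibit, for each $(b,d)\in\specialbd{m}$, the corresponding function $f=f_{b,d}\in\fclassspec{m}$ from Definition~\ref{def:f-class-spec} and to bound from below the weak-type ratio in~\eqref{eq:weak-type-def-restricted} by a direct computation of its numerator and denominator. Since $f$ is of the form~\eqref{eq:weak-type-extremals-2} with $a=1$ and $c=b$, Lemma~\ref{lem:weak-type-extremals-2} yields $\Lambda_m f(t)=1$ for $t\in(1,b)$ and $\Lambda_m f(t)=-1$ for $t\in(b,d)$, so the numerator satisfies
\[
\bigl|\{t\in[0,\infty) : |\Lambda_m f(t)|\geq 1\}\bigr|\geq (b-1)+(d-b)=d-1.
\]

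For the denominator I would locate the sign of $f$ and split the $L^1$-norm accordingly. Since $f(1)=-1$ and $\dZero(1,m)=-2(1+m)/m<0$, the function $f$ is strictly negative on $(1,b]$; on $(b,d]$ the constraint~\eqref{eq:constraint-for-special-b-d} forces exactly one sign change, at $t=t_0(b,m)\in(b,d]$ defined by $\dOne(b,m)\,t_0^{m/2}=(2+m)/m$ (cf.~\eqref{eq:def-t_0-explicit-formula}). I would thus write
\[
\|f\|_{L^1([0,\infty))}=-\int_1^b f\,dt-\int_b^{t_0} f\,dt+\int_{t_0}^d f\,dt
\]
and evaluate each integral using the antiderivatives of the constant function and of $t^{m/2}$.

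The calculation rests on two algebraic simplifications. First, the defining relation for $t_0$ gives $\dOne\,t_0^{1+m/2}=(2+m)t_0/m$, so every $t_0^{1+m/2}$ contribution reduces to a multiple of $t_0$, and the $t_0$-dependent part of $\|f\|_{L^1}$ collapses to a single summand $+2t_0$. Second, substituting $\dOne(b,m)=\frac{2(1+m)}{m}(2b^{-m/2}-1)$ and using the identity $(2b^{-m/2}-1)b^{1+m/2}=2b-b^{1+m/2}$ eliminates the $b^{1+m/2}$ terms entirely, leaving only linear contributions in $b$. After these cancellations the resulting expression matches verbatim the denominator of~\eqref{eq:a=1-b=c-would-like-to-optimize-technical}; in particular this denominator is positive, since it equals $\|f\|_{L^1}>0$. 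The inequality $\weakconstrestricted{\Lambda_m}{\fclassspec{m}}\geq\XXXspecialbd(b,d,m)$ then holds for any $(b,d)\in\specialbd{m}$, and taking the supremum gives the claim; boundary cases $(b,d)\in\partial\specialbd{m}$ are recovered by continuity of $(b,d)\mapsto\XXXspecialbd(b,d,m)$.

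The main obstacle I anticipate is combinatorial rather than conceptual: the integrals produce a sizeable collection of terms with denominators $m$, $2+m$, and $m(2+m)$, whose coefficients must be tracked carefully through the two cancellations above. The identity works only because of the particular choice of $\dOne(b,m)$ dictated by Definition~\ref{def:weak-type-extremals-2} and the specialization $c=b$ used in passing from $\fclass{m}$ to $\fclassspec{m}$.
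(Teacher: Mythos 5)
Your proposal matches the paper's proof essentially step for step: lower-bound the numerator by $d-1$ via Lemma~\ref{lem:weak-type-extremals-2}, split $\|f\|_{L^1}$ according to the sign of $f$ on $(1,b]$ and at $t_0\in(b,d)$, and use the defining relations for $t_0$ and $\dOne$ to reduce the result to the denominator of~\eqref{eq:a=1-b=c-would-like-to-optimize-technical} before taking the supremum. The two algebraic simplifications you identify ($\dOne t_0^{1+m/2}=(2+m)t_0/m$ and the cancellation of $b^{1+m/2}$ terms via $(2b^{-m/2}-1)b^{1+m/2}=2b-b^{1+m/2}$) are exactly the ones the paper uses, so this is the same argument.
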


\begin{proof}
	Consider $1 < b <d$ satisfying the constraints of  Definition~\ref{def:f-class-spec}
	and the corresponding function  $f\in\fclassspec{m}$. 
	By Lemma~\ref{lem:weak-type-extremals-2},
		\[
			\bigl|\{t\in[0,\infty) : |\Lambda_m f(t)|\geq 1\} \bigr| \geq d-1.
	    \]
		Let us calculate $\|f\|_{L^1([0, \infty))} = \int_1^b |f(t)| dt +\int_c^d |f(t)| dt$.
		Observe that $\lim_{t\to 1^+} f(t) = -1 $, and consequently $f$ is negative and decreases on $(1,b]$.
		Hence,
		\begin{align*}
		\int_1^b |f(t)| dt &= \int_1^b  - \frac{2+m}{m} +\frac{2(1+m)}{m} t^{m/2} dt  \\
		&= - \frac{2+m}{m}(b-1) + \frac{4(1+m)}{m(2+m)} (b^{1+m/2} - 1)\\
		&= \frac{m}{2+m} - \frac{2+m}{m}b + \frac{4(1+m)}{m(2+m)} b^{1+m/2}.
		\end{align*}
	Using the definition of $t_0 = t_0(b,m)$ 
	(twice, in the first and last equality) we can write
	\begin{align*}
	\MoveEqLeft[2]
	\int_b^d |f(t)| dt
	= \int_{b}^{t_0} \frac{2+m}{m} - \dOne t^{m/2}  dt 
	+  \int_{t_0}^d -\frac{2+m}{m} + \dOne t^{m/2}  dt \\
	&= -\frac{2+m}{m}(d-2t_0 + b) + \frac{2\dOne}{2+m} (d^{1+m/2} - 2 t_0^{1+m/2} + b^{1+m/2}) \\
	&=  -\frac{2+m}{m}(d+ b) + \frac{2\dOne}{2+m} (d^{1+m/2} + b^{1+m/2}) +2t_0 \cdot \bigl( \frac{2+m}{m} -  \frac{2\dOne}{2+m} t_0^{m/2} \bigr)  \\
	&=  -\frac{2+m}{m}(d+ b) + \frac{2\dOne}{2+m}(d^{1+m/2} + b^{1+m/2})+ 2t_0.
	\end{align*}
	Recalling the definition of $\dOne = \dOne(b,m)$ yields
	\begin{multline*}
	\frac{2\dOne}{2+m} (d^{1+m/2} + b^{1+m/2}) 
	= \frac{4(1+m)}{m(2+m)}(2b^{-m/2}  -1)(d^{1+m/2} + b^{1+m/2})\\
	=\frac{4(1+m)}{m(2+m)}(2b^{-m/2}  -1)d^{1+m/2} 
	+\frac{8(1+m)}{m(2+m)} b  - \frac{4(1+m)}{m(2+m)} b^{1+m/2}.
	\end{multline*}
	Gathering everything and simplifying the expression for $\int_1^b |f(t)| dt+ \int_{b}^d |f(t)| dt$ 
	yields
	\[
	\weakconstrestricted{\Lambda_m}{\fclassspec{m}} 
	\geq \frac{\bigl|\{t\in[0,\infty) : |\Lambda_m f(t)|\geq 1\} \bigr|}{\|f\|_{L^1([0, \infty))} } \geq 
	\XXXspecialbd(b,d,m).
	\]
	The assertion follows by taking the supremum over $(b,d)\in \specialbd{m}$.
\end{proof}

\begin{corollary}
	\label{cor:lower-bound-asymptotic-special-b-c-d-alternate-expression}
	We have 
	\begin{equation*}
	\liminf_{m\to \infty} \weakconstrestricted{\Lambda_m}{\fclassspec{m}}
	\geq
	\sup\Bigl\{ 
	\frac{x + y}{2e^x - x -4  - y + 2(2-e^x) (e^y + 1) -2\ln(2(2-e^x))}
	\Bigr\},
	\end{equation*}
	where the supremum is taken over
	$x\in [\ln(3/2), \ln(2))$
	and $e^{-y} \leq 2(2-e^x) \leq  3e^{-y}$.
\end{corollary}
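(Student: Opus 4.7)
The plan is to substitute $b = \exp(2x/m)$ and $d = \exp(2(x+y)/m)$ into the bound of Lemma~\ref{lem:lower-bound-fixed-m-special-b-c-d-alternate-expression-with-inequality} and pass to the limit $m \to \infty$ with $x, y$ fixed. This parametrization is chosen so that $b^{m/2} = e^x$ and $d^{m/2} = e^{x+y}$ remain of order one, which is the natural scale. A short computation verifies that $\bMin(m)^{m/2} = (2+3m)/(2+2m) \to 3/2$, $\bMax(m)^{m/2} = 2$, and that the ratios $\dMin(b,m)^{m/2}/b^{m/2}$ and $\dMax(b,m)^{m/2}/b^{m/2}$ tend to $1/(2(2-e^x))$ and $3/(2(2-e^x))$ respectively. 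Thus the admissibility $(b,d) \in \specialbd{m}$ translates in the limit exactly to $x \in [\ln(3/2), \ln 2)$ and $e^{-y} \leq 2(2-e^x) \leq 3 e^{-y}$; for $(x,y)$ satisfying strict versions of these inequalities, $(b_m, d_m) \in \specialbd{m}$ for all sufficiently large $m$, and Lemma~\ref{lem:lower-bound-fixed-m-special-b-c-d-alternate-expression-with-inequality} yields $\weakconstrestricted{\Lambda_m}{\fclassspec{m}} \geq \XXXspecialbd(b_m, d_m, m)$.

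The core of the argument is a first-order Taylor expansion in $\epsilon = 1/m$. The numerator satisfies $d_m - 1 = 2(x+y)\epsilon + O(\epsilon^2)$. For the denominator, the five summands have limits $1$, $-1$, $-2$, $0$, and $2$, whose sum vanishes; the denominator is therefore of order $\epsilon$, and its leading coefficient is the only nontrivial quantity to compute. Four of the terms contribute through standard expansions, while the delicate one is $2 t_0(b_m, m) = 2 \bigl((2+m)/(2(1+m))\bigr)^{2/m} (2 e^{-x} - 1)^{-2/m}$: since $\ln\bigl((2+m)/(2(1+m))\bigr) = -\ln 2 + O(1/m)$, both factors produce contributions at order $1/m$ through their logarithms, yielding $2 t_0(b_m, m) = 2 + (4/m)\bigl[x - \ln(2(2-e^x))\bigr] + O(1/m^2)$ after using the identity $2(2 e^{-x} - 1) = 2 e^{-x}(2 - e^x)$ to combine the two logarithms.

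Collecting the $1/m$ coefficients, the denominator becomes $(2/m) \bigl[2(2-e^x) e^y - (x+y) - 2 \ln(2(2-e^x))\bigr] + O(1/m^2)$, so $\lim_{m \to \infty} \XXXspecialbd(b_m, d_m, m) = (x+y)/\bigl[2(2-e^x) e^y - (x+y) - 2 \ln(2(2-e^x))\bigr]$. A brief algebraic manipulation using $2(2-e^x)(e^y + 1) = 2(2-e^x) e^y + 4 - 2 e^x$ identifies this with the expression stated in the corollary. Taking $\liminf$ in the inequality from Lemma~\ref{lem:lower-bound-fixed-m-special-b-c-d-alternate-expression-with-inequality} for each admissible $(x, y)$ and then supremum over the region completes the proof; the boundary of the region poses no problem since the limit expression is continuous in $(x, y)$. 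The main obstacle is purely computational: the bookkeeping in the Taylor expansions, particularly through the $t_0$ term, where the mixing of orders in $\bigl((2+m)/(2(1+m))\bigr)^{2/m}$ must be tracked carefully to confirm that the expected cancellations occur and that the stated coefficient of $1/m$ emerges.
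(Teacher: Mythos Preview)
Your proof is correct and follows essentially the same approach as the paper: the same parametrization $b=e^{2x/m}$, $d=e^{2(x+y)/m}$, the same verification that the constraints are eventually satisfied, and passage to the limit in the bound of Lemma~\ref{lem:lower-bound-fixed-m-special-b-c-d-alternate-expression-with-inequality}. The only difference is computational: the paper evaluates the limit by substituting $t=e^{2s/m}$ in the integral expression for $\|f\|_{L^1}$ and passing to the limit inside the integral, whereas you Taylor-expand the closed-form expression $\XXXspecialbd(b,d,m)$ directly (which requires more careful tracking of the $t_0$ term but reaches the same result).
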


\begin{proof}
	Fix $x\in [\ln(3/2), \ln(2))$, $y>0$ such that $e^{-y} < 2(2-e^x) <  3e^{-y}$.
	We shall first show that
	\begin{equation}
	\label{eq:ladida}
	\liminf_{m\to \infty} \weakconstrestricted{\Lambda_m}{\fclassspec{m}}
	\geq 	\frac{x + y }{2e^x - x -2+\int_0^y |-1 + 2(2-e^x) e^s | ds}.
	\end{equation}
	To this end set
	\begin{equation*}
	b = b(m) = e^{2x/m},\quad
	d = d(m) = e^{2(x+y)/m}
	\end{equation*}
	and let $\dOne = \dOne(b,m)$ be as in Definition~\ref{def:f-class-spec}.
	We have $D(b,m) > 0$ (since $x<\ln(2)$)
	and
	$\dOne(b,m)\to 2(2e^{-x} -1)$ for $m\to\infty$.
	Our constraints for $x$, $y$ imply that
		\begin{equation*}
		 -1 + 2(2e^{-x} - 1) e^x < 0 <  -1 + 2(2e^{-x} - 1) e^{x+y} < 2,
		\end{equation*}	
		so the constraint~\eqref{eq:constraint-for-special-b-d} is satisfied for $m$ large enough.
	Thus we can consider a~function $f\in\fclassspec{m}$ of the form given in Definition~\ref{def:f-class-spec}
	for $1 < b < d$ as above.

		From Lemma~\ref{lem:weak-type-extremals-2} we know that 
		\begin{equation*}
		m\bigl|\{ t\geq 0 : |\Lambda_m f (t)| \geq 1 \} \bigr| 
		\geq m(d(m) - 1) \xrightarrow[m\to\infty]{} 2x + 2y. 
		\end{equation*}
	    Moreover,
	    \begin{align*}
		m \int_0^\infty |f(t)| dt 
		&=m\int_{1}^{\exp(2x/m)} \Bigl| \frac{2+m}{m} -  \frac{2(1+m)}{m}  t^{m/2}\Bigr| dt\\
		&\qquad  
		+ m\int_{\exp(2x/m)}^{\exp(2(x+y)/m)} \Bigl| -\frac{2+m}{m} + \dOne t^{m/2}\Bigr| dt\nonumber\\
		&= 2 \int_{0}^{x} \Bigl| \frac{2+m}{m} - \frac{2(1+m)}{m} e^{s} \Bigr| e^{2s/m} ds \\
		&\qquad	+  2\int_{x}^{x+y} \Bigl| -\frac{2+m}{m} + \dOne e^{s} \Bigr| e^{2s/m} ds,
		\end{align*}
		where in the last step we substituted $t = e^{2s/m}$.
		For fixed $x, y$ we pass to the limit with $m\to\infty$.
		The integrals tend to
		\begin{align*}
		&2\int_{0}^{x} \Bigl| 1 - 2 e^{s}\Bigr| ds    
		=  2\int_{0}^{x} 2 e^{s} -1 ds  
		= 2(2e^{x} -x - 2),\\
		&2 \int_{x }^{x+y} \bigl| -1 +  2(2 e^{-x } -1)  e^{s}\bigr| ds
		= 2\int_{0 }^{y} \bigl| -1 +  2(2-e^x) e^{s}\bigr| ds,
		\end{align*}
		respectively; the inequality~\eqref{eq:ladida} follows.

	Denote $z = 2(2-e^x)$. Then $z\in(0,1]$, $-1+z \leq 0 < -1+ze^y$, and hence
	 \begin{align*}
	\int_0^y | -1 + z e^s | ds
	 & = \int_{0}^{-\ln(z)} 1 -  z  e^{s} ds
	 + \int_{ -\ln(z)}^{y} -1 +  z e^{s} ds\\
	 & = - 2\ln(z)  - y  + z (e^y - 2e^{-\ln(z)} + 1)\\
	 & = - 2\ln(2(2-e^x)) - y - 2   + 2(2-e^x)(e^y + 1).
	 \end{align*} 
	 The assertion is obtained by taking the supremum.	
\end{proof}

\begin{table}[b] 
	\centering
	\caption{
		The values of $b$, $d$ given below are close to the optimal ones 
		which attain the supremum from
		Lemma~\ref{lem:lower-bound-fixed-m-special-b-c-d-alternate-expression-with-inequality}
		(and were found using a computer algebra system).
		The number $t_0=t_0(b,m)$ is calculated using the 
		value of $b$ given in the table.
		The value in the fifth column is our lower bound for $\weakconst{\Lambda_m}$.
		For comparison, the last column contains the value of the previous lower bound from \cite[Conjecture~1]{MR2677626} (i.e., the right-hand side of~\eqref{eq:conjecture-Gill}).
	}
	\begin{tabular}{llllll}
		\toprule
		$m$ & $b$  & $d$ & $t_0(b,m)$ & $\XXXspecialbd(b,d,m)$ & Gill's bound \\ 
		\midrule
		1  &  2.157  & 6.623  & 4.29782...  & 1.383...  & 1.282...\\
		2 & 1.566 & 3.284  & 2.40552...  & 1.375... & 1.207... \\
		3 & 1.374  & 2.400  & 1.88345... & 1.373... & 1.163...\\
		4 & 1.279 & 2.003   & 1.64172...  & 1.371...   & 1.134... \\ 	  	  	 
		\bottomrule
	\end{tabular}
	\label{table:small-m}
\end{table}

\begin{proof}[Proof of Theorem~\ref{thm:main-lower-bounds}]
For, e.g., $m\in\{1,2,3,4\}$ we can numerically maximize the function $\XXXspecialbd(b,d,m)$
which appears in Lemma~\ref{lem:lower-bound-fixed-m-special-b-c-d-alternate-expression-with-inequality}.
The resulting near optimal values of $b$ and $d$ can be found in Table~\ref{table:small-m}.

In order to prove the asymptotic lower bound for  $\liminf_{m\to \infty} \weakconst{\Lambda_m}$ we use Corollary~\ref{cor:lower-bound-asymptotic-special-b-c-d-alternate-expression}.
Take  $x = 0.548$, $y= 1.164$. 
Then $x\in [\ln(3/2), \ln(2))$, $e^{-y} \leq 2(2-e^x) \leq 3e^{-y}$.
It follows that 
\begin{align*}
\liminf_{m\to \infty} \weakconstrestricted{\Lambda_m}{\fclassspec{m}}
&\geq \frac{x + y}{2e^x - x -4  - y + 2(2-e^x) (e^y + 1) -2\ln(2(2-e^x))}\\
&\geq 1.37.\qedhere
\end{align*}
 \end{proof}

\begin{remark}
	\label{rem:exact-asymptotic-bound}
	We have 
	\[
	\liminf_{m\to \infty} \weakconstrestricted{\Lambda_m}{\fclassspec{m}}
	\geq (\exp(\xOptInfty) - 1)^{-1},
	\]
	where $\xOptInfty\approx 0.54807758$  
	is the unique root of
	\[
	e^x (1-2x)  - (2-e^x)\ln(2(2-e^x)) = 0
	\]
	in the interval $[\ln(3/2), \ln(2))$.
\end{remark}

\begin{proof}[Proof of Remark~\ref{rem:exact-asymptotic-bound}]
	Take $x\in [\ln(3/2), \ln(2))$
	and $y$ defined by
	$e^x = 2(2-e^x) e^y$.
	Then $y = x - \ln(2(2-e^x))$ and $2(2-e^x) e^y \in [3/2, 2)$, so  Corollary~\ref{cor:lower-bound-asymptotic-special-b-c-d-alternate-expression} yields
	\begin{equation}
	\label{eq:exact-asymptotic-bound-to-maximize}
	\liminf_{m\to \infty} \weakconstrestricted{\Lambda_m}{\fclassspec{m}}
	\geq 	\frac{2x - \ln(2(2-e^x)) }{e^x - 2x  -\ln(2(2-e^x))}
	\end{equation}
	(note that our choice of $y$ is in fact optimal, see \LongArxivOrShortJournalVersion{Appendix~\ref{APP:interlude}}{Appendix~C in the arXiv version of this paper}).
	The derivative of the right-hand side is of the same sign as 
	\begin{align*}
	\MoveEqLeft[4]
	\Bigl(2-\frac{-e^x}{2-e^x} \Bigr)\bigl(e^x - 2x  -\ln(2(2-e^x))\bigr)
	- \bigl(2x  -\ln(2(2-e^x))\bigr)\Bigl(e^x - 2 -\frac{-e^x}{2-e^x} \Bigr)\\
	&= \frac{4-e^x}{2-e^x}\Bigl( e^x - 2x  -\ln(2(2-e^x)) - \bigl(2x  -\ln(2(2-e^x))\bigr)(e^x -1) \Bigr)\\
	&= \frac{4-e^x}{2-e^x}\Bigl( e^x (1-2x)  - (2-e^x)\ln(2(2-e^x)) \Bigr).
	\end{align*}
	Denote $h(x) = e^x (1-2x)  - (2-e^x)\ln(2(2-e^x))$ for $x\in[\ln(3/2), \ln(2))$.
	If  $x\in[\ln(3/2), \ln(2))$,
	then the factor $(4-e^x)/(2-e^x)$ is positive 
	and
	\[
	h'(x) = e^x\bigl(-2x + \ln(2(2-e^x))\bigr) < 0.
	\]
	Since
	\[
	h(\ln(3/2)) = 3/2 - 3 \ln(3/2) > 0 >  2-4\ln(2) =  \lim_{x\to \ln(2)^-} h(x),
	\]
	we conclude that the equation $h(x) = 0$ has exactly one root $x=\xOptInfty$ in the interval $[\ln(3/2),\ln(2))$;
	the right-hand side of \eqref{eq:exact-asymptotic-bound-to-maximize} is maximized at $x=\xOptInfty$.
	Moreover,
	\begin{align*}
	\frac{2\xOptInfty - \ln(2(2-e^{\xOptInfty})) }{e^{\xOptInfty} - 2\xOptInfty  -\ln(2(2-e^{\xOptInfty}))}
	&=
	\frac{2\xOptInfty - \frac{e^{\xOptInfty}(1-2\xOptInfty)}{2-e^{\xOptInfty}} }{e^{\xOptInfty} - 2\xOptInfty  -  \frac{e^{\xOptInfty}(1-2\xOptInfty)}{2-e^{\xOptInfty}}  }\\
	&=
	\frac{4\xOptInfty - e^{\xOptInfty}}{4\xOptInfty e^{\xOptInfty} - 4 \xOptInfty  - e^{2\xOptInfty} + e^{\xOptInfty} }
	=
	\frac{1}{e^{\xOptInfty} -1}.\qedhere
	\end{align*}
\end{proof}

\section{Proof of Theorem~\ref{thm:main-lower-bounds-star} and sketch of proof of Remark~\ref{rem:W-vs-W-star}}

\label{sec:proof-thm-2}

\subsection{Adjoint: initial calculations}

Recall that $\Lambda_m^*$ is given by~\eqref{eq:def-Lambda_m^star}.
The course of events in this and the next subsection  is for the most part very similar 
to the one in Section~\ref{SEC:Lambda}.
Where possible we try to denote corresponding objects or quantities in the same way as above
with an asterisk added somewhere in the sub- or superscript. 

\begin{lemma}
	\label{lem:weak-type-eigenfunctions-star}
	For $m\in\NNzero$ and $\alpha < m/2$, the function $t\mapsto t^\alpha$, $t>0$,
	is an eigenfunction of $\Lambda_m^*$ with eigenvalue $(1+\alpha +m/2)/(m/2 - \alpha)$.	
	In particular, for $m\in \NNone$, $\Lambda_m^*$ maps the constant function $m/(2+m)$ to  the constant function equal to $1$,
	whereas the function $t\mapsto t^{-1-m/2}$ vanishes under $\Lambda_m^*$.
\end{lemma}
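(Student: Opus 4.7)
The plan is to mirror the proof of Lemma~\ref{lem:weak-type-eigenfunctions}: the statement reduces to a single direct computation of the integral $\int_t^\infty s^{\alpha-1-m/2}\,ds$, so no real ideas are required, only bookkeeping of convergence and of the algebraic simplification.

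First I would fix $m\in\NNzero$ and $\alpha<m/2$ and write $f(s)=s^\alpha$. The condition $\alpha<m/2$ is exactly what makes the exponent $\alpha-1-m/2$ strictly less than $-1$, so the improper integral in the definition of $\Lambda_m^*$ converges at infinity. Evaluating it gives
\[
\int_t^\infty \frac{s^\alpha}{s^{1+m/2}}\,ds = \frac{t^{\alpha-m/2}}{m/2-\alpha}.
\]
Substituting into~\eqref{eq:def-Lambda_m^star} and collecting the two terms over the common denominator $m/2-\alpha$ yields
\[
\Lambda_m^*(t^\alpha) = \left(\frac{1+m}{m/2-\alpha}-1\right)t^\alpha = \frac{1+\alpha+m/2}{m/2-\alpha}\,t^\alpha,
\]
which is the claimed eigenvalue.

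For the two specific cases in the ``in particular'' clause: plugging $\alpha=0$ gives eigenvalue $(1+m/2)/(m/2)=(2+m)/m$, so the constant $m/(2+m)$ is sent to $1$; plugging $\alpha=-1-m/2$ makes the numerator $1+\alpha+m/2=0$, so $t^{-1-m/2}$ is sent to $0$. Both are valid since $0<m/2$ and $-1-m/2<m/2$ for $m\in\NNone$.

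There is no serious obstacle here; the only thing to be careful about is the direction of the inequality $\alpha<m/2$ (as opposed to $\alpha>-1-m/2$ in Lemma~\ref{lem:weak-type-eigenfunctions}), which comes from integrability at $\infty$ rather than at $0$, and the sign change when evaluating $s^{\alpha-m/2}$ between $t$ and $\infty$.
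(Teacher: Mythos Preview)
Your proof is correct and is precisely the direct calculation the paper alludes to (the paper's own proof reads in full: ``This follows from a direct calculation''). You have simply spelled out that calculation, including the convergence condition and the two special cases, so there is nothing to add.
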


\begin{proof} This follows from a direct calculation.
\end{proof}

\begin{definition}
	\label{def:weak-type-extremals-2-star}
	For $m\in\NNone$ 
	and $0< \dStar{} < \cStar{} \leq \bStar{} < \aStar{}$
	define the coefficients
	\begin{align*}
	\dZeroStar{} &= 
	\dZeroStar(\aStar, m)
	\coloneqq -\frac{2(1+m) \aStar^{1+m/2}}{2+m},\\
	\dOneStar{} &= \dOneStar(\aStar, \bStar, \cStar, m) \coloneqq  \frac{2(1+m)\cStar^{1+m/2}}{2+m}  \Bigl( 1 + \Bigl(\frac{\cStar}{\bStar}\Bigr)^{m/2}\Bigr) + \dZeroStar{} \Bigl(\frac{\cStar}{\bStar}\Bigr)^{1+m}.
	\end{align*}
	We denote by $\fclassStar{m}$ the class of functions $\fStar{}\colon [0,\infty)\to\RR$
	which are of the form
	\begin{align}
	\label{eq:weak-type-extremals-2-star}
	\fStar(t) &=
	\bigl(-\frac{m}{2+m} +\dOneStar{} t^{-1-m/2}\bigr) \indbr{t\in(\dStar{},\cStar{}]}\\
		&\quad+
		\bigl(\frac{m}{2+m} + \dZeroStar{} t^{-1-m/2}\bigr) \indbr{t\in(\bStar{},\aStar{}]}
		\nonumber
	\end{align}
	for some  $0< \dStar{} < \cStar{} \leq \bStar{} < \aStar{}$ (where $\dZeroStar{} = \dZeroStar(\aStar,m)$ and $\dOneStar{} = \dOneStar(\aStar,\bStar,\cStar,m)$ are as above).
\end{definition}

\begin{lemma}
	\label{lem:weak-type-extremals-2-star}
	Suppose that $m\in\NNone$, $0< \dStar{} < \cStar{} \leq \bStar{} < \aStar{}$,
	and let $\fStar\in\fclassStar{m}$ be the corresponding function of the form~\eqref{eq:weak-type-extremals-2-star}.
	Then $\Lambda_m^* \fStar(t)= - 1$ for $t\in(\dStar,\cStar)$ and $\Lambda_m^* \fStar(t)=1$ for $t\in(\bStar,\aStar)$.
\end{lemma}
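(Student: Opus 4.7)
The plan is to mimic the proof of Lemma~\ref{lem:weak-type-extremals-2}, replacing the head integrals $\int_0^t (\cdot)\, s^{m/2}\,ds$ appearing in $\Lambda_m$ with the tail integrals $\int_t^\infty (\cdot)\, s^{-1-m/2}\,ds$ appearing in $\Lambda_m^*$. I would introduce the auxiliary functions
\[
\fZeroStar(t) \coloneqq \frac{m}{2+m} + \dZeroStar t^{-1-m/2}, \qquad \fOneStar(t) \coloneqq -\frac{m}{2+m} + \dOneStar t^{-1-m/2}.
\]
By Lemma~\ref{lem:weak-type-eigenfunctions-star} and linearity, $\Lambda_m^* \fZeroStar(t) = 1$ and $\Lambda_m^* \fOneStar(t) = -1$ for all $t>0$. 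Observe that $\fStar$ coincides with $\fZeroStar$ on $(\bStar, \aStar]$ and with $\fOneStar$ on $(\dStar, \cStar]$, so the task reduces to checking that the tail integral of $\fStar$ agrees with the corresponding tail integral of $\fZeroStar$ (respectively $\fOneStar$) on each of the two intervals in question.

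First I would establish the \emph{outer} identity
\[
\int_{\aStar}^\infty \fZeroStar(s)\, s^{-1-m/2}\,ds = 0.
\]
This is exactly the content of the choice of $\dZeroStar$ and may be verified either by a short direct computation or, more slickly, by rewriting $\Lambda_m^* \fZeroStar(\aStar) = 1$ as $(1+m)\aStar^{m/2} \int_{\aStar}^\infty \fZeroStar(s)\, s^{-1-m/2}\,ds = 1 + \fZeroStar(\aStar)$ and noting that the right-hand side vanishes by definition of $\dZeroStar$. Since $\fStar$ is supported in $(\dStar, \aStar]$, for $t \in (\bStar, \aStar)$ we immediately obtain
\[
\int_t^\infty \fStar(s)\, s^{-1-m/2}\,ds = \int_t^{\aStar} \fZeroStar(s)\, s^{-1-m/2}\,ds = \int_t^\infty \fZeroStar(s)\, s^{-1-m/2}\,ds,
\]
and hence $\Lambda_m^* \fStar(t) = \Lambda_m^* \fZeroStar(t) = 1$, which handles one of the two cases.

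Next I would prove the \emph{matching} identity
\[
\int_{\cStar}^\infty \fStar(s)\, s^{-1-m/2}\,ds = \int_{\cStar}^\infty \fOneStar(s)\, s^{-1-m/2}\,ds.
\]
Its left-hand side reduces to $\int_{\bStar}^{\aStar} \fZeroStar(s)\, s^{-1-m/2}\,ds$ (because $\fStar$ vanishes on $(\cStar, \bStar]$ and on $(\aStar, \infty)$), which equals $\int_{\bStar}^\infty \fZeroStar(s)\, s^{-1-m/2}\,ds$ by the outer identity just established. The two tail integrals can now be evaluated via $\Lambda_m^* \fZeroStar(\bStar) = 1$ and $\Lambda_m^* \fOneStar(\cStar) = -1$, giving explicit formulas of the shape $\tfrac{2 \bStar^{-m/2}}{2+m} + \tfrac{\dZeroStar \bStar^{-1-m}}{1+m}$ and $-\tfrac{2\cStar^{-m/2}}{2+m} + \tfrac{\dOneStar \cStar^{-1-m}}{1+m}$, and solving the resulting equation for $\dOneStar$ reproduces precisely the defining formula from Definition~\ref{def:weak-type-extremals-2-star}. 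Splitting $\int_t^\infty \fStar(s)\, s^{-1-m/2}\,ds$ at $s=\cStar$ for $t\in(\dStar, \cStar)$ and using both the matching identity and $\fStar = \fOneStar$ on $(\dStar, \cStar]$ then gives $\Lambda_m^* \fStar(t) = \Lambda_m^* \fOneStar(t) = -1$.

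The main obstacle is purely bookkeeping: tracking which eigenfunction is being compared on which interval, and which of the two constraints (encoded by $\dZeroStar$ and by $\dOneStar$) is being invoked at each step. No genuinely new idea beyond the proof of Lemma~\ref{lem:weak-type-extremals-2} is required; the two arguments are dual via the symmetry between $\int_0^t (\cdot)\, s^{m/2}\,ds$ and $\int_t^\infty (\cdot)\, s^{-1-m/2}\,ds$.
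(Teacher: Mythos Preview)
Your proposal is correct and follows essentially the same approach as the paper's own proof: introduce the two eigenfunctions $\fZeroStar$, $\fOneStar$, verify the vanishing of the tail integral of $\fZeroStar$ at $\aStar$ via the choice of $\dZeroStar$, deduce $\Lambda_m^*\fStar=1$ on $(\bStar,\aStar)$, then match the tail integrals at $\cStar$ using the definition of $\dOneStar$ to obtain $\Lambda_m^*\fStar=-1$ on $(\dStar,\cStar)$. The explicit formulas you write for the two tail integrals agree with those in the paper, and the overall structure is identical.
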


\begin{proof}
	Fix $m\in\NN$ and $0< \dStar < \cStar \leq \bStar  < \aStar$.
	For $t>0$ denote
	\begin{align*}
	\fOneStar(t) &=  -\frac{m}{2+m} +\dOneStar{} t^{-1-m/2},\\
	\fZeroStar(t) &= \frac{m}{2+m} + \dZeroStar t^{-1-m/2},
	\end{align*}
	where $\dOneStar$, $\dZeroStar$ are as in Definition~\ref{def:weak-type-extremals-2-star}. 
	By Lemma~\ref{lem:weak-type-eigenfunctions-star} 
	we have $-\Lambda_m^* \fOneStar(t) = 1 = \Lambda_m^* \fZeroStar(t)$ for all $t>0$.

	We first claim that $\int_{\aStar}^\infty \fStar(s) s^{-1-m/2} ds 
	=\int_{\aStar}^\infty \fZeroStar(s) s^{-1-m/2} ds$.
	Indeed, we clearly we have $\int_{\aStar}^\infty \fStar(s) s^{-1-m/2} ds
	= 0$.
	By Lemma~\ref{lem:weak-type-eigenfunctions-star} and due to our choice of $\dZeroStar$ we also have
	\begin{align*}
	\int_{\aStar}^\infty \fZeroStar(s) s^{-1-m/2} ds
	&= \frac{\aStar^{-m/2}}{1+m} \Bigl(\Lambda_m^* \fZeroStar(\aStar) + \fZeroStar(\aStar) \Bigr)\\
	&= \frac{\aStar^{-m/2}}{1+m} \Bigl( 1 + \frac{m}{2+m} + \dZeroStar \aStar^{-1-m/2} \Bigr)\\
	&= \frac{2\aStar^{-m/2} }{2+m}+ \dZeroStar \frac{\aStar^{-1-m}}{1+m}  = 0.
	\end{align*}
	Since $\fStar=\fZeroStar$ on $(\bStar,\aStar)$, 
	this first claim immediately implies that
	\[
	\int_t^{\aStar} \fStar(s) s^{-1-m/2} ds =\int_t^{\aStar} \fZeroStar(s) s^{-1-m/2} ds
	\]
	 for $t\in [\bStar,\aStar]$
	and $\Lambda_m^* \fStar (t) = \Lambda_m^* \fZeroStar (t) = 1$ for $t\in(\bStar,\aStar)$.

	We also claim  that $\int_{\cStar}^\infty \fStar (s) s^{-1-m/2} ds= \int_{\cStar}^\infty \fOneStar (s) s^{-1-m/2} ds$.
	Indeed, using the first claim in the second equality, we see that
	\begin{align*}
	\int_{\cStar}^\infty \fStar (s) s^{-1-m/2} ds
	=
	\int_{\bStar}^\infty \fStar(s) s^{-1-m/2} ds
	&=
	\int_{\bStar}^\infty \fZeroStar(s) s^{-1-m/2} ds\\
	&=
	\frac{\bStar^{-m/2}}{1+m} \Bigl(\Lambda_m^* \fZeroStar (\bStar) + \fZeroStar(\bStar) \Bigr)\\
	&=\frac{\bStar^{-m/2}}{1+m} \Bigl( \frac{2(1+m)}{2+m} + \dZeroStar \bStar^{-1-m/2} \Bigr),
	\end{align*}
	while
	\begin{align*}
	\int_{\cStar}^\infty \fOneStar (s) s^{-1-m/2} ds
	&=
	\frac{\cStar^{-m/2}}{1+m} \Bigl(\Lambda_m^* \fOneStar(\cStar) + \fOneStar(\cStar) \Bigr)\\
	&=\frac{\cStar^{-m/2}}{1+m} \Bigl(-\frac{2(1+m)}{2+m} + \dOneStar \cStar^{-1-m/2} \Bigr).
	\end{align*}
	Thus, the claim follows by our choice of $\dOneStar$.
	Consequently, we have $ \Lambda_m^* \fStar(t) = \Lambda_m^* \fOneStar(t) = - 1$ for $t\in(\dStar,\cStar)$
	(since $\fStar=\fOneStar$ on $(\dStar,\cStar)$). 
	This ends the proof of the lemma. 
\end{proof}

\subsection{Adjoint: guesstimates}

\label{sec:guesstimates-star}

Similarly as in Section~\ref{sec:guesstimates} we consider functions of the form \eqref{eq:weak-type-extremals-2-star} with $\aStar = 1$, $\cStar = \bStar$, and some additional constraints.

\begin{definition}
	\label{def:f-class-spec-star}
	Let $\fclassspecStar{m}\subset \fclassStar{m}$ be the class of functions
	which are of the form
	\[
	\fStar(t) = \bigl(-\frac{m}{2+m} + \dOneStar t^{-1 -m/2}\bigr) \indbr{t\in (\dStar,\bStar]}
	+  \bigl( \frac{m}{2+m} -\frac{2(1+m)}{2+m} t^{-1-m/2}\bigr)\indbr{t\in(\bStar,1]},
	\]
	for some $0< \dStar < \bStar < 1$ which satisfy
	\begin{equation}
	\label{eq:constraint-for-special-b-d-star} 
	2 > -\frac{m}{2+m} + \dOneStar (\bStar,m) \dStar^{-1-m/2}
	>  0 >  -\frac{m}{2+m} + \dOneStar(\bStar,m) \bStar ^{-1-m/2},
	\end{equation}
	where 
	\[
	\dOneStar = \dOneStar (\bStar,m)  \coloneqq \frac{2(1+m)}{2+m} \bigl(2 \bStar^{1+m/2} - 1 \bigr) >0.
	\]
\end{definition}

For $m\in\NN$ denote by $\specialbdStar{m}$
the set of $(\bStar,\dStar)\in\RR^2$ which satisfy all the constraints 
of Definition ~\ref{def:f-class-spec-star} (in particular, $0< \dStar < \bStar < 1$).
The constraints 
$-m/(2+m) + \dOneStar(\bStar,m) \bStar^{-1-m/2} <0$ and $\dOneStar(\bStar,m) >0$
imply that $\bStar\in (\bMinStar, \bMaxStar)$, where
\begin{equation}
\label{eq:def-bMin-bMax-star}
\bMinStar = \bMinStar(m)\coloneqq 2^{-2/(2+m)},
\quad \ 
\bMaxStar = \bMaxStar(m) \coloneqq \Bigl(\frac{2+2m}{4+3m}\Bigr)^{2/(2+m)}.
\end{equation}
Note that 
$-m/(2+m) + \dOneStar(\bMaxStar,m) \bMaxStar^{-1-m/2} = 0$.

Additionally, the constraint \eqref{eq:constraint-for-special-b-d-star} yields
that for fixed $\bStar\in (\bMinStar, \bMaxStar)$ we must have
$\dStar\in (\dMinStar, \dMaxStar)$,
where
\begin{align}
\label{eq:def-dMin-star} 
\dMinStar ^{-1-m/2} &= \dMinStar(\bStar,m)^{-1-m/2} 
\coloneqq \frac{4+3m}{ 2(1+m) (2\bStar^{1+m/2}  -1)},\\
\label{eq:def-dMax-star} 
\dMaxStar^{-1-m/2} &= \dMaxStar(\bStar,m)^{-1-m/2} \coloneqq \frac{m}{ 2(1+m) (2\bStar^{1+m/2}  -1)}.
\end{align}
Clearly, the functions
$\bStar\mapsto \dMinStar(\bStar,m)$,
$\bStar\mapsto \dMaxStar(\bStar,m)$
are increasing (in the above specified interval)
and the definitions also make sense for $\bStar = \bMaxStar$.

Finally, for $m\in\NN$ and $(\bStar,\dStar)\in\specialbdStar{m}$ 
we denote
\begin{multline}
\label{eq:a=1-b=c-would-like-to-optimize-technical-star}
\XXXspecialbdStar(\bStar,\dStar,m)\\
\coloneqq 
\frac{1-\dStar}{-\frac{2+m}{m}  + \frac{m}{2+m}\dStar + \frac{2(2+m)}{m}\bStar +
	\frac{4(1+m)}{m(2+m)}(2\bStar^{1+m/2}  -1)\dStar^{-m/2} 
	- 2\tZeroStar(\bStar,m)},
\end{multline}
where  $\tZeroStar  = \tZeroStar (\bStar,m)$ is  
the unique number in $[\dStar,\bStar]$  which satisfies
$-\frac{m}{2+m} + \dOneStar(\bStar,m) \tZeroStar ^{-1-m/2} = 0$,
i.e.,
\begin{align}
\label{eq:def-t_0-explicit-formula-star}
\tZeroStar =  \tZeroStar(\bStar,m) 
&= \Bigl(\frac{2(1+m)}{m}\Bigr)^{2/(2+m)} \bigl(2 \bStar^{1+m/2} -1\bigr)^{2/(2+m)}\\
& = \dMaxStar(\bStar,m). \nonumber
\end{align}
The definition of $\tZeroStar$ makes sense also for $\bStar=\bMaxStar$.
Note also that it follows from the proof of Lemma~\ref{lem:lower-bound-fixed-m-special-b-c-d-alternate-expression-with-inequality-star}
below that the denominator on the right-hand side of~\eqref{eq:a=1-b=c-would-like-to-optimize-technical-star} is positive and that we can consider $(\bStar,\dStar)$ on the boundary of $\specialbdStar{m}$ as well.

We have the following lower bound.

\begin{lemma}
    \label{lem:lower-bound-fixed-m-special-b-c-d-alternate-expression-with-inequality-star}
	For $m\in\NNone$,
	\begin{equation*}
	\weakconstrestricted{\Lambda_m^*}{\fclassspecStar{m}} 
	\geq 
	\sup\bigl\{
	\XXXspecialbdStar(\bStar,\dStar,m) : (\bStar,\dStar)\in\specialbdStar{m} \bigr\}.
	\end{equation*}
\end{lemma}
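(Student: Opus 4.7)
The plan is to mirror the proof of Lemma~\ref{lem:lower-bound-fixed-m-special-b-c-d-alternate-expression-with-inequality} almost verbatim, with $\Lambda_m^*$ replacing $\Lambda_m$, $\fclassspecStar{m}$ replacing $\fclassspec{m}$, and the roles of the two characteristic intervals swapped (the function is supported in $(\dStar,1]$ rather than in $(1,d]$). Fix $(\bStar,\dStar)\in\specialbdStar{m}$ and let $\fStar\in\fclassspecStar{m}$ be the corresponding function from Definition~\ref{def:f-class-spec-star}. Lemma~\ref{lem:weak-type-extremals-2-star} (applied with $\cStar=\bStar$ and $\aStar=1$) yields $\Lambda_m^*\fStar(t)=-1$ on $(\dStar,\bStar)$ and $\Lambda_m^*\fStar(t)=+1$ on $(\bStar,1)$, so
\[
\bigl|\{t\in[0,\infty) : |\Lambda_m^*\fStar(t)|\geq 1\}\bigr|\geq 1-\dStar.
\]

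Next I would compute $\|\fStar\|_{L^1([0,\infty))}=\int_{\dStar}^{\bStar}|\fStar(t)|\,dt+\int_{\bStar}^{1}|\fStar(t)|\,dt$ by a sign analysis on each piece. On $(\bStar,1)$ the function $t\mapsto \frac{m}{2+m}-\frac{2(1+m)}{2+m}t^{-1-m/2}$ equals $-1$ at $t=1$ and is strictly increasing in $t$, hence is negative throughout; integrating via $\int t^{-1-m/2}\,dt=-\tfrac{2}{m}t^{-m/2}$ gives a closed form in $\bStar$ alone. On $(\dStar,\bStar)$, the constraint~\eqref{eq:constraint-for-special-b-d-star} and the strict monotonicity of $t\mapsto -\frac{m}{2+m}+\dOneStar t^{-1-m/2}$ (which is decreasing because $\dOneStar>0$) force a single sign change, at the zero $t=\tZeroStar(\bStar,m)$ from~\eqref{eq:def-t_0-explicit-formula-star}. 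Splitting the integral at $\tZeroStar$ and pairing up the two antiderivative evaluations produces the telescoping ``$-2\tZeroStar$'' contribution that appears in the denominator of~\eqref{eq:a=1-b=c-would-like-to-optimize-technical-star}, in direct analogy with the ``$+2t_0$'' term in the proof of Lemma~\ref{lem:lower-bound-fixed-m-special-b-c-d-alternate-expression-with-inequality}.

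The last step is an algebraic simplification entirely parallel to the unstarred case: substituting $\dOneStar(\bStar,m)=\frac{2(1+m)}{2+m}(2\bStar^{1+m/2}-1)$, expanding, and gathering terms in $\bStar$, $\dStar$, $\dStar^{-m/2}$, and $\tZeroStar$, the spurious $\bStar^{1+m/2}$ contributions cancel pairwise and what remains equals exactly the denominator of $\XXXspecialbdStar(\bStar,\dStar,m)$. Dividing the measure lower bound $1-\dStar$ by this quantity and taking the supremum over $(\bStar,\dStar)\in\specialbdStar{m}$ then gives the claim. The main difficulty is not conceptual but bookkeeping: one must carefully track several terms involving different powers of $\bStar$ and $\dStar$, verify that the final denominator is positive throughout $\specialbdStar{m}$, and check that all expressions extend continuously to the boundary (in particular at $\bStar=\bMaxStar$, where $\dOneStar\cdot\bStar^{-1-m/2}=m/(2+m)$ degenerates and the first piece of $\fStar$ collapses), which justifies including boundary points in the supremum.
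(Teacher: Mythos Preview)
Your proposal is correct and follows essentially the same approach as the paper's own proof: bound the level set from below by $1-\dStar$ via Lemma~\ref{lem:weak-type-extremals-2-star}, compute $\|\fStar\|_{L^1}$ by noting that $\fStar$ is negative on $(\bStar,1)$ and changes sign exactly once on $(\dStar,\bStar)$ at $\tZeroStar$, then substitute the formula for $\dOneStar$ to simplify. The positivity of the denominator is immediate since it equals $\|\fStar\|_{L^1}>0$, and the boundary remarks you make are exactly those the paper flags as well.
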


\begin{proof}
	Consider $0< \dStar < \bStar < 1$ satisfying the constraints of  Definition~\ref{def:f-class-spec-star}
	and the corresponding function  $\fStar\in\fclassspecStar{m}$. 
	By Lemma~\ref{lem:weak-type-extremals-2-star},
	\begin{equation*}
	\bigl|\{t\in[0,\infty) : |\Lambda_m^* \fStar(t)|\geq 1\} \bigr| 
	\geq  1 - \dStar.
	\end{equation*}
	Let us calculate 
		$\|\fStar\|_{L^1([0, \infty))} = \int_{\dStar}^{\cStar} |\fStar(t)| dt + \int_{\bStar}^1 |\fStar(t)| dt$.
		Observe first that $\lim_{t\to 1^-} \fStar(t) = -1 $,
		and consequently the function $\fStar$ is negative and increases on $(\bStar,1]$.
		Hence,
		\begin{align*}
		\int_{\bStar}^1 |\fStar(t)| dt &= \int_{\bStar}^1  - \frac{m}{2+m} +\frac{2(1+m)}{2+m} t^{-1-m/2} dt  \\
		&= - \frac{m}{2+m}(1-\bStar) - \frac{4(1+m)}{m(2+m)} (1-\bStar^{-m/2})\\
		&= -\frac{2+m}{m} + \frac{m}{2+m}\bStar + \frac{4(1+m)}{m(2+m)} \bStar^{-m/2}.
		\end{align*}
	Using the definition of $\tZeroStar  = \tZeroStar (\bStar,m)$ (twice, in the first and last equality) we can write 	
	\begin{align*}
	\MoveEqLeft[2]
	\int^{\bStar}_{\dStar} |\fStar(t)| dt
	= \int_{\dStar}^{\tZeroStar} -\frac{m}{2+m} + \dOneStar t^{-1-m/2}  dt 
	+  \int_{\tZeroStar}^{\bStar} \frac{m}{2+m} - \dOneStar t^{-1-m/2}  dt \\
	&= \frac{m}{2+m}(\dStar-2\tZeroStar+ \bStar) 
	+ \frac{2\dOneStar}{m} (\dStar^{-m/2} - 2 \tZeroStar^{-m/2} + \bStar^{-m/2}) \\
	&=  \frac{m}{2+m}(\dStar+ \bStar) + \frac{2\dOneStar}{m} (\dStar^{-m/2} + \bStar^{-m/2})
	+2\tZeroStar \cdot \bigl( -\frac{m}{2+m} -  \frac{2\dOneStar}{m} \tZeroStar^{-1-m/2} \bigr)  \\
	&=  \frac{m}{2+m}(\dStar+ \bStar) + \frac{2\dOneStar}{m} (\dStar^{-m/2} + \bStar^{-m/2})
	- 2\tZeroStar.
	\end{align*}
	Recalling the definition of $\dOneStar = \dOneStar(\bStar, m)$ yields
	\begin{multline*}
	\frac{2\dOneStar}{m} (\dStar^{-m/2} + \bStar^{-m/2}) 
	= \frac{4(1+m)}{m(2+m)}(2\bStar^{1+m/2}  -1)(\dStar^{-m/2} + \bStar^{-m/2})\\
	=\frac{4(1+m)}{m(2+m)}(2\bStar^{1+m/2}  -1)\dStar^{-m/2} 
	+\frac{8(1+m)}{m(2+m)} \bStar  - \frac{4(1+m)}{m(2+m)} \bStar^{-m/2}.
	\end{multline*}
	Gathering everything and simplifying the expression for $\int_{\dStar}^{\bStar} |\fStar(t)| dt+ \int_{\bStar}^1 |\fStar(t)| dt$ yields
	\[
		\weakconstrestricted{\Lambda_m^*}{\fclassspecStar{m}} 
		\geq
		\frac{	\bigl|\{t\in[0,\infty) : |\Lambda_m^* \fStar(t)|\geq 1\} \bigr|}{\|\fStar\|_{L^1([0, \infty))} } 
		\geq		\XXXspecialbdStar(\bStar,\dStar,m).
	\]
	The assertion follows by taking the supremum over $(\bStar,\dStar)\in \specialbdStar{m} $.
\end{proof}

\begin{corollary}
	\label{cor:lower-bound-asymptotic-special-b-c-d-alternate-expression-star}
	We have 
	\begin{equation*}
	\liminf_{m\to \infty} \weakconstrestricted{\Lambda_m}{\fclassspecStar{m}}
	\geq
	\sup\Bigl\{ 
	\frac{x + y}{2e^x - x -4  - y + 2(2-e^x) (e^y + 1) -2\ln(2(2-e^x))}
	\Bigr\},
	\end{equation*}
	where the supremum is taken over
	$x\in [\ln(3/2), \ln(2))$
	and $e^{-y} \leq 2(2-e^x) \leq  3e^{-y}$.
\end{corollary}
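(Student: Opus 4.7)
The plan is to follow the same strategy as the proof of Corollary~\ref{cor:lower-bound-asymptotic-special-b-c-d-alternate-expression}, but with a substitution adapted to the scaling of the adjoint class $\fclassspecStar{m}$. For fixed $x \in (\ln(3/2), \ln(2))$ and $y > 0$ with $e^{-y} < 2(2-e^x) < 3e^{-y}$, I would set
\[
\bStar(m) \coloneqq e^{-2x/(2+m)}, \qquad \dStar(m) \coloneqq e^{-2(x+y)/(2+m)},
\]
chosen so that the quantities which actually appear in the adjoint formulas, $\bStar^{1+m/2}$ and $\dStar^{-1-m/2}$, are equal to $e^{-x}$ and $e^{x+y}$ for every $m$. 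In particular $\dOneStar(\bStar(m), m) \to 2(2e^{-x}-1)$, and the two expressions in constraint~\eqref{eq:constraint-for-special-b-d-star} converge to $3-2e^x < 0$ and $-1 + 2(2-e^x) e^y \in (0,2)$ respectively. Hence $(\bStar(m),\dStar(m)) \in \specialbdStar{m}$ for all $m$ large enough, and Lemma~\ref{lem:lower-bound-fixed-m-special-b-c-d-alternate-expression-with-inequality-star} applies.

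Next, working from the inequality $\weakconstrestricted{\Lambda_m^*}{\fclassspecStar{m}} \geq (1-\dStar)/\|\fStar\|_{L^1}$ used inside the proof of that lemma, I would substitute $t = e^{-2s/(2+m)}$ (so that $|dt| = \tfrac{2t}{2+m}\,|ds|$) in both pieces of $\|\fStar\|_{L^1}$. On $(\bStar,1]$ the integrand becomes $\bigl|\tfrac{m}{2+m} - \tfrac{2(1+m)}{2+m} e^s\bigr|$, and on $(\dStar,\bStar]$ it becomes $\bigl|{-\tfrac{m}{2+m}} + \dOneStar e^s\bigr|$. Multiplying by $m$ and invoking dominated convergence on the fixed intervals $[0,x]$ and $[x,x+y]$, these two integrals converge to $2\int_0^x (2e^s-1)\,ds = 2(2e^x-x-2)$ and $2\int_0^y |{-1}+2(2-e^x)e^u|\,du$, respectively. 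Combined with $m(1-\dStar(m)) \to 2(x+y)$, this yields the same intermediate inequality as~\eqref{eq:ladida}, namely
\[
\liminf_{m\to\infty} \weakconstrestricted{\Lambda_m^*}{\fclassspecStar{m}} \geq \frac{x+y}{2e^x - x - 2 + \int_0^y |{-1} + 2(2-e^x) e^u|\,du}.
\]

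The remainder of the argument is then identical to the primal case: the absolute-value integral is split at $u = -\ln(2(2-e^x))$, which lies in $[0,y]$ thanks to the two-sided constraint on $e^{-y}$, and evaluated explicitly; the boundary cases $e^{-y} = 2(2-e^x)$ and $2(2-e^x) = 3e^{-y}$ are recovered by continuity, and the supremum over $(x,y)$ is then taken. The only real obstacle is bookkeeping around the various factors $m/(2+m)$, $(2+m)/m$, $\dOneStar$, and $\tZeroStar$ in the denominator of $\XXXspecialbdStar$, together with the fact that each individual term converges to a nonzero limit but their sum vanishes to leading order (this is why one must multiply by $m$ and read off the next-order contribution). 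The substitution is designed precisely so that $\bStar^{1+m/2}$ and $\dStar^{-1-m/2}$ are $m$-independent, which eliminates the only potentially singular behaviour in the integrands and makes the passage to the limit straightforward.
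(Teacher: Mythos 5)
Your proposal is correct and follows essentially the same route as the paper's proof: identical choice $\bStar(m)=e^{-2x/(2+m)}$, $\dStar(m)=e^{-2(x+y)/(2+m)}$ (so that $\bStar^{1+m/2}=e^{-x}$ and $\dStar^{-1-m/2}=e^{x+y}$ are $m$-independent), the same verification of~\eqref{eq:constraint-for-special-b-d-star} for large $m$, and the same substitution $t=e^{-2s/(2+m)}$ yielding the intermediate bound~\eqref{eq:ladida-star}, after which the absolute-value integral is split exactly as in Corollary~\ref{cor:lower-bound-asymptotic-special-b-c-d-alternate-expression}. The only cosmetic discrepancy is your closing remark about tracking the cancellations in the denominator of $\XXXspecialbdStar$: since you (like the paper) work directly with $m(1-\dStar)/\bigl(m\|\fStar\|_{L^1}\bigr)$ rather than with the closed-form expression~\eqref{eq:a=1-b=c-would-like-to-optimize-technical-star}, that bookkeeping never actually arises.
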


\begin{proof}
	Fix $x\in [\ln(3/2), \ln(2))$, $y>0$ such that $e^{-y} < 2(2-e^x) <  3e^{-y}$.
	It suffices to show that
	\begin{equation}
	\label{eq:ladida-star}
	\liminf_{m\to \infty} \weakconstrestricted{\Lambda_m}{\fclassspecStar{m}}
	\geq 	\frac{x + y }{2e^x - x -2+\int_0^y |-1 + 2(2-e^x) e^s | ds}
	\end{equation}
    (the assertion is then obtained as in the proof of Corollary~\ref{cor:lower-bound-asymptotic-special-b-c-d-alternate-expression}).
	To this end set
	\begin{equation*}
	\dStar = \dStar(m) = e^{-2(\xStar+\yStar)/(2+m)},\quad
	\bStar = \bStar(m) = e^{-2\xStar/(2+m)}.
	\end{equation*}
	and let $\dOneStar = \dOneStar(\bStar,m)$ be as in Definition~\ref{def:f-class-spec-star}.
	We have $\dOneStar(\bStar,m) > 0$ (since $x<\ln(2)$)
	and
	$\dOneStar(\bStar,m)\to 2(2e^{-x} -1)$ for $m\to\infty$.
	Our constraints for $x$, $y$ imply that
	\begin{equation*}
	-1 + 2(2e^{-x} - 1) e^x < 0 <  -1 + 2(2e^{-x} - 1) e^{x+y} < 2,
	\end{equation*}	
	so the constraint~\eqref{eq:constraint-for-special-b-d-star} is satisfied for $m$ large enough.
	Thus we can consider a~function $\fStar\in\fclassspecStar{m}$ of the form given in Definition~\ref{def:f-class-spec-star}
	for $0 < \dStar < \bStar  <1$ as above.

	From Lemma~\ref{lem:weak-type-extremals-2-star} we know that 
	\begin{equation*}
	m\bigl|\{ t\geq 0 : |\Lambda_m^* \fStar (t)| \geq 1 \} \bigr| 
	\geq m(1-\dStar(m)) \xrightarrow[m\to\infty]{} 2x + 2y. 
	\end{equation*}
	Moreover,
	\begin{align*}
	m \int_0^\infty |\fStar(t)| dt 
	&=m\int^1_{\exp(-2x/(2+m))} \Bigl| \frac{m}{2+m} -  \frac{2(1+m)}{2+m}  t^{-1-m/2}\Bigr| dt\\
	&\qquad  
	+ m\int^{\exp(-2x/(2+m))}_{\exp(-2(x+y)/(2+m)} \Bigl| -\frac{m}{2+m} + \dOneStar t^{-1-m/2}\Bigr| dt\nonumber\\
	&= 2 \int_{0}^{x} \Bigl| \frac{m}{2+m} - \frac{2(1+m)}{2+m} e^{s} \Bigr| e^{-2s/(2+m)} ds \\
	&\qquad	+  2\int_{x}^{x+y} \Bigl| -\frac{m}{2+m} + \dOneStar e^{s} \Bigr| e^{-2s/(2+m)} ds,
	\end{align*}
	where in the last step we substituted $t = e^{-2s/(2+m)}$.
	For fixed $x, y$ we pass to the limit with $m\to\infty$.
	The integrals tend to
	\begin{align*}
	&2\int_{0}^{x} \Bigl| 1 - 2 e^{s}\Bigr| ds    
	=  2\int_{0}^{x} 2 e^{s} -1 ds  
	= 2(2e^{x} -x - 2),\\
	&2 \int_{x }^{x+y} \bigl| -1 +  2(2 e^{-x } -1)  e^{s}\bigr| ds
	= 2\int_{0 }^{y} \bigl| -1 +  2(2-e^x) e^{s}\bigr| ds,
	\end{align*}
	respectively.
	This finishes the proof of inequality~\eqref{eq:ladida-star} and the corollary.
	\end{proof}

\begin{proof}[Proof of Theorem~\ref{thm:main-lower-bounds-star}]
	For, e.g., $m=1$ we can numerically maximize the function $\XXXspecialbdStar(\bStar,\dStar,m)$
	which appears in Lemma~\ref{lem:lower-bound-fixed-m-special-b-c-d-alternate-expression-with-inequality-star}.
	The resulting near optimal values $\bStar = 0.649$, $\dStar = 0.150$ imply that 
	\[
	\weakconstrestricted{\Lambda_1^*}{\fclassspecStar{1}} \geq 1.383.
	\]
	In order to prove the asymptotic lower bound we invoke Corollary~\ref{cor:lower-bound-asymptotic-special-b-c-d-alternate-expression-star}
	and take the same values of $x$ and $y$ as in the proof of Theorem~\ref{thm:main-lower-bounds} (note that the lower bounds in Corollaries~\ref{cor:lower-bound-asymptotic-special-b-c-d-alternate-expression} and~\ref{cor:lower-bound-asymptotic-special-b-c-d-alternate-expression-star} coincide exactly).
\end{proof}

\subsection{Non-technical sketch of the of the proof of Remark~\ref{rem:W-vs-W-star}}

While the reasoning presented above in the case of the operator $\Lambda_m^*$ definitely is parallel to the one from Section~\ref{SEC:Lambda},
it does not seem that one can deduce Theorem~\ref{thm:main-lower-bounds-star} immediately from Theorem~\ref{thm:main-lower-bounds} by some easy substitution or change of variables (cf. the proofs of Corollaries~\ref{cor:lower-bound-asymptotic-special-b-c-d-alternate-expression} and~\ref{cor:lower-bound-asymptotic-special-b-c-d-alternate-expression-star},
where eventually the functions under the suprema on the right-hand sides coincide exactly,
but this happens only after the limit passage $m\to\infty$).

\begin{figure}[t]
	\begin{tikzpicture}[scale=1, xscale=4,yscale=1.2]
	\draw[->] (0, 0) -- (2.3, 0) node[below] {$b$};
	\draw[->] (0, 0) -- (0, 3.9) node[left] {$d$};
	\node[below ] at ({8/6},0) {$\bMin\coloneqq \bigl(\frac{2+3m}{2+2m}\bigr)^{2/m}\qquad $};
	\node[below] at ({2},0) {$\bMax\coloneqq 2^{2/m}\vphantom{\tfrac{3}{2}}\qquad\vphantom{\bigl(\frac{2+3m}{2+2m}\bigr)^{2/m}}$};
	\node[left] at (0,8/6) {$\bMin = \dMin(\bMin)  $};
	\node[left] at (0,{(4*sqrt(5))/3^(3/2)}) {$\dOpt(\bMin)$}; 
	\node[left] at (0,8/3) {$\dMax(\bMin)$};
	\fill [black!5!white, domain=8/6:1.7, variable=\x]
	(8/6,8/6)
	-- plot ({\x}, {(2+2)/(2*(1+2)*(2*\x^(-2/2) -1))})
	-- ({34/23},{(2*2 + 2+2)/(2*(1+2)*(2*(34/23)^(-2/2)-1))})
	-- (34/23, 8/3)
	-- (8/6,8/3)
	-- cycle;
	\fill [black!5!white, domain=8/6:34/23, variable=\x]
	(8/6,8/3)
	-- plot ({\x}, {(2*2+2+2)/(2*(1+2)*(2*\x^(-2/2) -1))})
	-- (34/23, 8/3)
	-- cycle;
	\draw[domain=8/6:1.7, smooth, variable=\x, MyLineCases] plot ({\x}, {(2+2)/(2*(1+2)*(2*\x^(-2/2) -1))});
	\draw[domain=8/6:8/3, smooth, variable=\y, MyLineCases]  plot ({8/6}, {\y});
	\draw[domain=8/6:34/23, smooth, variable=\x, MyLineCases] plot ({\x}, {(2*2+2+2)/(2*(1+2)*(2*\x^(-2/2) -1))});
	\draw[ domain=8/6:1.609865, smooth, variable=\x, MyLineOpt]  plot ({\x}, {sqrt(16/(3*(2/\x-1)^2)-2*\x^2)/sqrt(6)});
	\draw[ domain=0:2, smooth, variable=\x, black, MyLineHelp]  plot ({\x}, {\x});
	\draw[domain=8/3:34/9, smooth, variable=\y, MyLineHelp]  plot ({8/6}, {\y});
	\draw[domain=0:8/6, smooth, variable=\y, MyLineHelp]  plot ({8/6}, {\y});
	\draw[domain=0:34/9, smooth, variable=\y, MyLineHelp]  plot ({2}, {\y});
	\draw[domain=0:8/6, smooth, variable=\x, MyLineHelp]  plot ({\x}, {8/6});
	\draw[domain=0:8/6, smooth, variable=\x, MyLineHelp]  plot ({\x}, {(4*sqrt(5))/3^(3/2)});
	\draw[domain=0:8/6, smooth, variable=\x, MyLineHelp]  plot ({\x}, {8/3});
	\draw[MyArrowsL={0.5}, domain=8/6:24/17, variable=\x, MyLinePush]  plot ({\x}, {1.6});
	\draw[MyArrowsRL={0.25}{0.8}, domain=8/6:66/43, variable=\x, MyLinePush]  plot ({\x}, {2.2});
	\draw[MyArrowsRL={0.35}{0.85}, domain=18/13:18/11, variable=\x, MyLinePush]  plot ({\x}, {3});
	\end{tikzpicture}
	\caption{The light gray region
		is the set~$\specialbd{m}$
		consisting of parameters $b$, $d$
		satisfying the constraints of the Definition~\ref{def:f-class-spec}.
		Its boundary consists of the dashed lines described by:
		$b = \bMin(m)$, 
		$d= \dMin(b,m)$,
		and $d=\dMax(b,m)$.
		The thick black line is the curve $d = \dOpt(b,m)$.
		Note that the axes have been scaled for readability.}
	\label{fig:m-fixed-optimal-curve}
\end{figure}
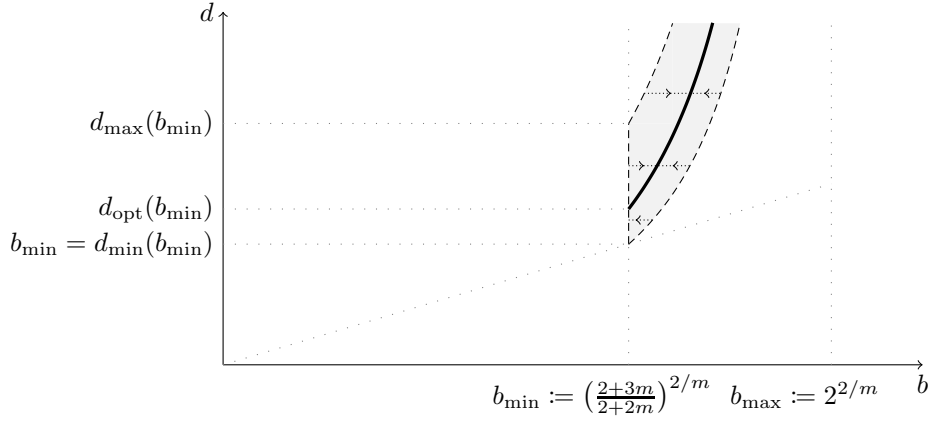

Nonetheless, Remark~\ref{rem:W-vs-W-star} asserts that there is
a relation
between the restricted weak-type constant of $\Lambda_m$ and its adjoint operator.
\LongArxivOrShortJournalVersion{%
We only provide a high-level sketch of the proof at this point;
the proof with all technical details and calculations 
is presented Appendix~\ref{APP:equality}.}{%
We only provide a high-level sketch of the proof;
the proof with all technical details and calculations
can be found in Appendix~A 
of the arXiv version of this paper.}

\begin{proof}[Sketch of the proof of Remark~\ref{rem:W-vs-W-star}]
	Let us assume that $m\geq 2$ (for $m=1$ the outline of the proof is the same, but some additional intricate technical details appear). First of all, one can prove that actually
	\begin{equation*}
	\weakconstrestricted{\Lambda_m}{\fclassspec{m}} 
	=  
	\sup\bigl\{
	\XXXspecialbd(b,d,m) : (b,d)\in\specialbd{m} \bigr\},
	\end{equation*}
	i.e., we have equality in the statement of Lemma~\ref{lem:lower-bound-fixed-m-special-b-c-d-alternate-expression-with-inequality}.
	This step requires checking carefully that for a function $f\in\fclassspec{m}$ we have $|\Lambda_m f(t)| = 1$ if \emph{and only if} $t\in (1,d)$.
	
	The parameter $b$ appears only in the denominator of the function $\XXXspecialbd$.
	Minimizing over it, we can find the optimal value of $b$ for fixed $d$; this optimal value is expressed in implicit form, i.e., one can prove that there exists a function $b\mapsto \dOpt(b,m)$ such that
	\begin{equation*}
		\weakconstrestricted{\Lambda_m}{\fclassspec{m}} 
		=  \sup\bigl\{
		\XXXspecialbd(b,\dOpt(b,m),m) : \bMin(m) \leq b < \bMax(m) \bigr\}
	\end{equation*}
	(cf. Figure~\ref{fig:m-fixed-optimal-curve}).
	Similarly, one can prove that
	\begin{equation*}
		\weakconstrestricted{\Lambda_m^*}{\fclassspecStar{m}} 	
	= 
		\sup\bigl\{
		\XXXspecialbdStar(\bStar,\dOptStar(\bStar,m),m) : \bMinStar(m) < \bStar \leq \bMaxStar(m) \bigr\}
	\end{equation*} 
	for some function $\bStar\mapsto \dOptStar(\bStar,m)$.
	
The main part of the proof is realizing that for an appropriate choice of $\bStar=\bStar(b,m)$
we have $\XXXspecialbd(b,\dOpt(b,m),m) = \XXXspecialbdStar(\bStar,\dOptStar(\bStar,m),m)$.
More precisely,  let us denote 
\[
\bStarWvsWStar \coloneqq \bStarWvsWStar (b,m) =  \frac{t_0(b,m)}{\dOpt(b,m)}.
\]
One can prove that this number satisfies the constraint $\bStarWvsWStar\in (\bMinStar, \bMaxStar)$ (and can take any value in this interval)
 and moreover the following  equalities hold (cf.\ Figure~\ref{fig:star-relations}):
\begin{align*}
\tZeroStar(\bStarWvsWStar,m) 
&= \frac{b}{\dOpt(b,m)},\\
\dOptStar(\bStarWvsWStar,m) 
&= \frac{1}{\dOpt(b,m)},\\
\XXXspecialbdStar(\bStarWvsWStar, \dOptStar(\bStarWvsWStar,m), m) 
&= 	\XXXspecialbd(b,\dOpt(b,m),m).
\end{align*}
Proving these facts requires some rather involved calucations with implicit functions.

Combining all these claims finishes the proof.
\end{proof}

\begin{figure}[t]
	\begin{tikzpicture}[scale=1, xscale=3.5, yscale=0.5]
	\draw[->] (0, 0) -- (3, 0); 
	\draw[->] (0, -2) -- (3, -2); 
	\draw (2.692582403567252,0) -- (1,-2);
	\draw (2,0) -- (2/2.692582403567252,-2);
	\draw [dashed] (1.5,0) -- (1.5/2.692582403567252,-2);
	\draw [dashed] (1,0) -- (1/2.692582403567252,-2);
	\node[circle,minimum size=3pt,inner sep=0,fill=black,label=above:{$1\vphantom{\dOpt(b,m)t_0(b,m)}$}] at (1,0) {};
	\node[circle,minimum size=3pt,inner sep=0,fill=black,label=above:{$b\vphantom{\dOpt(b,m)t_0(b,m)}$}] at (1.5,0) {};
	\node[circle,minimum size=3pt,inner sep=0,fill=black,label=above:{$t_0(b,m)\vphantom{\dOpt(b,m)}$}] at (2,0) {};		
	\node[circle,minimum size=3pt,inner sep=0,fill=black,label=above:{$\dOpt(b,m)\vphantom{\dOpt(b,m)t_0(b,m)}$}] at (2.692582403567252,0) {};
	\node[circle,minimum size=3pt,inner sep=0,draw=black,fill=white,label=below:{}] at (1/2.692582403567252,-2)  {}; 
	\node[circle,minimum size=3pt,inner sep=0,draw=black,fill=white,label=below:{}] at (1.5/2.692582403567252,-2)  {}; 
	\node[circle,minimum size=3pt,inner sep=0,fill=black,label=below:{$\frac{t_0(b,m)}{\dOpt(b,m)}$}] at (2/2.692582403567252,-2) {};		
	\node[circle,minimum size=3pt,inner sep=0,fill=black,label=below:{$1\vphantom{\frac{t_0(b,m)}{\dOpt(b,m)}}$}] at (1,-2) {};
	
	\end{tikzpicture}
	\caption{For a specific and appropriate choice of $d=d(b,m)$, $\bStar=\bStar(b,m)$, and $\dStar = \dStar(\bStar(b,m),m)$
		we have $\XXXspecialbd(b,d,m) = \XXXspecialbdStar(\bStar,\dStar,m)$.
	}
	\label{fig:star-relations}
\end{figure}

\section{Remarks and conjectures}

\label{sec:conclusions}

Theorem~\ref{thm:main-lower-bounds}
provides 
a counterexample to conjecture~\eqref{eq:conjecture-Gill} posed in~\cite{MR2677626}
and
shows that even on radial function subspaces
the behavior of the Beurling--Ahlfors transform~$\BA$ 
can be more complicated than one would initially expect.
Theorem~\ref{thm:main-lower-bounds-star} and Remark~\ref{rem:W-vs-W-star}
 suggest that there could be a deeper relation 
between the operators $\Lambda_m$ and $\Lambda_m^*$.
The author believes that the following conjecture holds.

\begin{conjecture}
	\label{conj:mine}
For $m\in\NNone$,
\[
\weakconst{\Lambda_m} 
\overset{??}{=} \weakconstrestricted{\Lambda_m}{\fclass{m}}
\overset{?}{=} \weakconstrestricted{\Lambda_m}{\fclassspec{m}}
= \weakconstrestricted{\Lambda_m^*}{\fclassspecStar{m}}	 
\overset{?}{=} \weakconstrestricted{\Lambda_m^*}{\fclassStar{m}} 
\overset{??}{=} \weakconst{\Lambda_m^*}.
\]
Moreover, this quantity is decreasing in $m$.
\end{conjecture}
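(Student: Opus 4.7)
The plan is to decompose Conjecture~\ref{conj:mine} into its constituent equalities and the monotonicity claim, and to attack them according to their apparent difficulty. The middle equality $\weakconstrestricted{\Lambda_m}{\fclassspec{m}} = \weakconstrestricted{\Lambda_m^*}{\fclassspecStar{m}}$ is precisely Remark~\ref{rem:W-vs-W-star}, whose proof is sketched above via the explicit change-of-variables $\bStarWvsWStar = t_0(b,m)/\dOpt(b,m)$. The two single-$?$ equalities $\weakconstrestricted{\Lambda_m}{\fclass{m}}=\weakconstrestricted{\Lambda_m}{\fclassspec{m}}$ and $\weakconstrestricted{\Lambda_m^*}{\fclassStar{m}}=\weakconstrestricted{\Lambda_m^*}{\fclassspecStar{m}}$ are reductions within finite-dimensional families, while the two double-$?$ equalities $\weakconst{\Lambda_m}=\weakconstrestricted{\Lambda_m}{\fclass{m}}$ and $\weakconst{\Lambda_m^*}=\weakconstrestricted{\Lambda_m^*}{\fclassStar{m}}$ demand a reduction from arbitrary $L^1$ functions to the four-parameter piecewise-eigenfunction ansatz.

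For the single-$?$ reductions my plan is a direct variational analysis. Any $f\in\fclass{m}$ is parametrized by $0<a<b\leq c<d$; by scaling I normalize $a=1$. Using the explicit formula for $\weakconstrestricted{\Lambda_m}{\fclass{m}}$ indicated in the appendix referenced at the end of Section~\ref{sec:guesstimates}, I would compute the gradient of $(b-1+d-c)/\|f\|_{L^1}$ in the three remaining parameters and analyze the critical-point equations, distinguishing cases according to whether each of the additional constraints defining $\fclassspec{m}$ (namely $c=b$, the sign condition $\lim_{t\to b^+}f(t)<0$, and the magnitude condition $\lim_{t\to d^-}f(t)<2$) is slack or saturated. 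The expected conclusion is that every interior critical point collapses onto the sub-family $\fclassspec{m}$, and a boundary analysis rules out the degenerate cases (e.g., $a=0$ excluded by Definition~\ref{def:weak-type-extremals-2}). The argument for the adjoint operator proceeds completely in parallel, using $\fclassStar{m}$ and Definition~\ref{def:f-class-spec-star}.

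The two double-$?$ equalities are the main obstacle. The most promising avenue is a Bellman-function / martingale approach in the style of Ba\~nuelos--Osękowski \cite{MR3018958,MR3189475}, where one seeks a special function of a few real variables whose concavity, size and boundary behavior encode the claimed weak-type inequality and for which equality is attained precisely on $\fclass{m}$. A complementary, more hands-on strategy is a three-step rearrangement: given $f\in L^1([0,\infty))$ with level set $E=\{t:|\Lambda_m f(t)|\geq 1\}$, first show that one may assume $\Lambda_m f = \pm 1$ on $E$ (otherwise one can shrink $f$ on the complement of $E$ to increase $|E|/\|f\|_{L^1}$); then use the eigenfunction structure (Lemma~\ref{lem:weak-type-eigenfunctions}) to conclude that on each connected component of $E$ the extremal $f$ must be an affine combination of $1$ and $t^{m/2}$; finally, reduce the number of components to at most two by sign-based merging. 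The weight $t^{m/2}$ built into $\Lambda_m$ makes this substantially harder than the unweighted $m=0$ case treated in \cite{MR2595549,MR3018958}, and I expect this to be the deepest part of the whole program.

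For the monotonicity assertion, the plan is to work with the restricted function $\XXXspecialbd(b,\dOpt(b,m),m)$ on the optimal curve of Figure~\ref{fig:m-fixed-optimal-curve} and differentiate in $m$ at the maximizer. After reparametrizing $b=e^{2x/m}$ in the spirit of Corollary~\ref{cor:lower-bound-asymptotic-special-b-c-d-alternate-expression}, the computation should reduce to a one-variable monotonicity check; the asymptotic value $(e^{\xOptInfty}-1)^{-1}\approx 1.37$ from Remark~\ref{rem:exact-asymptotic-bound} together with the tabulated values in Table~\ref{table:small-m} strongly suggest that the relevant single-variable function is indeed monotone. The hard part is, unambiguously, the reduction from general $L^1$ to $\fclass{m}$: without either a workable Bellman function or a deeper structural insight into the operator $\Lambda_m$ on $L^1$, that step will almost certainly require genuinely new ideas beyond the finite-dimensional optimization that carries the rest of the argument.
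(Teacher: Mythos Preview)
The statement you are addressing is a \emph{conjecture}, not a theorem: the paper does not prove it and explicitly marks four of the five equalities as open (with single or double question marks). So there is no ``paper's own proof'' to compare against. What the paper does provide is exactly the status report you have reconstructed: the middle equality is Remark~\ref{rem:W-vs-W-star} (proved in the appendix), the single-$?$ equalities are expected to be technical calculus exercises on a finite-dimensional parameter space, and the double-$??$ equalities are identified as the genuine obstacle, with the Bellman-function approach of \cite{MR3018958,MR3189475,MR3306045} suggested as the natural line of attack.

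Your proposal is therefore not a proof but a research plan, and you seem fully aware of this (``will almost certainly require genuinely new ideas''). As a plan it is sensible and matches the paper's own assessment almost point for point: variational analysis on $\fclass{m}$ for the single-$?$ steps, a Bellman function or rearrangement argument for the double-$??$ steps, and a one-variable monotonicity check after the substitution $b=e^{2x/m}$ for the decreasing claim. The three-step rearrangement sketch you give for $\weakconst{\Lambda_m}=\weakconstrestricted{\Lambda_m}{\fclass{m}}$ is plausible in outline but each step hides real difficulties (e.g., ``shrinking $f$ off $E$'' need not preserve the level set $E$ since $\Lambda_m$ is nonlocal; the reduction to two components is where the $m=0$ argument in \cite{MR2595549} is already delicate). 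None of this is a gap in your reasoning so much as an honest acknowledgment that the conjecture is open.
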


\LongArxivOrShortJournalVersion{
The middle equality is true by Remark~\ref{rem:W-vs-W-star}.
Note that Proposition~\ref{prop:asymptotic-push} 
hints that the equality $\weakconstrestricted{\Lambda_m}{\fclass{m}}
= \weakconstrestricted{\Lambda_m}{\fclassspec{m}}$
holds asymptotically when $m\to\infty$,
but the form in which Proposition~\ref{prop:asymptotic-push}  (and related results, namely
Proposition~\ref{prop:lower-bound-asymptotic}, Corollary~\ref{cor:lower-bound-asymptotic-special-b-c-d-alternate-expression}, and  Proposition~\ref{prop:lower-bound-fixed-m-special-b-c-d})
 is stated does not yield a rigorous proof of this fact.
 Nonetheless, proving the equalities denoted with a single question mark
 should be a purely technical, if somewhat daunting, exercise in calculus;
 the author does not insist that to this end it is best to follow the approach presented in Appendix~\ref{APP:push}.}{
The middle equality is true by Remark~\ref{rem:W-vs-W-star}.
Proposition~C.3 in the arXiv version of this article suggests that
the equality $\weakconstrestricted{\Lambda_m}{\fclass{m}}
= \weakconstrestricted{\Lambda_m}{\fclassspec{m}}$ holds asymptotically when $m\to \infty$;
proving the equalities denoted with a single question mark
 should be a purely technical, if somewhat daunting, exercise in calculus (cf.\  Appendix~D in the arXiv version of this article).}

Proving the equalities denoted with double question marks
is a task of different kind, since it involves proving a weak-type inequality
for \emph{all} functions $f\in L^1([0,\infty))$
(instead of working with functions of a special form).
Nonetheless, the author believes that the classes $\fclass{m}$
could be the classes of extremal functions 
needed to give a Bellman function proof of the weak-type inequality for the operators $\Lambda_m$ 
(with optimal constant), cf.\ the method described in~\cite{MR3306045} 
and the way that extremal functions are connected with the Bellman function in \cite{MR3558516,MR4088501}.
Note that the operators $\Lambda_m$, $m\geq 1$,
can be viewed as weighted versions of $\Lambda_0$ and
it is possible that the problem of finding their weak-type constants 
will require more effort than in the unweighted case $m=0$
(especially since for $m=0$ nice probabilistic understanding of the operators $\Lambda_0$, $\Lambda_0^*$ is available, cf.\ \cite{MR3018958,MR3189475,MR3558516}).


Finally, let us mention
that an abstract and non-technical proof of either of the missing equalities, especially $\weakconst{\Lambda_m} 
=\weakconst{\Lambda_m^*},$ would be interesting.
Finding a less technical proof of the equality
$\weakconstrestricted{\Lambda_m}{\fclassspec{m}}
= \weakconstrestricted{\Lambda_m^*}{\fclassspecStar{m}}$
might be a good starting point.

%
%
%
%

\appendix

%
%
%
%
%
%
%
%

\section{Detailed proof of Remark~\ref{rem:W-vs-W-star}}

\label{APP:equality}

In this section we provide all the technical details needed for the proof that $\weakconstrestricted{\Lambda_m}{\fclassspec{m}}
= \weakconstrestricted{\Lambda_m^*}{\fclassspecStar{m}}$ for all $m\in\NN$.
Some of the results serve at the same time as additional motivation behined Conjecture~\ref{conj:mine} and Definitions~\ref{def:f-class-spec} and~\ref{def:f-class-spec-star}.

Throughout this section $m$ is fixed 
and we supress the dependence on it in many places in the notation:
we shall often write, e.g.,
$\bMin$ and $t_0(\bMin)$ instead of
$\bMin(m)$ and $t_0(\bMin(m), m)$, respectively.

\subsection{Additional technical calculations}

This subsection is devoted to proving an enhanced version of Lemma~\ref{lem:lower-bound-fixed-m-special-b-c-d-alternate-expression-with-inequality} --
it is 
stated in Proposition~\ref{prop:lower-bound-fixed-m-special-b-c-d} below.
The proof is split into several lemmas (and uses the notation introduced in Subsection~\ref{sec:guesstimates}).
Some of the ideas and steps are illustrated in Figure~\ref{fig:m-fixed-optimal-curve} on page~\pageref{fig:m-fixed-optimal-curve}.

We start with the following more general and more exact version of Lemma~\ref{lem:lower-bound-fixed-m-special-b-c-d-alternate-expression-with-inequality}.
Recall the notation $s\lor t = \max\{s, t\}$ and $s \land t = \min\{s, t\}$ for $s,t\in\RR$.

\begin{proposition}
	\label{prop:lower-bound-fixed-m}
	For $m\in\NNone$,
	\begin{equation*}
	\weakconstrestricted{\Lambda_m}{\fclass{m}} 
	=
	\sup\Bigl\{
	\frac{\bGain-1 + \dGain-c}{ \frac{m}{2+m} - \frac{2+m}{m}b + \frac{4(1+m)}{m(2+m)} b^{1+m/2} + \int_c^d \bigl|-\frac{2+m}{m} + \dOne t^{m/2} \bigr| dt}
	\Bigr\},
	\end{equation*}
	where the supremum is taken over  $1<b\leq c <d$ and  
	\begin{align*}
	\dOne &= \dOne(b,c,m) \coloneqq \frac{2(1+m)}{m c^{m/2}} \Bigl(1 + \Bigl(\frac{b}{c}\Bigr)^{1+m/2}\bigl(1 - b^{m/2}\bigr)\Bigr),\\
	\bGain &=\bGain(b,c,m)
	\coloneqq \Bigl[ b\lor \Bigl[ b\Bigl(-1 -\frac{2+m}{m} + \frac{2(1+m)}{m} b ^{m/2} \Bigr)^{2/(2+m)}\Bigr]\Bigr] \land c,\\
	\dGain &=\dGain(b,c,d,m)
	\coloneqq d\lor \Bigl[ d \bigl| - 1 - \frac{2+m}{m} + D d ^{m/2} \bigr|^{2/(2+m)}\Bigr].
	\end{align*}
\end{proposition}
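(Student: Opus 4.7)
The plan is to use scaling invariance to reduce to $a=1$ and then compute $\Lambda_m f$, its superlevel set, and $\|f\|_{L^1([0,\infty))}$ piecewise on the five intervals $(0,1)$, $(1,b)$, $(b,c)$, $(c,d)$, $(d,\infty)$. The claimed expression will then be nothing but the resulting ratio, and the $\lor$, $\land$ in $\bGain$, $\dGain$ will merely bookkeep the edge cases; this is the main technical point.

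A direct substitution shows that $f_\lambda(t)\coloneqq f(\lambda t)$ satisfies $\Lambda_m f_\lambda = (\Lambda_m f)(\lambda\,\cdot)$, so both numerator and denominator of the ratio in~\eqref{eq:weak-type-def-restricted} scale by $\lambda^{-1}$ and the quotient is scale-invariant. If $f\in\fclass{m}$ has parameters $(a,b,c,d)$, then $f_a\in\fclass{m}$ with parameters $(1,b/a,c/a,d/a)$, so I may assume $a=1$; then $\dZero=-2(1+m)/m$ and the expression for $\dOne$ in Definition~\ref{def:weak-type-extremals-2} simplifies to the one stated in the proposition. On $(0,1)$ both $f$ and $\int_0^t f s^{m/2}\,ds$ vanish, so $\Lambda_m f\equiv 0$. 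On $(1,b)$ and $(c,d)$, Lemma~\ref{lem:weak-type-extremals-2} yields $\Lambda_m f\equiv 1$ and $\Lambda_m f\equiv -1$. On $(b,c)$, $f=0$ and $\int_0^t f s^{m/2}\,ds$ equals the constant $\int_0^b \fZero\, s^{m/2}\,ds = \tfrac{b^{1+m/2}}{1+m}(1+\fZero(b))$ (from the proof of Lemma~\ref{lem:weak-type-extremals-2}); using $1+\fZero(b)=\tfrac{2(1+m)}{m}(1-b^{m/2})$ one gets
\[
\Lambda_m f(t) = \Bigl(\frac{b}{t}\Bigr)^{1+m/2}\cdot\frac{2(1+m)}{m}(1-b^{m/2}),\qquad t\in(b,c),
\]
which is negative and whose absolute value is strictly decreasing in $t$. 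Analogously, using $\int_0^d f s^{m/2}\,ds = \int_0^d \fOne\, s^{m/2}\,ds = \tfrac{d^{1+m/2}}{1+m}(-1+\fOne(d))$, one obtains $\Lambda_m f(t)=(d/t)^{1+m/2}(-1-(2+m)/m+\dOne\, d^{m/2})$ for $t>d$, again with $|\Lambda_m f|$ monotone decreasing.

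By monotonicity, $\{t:|\Lambda_m f(t)|\geq 1\}$ is of the form $[1,b]\cup[b,\bGain]\cup[c,d]\cup[d,\dGain]$. On $(b,c)$ the condition $|\Lambda_m f|\geq 1$ reads $t\leq b\bigl(\tfrac{2(1+m)}{m}(b^{m/2}-1)\bigr)^{2/(2+m)}$; capping this candidate below by $b$ (when it is $<b$, no $t\in(b,c)$ qualifies) and above by $c$ (when it is $>c$, all of $(b,c)$ qualifies) reproduces $\bGain$ via the identity $-1-(2+m)/m+(2(1+m)/m)b^{m/2}=(2(1+m)/m)(b^{m/2}-1)$. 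The $(d,\infty)$ computation is identical and yields $\dGain$. Summing, the total measure equals $(b-1)+(\bGain-b)+(d-c)+(\dGain-d)=\bGain-1+\dGain-c$. For the denominator, $f$ is negative and decreasing on $(1,b)$ (since $f(1^+)=-1$ and $\dZero<0$), so $\int_1^b|f|\,dt$ simplifies by direct integration to $\tfrac{m}{2+m}-\tfrac{2+m}{m}b+\tfrac{4(1+m)}{m(2+m)}b^{1+m/2}$; on $(c,d)$ the sign of $f$ depends on $\dOne$, so we leave the contribution as $\int_c^d|{-}(2+m)/m+\dOne\, t^{m/2}|\,dt$. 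The resulting ratio is precisely the expression in the proposition, and taking the supremum over $1<b\leq c<d$ gives the claimed equality. The only delicate step is the truncation bookkeeping for $\bGain$ and $\dGain$, which becomes routine once $|\Lambda_m f|$ is known to be monotone on $(b,c)$ and on $(d,\infty)$.
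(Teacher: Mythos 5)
Your proof is correct and follows the same strategy as the paper's own argument in Appendix~A: reduce to $a=1$ by scaling, use Lemma~\ref{lem:weak-type-extremals-2} on $(1,b)\cup(c,d)$, compute $\Lambda_m f$ explicitly on $(b,c)$ and $(d,\infty)$ via the accumulated integral, exploit the monotonicity of $|\Lambda_m f|$ there to identify the superlevel set as $(1,\bGain)\cup(c,\dGain)$, and then compute $\|f\|_{L^1}$ piecewise. The simplification $1+\tfrac{2+m}{m}-\tfrac{2(1+m)}{m}b^{m/2}=\tfrac{2(1+m)}{m}(1-b^{m/2})$ and the truncation bookkeeping in $\bGain,\dGain$ are exactly what the paper does, so this is essentially a faithful reconstruction of the intended proof.
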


\begin{proof}
	Consider a function  $f$ of the form \eqref{eq:weak-type-extremals-2}. 
	Due to scaling invariance,
	we may and do assume that $a=1$.
	Thus, for  any $1 < b \leq c <d$ we consider the function
	\[
	f(t) = \bigl(\frac{2+m}{m} -\frac{2(1+m)}{m} t^{m/2}\bigr) \indbr{t\in (1,b]}
	+  \bigl( -\frac{2+m}{m} + \dOne t^{m/2}\bigr)\indbr{t\in(c,d]},
	\]
	where $\dOne = \dOne(b,c,m) $ is as in the statement of the proposition.
	Let $\fZero(t)=  \frac{2+m}{m} - \frac{2(1+m)}{m} t^{m/2}$, $\fOne(t)=   -\frac{2+m}{m} + \dOne t^{m/2}$,
	be the functions from the proof of Lemma~\ref{lem:weak-type-extremals-2}.

	We start with identifying the set where $|\Lambda_m f (t)| \geq 1$.
	Clearly
	$ \Lambda_m f(t) =0$ for $t\in(0,1)$ 
	and $|\Lambda_m f(t)| = 1$ for $t\in (1,b) \cup (c,d)$
	by Lemma~\ref{lem:weak-type-extremals-2}.
	However, since $\lim_{t\to 1^+} f(t) = -1 $,
	the function $f$ is negative and decreases on $(1,b]$.
	Thus,
	although $f(t) = 0$ for $t\in (b,c)$,
	it may happen that $\Lambda_m f(t) \leq -1$ for some $t\in(b,c)$
	due to the (negative) mass acumulated in $\int_0^b f(s) s^{m/2} ds$.
	Indeed, for $t\in (b,c)$ we have
	\begin{align*}
	\Lambda_m f(t)
	&= \frac{1+m}{t^{1+m/2}} \int_0^b f(s) s^{m/2}ds
	= \frac{1+m}{t^{1+m/2}} \int_0^b \fZero(s) s^{m/2}ds\\
	&= \frac{b^{1+m/2}}{t^{1+m/2}} \bigl( \Lambda_m \fZero(b) + \fZero(b)\bigr)
	= \frac{b^{1+m/2}}{t^{1+m/2}} \bigl( 1 + \frac{2+m}{m}  - \frac{2(1+m)}{m} b^{m/2}\bigr), 
	\end{align*}
	which is negative and increases in $t$.
	Hence, the condition $|\Lambda_m f(t)| \geq 1$ and $t\in(b,c)$ is equivalent to $t\in(b, \bGain)$, where $\bGain$ is as in the statement of the proposition.
	
	Similarly, by the very definition of the coefficient $\dOne$ and the calculations from the proof of Lemma~\ref{lem:weak-type-extremals-2}, for $t\in(d,\infty)$,
	\begin{align*}
	\Lambda_m f(t)
	&= \frac{1+m}{t^{1+m/2}} \int_0^d f(s) s^{m/2}ds
	= \frac{1+m}{t^{1+m/2}} \int_0^d \fOne(s) s^{m/2}ds\\
	&= \frac{d^{1+m/2}}{t^{1+m/2}} \bigl( \Lambda_m \fOne(d) + \fOne(d)\bigr)
	= \frac{d^{1+m/2}}{t^{1+m/2}} \bigl( -1 - \frac{2+m}{m}  + \dOne d^{m/2}\bigr).
	\end{align*}
	The absolute value of this expression decreases in $t$,
	so the condition $|\Lambda_m f(t)| \geq 1$ and $t\in(d,\infty)$
	is equivalent to $t\in(d, \dGain)$.

	We calculate $\|f\|_{L^1([0, \infty))}$ as in the proof of Lemma~\ref{lem:lower-bound-fixed-m-special-b-c-d-alternate-expression-with-inequality} above.
	This ends the proof.
\end{proof}

With this result in hand we can upgrade the statement of Lemma~\ref{lem:lower-bound-fixed-m-special-b-c-d-alternate-expression-with-inequality}
from an estimate to an equality.

\begin{lemma}
	\label{lem:lower-bound-fixed-m-special-b-c-d-alternate-expression}
	For $m\in\NNone$,
	\begin{equation*}
	\weakconstrestricted{\Lambda_m}{\fclassspec{m}} 
	=  
	\sup\bigl\{
	\XXXspecialbd(b,d,m) : (b,d)\in\specialbd{m} \bigr\}.
	\end{equation*}
\end{lemma}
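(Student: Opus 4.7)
The ``$\geq$'' direction is the content of Lemma~\ref{lem:lower-bound-fixed-m-special-b-c-d-alternate-expression-with-inequality}, so the task is to establish the matching upper bound. My plan is to fix an arbitrary $f\in\fclassspec{m}$ with parameters $1<b<d$ as in Definition~\ref{def:f-class-spec} and to verify the precise identity
\[
\frac{\bigl|\{t\in[0,\infty):|\Lambda_m f(t)|\geq 1\}\bigr|}{\|f\|_{L^1([0,\infty))}} = \XXXspecialbd(b,d,m),
\]
from which the equality claimed in the lemma follows by taking the supremum over $(b,d)\in\specialbd{m}$.

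The denominator handles itself: the calculation of $\|f\|_{L^1([0,\infty))}$ carried out in the proof of Lemma~\ref{lem:lower-bound-fixed-m-special-b-c-d-alternate-expression-with-inequality} is an exact identity (not merely a lower bound) and already shows that $\|f\|_{L^1([0,\infty))}$ coincides with the denominator of $\XXXspecialbd(b,d,m)$. For the numerator it suffices to argue that $\bigl|\{|\Lambda_m f|\geq 1\}\bigr|=d-1$. Lemma~\ref{lem:weak-type-extremals-2} (applied with $a=1$ and $c=b$) already gives $\Lambda_m f(t)=1$ on $(1,b)$ and $\Lambda_m f(t)=-1$ on $(b,d)$, while $\Lambda_m f(t)=0$ on $(0,1)$ because $f$ vanishes there. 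So the whole game reduces to verifying that $|\Lambda_m f(t)|<1$ for every $t>d$.

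This is the only step that goes beyond the $\geq$ direction, and it is precisely where the third constraint of Definition~\ref{def:f-class-spec}---the upper bound $-(2+m)/m+\dOne(b,m)d^{m/2}<2$---earns its keep. Since $f$ vanishes on $(d,\infty)$, the same manipulation used in the proof of Lemma~\ref{lem:weak-type-extremals-2} yields
\[
\Lambda_m f(t) = \frac{d^{1+m/2}}{t^{1+m/2}}\Bigl(-1-\frac{2+m}{m}+\dOne(b,m)\,d^{m/2}\Bigr)\qquad\text{for } t>d,
\]
and the constraint~\eqref{eq:constraint-for-special-b-d} places the expression in parentheses inside the open interval $(-1,1)$. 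Combined with the prefactor $d^{1+m/2}/t^{1+m/2}\leq 1$ for $t\geq d$, this gives $|\Lambda_m f(t)|<1$ on $(d,\infty)$, as desired.

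I do not anticipate any serious obstacle; all the hard work is already in the earlier lemmas. An alternative route is to read the statement off Proposition~\ref{prop:lower-bound-fixed-m}: specializing to $c=b$ automatically forces $\bGain=b$, and the three constraints of Definition~\ref{def:f-class-spec} are exactly what is needed to force $\dGain=d$ as well, so the general expression from Proposition~\ref{prop:lower-bound-fixed-m} collapses to $\XXXspecialbd(b,d,m)$.
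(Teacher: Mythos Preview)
Your proof is correct and essentially matches the paper's approach. The paper's argument is terse---it simply invokes Proposition~\ref{prop:lower-bound-fixed-m} (together with Definition~\ref{def:f-class-spec}) to conclude that $\bigl|\{|\Lambda_m f|\geq 1\}\bigr|=d-1$, which is exactly your ``alternative route''; your primary, more self-contained argument computing $\Lambda_m f$ on $(d,\infty)$ directly is just the specialization of that proposition's proof to the case $c=b$, so the two are the same in substance.
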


\begin{proof}
	We proceed as in the proof of Lemma~\ref{lem:lower-bound-fixed-m-special-b-c-d-alternate-expression-with-inequality},
	with the only difference being that Proposition~\ref{prop:lower-bound-fixed-m} (and its proof)
	and Definition~\ref{def:f-class-spec} actually  that for $f\in\fclassspec{m}$,
	\begin{equation*}
	\bigl|\{t\in[0,\infty) : |\Lambda_m f(t)|\geq 1\} \bigr| 
	= d-1.
	\qedhere
	\end{equation*}
\end{proof}
\begin{lemma}
	\label{lem:dOpt-definition}
	For $m\in\NN$ and $b\in [\bMin, \bMax)$ we have
	\begin{align*}
	\MoveEqLeft[2]
	2(1+m)(2b^{-m/2} -1) \dMin(b,m)^{1+m/2} \\
	&\leq -m(2b^{-m/2} - 1) b^{1+m/2}  
	+ 2(2+m) t_0(b,m),\\
	&\leq 2(1+m)(2b^{-m/2} -1) \dMax(b,m)^{1+m/2}.
	\end{align*}	
	In particular, for every $b\in [\bMin, \bMax)$
	there exists  $\dOpt= \dOpt(b,m)\in [\dMin, \dMax]$ 
	such that
	\begin{align}
	\label{eq:fixed-m-supremum-W-optimal-curve}
	2(1+m)(2b^{-m/2} -1) & \dOpt(b,m)^{1+m/2} \\
	&= -m(2b^{-m/2} - 1) b^{1+m/2}  
	+ 2(2+m) t_0(b,m). \nonumber
	\end{align}	
\end{lemma}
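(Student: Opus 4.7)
The plan is to dispatch the ``existence'' half of the lemma as an immediate consequence of the first half: the map $d \mapsto 2(1+m)(2b^{-m/2} - 1) d^{1+m/2}$ is continuous and strictly increasing on $(0, \infty)$ (since $2b^{-m/2} - 1 > 0$ for $b < \bMax$), while the right-hand side of \eqref{eq:fixed-m-supremum-W-optimal-curve} does not depend on $d$, so the intermediate value theorem yields the required $\dOpt \in [\dMin, \dMax]$. Hence all the substance lies in the two inequalities.

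My first step would be to set $u := 2b^{-m/2} - 1$, which takes values in $(0, (2+m)/(2+3m)]$ for $b \in [\bMin, \bMax)$, and to use the explicit formulas \eqref{eq:def-dMin}--\eqref{eq:def-dMax} to rewrite
\[
2(1+m) u \dMin(b,m)^{1+m/2} = (2+m) \dMin(b,m),
\qquad
2(1+m) u \dMax(b,m)^{1+m/2} = (2+3m) \dMax(b,m).
\]
Recalling that $t_0(b,m) = \dMin(b,m)$, the two bounds in the lemma simplify respectively to
\[
mu b^{1+m/2} \leq (2+m) t_0(b,m)
\quad \text{and} \quad
(2+3m) \dMax(b,m) + mu b^{1+m/2} \geq 2(2+m) t_0(b,m).
\]

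For the first inequality I plan to raise both sides to the $m/2$ power and substitute the closed form of $t_0 = \dMin$ from \eqref{eq:def-dMin}; after cancellation this reduces to
\[
m^{m/2} \cdot 2(1+m) \cdot (2 - b^{m/2})^{1+m/2} \leq (2+m)^{1+m/2}.
\]
The constraint $b \geq \bMin$ is exactly equivalent to $2 - b^{m/2} \leq (2+m)/(2+2m)$, and substituting this bound reduces the inequality to $m \leq 2(1+m)$, which is obvious. I expect this algebraic reduction --- keeping straight the many fractional powers --- to be the step where careful bookkeeping is most required.

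For the second inequality I would use the identity $\dMax(b,m) = t_0(b,m) \cdot ((2+3m)/(2+m))^{2/m}$ (which follows immediately from \eqref{eq:def-dMin}--\eqref{eq:def-dMax}) to rearrange the claim into
\[
mu b^{1+m/2} \geq (2+m) t_0(b,m) \Bigl(2 - ((2+3m)/(2+m))^{1+2/m}\Bigr).
\]
The left-hand side is nonnegative, so it suffices to verify $((2+3m)/(2+m))^{1+2/m} \geq 2$ for every $m \in \NN$. For $m \geq 2$ this is immediate since already $(2+3m)/(2+m) \geq 2$, while for $m = 1$ one computes $(5/3)^3 = 125/27 > 2$ directly.
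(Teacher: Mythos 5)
Your proof is correct, and the overall structure matches the paper's: you rewrite $2(1+m)u\dMin^{1+m/2} = (2+m)\dMin$ and $2(1+m)u\dMax^{1+m/2} = (2+3m)\dMax$, use $t_0 = \dMin$, discard the nonnegative $b$-term for the upper bound, and close by an intermediate value argument. The one place you diverge is the lower bound, where the paper writes $-mub^{1+m/2} \geq -mut_0^{1+m/2} = -\tfrac{m(2+m)}{2(1+m)}t_0$ (using $b\leq t_0$) and finishes with linear arithmetic in $t_0$, whereas you raise $mub^{1+m/2}\leq(2+m)t_0$ to the $m/2$ power, pass to $(2-b^{m/2})^{1+m/2}$, and invoke $b\geq\bMin$ directly. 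These are equivalent routes --- the paper's $b\leq t_0$ is precisely the constraint $b\geq\bMin$ in disguise --- so this is a cosmetic variant, not a different argument; your version isolates a cleaner terminal inequality $m\leq 2(1+m)$ at the cost of managing fractional exponents. Likewise, your terminal inequality $\bigl(\tfrac{2+3m}{2+m}\bigr)^{1+2/m}\geq 2$ is algebraically equivalent to the paper's $(2+3m)\bigl(\tfrac{2+3m}{4+2m}\bigr)^{m/2}\geq 2+m$, and you both dispose of it by the split $m\geq 2$ versus $m=1$.
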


\begin{proof}
	Consider first $\dMin=\dMin(b,m)$.
	Recall from~\eqref{eq:def-t_0-explicit-formula} that $t_0 = t_0(b,m) = \dMin$ 
	and
	\begin{equation*}
	2(1+m)(2 b^{-m/2} -1)\dMin^{1+m/2} 
	=  (2+m) \dMin =  (2+m) t_0.
	\end{equation*} 
	Consequently,
	\begin{equation*}
	-m(2 b^{-m/2} -1)b^{1+m/2} \geq -m(2 b^{-m/2} -1) t_0^{1+m/2} = -\frac{m(2+m)}{2(1+m)} t_0.
	\end{equation*} 
	Thus
	\begin{multline*}
	-m(2 b^{-m/2} -1)b^{1+m/2} 
	+2(2+m) t_0(b,m) - 2(1+m)(2 b^{-m/2} -1)\dMin^{1+m/2}\\
	\geq \Bigl(-\frac{m(2+m)}{2(1+m)} + 2(2+m) - (2+m)\Bigr)\cdot t_0 = \frac{(2+m)^2}{2(1+m)} \cdot t_0 \geq 0,
	\end{multline*}
	which proves the first part of the assertion.
	
	It follows from the definition~\eqref{eq:def-dMax} that $\dMax = \dMax(b,m)$ satisfies
	\begin{align*}
	2(1+m)(2 b^{-m/2} -1)\dMax^{1+m/2}
	&= (2+3m) \dMax
	\end{align*} 
	and in order to prove the second part of the assertion  
	it suffices to check that
	\[
	2(2+m) t_0 \leq (2+3m) \dMax.
	\]
	To this end, observe that
	\begin{align*}
	2(1+m)(2 b^{-m/2} -1)\Bigl( \frac{2+3m}{4+2m} \dMax\Bigr)^{m/2}  
	&= (2+3m)\Bigl( \frac{2+3m}{4+2m} \Bigr)^{m/2}\\
	&\geq
	\begin{cases}
	2+3m & \text{if } m\geq 2,\\
	5 (5/6)^{1/2} & \text{if } m=1,
	\end{cases}
	\\
	&\geq 
	2+m \quad \text{(for all $m\geq 1$)}.
	\end{align*}
	This inequality means exactly that $t_0 \leq \frac{2+3m}{4+2m} \dMax$,
	which finishes the proof.
\end{proof}

We now consider the behavior of the function $\XXXspecialbd$
on the part of the boundary of the  set $\specialbd{m}$ 
which is obtained by considering $b \to \bMin(m) ^+$.

\begin{lemma}
	\label{lem:lower-bound-fixed-m-special-b-c-d-boundary-bMin}
	For $m\in\NNone$,
	\begin{align}
	\nonumber
	\dMin(\bMin(m), m) &= \Bigl( \frac{2+3m}{2+2m}\Bigr)^{2/m} = \bMin(m),\\	
	\nonumber
	\dMax(\bMin(m),m) &= \Bigl(\frac{(2+3m)^2}{(2+m)(2+2m)}\Bigr)^{2/m}
	= \bMin(m) \Bigl(\frac{2+3m}{2+m}\Bigr)^{2/m},\\
	\label{eq:dOpt-for-bMin}
	\dOpt(\bMin(m),m) &= \bMin(m) \cdot \Bigl( \frac{4+3m}{2+2m}\Bigr)^{2/(2+m)}.
	\end{align}
\end{lemma}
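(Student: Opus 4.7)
The plan is to verify each of the three equalities by direct substitution into the definitions from Subsection~\ref{sec:guesstimates}, relying only on the identities for $\dMin$, $\dMax$, $\dOpt$, and $t_0$ that have already been established. The key observation which makes everything work cleanly is that
\[
\bMin(m)^{m/2} = \frac{2+3m}{2+2m},
\qquad
\text{hence}
\qquad
2\bMin(m)^{-m/2} - 1 = \frac{2(2+2m)}{2+3m} - 1 = \frac{2+m}{2+3m}.
\]
This quantity appears in every denominator of~\eqref{eq:def-dMin}, \eqref{eq:def-dMax}, and~\eqref{eq:fixed-m-supremum-W-optimal-curve} evaluated at $b=\bMin(m)$, so the entire proof is essentially bookkeeping.

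For the first two equalities, I would simply substitute into~\eqref{eq:def-dMin}, \eqref{eq:def-dMax}: for $\dMin$ we get $\dMin(\bMin,m)^{m/2} = \frac{2+m}{2(1+m)\cdot (2+m)/(2+3m)} = \frac{2+3m}{2+2m}$, which raised to the $2/m$ power gives both stated expressions, and in particular $\dMin(\bMin(m),m) = \bMin(m)$; the computation for $\dMax$ is analogous and yields $\dMax(\bMin,m)^{m/2} = (2+3m)^2 / \bigl((2+m)(2+2m)\bigr)$.

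For the third equality I would use relation~\eqref{eq:fixed-m-supremum-W-optimal-curve}. Since $t_0(b,m) = \dMin(b,m)$ by~\eqref{eq:def-t_0-explicit-formula}, the first equality already proved gives $t_0(\bMin(m),m) = \bMin(m)$. Plugging $b = \bMin(m)$, $t_0 = \bMin(m)$, and $2b^{-m/2}-1 = (2+m)/(2+3m)$ into~\eqref{eq:fixed-m-supremum-W-optimal-curve} turns it into
\[
\frac{2(1+m)(2+m)}{2+3m}\,\dOpt(\bMin,m)^{1+m/2}
=
-\frac{m(2+m)}{2+2m}\,\bMin(m) + 2(2+m)\bMin(m),
\]
after using $\bMin^{1+m/2} = \bMin\cdot(2+3m)/(2+2m)$ on the first term of the right-hand side. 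Dividing through by $(2+m)$ and simplifying the right-hand side to $(2+m)(4+3m)\bMin(m)/(2+2m)$ yields
\[
\dOpt(\bMin,m)^{1+m/2}
=
\bMin(m)\cdot\frac{(2+3m)(4+3m)}{4(1+m)^2}
=
\bMin(m)^{1+m/2}\cdot\frac{4+3m}{2+2m},
\]
and raising both sides to the power $2/(2+m)$ gives exactly~\eqref{eq:dOpt-for-bMin}.

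There is no genuine obstacle here; the only thing one must check along the way is that the value $\dOpt(\bMin,m)$ delivered by~\eqref{eq:fixed-m-supremum-W-optimal-curve} is indeed well-defined and lies in $[\dMin(\bMin,m),\dMax(\bMin,m)]$, but this is precisely the conclusion of Lemma~\ref{lem:dOpt-definition} applied to $b=\bMin(m)$, so the formula~\eqref{eq:fixed-m-supremum-W-optimal-curve} may be used without further justification.
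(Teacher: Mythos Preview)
Your proof is correct and follows essentially the same approach as the paper: both compute the key quantity $2\bMin^{-m/2}-1 = (2+m)/(2+3m)$, deduce the formulas for $\dMin(\bMin)$ and $\dMax(\bMin)$ by direct substitution, and then plug $b=\bMin$, $t_0(\bMin)=\bMin$ into~\eqref{eq:fixed-m-supremum-W-optimal-curve} to obtain $\dOpt(\bMin)$. The only cosmetic difference is that the paper rewrites $\bMin$ in terms of $\bMin^{1+m/2}$ on the right-hand side, whereas you rewrite $\bMin^{1+m/2}$ in terms of $\bMin$ first; the algebra is otherwise identical.
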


\begin{proof}	
	We have $\bMin^{m/2} = (2+3m)/(2+2m)$, so
	\[
	2\bMin^{-m/2}  - 1 = 2 \cdot \frac{2+2m}{2+3m} - 1 = 
	\frac{2+m}{2+3m}.
	\]
	The formulas for $\dMin(\bMin)$ and $\dMax(\bMin)$ follow immediately (cf.\ \eqref{eq:def-dMin}, \eqref{eq:def-dMax}); note also that
	$t_0(\bMin) \dMin(\bMin)= \bMin$.

	Using the preceding observations 
	we can rewrite~\eqref{eq:fixed-m-supremum-W-optimal-curve}
	as
	\begin{align*}  
	2(1+m)\frac{2+m}{2+3m} \dOpt(\bMin)^{1+m/2} 
	&= -m \frac{2+m}{2+3m} \bMin^{1+m/2}   
	+ 2(2+m) \bMin\\
	&= -m \frac{2+m}{2+3m} \bMin^{1+m/2}   
	+ 2(2+m) \frac{2+2m}{2+3m} \bMin^{1+m/2}, 
	\end{align*}
	which yields the claim~\eqref{eq:dOpt-for-bMin} and finishes the proof of whole lemma.
\end{proof}

\begin{remark}
	In particular,
	\begin{equation}
	\label{eq:claim-dOpt-for-bMin}
	\dMin(\bMin(m),m)  < \dOpt(\bMin(m),m)  < \dMax(\bMin(m),m).
	\end{equation}
	Indeed, this is equivalent to
	\[
	1 < \Bigl( \frac{4+3m}{2+2m}\Bigr)^{2/(2+m)} < \Bigl( \frac{2+3m}{2+m}\Bigr)^{2/m}.
	\]
	The left-hand side inequality holds true for all $m\in \NN$.
	The right-hand side inequality is satisfied for $m=1$ and for $m\geq 2$ we can write
	\[
	\Bigl( \frac{4+3m}{2+2m}\Bigr)^{2/(2+m)}
	\leq 2^{2/(2+m)} < 2^{2/m}  \leq  \Bigl( \frac{2+3m}{2+m}\Bigr)^{2/m}.
	\]
\end{remark}

\begin{lemma}
	\label{lem:tZero-divied-by-dOpt-monotone}
	For $m\in\NN$ and 
	$b\in [\bMin, \bMax)$ we have
	\[
	2^{-2/(2+m)}  < \frac{t_0(b)}{\dOpt(b)} \leq \frac{t_0(\bMin)}{\dOpt(\bMin)} 
	=
	\Bigl(\frac{2+2m}{4+3m}\Bigr)^{2/(2+m)}.
	\]
	Moreover, the function $b\mapsto \dOpt(b)$ is increasing, the function $b\mapsto t_0(b)/\dOpt(b)$ is decreasing and  
	\[
	\lim_{b\to\bMax}\frac{t_0(b)}{\dOpt(b)}
	= 2^{-2/(2+m)}.
	\]
\end{lemma}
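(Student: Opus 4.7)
The plan is to derive a closed-form expression for the ratio $u(b) \coloneqq t_0(b,m)/\dOpt(b,m)$ from the implicit defining equation~\eqref{eq:fixed-m-supremum-W-optimal-curve} and then read off all four claims from it. Using the identity $t_0(b,m)^{m/2}(2b^{-m/2}-1) = (2+m)/(2(1+m))$, which follows from~\eqref{eq:def-t_0-explicit-formula}, I would rewrite the left-hand side of~\eqref{eq:fixed-m-supremum-W-optimal-curve} as $(2+m)\, t_0 / u^{1+m/2}$. After dividing through by $(2+m)\, t_0$ this gives
\[
\frac{1}{u^{1+m/2}} = 2 - \frac{m(2b^{-m/2}-1)\, b^{1+m/2}}{(2+m)\, t_0(b,m)}.
\]

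The next step is to simplify the rightmost term via the substitution $\psi \coloneqq 2 - b^{m/2}$. Writing $\phi = 2b^{-m/2}-1$, one checks that $\psi = 2\phi/(1+\phi)$; combining this with $b^{1+m/2} = 2^{1+2/m}/(1+\phi)^{1+2/m}$ and $t_0 = \bigl((2+m)/(2(1+m))\bigr)^{2/m} \phi^{-2/m}$, and then collecting exponents, yields
\[
\frac{1}{u^{1+m/2}} = 2 - \frac{m}{2+m}\left(\frac{2(1+m)}{2+m}\right)^{2/m} \psi^{1+2/m}.
\]
Since $b \mapsto \psi(b) = 2 - b^{m/2}$ is strictly decreasing on $[\bMin, \bMax)$, ranging through $\bigl(0, (2+m)/(2+2m)\bigr]$, the right-hand side above is strictly increasing in $b$, so $u$ is strictly decreasing. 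Evaluating at $\psi = (2+m)/(2+2m)$ the second term telescopes to $m/(2+2m)$, giving $u(\bMin)^{1+m/2} = (2+2m)/(4+3m)$, which recovers the upper bound with the claimed equality. Letting $\psi \to 0^+$ yields $u^{1+m/2} \to 1/2$, which identifies the lower limit as $2^{-2/(2+m)}$, and since this value is not attained on the half-open interval, the inequality is strict.

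The monotonicity of $\dOpt$ then follows from $\dOpt = t_0/u$: by~\eqref{eq:def-t_0-explicit-formula} the function $t_0(b,m)$ is strictly increasing in $b \in [\bMin, \bMax)$ (since $2b^{-m/2}-1$ strictly decreases to $0$), and $1/u$ is strictly increasing by the previous step, so $\dOpt$ is a product of two positive strictly increasing functions.

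The main obstacle is purely algebraic: reaching the clean expression involving $\psi^{1+2/m}$ requires carefully juggling the exponents $2/m$, $1+2/m$, $1+m/2$, and $2/(2+m)$, and matching factors such as $2(1+m)/(2+m)$ against $(2+m)/(2+2m)$ at the endpoint. The key observation is the identity $2\phi/(1+\phi) = 2 - b^{m/2}$, which consolidates the remaining $b$-dependence into a single monomial in $\psi$; without it the monotonicity claim looks opaque, but once the simplified formula is in hand, all three assertions (monotonicity of $u$ and of $\dOpt$, together with the two boundary values) follow immediately.
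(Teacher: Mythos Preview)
Your proof is correct and follows essentially the same route as the paper: both substitute the identity $2(1+m)(2b^{-m/2}-1) = (2+m)\,t_0^{-m/2}$ into the defining relation~\eqref{eq:fixed-m-supremum-W-optimal-curve} and divide through by $(2+m)\,t_0$ to isolate $u^{-(1+m/2)}$, then read off monotonicity and the two endpoint values. The only cosmetic difference is that the paper parameterizes the remaining term by $b/t_0$ (noting $(b/t_0)^{m/2} = \tfrac{2(1+m)}{2+m}(2-b^{m/2})$), whereas you parameterize directly by $\psi = 2-b^{m/2}$; since these are monotone functions of one another, the arguments are equivalent, and your more explicit closed form makes the endpoint evaluations slightly cleaner.
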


\begin{proof}
	The expression for  $t_0(\bMin)/\dOpt(\bMin)$ 
	follows from Lemma~\ref{lem:lower-bound-fixed-m-special-b-c-d-boundary-bMin}.
	The definition of $t_0$ (see~\eqref{eq:def-t_0-explicit-formula}) 
	can be rewritten as
	\[
	2(1+m) (2b^{-m/2}  -1 ) = (2+m)  t_0(b)^{-m/2}.
	\]
	Hence, if $b\to \bMax = 2^{2/m}$, then $t_0(b) \to \infty$ and $b/t_0(b)\to 0$.
	Moreover, by multiplying both sides by $b^{m/2}$ we see that the function $b\mapsto b/t_0(b)$ is decreasing.
	Substituting the above expression for $2(1+m)(2b^{-m/2} - 1)$
	into~\eqref{eq:fixed-m-supremum-W-optimal-curve} we arrive at:
	\begin{equation*}
	\frac{ \dOpt(b)^{1+m/2}}{ t_0(b)^{m/2}}
	= -\frac{mb^{1+m/2}}{2(1+m) t_0(b)^{m/2}}   
	+ 2 t_0(b,m). 
	\end{equation*}
	If we divide both sides by $t_0(b)$ and let $b\to \bMax$, we can conclude that 
	\[
	\lim_{b\to\bMax}\frac{t_0(b)}{\dOpt(b)}
	= 2^{-2/(2+m)}.
	\]
	and moreover the convergence is monotone (decreasing).
	In particular, since $t_0(b)$ is increasing, $\dOpt(b)$ also has to be increasing.
\end{proof}

\begin{lemma}
	\label{lem:aux-dOpt-for-bMin}
	For $m\in\NN$,
	\[
	\frac{2+m}{m}\Bigl(\frac{4+3m}{2+2m}\Bigr)^{m/(2+m)} - \frac{2+m}{m} \geq \frac{1}{2} \geq
	\frac{m}{2+m}\Bigl(\frac{2+3m}{2+2m}\Bigr)^{(2+m)/m} - \frac{m}{2+m}.
	\]
\end{lemma}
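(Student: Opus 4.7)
The plan is to reduce both inequalities to the sign analysis of a single one-variable function. First I would make the substitution $y = 1/(1+m)$, so that $m/(2+m) = (1-y)/(1+y)$, $(2+m)/m = (1+y)/(1-y)$, $(4+3m)/(2+2m) = (3+y)/2$ and $(2+3m)/(2+2m) = (3-y)/2$; for $m\in\NN$ this gives $y\in(0,1/2]$. After rearrangement, the left inequality becomes $\bigl((3+y)/2\bigr)^{(1-y)/(1+y)} \geq (3+y)/(2(1+y))$, and the right one becomes $\bigl((3-y)/2\bigr)^{(1+y)/(1-y)} \leq (3-y)/(2(1-y))$. Setting $x = (3+y)/2 \in (3/2, 7/4]$ in the first and $x = (3-y)/2 \in [5/4, 3/2)$ in the second, both reduce to comparing $x^{(2-x)/(x-1)}$ with $x/(2(x-1))$ --- one inequality in each direction.

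Next, taking logarithms, both inequalities amount to determining the sign of
\[
F(x) \coloneqq \frac{3-2x}{x-1}\ln x + \ln\bigl(2(x-1)\bigr), \qquad x>1,
\]
namely $F \geq 0$ on $[3/2, 2]$ (covering the first inequality) and $F \leq 0$ on $(1, 3/2]$ (covering the second). A direct plug-in yields $F(3/2) = F(2) = 0$, so what remains is to rule out further sign changes in the relevant intervals.

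To that end I would differentiate and obtain
\[
F'(x) = \frac{G(x)}{(x-1)^2}, \qquad G(x) \coloneqq -\ln x - x + 4 - \frac{3}{x}.
\]
Since $G(1)=0$ and $G'(x) = -(x^2+x-3)/x^2$, the function $G$ is unimodal on $(1,\infty)$: strictly increasing up to $x_* = (-1+\sqrt{13})/2 \approx 1.30$ and strictly decreasing thereafter. The elementary numerical checks $G(3/2) = \tfrac{1}{2} - \ln(3/2) > 0$ and $G(2) = \tfrac{1}{2} - \ln 2 < 0$ then pin down a unique additional zero $x_0 \in (3/2, 2)$, with $G > 0$ on $(1, x_0)$ and $G < 0$ on $(x_0, \infty)$. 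Hence $F$ is strictly increasing on $(1, x_0)$ and strictly decreasing on $(x_0, \infty)$; combined with $F(3/2) = F(2) = 0$ and $3/2 < x_0 < 2$, this forces $F \leq 0$ on $(1, 3/2]$ and $F \geq 0$ on $[3/2, 2]$, which is exactly what is needed.

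The only real obstacle is spotting the unifying substitution $y = 1/(1+m)$ together with $x = (3\pm y)/2$, which collapses what looks like two separate inequalities into one sign question for $F$. Once in place, the derivatives are short, the numerical checks $G(3/2) > 0 > G(2)$ are safely bounded away from zero, and the remaining sign analysis is a routine elementary calculus argument with no delicate constants.
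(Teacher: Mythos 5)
Your proof is correct, but it takes a genuinely different route from the paper's. The paper proves each side separately: it rearranges the left inequality to the form $4+2m \geq (4+3m)^{2/(2+m)}(2+2m)^{m/(2+m)}$ and the right one to $2+2m \geq (2m)^{m/(2+m)}(2+3m)^{2/(2+m)}$, each of which is exactly a weighted AM--GM inequality (the left side is the weighted arithmetic mean of the two factors with weights $2/(2+m)$ and $m/(2+m)$); the proof is then one line via concavity of $\ln$. Your approach is to reparametrize via $y=1/(1+m)$ and then $x=(3\pm y)/2$, which cleverly collapses the two inequalities into a single sign question for $F(x)=\tfrac{3-2x}{x-1}\ln x + \ln(2(x-1))$ on $(1,2]$, resolved by noting $F(3/2)=F(2)=0$ and that $F'$ changes sign exactly once between them. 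I checked the computation of $F'(x)=G(x)/(x-1)^2$ with $G(x)=-\ln x - x + 4 - 3/x$, the unimodality of $G$ (peak at $x_*=(-1+\sqrt{13})/2\approx1.30$, with $G(1)=0$), and the sign checks $G(3/2)=\tfrac12-\ln\tfrac32>0>G(2)=\tfrac12-\ln 2$; all are correct, and the reduction of both original inequalities to the sign of $F$ on the subintervals $(3/2,7/4]$ and $[5/4,3/2)$ is accurate. What each approach buys: the paper's argument is much shorter and requires no calculus, but treats the two inequalities as unrelated; yours is longer and more computational but exposes a genuine symmetry between them (the same function $F$, with the cases $m\in\NN$ sitting on opposite sides of the common zero $x=3/2$, i.e.\ the degenerate case $m=\infty$), which is a nice structural observation the paper does not make.
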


\begin{proof}
	The left-hand side inequality can be rewritten as 
	\[
	\Bigl(\frac{4+3m}{2+2m}\Bigr)^{m/(2+m)}
	\geq \frac{1}{2}\cdot \frac{m}{2+m} + 1 = \frac{4+3m}{4+2m},
	\]
	which is equivalent to
	\[
	4+2m \geq (4+3m)^{2/(2+m)} (2+2m)^{m/(2+m)},
	\]
	which is true by the concavity of the logarithmic function since
	\[
	4+2m = \frac{2}{2+m}\cdot(4+3m) + \frac{m}{2+m} \cdot  (2+2m).
	\]
	
	Similarly,
	the right-hand side inequality can be rewritten as 
	\[
	\frac{2+3m}{2m} = \frac{1}{2}\cdot \frac{2+m}{m} + 1
	\geq 	\Bigl(\frac{2+3m}{2+2m}\Bigr)^{(2+m)/m}
	\]
	or equivalently as
	\[
	2+2m \geq (2m)^{m/(2+m)} (2+3m)^{2/(2+m)},
	\]
	which again follows by  concavity since
	\[
	2+2m = \frac{m}{2+m}\cdot(2m) + \frac{2}{2+m} \cdot  (2+3m).
	\]
	This finishes the proof of the lemma.
\end{proof}

\begin{lemma}
	\label{lem:lower-bound-fixed-m-special-b-c-d-boundary-bMin-part-2}
	For $m\in\NNone$
	and
	$d \in \bigl[\bMin(m), \dOpt(\bMin(m),m)\bigr]$,
	\begin{equation}
	\label{eq:claim-W-boundary}
	\XXXspecialbd(\bMin(m), d, m) \leq \XXXspecialbd\bigl(\bMin(m), \dOpt(\bMin(m),m), m\bigr).
	\end{equation}
\end{lemma}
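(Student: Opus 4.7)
The plan is to show that $\phi(d) := \XXXspecialbd(\bMin(m), d, m)$ is non-decreasing on the interval $[\bMin(m), \dOpt(\bMin(m), m)]$, from which the conclusion follows by evaluating at the right endpoint. Note that $\dOpt(\bMin)$ is not in general an interior critical point of $\phi$; rather, the statement amounts to saying that the unique critical point of $\phi$ (if any) lies at or beyond $\dOpt(\bMin)$.

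First I would substitute $b=\bMin(m)$ into \eqref{eq:a=1-b=c-would-like-to-optimize-technical}. Using Lemma~\ref{lem:lower-bound-fixed-m-special-b-c-d-boundary-bMin} (in particular $t_0(\bMin)=\bMin$ and $2\bMin^{-m/2}-1 = (2+m)/(2+3m)$), this rewrites as $\phi(d) = (d-1)/D(d)$, where
\[
D(d) = \frac{m + 4\bMin}{2+m} - \frac{2+m}{m}d + \frac{4(1+m)}{m(2+3m)}\,d^{1+m/2}.
\]

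Next I would analyze $\phi'(d) = N(d)/D(d)^2$, where $N(d) := D(d) - (d-1)D'(d)$. A direct calculation gives $D''(d) = \frac{(1+m)(2+m)}{2+3m}\,d^{m/2-1} > 0$, so $N'(d) = -(d-1)D''(d) < 0$ for $d>1$; in particular $N$ is strictly decreasing. It therefore suffices to verify the single inequality $N(\dOpt(\bMin,m)) \geq 0$, since then $N \geq 0$ on the entire interval, $\phi$ is non-decreasing there, and the claim follows. (Positivity of $D$ at the left endpoint is elementary: from $\frac{2(1+m)}{2+3m}\bMin^{m/2} = 1$ one gets $D'(\bMin) = 0$, and a short computation yields $D(\bMin) = (m + (2-m)\bMin)/(2+m)$, which is positive since $\bMin \leq \bMax = 2^{2/m} < m/(m-2)$ for $m\geq 3$ and trivially for $m\in\{1,2\}$.)

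For the key inequality $N(\dOpt(\bMin)) \geq 0$, I would plug in the closed form $\dOpt(\bMin) = \bMin\cdot\bigl(\tfrac{4+3m}{2+2m}\bigr)^{2/(2+m)}$ from \eqref{eq:dOpt-for-bMin}, together with the identity $\dOpt(\bMin)^{1+m/2} = \bMin\cdot\tfrac{(4+3m)(2+3m)}{4(1+m)^2}$, which one obtains by combining \eqref{eq:dOpt-for-bMin} with Lemma~\ref{lem:lower-bound-fixed-m-special-b-c-d-boundary-bMin}. This reduces the claim to a concrete inequality in $m$ alone; I expect it to be tractable via concavity-of-logarithm or AM--GM-type arguments in the spirit of Lemma~\ref{lem:aux-dOpt-for-bMin}. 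This last step is the main obstacle: the resulting elementary inequality is delicate enough that proving it uniformly for all $m\in\NN$ will likely require careful algebraic manipulation or case analysis, since the irrational factor $\bigl(\tfrac{4+3m}{2+2m}\bigr)^{2/(2+m)}$ entering through $\dOpt(\bMin)^{m/2}$ does not cleanly combine with the rational terms that also appear in $N(\dOpt(\bMin))$.
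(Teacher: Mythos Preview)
Your approach is essentially identical to the paper's: write $\phi(d)=(d-1)/D(d)$, observe $N(d)=D(d)-(d-1)D'(d)$ is strictly decreasing on $(1,\infty)$, and reduce to checking $N(\dOpt(\bMin))\geq 0$ by substituting the closed form~\eqref{eq:dOpt-for-bMin}. The final step you flag as the main obstacle is less delicate than you fear: after the substitutions you describe (the paper carries them out explicitly, grouping the $d^{1+m/2}$ term with the $\bMin$ term and the $d^{m/2}$ term separately), the inequality $N(\dOpt(\bMin))\geq 0$ becomes \emph{exactly} the statement of Lemma~\ref{lem:aux-dOpt-for-bMin}, which is then proved by two applications of the concavity of $\ln$.
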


\begin{proof} 
	From Lemma~\ref{lem:lower-bound-fixed-m-special-b-c-d-boundary-bMin} and \eqref{eq:def-t_0-explicit-formula} we 
	we know that
	$t_0(\bMin) = \dMin(\bMin) = \bMin$. 
	Recall also that
	$\bMin^{-m/2} = \frac{2+2m}{2+3m}$ 
	and
	\[
	\frac{4(1+m)}{m(2+m)}(2\bMin^{-m/2}  -1) = 	\frac{4(1+m)}{m(2+3m)}. 
	\]
	It follows that
	\[
	\XXXspecialbd(\bMin,d,m)= \frac{d-1}{
		\frac{4(1+m)}{m(2+3m)}d^{1+m/2} -\frac{2+m}{m}d + \frac{m}{2+m} + \frac{4}{2+m} \bMin(m)}.
	\]
	In order to prove the inequality~\eqref{eq:claim-W-boundary} 
	we claim that
	\begin{equation}
	\label{eq:claim-dW-dd-boundary}
	\frac{\partial }{\partial d} \XXXspecialbd(\bMin, d , m) \geq 0  \qquad\text{ for } \quad d \in [\bMin, \dOpt(\bMin)].
	\end{equation} 	 
	Indeed, $\frac{\partial }{\partial d} \XXXspecialbd(\bMin, d , m)$
	has the same sign as
	\begin{align*}
	&\frac{4(1+m)}{m(2+3m)}d^{1+m/2} -\frac{2+m}{m}d + \frac{m}{2+m} + \frac{4}{2+m} \bMin\\
	&\qquad \qquad \quad -(d-1)\cdot\Bigl(\frac{4(1+m)}{m(2+3m)}(1+m/2)d^{m/2} -\frac{2+m}{m}\Bigr)
	\\
	&\quad= \frac{4(1+m)}{m(2+3m)}\Bigl[ - \frac{m}{2} d^{1+m/2} 
	+ (1+m/2)d^{m/2} \Bigr]
	+ \frac{m}{2+m} + \frac{4}{2+m} \bMin
	- \frac{2+m}{m}.
	\end{align*}
	Since the expression in the square brackets decreases in $d$ (for $d>1$),
	in order to prove \eqref{eq:claim-dW-dd-boundary} we need only to check that it holds for $d = \dOpt(\bMin)$.
	Recalling first~\eqref{eq:dOpt-for-bMin}
	and using the formula for $\bMin$ a couple of times we see that
	\begin{align*}
	\MoveEqLeft[4]
	-\frac{4(1+m)}{m(2+3m)}\cdot\frac{m}{2}\dOpt(\bMin)^{1+m/2} 
	+ \frac{4}{2+m} \bMin \\
	&=-\frac{2(1+m)}{2+3m}\cdot
	\frac{4+3m}{2(1+m)}\bMin^{1+m/2} 
	+ \frac{4}{2+m} \cdot\frac{2(1+m)}{2+3m}\bMin^{1+m/2}\\
	&=\Bigl( - \frac{4+3m}{2+3m} + \frac{8(1+m)}{(2+m)(2+3m)}\Bigr)\bMin^{1+m/2} \\
	&= -\frac{m}{2+m}\bMin^{1+m/2}
	= -\frac{m}{2+m}\Bigl(\frac{2+3m}{2+2m}\Bigr)^{(2+m)/m} .
	\end{align*}
	On the other hand, it follows from~\eqref{eq:dOpt-for-bMin}
	and the formula for $\bMin$,
	that
	\[
	\dOpt(\bMin)^{m/2} = \frac{2+3m}{2(1+m)} \Bigl( \frac{4+3m}{2+2m}\Bigr)^{m/(2+m)}
	\]
	and thus
	\begin{equation*}
	\frac{4(1+m)}{m(2+3m)}(1+m/2)\dOpt(\bMin)^{m/2}
	=\frac{2+m}{m}\Bigl(\frac{4+3m}{2+2m}\Bigr)^{m/(2+m)} .
	\end{equation*}
	All in all, in order to finish the proof of~\eqref{eq:claim-dW-dd-boundary}
	it suffices to check whether
	\[
	-\frac{m}{2+m}\Bigl(\frac{2+3m}{2+2m}\Bigr)^{(2+m)/m} + \frac{2+m}{m}\Bigl(\frac{4+3m}{2+2m}\Bigr)^{m/(2+m)}
	+ \frac{m}{2+m} - \frac{2+m}{m} \geq 0.
	\]
	This inequality is true by Lemma~\ref{lem:aux-dOpt-for-bMin}.
	This finishes the proof of the lemma.
\end{proof}
The main result of this subsection reads as follows.

\begin{proposition}
	\label{prop:lower-bound-fixed-m-special-b-c-d}
	For $m\in\NNone$,
	\begin{align*}
	\weakconstrestricted{\Lambda_m}{\fclassspec{m}} 
	&=  
	\sup\bigl\{
	\XXXspecialbd(b,d,m) : (b,d)\in\specialbd{m} \bigr\}\\
	&= 	\sup\bigl\{
	\XXXspecialbd(b,\dOpt(b,m),m) : \bMin(m) \leq b < \bMax(m) \bigr\}.	
	\end{align*}
\end{proposition}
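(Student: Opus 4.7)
The first equality is the content of Lemma~\ref{lem:lower-bound-fixed-m-special-b-c-d-alternate-expression}. For the second equality, my plan is to swap the order of optimization: instead of looking at $\XXXspecialbd(b,d,m) = (d-1)/N(b,d)$ as a function of $d$ for fixed $b$, fix $d$ and minimize the denominator $N(b,d) = \|f\|_{L^1} > 0$ over the admissible $b$'s. I will then show that the minimizing $b$ satisfies $d = \dOpt(b,m)$, so after reparametrization the two-variable sup reduces to a sup over $b$ along the curve $d = \dOpt(b,m)$.

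The key step is to compute $\partial_b N$ and recognize the critical-point equation as the defining relation for $\dOpt$. Using~\eqref{eq:def-t_0-explicit-formula} and logarithmic differentiation, one obtains
\begin{equation*}
\partial_b t_0(b,m) = \frac{4(1+m)\, t_0(b,m)^{1+m/2}}{(2+m)\, b^{1+m/2}},
\end{equation*}
and combining this with the explicit formula for $N$ yields
\begin{equation*}
\partial_b N(b,d) = -\frac{2m}{2+m} + \frac{4(1+m)}{(2+m)\, b^{1+m/2}}\bigl[2 t_0(b,m)^{1+m/2} - d^{1+m/2}\bigr].
\end{equation*}
Setting $\partial_b N = 0$ simplifies to $d^{1+m/2} = 2 t_0(b,m)^{1+m/2} - \tfrac{m\, b^{1+m/2}}{2(1+m)}$. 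Since $2(1+m)(2b^{-m/2}-1) = (2+m)/t_0(b,m)^{m/2}$ (as a consequence of~\eqref{eq:def-t_0-explicit-formula}), this is precisely equation~\eqref{eq:fixed-m-supremum-W-optimal-curve}. Thus the interior critical point of $b \mapsto N(b, d)$ is $b = \dOpt^{-1}(d)$, which is well-defined by the monotonicity of $b \mapsto \dOpt(b, m)$ established in Lemma~\ref{lem:tZero-divied-by-dOpt-monotone}.

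To confirm the critical point is a minimum, I would argue by signs: for $b < \dOpt^{-1}(d)$, monotonicity gives $\dOpt(b) < d$, hence $d^{1+m/2} > 2 t_0^{1+m/2} - \tfrac{m\, b^{1+m/2}}{2(1+m)}$ and $\partial_b N(b,d) < 0$; the reverse inequality holds above the critical point. Moreover $\dOpt^{-1}(d)$ lies strictly inside the admissible range $(\dMax^{-1}(d), \dMin^{-1}(d))$ because $\dMin(b) < \dOpt(b) < \dMax(b)$ on $[\bMin, \bMax)$ (Lemma~\ref{lem:dOpt-definition} together with~\eqref{eq:claim-dOpt-for-bMin} at the left endpoint). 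Consequently $\sup_b \XXXspecialbd(b, d, m) = \XXXspecialbd(\dOpt^{-1}(d), d, m)$ whenever $d$ lies in the range of $\dOpt$, and reparametrizing via $d = \dOpt(b, m)$ gives the second equality on $b \in (\bMin, \bMax)$. For admissible $d$'s below $\dOpt(\bMin, m)$ the formal critical $b$ would fall below $\bMin$, so the sup over admissible $b$ is attained at the left boundary $b = \bMin$; that $\XXXspecialbd(\bMin, \cdot)$ is in turn maximized at $d = \dOpt(\bMin, m)$ (and not at smaller $d$) is exactly the content of Lemma~\ref{lem:lower-bound-fixed-m-special-b-c-d-boundary-bMin-part-2}, which justifies including $b = \bMin$ in the parameter set on the right-hand side.

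The main obstacle is the algebraic identification of the equation $\partial_b N = 0$ with the defining relation~\eqref{eq:fixed-m-supremum-W-optimal-curve} for $\dOpt$, and the verification (via Lemma~\ref{lem:dOpt-definition}) that the critical point always lies in the admissible region; once these are in hand, the rest is routine first-order optimization combined with the monotonicity from Lemma~\ref{lem:tZero-divied-by-dOpt-monotone} and the boundary analysis from Lemma~\ref{lem:lower-bound-fixed-m-special-b-c-d-boundary-bMin-part-2}.
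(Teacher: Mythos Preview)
Your proposal is correct and follows essentially the same approach as the paper: fix $d$, minimize the denominator over $b$, identify the critical-point equation with the defining relation~\eqref{eq:fixed-m-supremum-W-optimal-curve} for $\dOpt$, use the monotonicity of $\dOpt$ from Lemma~\ref{lem:tZero-divied-by-dOpt-monotone} to determine the sign of $\partial_b N$ on either side, and handle the boundary segment $d\in[\bMin,\dOpt(\bMin)]$ via Lemma~\ref{lem:lower-bound-fixed-m-special-b-c-d-boundary-bMin-part-2}. Your formula for $\partial_b t_0$ is an algebraically equivalent rewriting of the paper's $\partial_b t_0 = 2t_0 b^{-1-m/2}/(2b^{-m/2}-1)$ (obtained by substituting $2b^{-m/2}-1 = (2+m)/(2(1+m)t_0^{m/2})$), and the rest of the argument matches the paper's line by line.
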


\begin{proof}
	Because of Lemma~\ref{lem:lower-bound-fixed-m-special-b-c-d-alternate-expression}
	we only need to prove the second equality.
	In order to maximize the function $\XXXspecialbd(b,d,m)$
	over $(b,d) \in \specialbd{m}$
	we analyze its behaviour when $d$ is fixed
	(dotted lines in Figure~\ref{fig:m-fixed-optimal-curve}).
	Since the numerator in the formula defining $\XXXspecialbd$ does not depend on $b$,
	we would like to  minimize the function 
	\[
	b \mapsto \XXXspecialbdDenominator(b) \coloneqq \XXXspecialbdDenominator(b,d,m) = - \frac{2m}{2+m}b +
	\frac{4(1+m)}{m(2+m)}(2b^{-m/2}  -1)d^{1+m/2} 
	+2t_0(b,m)
	\]
	(we treat $d$ and $m$ as fixed parameters and suppress them in notation).
	We proceed in a standard way, by checking when $\frac{\partial }{\partial b} \XXXspecialbdDenominator(b) = 0$ and what happens on the boundary.
	It follows from the formula for $t_0(b,m)$ (see~\eqref{eq:def-t_0-explicit-formula})
	that
	\[
	\frac{\partial}{\partial b} t_0(b,m) = \frac{2t_0(b,m) b^{-1-m/2}}{2 b^{-m/2} -1 }
	\]
	and thus
	\begin{align*}
	\frac{\partial}{\partial b} \XXXspecialbdDenominator(b)
	&= - \frac{2m}{2+m}
	- \frac{4(1+m)}{2+m}b^{-1-m/2}  d^{1+m/2} 
	+\frac{4t_0(b,m) b^{-1-m/2}}{2 b^{-m/2} -1 }\\
	&=   \frac{2b^{-1-m/2} 
	}{(2+m)(2 b^{-m/2} -1)}
	\cdot
	\Bigl( 
	-m(2 b^{-m/2} -1)b^{1+m/2} \\
	&\qquad\quad +2(2+m) t_0(b,m) - 2(1+m)(2 b^{-m/2} -1)d^{1+m/2}	\Bigr).
	\end{align*}		
	It follows directly from Lemma~\ref{lem:dOpt-definition} that
	$\frac{\partial }{\partial b} \XXXspecialbdDenominator(b,d,m) =0 $ if and only if  $d=\dOpt(b,m)$,
	$\frac{\partial }{\partial b} \XXXspecialbdDenominator(b,d,m) \geq 0$
	if $d = \dMin(b,m)$,
	and $\frac{\partial }{\partial b} \XXXspecialbdDenominator(b) \leq 0$
	if $d = \dMax(b,m)$.
	By Lemma~\ref{lem:tZero-divied-by-dOpt-monotone} 
	the function
	$b\mapsto \dOpt(b)$ is increasing 
	(and by Lemma~\ref{lem:dOpt-definition} its graph for $b\in(\bMin, \bMax)$ is contained in the set $\specialbd{m}$, cf.\ Figure~\ref{fig:m-fixed-optimal-curve}).
	Hence:
	\begin{itemize}
		\item $ \frac{\partial }{\partial b} \XXXspecialbdDenominator(b,d,m) \geq 0$ for $(b,d)\in\specialbd{m}$ 
		such that $d \leq \dOpt(b)$,
		\item $ \frac{\partial }{\partial b} \XXXspecialbdDenominator(b,d,m) \leq 0$ for $(b,d)\in\specialbd{m}$  such that $d \geq \dOpt(b)$.
	\end{itemize}
	
	Thus on each horizontal line $d= \text{const.} \in [\dOpt(\bMin), \infty)$
	the infimum of $\XXXspecialbdDenominator(b)$ is attained when
	$\frac{\partial }{\partial  b} \XXXspecialbdDenominator(b) = 0$ (cf.\ Figure~\ref{fig:m-fixed-optimal-curve}).
	On the other hand,
	on each horizontal line $d= \text{const.} \in [\bMin, \dOpt(\bMin)]$
	the infimum of $\XXXspecialbdDenominator(b)$ is attained  when $b$ is as small as possible (cf.\ Figure~\ref{fig:m-fixed-optimal-curve}).
	In order to finish the proof we invoke Lemma~\ref{lem:lower-bound-fixed-m-special-b-c-d-boundary-bMin-part-2}.   
\end{proof}

For reasons which will become apparent later, we  need one more technical result.
Denote $\bSh \coloneqq 7^{2/3}$
and
\begin{equation}
\label{eq:def-bMaxTilde}
\bMaxTilde = \bMaxTilde(m)
\coloneqq
\begin{cases}
\bMax(m) & \text{ if } m\geq 2,\\
\bSh & \text{ if } m = 1.
\end{cases}
\end{equation}
Observe that, for $m=1$,
$\bMin(1) = (5/4)^2 < 7^{2/3} < 4 = \bMax(1)$.

\begin{lemma}
	\label{lem:m=1-optimal-curve-far-end}
	For $m=1$,
	\begin{equation*}
	\weakconstrestricted{\Lambda_1}{\fclassspec{1}} 
	= 	\sup\bigl\{
	\XXXspecialbd(b,\dOpt(b,1),1) : \bMin(1) \leq b \leq \bMaxTilde(1) \bigr\}.	
	\end{equation*}
\end{lemma}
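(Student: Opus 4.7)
The plan is to invoke Proposition~\ref{prop:lower-bound-fixed-m-special-b-c-d} to reduce matters to the one-dimensional problem of maximizing $b\mapsto \XXXspecialbd(b, \dOpt(b,1), 1)$ on $[\bMin(1), \bMax(1)) = [25/16, 4)$, and then to show that the supremum over the tail interval $[\bSh, 4)=[7^{2/3},4)$ does not exceed the value already attained at some point of $[\bMin(1), \bSh]$. Concretely, I would first compute an explicit lower bound
\[
L \coloneqq \XXXspecialbd\bigl(\bMin(1), \dOpt(\bMin(1),1), 1\bigr)
\]
using the closed-form expressions from Lemma~\ref{lem:lower-bound-fixed-m-special-b-c-d-boundary-bMin} (for $m=1$ one has $\bMin(1)=25/16$, $\dOpt(\bMin(1),1) = (25/16)(7/4)^{2/3}$, and $t_0(\bMin(1),1)=\bMin(1)=25/16$). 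This yields a concrete numerical value $L>1$ which also bounds $\weakconstrestricted{\Lambda_1}{\fclassspec{1}}$ from below.

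Next, I would establish the uniform bound
\[
\sup\bigl\{ \XXXspecialbd(b, \dOpt(b,1), 1) : b\in[\bSh, 4)\bigr\} \leq L.
\]
To handle the whole tail, including the limit $b\to 4^-$, I would eliminate $\dOpt$ using the defining identity~\eqref{eq:fixed-m-supremum-W-optimal-curve}, i.e., for $m=1$,
\[
4(2b^{-1/2}-1)\dOpt^{3/2} = -(2b^{-1/2}-1)b^{3/2} + 6t_0(b,1),
\]
and use~\eqref{eq:def-t_0-explicit-formula} to express $t_0(b,1)=\tfrac{9}{16}(2b^{-1/2}-1)^{-2}$. Substituting into the definition of $\XXXspecialbd$ gives a single-variable rational--algebraic function $\Phi(b) \coloneqq \XXXspecialbd(b, \dOpt(b,1), 1)$ of $b$ on $(\bMin(1),\bMax(1))$, and a convenient reparametrization is $u=b^{1/2}\in[5/4,2)$, so that $2b^{-1/2}-1 = (2-u)/u$. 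Under this change of variables, $t_0 = \tfrac{9}{16}u^2/(2-u)^2$ and $\dOpt$ is explicit as a power of a rational function of $u$; this makes both $\Phi(b)-1$ and the denominator of $\Phi(b)$ elementary expressions in $u$.

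The main obstacle, and the reason the distinguished value $\bSh = 7^{2/3}$ (equivalently $u=7^{1/3}$) is imposed, is that on the full range $u\in[5/4,2)$ the function $\Phi$ is not monotone, but on the \emph{tail} $u\in[7^{1/3}, 2)$ one can show either monotonicity of $\Phi$ or an explicit algebraic upper bound. Specifically, I would compute the sign of $\Phi'(u)$ on $[7^{1/3}, 2)$ by clearing denominators: after multiplying by the positive factor $(2-u)^{k}$ for some integer $k$, the condition $\Phi'(u)\leq 0$ reduces to a polynomial inequality in $u$ which is verifiable elementarily on $[7^{1/3},2)$. The choice $u_0 = 7^{1/3}$ should correspond exactly to a root (or a convenient majorant of a root) of that polynomial, i.e., the place where a tidy numerical bound becomes available; this explains the otherwise mysterious $7^{2/3}$. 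From monotonic decrease of $\Phi$ on $[7^{1/3},2)$ together with the explicit inequality $\Phi(7^{1/3})\leq L$ (verified by direct numerical evaluation using the formulas above), the claim follows.

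Finally, I would combine this tail estimate with Proposition~\ref{prop:lower-bound-fixed-m-special-b-c-d} to conclude
\[
\weakconstrestricted{\Lambda_1}{\fclassspec{1}}
= \sup_{b\in[\bMin(1), 4)}\Phi(b)
= \sup_{b\in[\bMin(1), \bSh]}\Phi(b),
\]
which is the asserted equality. The clean conceptual steps are straightforward; what requires care is the algebraic verification of the tail-monotonicity inequality on $[7^{1/3},2)$ and the numerical comparison $\Phi(7^{1/3})\leq L$, both of which are finite and elementary but not illuminating to write out in detail.
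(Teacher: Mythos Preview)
Your architecture is right—invoke Proposition~\ref{prop:lower-bound-fixed-m-special-b-c-d} and then show the tail $[\bSh,4)$ is irrelevant—but the tail step contains a genuine error. Your threshold $L=\Phi(\bMin(1))$ is too small: with $\dOpt(\bMin(1),1)=(25/16)(7/4)^{2/3}\approx2.27$ one computes $\Phi(\bMin(1))\approx1.18$, whereas $\dOpt(\bSh,1)>430$ and a direct evaluation gives $\Phi(\bSh)\approx1.27$. Hence the inequality $\Phi(\bSh)\le L$ you plan to verify is \emph{false}. In addition, the plan to reduce $\Phi'\le0$ to a polynomial inequality cannot work as stated, since $\dOpt$ is the $2/3$-power of a rational function of $u$ and enters the numerator of $\Phi$ linearly; and $\bSh=7^{2/3}$ is not a root of any such derivative—it arises from the adjoint correspondence, being the $b$ for which $\bStarWvsWStar(b)=t_0(b)/\dOpt(b)$ equals $\bShStar$ (see the end of the proof of Remark~\ref{rem:W-vs-W-star}).

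The paper avoids both issues. For the threshold it simply uses $1.3$, already available from Theorem~\ref{thm:main-lower-bounds}. For the tail bound it does not attempt monotonicity of $\Phi$ at all: after using~\eqref{eq:fixed-m-supremum-W-optimal-curve} to rewrite the denominator as $\tfrac13-3\dOpt-2b+\tfrac23 b^{3/2}+6t_0$, two one-line estimates suffice, namely $\tfrac13-2b+\tfrac23 b^{3/2}\ge-7/3$ on $[(5/4)^2,4]$ and $t_0\ge2^{-2/3}\dOpt$ from Lemma~\ref{lem:tZero-divied-by-dOpt-monotone}. This yields $\Phi(b)\le(\dOpt-1)\big/\bigl((6\cdot2^{-2/3}-3)\dOpt-\tfrac73\bigr)$, a M\"obius function that is decreasing in $\dOpt$ (since $7/3>6\cdot2^{-2/3}-3$). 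As $\dOpt$ is increasing in $b$, plugging in $\dOpt(\bSh)>430$ gives a value below $1.3$, finishing the argument.
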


\begin{proof}
	From the proof of Theorem~\ref{thm:main-lower-bounds} we know that 	$\weakconstrestricted{\Lambda_1}{\fclassspec{1}} \geq 1.3$.
	Due to Proposition~\ref{prop:lower-bound-fixed-m-special-b-c-d} we only
	need to bound
	\[
	\XXXspecialbd(b,\dOpt(b,1),1)
	= \frac{\dOpt(b,1)-1}{ \frac{1}{3} - 3 \dOpt(b,1) - \frac{2}{3}b +
		\frac{8}{3}(2b^{-1/2}  -1) \dOpt(b,1)^{3/2} 
		+2t_0(b,1)}	
	\]
	for $b\in [\bSh, \bMax(1))$. 
	It follows from~\eqref{eq:fixed-m-supremum-W-optimal-curve} that
	\[
	\frac{8}{3}(2b^{-1/2}  -1) \dOpt(b)^{3/2} = -\frac{2}{3}(2b^{-1/2}  -1) b^{3/2} + 4t_0(b) 
	=
	-\frac{4}{3}b  +\frac{2}{3} b^{3/2} + 4t_0(b). 
	\]
	Thus
	\[
	\XXXspecialbd(b,\dOpt(b),1)
	= \frac{\dOpt(b)-1}{ \frac{1}{3} - 3 \dOpt(b) - 2b +
		\frac{2}{3}b^{3/2} + 6 t_0(b)}.	
	\]
	It is straightforward to see that 
	$\inf\{ 1/3 - 2b +(2/3) b^{3/2} : (5/4)^2 \leq b \leq 4\} = -7/3$.
	Moreover,  $t_0(b) \geq 4^{-1/3} \dOpt(b)$ by Lemma~\ref{lem:tZero-divied-by-dOpt-monotone}\
	(with $m=1$).
	Hence, for $b\in [\bSh, \bMax(1))$,
	\begin{align*}
	\XXXspecialbd(b,\dOpt(b),1)
	\leq \frac{\dOpt(b)-1}{ (6\cdot 4^{1/3}- 3 )\dOpt(b) - \frac{7}{3}}
	\leq
	\frac{\dOpt(\bSh)-1}{ (6\cdot 4^{1/3}- 3 )\dOpt(\bSh) - \frac{7}{3}}
	\leq 1.3,	
	\end{align*}
	where the second inequality  follows from monotonicity of $\dOpt(b)$ (see Lemma~\ref{lem:tZero-divied-by-dOpt-monotone})
	-- note that $7/3 > 6\cdot 4^{1/3}- 3$ and $\dOpt(\bSh) = \dOpt(7^{2/3},1) > 430$.
\end{proof}

\subsection{Adjoint: additional technical calculations}

This subsection is devoted to proving an enhanced version of Lemma~\ref{lem:lower-bound-fixed-m-special-b-c-d-alternate-expression-with-inequality-star}
(see Proposition~\ref{prop:lower-bound-fixed-m-special-b-c-d-star} below).
The proof is split into several lemmas (and uses the notation introduced in Subsection~\ref{sec:guesstimates-star}).

We first provide a more general and more exact version of Lemma~\ref{lem:lower-bound-fixed-m-special-b-c-d-alternate-expression-with-inequality-star}. %

\begin{proposition}
	\label{prop:lower-bound-fixed-m-star}
	For $m\in\NNone$,
	\begin{multline*}
	\weakconstrestricted{\Lambda_m^*}{\fclassStar{m}} \\
	=
	\sup\Bigl\{
	\frac{1 - \bGainStar + \cStar - \dGainStar}{ -\frac{2+m}{m} + \frac{m}{2+m}\bStar + \frac{4(1+m)}{m(2+m)} \bStar^{-m/2} + \int^{\cStar}_{\dStar} \bigl|-\frac{m}{2+m} + \dOneStar t^{-1-m/2} \bigr| dt}
	\Bigr\},
	\end{multline*}
	where the supremum is taken over   $0 <\dStar <  \cStar \leq  \bStar < 1$ and  
	\begin{align*}
	\dOneStar &= \dOneStar(\bStar,\cStar, m) = \frac{2(1+m)\cStar^{1+m/2}}{2+m} \Bigl(1 + \Bigl(\frac{\cStar}{\bStar}\Bigr)^{m/2}\bigl(1 - \bStar^{-1-m/2}\bigr)\Bigr),\\
	\bGainStar &=\bGainStar(\bStar,\cStar,m)
	\coloneqq  \cStar\lor\Bigl[ \Bigl[\bStar \bigl( - 1 - \frac{m}{2+m}  + \frac{2(1+m)}{2+m} \bStar^{-1-m/2}\bigr) ^{-2/m}\Bigr] \land \bStar\Bigr],\\
	\dGainStar &=\dGainStar(\bStar,\cStar,\dStar,m)
	\coloneqq \dStar \land \Bigl[ \dStar \bigl|  -1 - \frac{m}{2+m}  + \dOneStar \dStar^{-1-m/2}\bigr|^{-2/m}\Bigr].	 
	\end{align*}
\end{proposition}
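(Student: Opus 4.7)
My plan is to parallel the proof of Proposition~\ref{prop:lower-bound-fixed-m}, replacing the tail-of-support analysis used there by an analysis of interior mass behavior. First I would invoke scale invariance of the weak-type constant to reduce to $\aStar = 1$; then a generic $\fStar \in \fclassStar{m}$ is determined by parameters $0 < \dStar < \cStar \leq \bStar < 1$ and takes the explicit form displayed in the statement. Write $\fZeroStar(t) = \tfrac{m}{2+m} - \tfrac{2(1+m)}{2+m} t^{-1-m/2}$ and $\fOneStar(t) = -\tfrac{m}{2+m} + \dOneStar t^{-1-m/2}$ for the two eigenfunctions used in the proof of Lemma~\ref{lem:weak-type-extremals-2-star}; $\fStar$ agrees with $\fZeroStar$ on $(\bStar, 1]$, with $\fOneStar$ on $(\dStar, \cStar]$, and vanishes elsewhere.

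Next I would identify the level set $\{t : |\Lambda_m^* \fStar(t)| \geq 1\}$. Lemma~\ref{lem:weak-type-extremals-2-star} already handles the two ``core'' intervals $(\dStar, \cStar) \cup (\bStar, 1)$ on which $|\Lambda_m^* \fStar| \equiv 1$; on $(1, \infty)$ both $\fStar$ and the tail integral vanish, so $\Lambda_m^* \fStar = 0$. The interesting contributions come from the two intervals on which $\fStar \equiv 0$ but the tail integral is not. On $(\cStar, \bStar)$, applying the identity $\int_{\bStar}^{\infty} \fZeroStar(s) s^{-1-m/2}\,ds = \tfrac{\bStar^{-m/2}}{1+m}\bigl(1 + \fZeroStar(\bStar)\bigr)$ (from the proof of Lemma~\ref{lem:weak-type-extremals-2-star}) gives
\[
\Lambda_m^* \fStar(t) = \Bigl(\tfrac{t}{\bStar}\Bigr)^{m/2}\Bigl(-1 - \tfrac{m}{2+m} + \tfrac{2(1+m)}{2+m}\bStar^{-1-m/2}\Bigr), \qquad t\in(\cStar,\bStar).
\]
The bracketed constant has a fixed sign while the factor $(t/\bStar)^{m/2}$ is strictly increasing in $t$, so $|\Lambda_m^* \fStar|$ is monotone on this interval and the condition $|\cdot| \geq 1$ collapses to a single threshold, yielding the contribution $(\bGainStar, \bStar)$ with $\bGainStar$ exactly as in the statement. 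A parallel calculation on $(0, \dStar)$, based this time on the identity $\int_{\cStar}^\infty \fStar(s) s^{-1-m/2}\,ds = \int_{\cStar}^\infty \fOneStar(s) s^{-1-m/2}\,ds$, produces $\Lambda_m^* \fStar(t) = (t/\dStar)^{m/2}\bigl(-1 - \tfrac{m}{2+m} + \dOneStar \dStar^{-1-m/2}\bigr)$ and the contribution $(\dGainStar, \dStar)$. Thus the level set is, up to a null set, $(\dGainStar, \cStar) \cup (\bGainStar, 1)$ of total measure $1 - \bGainStar + \cStar - \dGainStar$.

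Evaluating $\|\fStar\|_{L^1}$ splits it into $\int_{\dStar}^{\cStar}|\fOneStar|\,dt + \int_{\bStar}^1 |\fZeroStar|\,dt$. The second integral is elementary since $\fZeroStar$ is strictly negative on $(\bStar, 1)$; it evaluates to $-\tfrac{2+m}{m} + \tfrac{m}{2+m}\bStar + \tfrac{4(1+m)}{m(2+m)}\bStar^{-m/2}$ exactly as in the proof of Lemma~\ref{lem:lower-bound-fixed-m-special-b-c-d-alternate-expression-with-inequality-star}. The first integral is kept in implicit form because, within the broader class $\fclassStar{m}$, the function $\fOneStar$ need not be one-signed on $(\dStar, \cStar)$. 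Dividing and taking the supremum over all admissible $(\bStar, \cStar, \dStar)$ yields the claimed equality.

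The main obstacle I foresee is the sign-and-monotonicity bookkeeping underlying the definitions of $\bGainStar$ and $\dGainStar$: one has to verify that the two constants $-1 - \tfrac{m}{2+m} + \tfrac{2(1+m)}{2+m}\bStar^{-1-m/2}$ and $-1 - \tfrac{m}{2+m} + \dOneStar \dStar^{-1-m/2}$ can each take either sign within the admissible range, so that the $\land$ and $\lor$ in the definitions correctly collapse the threshold to an interior critical value, to $\bStar$ or $\dStar$ (empty interior contribution), or to $\cStar$ (no room to extend). Apart from this, everything is a routine computation strictly parallel to the proof of Proposition~\ref{prop:lower-bound-fixed-m}.
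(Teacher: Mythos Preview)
Your proposal is correct and takes essentially the same approach as the paper's proof: reduce to $\aStar=1$ by scaling, identify the level set $\{|\Lambda_m^* \fStar|\geq 1\}$ piecewise using Lemma~\ref{lem:weak-type-extremals-2-star} on the core intervals and explicit accumulated-mass computations on $(\cStar,\bStar)$ and $(0,\dStar)$, then evaluate $\|\fStar\|_{L^1}$ as in Lemma~\ref{lem:lower-bound-fixed-m-special-b-c-d-alternate-expression-with-inequality-star}. One minor slip: your displayed formula on $(\cStar,\bStar)$ has the wrong sign---the correct value is $(t/\bStar)^{m/2}\bigl(1+\tfrac{m}{2+m}-\tfrac{2(1+m)}{2+m}\bStar^{-1-m/2}\bigr)$, which is negative for $\bStar<1$---but this is harmless since only the absolute value enters the level-set analysis and the definition of $\bGainStar$.
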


\begin{proof}[Proof of Proposition~\ref{prop:lower-bound-fixed-m-star}]
	Consider a function  $f_*$ of the form \eqref{eq:weak-type-extremals-2-star}. 
	Due to scaling invariance,
	we may and do assume that $\aStar=1$.
	Thus, for  any $0 < \dStar < \cStar \leq \bStar  <1$ we consider the function
	\[
	\fStar(t) = \bigl( -\frac{m}{2+m} + \dOneStar t^{-1 -m/2}\bigr)\indbr{t\in (\dStar,\cStar]}
	+
	\bigl(\frac{m}{2+m} -\frac{2(1+m)}{2+m} t^{-1 -m/2}\bigr) \indbr{t\in (\bStar, 1]},
	\]
	where $	\dOneStar = \dOneStar(\bStar,\cStar,m)$ is as in the statement of the proposition.
	Let $\fZeroStar(t)=  \frac{m}{2+m} - \frac{2(1+m)}{2+m} t^{-1-m/2}$,
	$\fOneStar (t) =   -\frac{m}{2+m} + \dOneStar t^{-1 - m/2}$,
	be the functions from the proof of Lemma~\ref{lem:weak-type-extremals-2-star}.

	We start with identifying the set where $|\Lambda_m^* \fStar (t)| \geq 1$.
	Clearly
	$ \Lambda_m^* \fStar(t) =0$ for $t>1$ 
	and $|\Lambda_m^* \fStar(t)| = 1$ for $t\in (\dStar,\cStar) \cup (\bStar, 1)$
	by Lemma~\ref{lem:weak-type-extremals-2-star}.
	However, since $\lim_{t\to 1^-} \fStar(t) = -1 $,
	the function $\fStar$ is negative and increases on $(\bStar,1]$.
	Thus,
	although $\fStar(t) = 0$ for $t\in (\cStar,\bStar)$,
	it may happen that $\Lambda_m^* \fStar(t) \leq -1$ for some $t\in(\cStar,\bStar)$
	due to the (negative) mass acumulated in $\int_{\bStar}^1 \fStar(s) s^{-1-m/2} ds$.
	Indeed, for $t\in (\cStar,\bStar)$ we have
	\begin{align*}
	\Lambda_m^* \fStar(t)
	&= (1+m){t^{m/2}} \int_{\bStar}^1 \fStar(s) s^{-1-m/2}ds
	= (1+m){t^{m/2}} \int_{\bStar}^1 \fZeroStar(s) s^{-1-m/2}ds\\
	&= \frac{t^{m/2}}{\bStar^{m/2}} \bigl( \Lambda_m^* \fZeroStar(\bStar) + \fZeroStar(\bStar)\bigr)
	= \frac{t^{m/2}}{\bStar^{m/2}} \bigl( 1 + \frac{m}{2+m}  - \frac{2(1+m)}{2+m} \bStar^{-1-m/2}\bigr), 
	\end{align*}
	which is negative and decreases in $t$.
	Hence, the condition $|\Lambda_m^* \fStar(t)| \geq 1$ and $t\in(\cStar,\bStar)$ is equivalent to 
	$t\in(\bGainStar,\bStar)$, where $\bGainStar$ is as in the statement of the proposition.

	Similarly,  by the very definition of the coefficient $\dOneStar$ and the calculations from the proof of Lemma~\ref{lem:weak-type-extremals-2-star}, for $t\in(0,\dStar)$,
	\begin{align*}
	\Lambda_m^* \fStar(t)
	&= (1+m) t^{m/2} \int_{\dStar}^1 \fStar(s) s^{-1-m/2}ds
	= (1+m) t^{m/2} \int_{\dStar}^1 \fOneStar(s) s^{-1-m/2}ds\\
	&= \frac{t^{m/2}}{\dStar^{m/2}} \bigl( \Lambda_m^* \fOneStar(\dStar) + \fOneStar(\dStar)\bigr)
	= \frac{t^{m/2}}{\dStar^{m/2}} \bigl( -1 - \frac{m}{2+m}  + \dOneStar \dStar^{-1-m/2}\bigr).
	\end{align*}
	The absolute value of this expression increases in $t$,
	so the condition $|\Lambda_m^* \fStar(t)| \geq 1$ and $t\in(0,\dStar)$
	is equivalent to $t\in(\dGainStar, \dStar)$.

	We calculate 
	$\|\fStar\|_{L^1([0, \infty))}$ as in the proof of Lemma~\ref{lem:lower-bound-fixed-m-special-b-c-d-alternate-expression-with-inequality-star} above.
	This ends the proof.
\end{proof}

\begin{lemma}
	\label{lem:lower-bound-fixed-m-special-b-c-d-alternate-expression-star}
	For $m\in\NNone$,
	\begin{equation*}
	\weakconstrestricted{\Lambda_m^*}{\fclassspecStar{m}} 
	=  
	\sup\bigl\{
	\XXXspecialbdStar(\bStar,\dStar,m) : (\bStar,\dStar)\in\specialbdStar{m} \bigr\}.
	\end{equation*}
\end{lemma}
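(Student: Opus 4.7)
The $\geq$ direction is already established in Lemma~\ref{lem:lower-bound-fixed-m-special-b-c-d-alternate-expression-with-inequality-star}, so my plan is to upgrade the inequality there to an equality. The strategy mirrors exactly the upgrade from Lemma~\ref{lem:lower-bound-fixed-m-special-b-c-d-alternate-expression-with-inequality} to Lemma~\ref{lem:lower-bound-fixed-m-special-b-c-d-alternate-expression}: I will invoke the general Proposition~\ref{prop:lower-bound-fixed-m-star} applied to functions $\fStar\in\fclassspecStar{m}\subset\fclassStar{m}$ and check that, under the constraints of Definition~\ref{def:f-class-spec-star}, the two ``gain'' contributions in the numerator collapse, so that the measure of the super-level set equals exactly $1-\dStar$ rather than merely being at least $1-\dStar$.

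More precisely, fix $(\bStar,\dStar)\in\specialbdStar{m}$ and consider the associated $\fStar\in\fclassspecStar{m}$, which corresponds to Definition~\ref{def:weak-type-extremals-2-star} with $\aStar=1$ and $\cStar=\bStar$. In the notation of Proposition~\ref{prop:lower-bound-fixed-m-star}, I would verify that both $\bGainStar=\bStar$ and $\dGainStar=\dStar$. The first identity is immediate from $\cStar=\bStar$, because the defining formula of $\bGainStar$ takes the maximum with $\cStar$ and the minimum with $\bStar$. For the second, the constraint~\eqref{eq:constraint-for-special-b-d-star} gives $\bigl|-1-\tfrac{m}{2+m}+\dOneStar\dStar^{-1-m/2}\bigr|<1$, so the factor raised to the negative exponent $-2/m$ is at least $1$, and the $\land$ with $\dStar$ returns $\dStar$.

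After these two simplifications the numerator in Proposition~\ref{prop:lower-bound-fixed-m-star} reduces to $1-\bGainStar+\cStar-\dGainStar = 1-\bStar+\bStar-\dStar = 1-\dStar$, which matches the left factor in $\XXXspecialbdStar(\bStar,\dStar,m)$. The denominator is exactly $\|\fStar\|_{L^1([0,\infty))}$, whose evaluation has already been carried out inside the proof of Lemma~\ref{lem:lower-bound-fixed-m-special-b-c-d-alternate-expression-with-inequality-star}: after substituting the definitions of $\dOneStar=\dOneStar(\bStar,m)$ and $\tZeroStar=\tZeroStar(\bStar,m)$, the sum of the two integrals simplifies to the denominator of $\XXXspecialbdStar(\bStar,\dStar,m)$. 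Taking the supremum over $(\bStar,\dStar)\in\specialbdStar{m}$ then yields the $\leq$ direction.

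The only (modest) obstacle is the verification of the two collapse identities $\bGainStar=\bStar$ and $\dGainStar=\dStar$; these are routine once one observes that the two inequalities packaged into~\eqref{eq:constraint-for-special-b-d-star} are precisely what forces the two ``gain'' terms to be trivial. No new arithmetic beyond what has already been done for Lemma~\ref{lem:lower-bound-fixed-m-special-b-c-d-alternate-expression-with-inequality-star} is required.
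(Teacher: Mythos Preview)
Your proposal is correct and follows essentially the same route as the paper's proof: both invoke Proposition~\ref{prop:lower-bound-fixed-m-star} (and its proof) to conclude that for $\fStar\in\fclassspecStar{m}$ the super-level set has measure exactly $1-\dStar$, with the constraints in Definition~\ref{def:f-class-spec-star} forcing $\bGainStar=\bStar$ and $\dGainStar=\dStar$. The denominator computation is then inherited verbatim from the proof of Lemma~\ref{lem:lower-bound-fixed-m-special-b-c-d-alternate-expression-with-inequality-star}, exactly as you outline.
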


\begin{proof}
	We proceed as in the proof of Lemma~\ref{lem:lower-bound-fixed-m-special-b-c-d-alternate-expression-with-inequality-star}
	with the only difference being that 
	Proposition~\ref{prop:lower-bound-fixed-m-star} (and its proof)
	together with 	Definition~\ref{def:f-class-spec-star} actually yields
	that for $\fStar\in\fclassspecStar{m}$,
	\begin{equation*}
	\bigl|\{t\in[0,\infty) : |\Lambda_m^* \fStar(t)|\geq 1\} \bigr| = 1 - \dStar.
	\qedhere
	\end{equation*}
\end{proof}

Denote 
$\bShStar \coloneqq (27/(54 - 16(2-7^{1/3})^3))^{2/3}$ and
\begin{equation}
\label{eq:def-bMinTildeStar}
\bMinTildeStar = \bMinTildeStar(m)
\coloneqq
\begin{cases}
\bMinStar(m) & \text{ if } m\geq 2,\\
\bShStar & \text{ if } m = 1.
\end{cases}
\end{equation}
Observe that, for $m=1$,
\[
\bMinStar(1) = 2^{-2/3}
< 0.63 <
\bMinTildeStar = \bShStar \approx 0.63004
< 0.68 <
(4/7)^{2/3}
=
\bMaxStar(1).
\]
We have the following counterpart of Lemma~\ref{lem:dOpt-definition}.
Note that somewhat surprisingly a distinction between the cases $m=1$ and $m\geq 2$ appears
in the formulation.

\begin{lemma}
	\label{lem:dOpt-definition-star}
	If $m\geq 2$ and $\bStar\in (\bMinStar(m), \bMaxStar(m)]$,
	then
	\begin{align}
	\label{eq:double-inequality-for-defining-dOptStar}
	\MoveEqLeft[2]
	2(1+m)(2\bStar^{1+m/2} -1) \dMinStar(\bStar,m)^{-m/2} \\
	&\geq -(2+m)(2\bStar^{1+m/2} - 1) \bStar^{-m/2}  
	+ 2m \tZeroStar(\bStar,m)\nonumber\\
	&\geq 2(1+m)(2\bStar^{1+m/2} -1) \dMaxStar(\bStar,m)^{-m/2}.\nonumber
	\end{align}	
	In particular, for $m\geq 2$,  
	for every $\bStar\in (\bMinStar(m), \bMaxStar(m)]$ 
	there exists  
	$\dOptStar = \dOptStar(\bStar,m)\in [\dMinStar(m),\dMaxStar(m)]$ 
	such that
	\begin{align}
	\label{eq:fixed-m-supremum-W-optimal-curve-star}
	2(1+m)(2 \bStar^{1+m/2} -1)&\dOptStar(\bStar, m)^{-m/2}\\
	&=-(2+m)(2\bStar^{1+m/2} -1)\bStar^{-m/2} 
	+2m \tZeroStar(\bStar,m).\nonumber
	\end{align}
	
	If $m=1$ and $\bStar \in [\bShStar, \bMaxStar(1)]$, then the double inequality~\eqref{eq:double-inequality-for-defining-dOptStar} also holds.
	In particular, for every $\bStar\in [\bShStar, \bMaxStar(1)]$ 
	there exists $\dOptStar = \dOptStar(\bStar,1)\in [\dMinStar(1),\dMaxStar(1)]$ 
	such that~\eqref{eq:fixed-m-supremum-W-optimal-curve-star} holds.
	Moreover,
	\[
	\dOptStar(\bShStar,1) = \dMinStar(\bShStar, 1).
	\]
	Furthermore, for every $\bStar\in (\bMinStar(1),\bShStar)$
	there exists $\dOptStar = \dOptStar(\bStar,1)\in (0,\dMinStar(1))$ 
	such that~\eqref{eq:fixed-m-supremum-W-optimal-curve-star} holds.
\end{lemma}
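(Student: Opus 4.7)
The plan is to view both halves of the double inequality~\eqref{eq:double-inequality-for-defining-dOptStar} through the auxiliary function $\phi(s) \coloneqq 2(1+m)(2\bStar^{1+m/2}-1)s^{-m/2}$, which is continuous and strictly decreasing from $+\infty$ at $s = 0^+$ to $0$ at $s = +\infty$. The defining relations $(4+3m)\dMinStar^{1+m/2} = 2(1+m)(2\bStar^{1+m/2}-1) = m\tZeroStar^{1+m/2}$ (the second identity being~\eqref{eq:def-t_0-explicit-formula-star}, which in particular records $\dMaxStar = \tZeroStar$) give $\phi(\dMinStar) = (4+3m)\dMinStar$ and $\phi(\dMaxStar) = m\tZeroStar$. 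Hence~\eqref{eq:double-inequality-for-defining-dOptStar} is equivalent to saying that
\[
A(\bStar) \coloneqq -(2+m)(2\bStar^{1+m/2}-1)\bStar^{-m/2} + 2m\tZeroStar
\]
lies in $[\phi(\dMaxStar), \phi(\dMinStar)]$, and existence of $\dOptStar$ in the required range will then follow from the intermediate value theorem applied to $\phi$.

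The lower half $A(\bStar) \geq m\tZeroStar$ is easy: substituting $m\tZeroStar^{1+m/2} = 2(1+m)(2\bStar^{1+m/2}-1)$ reduces it to $(\bStar/\tZeroStar)^{m/2} \geq (2+m)/(2(1+m))$, which holds because $\tZeroStar \leq \bStar$ (by definition of $\tZeroStar$) and $2(1+m) \geq 2+m$.

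The upper half $A(\bStar) \leq (4+3m)\dMinStar$ is the main obstacle. Combining the same substitutions with $\dMinStar = \bigl(m/(4+3m)\bigr)^{2/(2+m)}\tZeroStar$ converts it to
\[
(4+3m)^{m/(2+m)}\, m^{2/(2+m)} + \frac{m(2+m)}{2(1+m)}\, r \geq 2m, \qquad r \coloneqq (\tZeroStar/\bStar)^{m/2}.
\]
The first summand alone dominates $2m$ iff $(3+4/m)^{m} \geq 2^{m+2}$, which one verifies directly at $m=2$ (namely $5^2 = 25 \geq 16$) and extends to all $m\geq 2$ by elementary monotonicity of $m\mapsto m\log(3+4/m) - (m+2)\log 2$. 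This yields the $m\geq 2$ conclusion uniformly in $\bStar$. For $m = 1$ one has $7^{1/3} < 2$, so the needed inequality becomes $r \geq 4(2-7^{1/3})/3$. Exploiting $r^{(2+m)/m} = (2(1+m)/m)\bigl(2 - \bStar^{-1-m/2}\bigr)$ (which is strictly increasing in $\bStar$ with range $[0,1]$ over $(\bMinStar(1), \bMaxStar(1)]$), the condition singles out a unique threshold for $\bStar$; cubing the identity $r^3 = 8 - 4\bStar^{-3/2}$ at the critical $r = 4(2-7^{1/3})/3$ and rearranging gives $\bStar^{3/2} = 27 / \bigl(2\bigl(27 - 8(2-7^{1/3})^3\bigr)\bigr)$, matching exactly the definition~\eqref{eq:def-bMinTildeStar} of $\bShStar$. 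Thus the upper inequality holds iff $\bStar \geq \bShStar$, and at $\bStar = \bShStar$ it is an equality, forcing $\dOptStar(\bShStar,1) = \dMinStar(\bShStar,1)$.

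Finally, for $m = 1$ and $\bStar \in (\bMinStar(1), \bShStar)$ the upper inequality reverses to $A(\bStar) > \phi(\dMinStar)$, but since $\phi$ remains continuous and strictly decreasing on $(0,\infty)$ with $\phi(0^+) = +\infty$, the equation $\phi(\dOptStar) = A(\bStar)$ still admits a unique solution, now located in $(0, \dMinStar)$. This supplies the remaining part of the lemma. The only genuinely delicate step in the argument is the identification of the threshold coming from the $m=1$ analysis with the explicit formula defining $\bShStar$; everything else reduces to elementary manipulations of the defining identities for $\dMinStar$, $\dMaxStar = \tZeroStar$, and $\tZeroStar^{1+m/2}$.
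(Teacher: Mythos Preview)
Your proof is correct and follows essentially the same strategy as the paper: both reduce the double inequality to showing that the middle expression is sandwiched between $(4+3m)\dMinStar$ and $m\tZeroStar$, then invoke the intermediate value theorem. The main technical difference is in the $m\geq 2$ upper bound: the paper drops the negative term and verifies $(4+3m)\dMinStar \geq 2m\tZeroStar$ by splitting into the cases $m=2$, $m=3$, $m\geq 4$ with explicit numerical estimates, whereas you reduce to the equivalent inequality $(3+4/m)^m \geq 2^{m+2}$ and handle all $m\geq 2$ at once via monotonicity of $m\mapsto m\log(3+4/m)-(m+2)\log 2$. Your route is a bit cleaner here. The $m=1$ threshold computation identifying $\bShStar$ is essentially identical in both proofs.
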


\begin{proof}
	We first prove that for all $m\geq 1$ and all $\bStar\in (\bMinStar(m), \bMaxStar(m)]$ the right-hand side inequality of~\eqref{eq:double-inequality-for-defining-dOptStar} holds.
	Recall from~\eqref{eq:def-t_0-explicit-formula-star} 
	that
	$\tZeroStar = \tZeroStar(\bStar,m) = \dMaxStar$
	and
	\begin{equation*}
	2(1+m)(2 \bStar^{1+m/2} -1) \dMaxStar^{-m/2} 
	=  m\dMaxStar =  m\tZeroStar.
	\end{equation*} 
	Consequently,
	\begin{equation*}
	-(2+m)(2\bStar^{1+m/2} - 1) \bStar^{-m/2} \geq  -(2+m)(2\bStar^{1+m/2} - 1) \tZeroStar^{-m/2} = -\frac{m(2+m)}{2(1+m)} \tZeroStar.
	\end{equation*} 
	Thus
	\begin{multline*}
	-(2+m)(2\bStar^{1+m/2} - 1) \bStar^{-m/2}  
	+ 2m \tZeroStar
	- 2(1+m)(2\bStar^{1+m/2} -1) \dMaxStar^{-m/2}\\
	\geq \Bigl(-\frac{m(2+m)}{2(1+m)} + 2m - m \Bigr)\cdot t_0 = \frac{m^2}{2(1+m)} \cdot \tZeroStar \geq 0,
	\end{multline*}
	which proves the first part of the assertion.
	
	Let us now prove  that the left-hand side inequality of~\eqref{eq:double-inequality-for-defining-dOptStar} holds 
	for $m\geq 2$ and $\bStar\in (\bMinStar(m), \bMaxStar(m)]$.
	It follows from the definition~\eqref{eq:def-dMin-star} 
	that  $\dMinStar = \dMinStar(\bStar,m)$ satisfies 
	\begin{equation*}
	2(1+m)(2 \bStar^{1+m/2} -1)\dMinStar^{-m/2}
	= (4+3m) \dMinStar.
	\end{equation*} 
	In order to prove the second part of the assertion  
	it suffices to check that
	\[
	(4+3m) \dMinStar \geq 2m \tZeroStar.
	\]
	To this end, observe that 
	\begin{align*}
	2(1+m)(2 \bStar^{1+m/2} -1)\Bigl( \frac{4+3m}{2m} \dMinStar\Bigr)^{-1-m/2}  
	&= (4+3m)\Bigl( \frac{4+3m}{2m} \Bigr)^{-1-m/2}\\
	&= 2m \cdot\bigl(\frac{2m}{4+3m} \bigr)^{m/2}\\
	&\leq 
	\begin{cases}
	2m \cdot (2/3)^{m/2} & \text{if } m\geq 4,\\
	2m\cdot (6/13)^{3/2} & \text{if } m = 3,\\
	2m \cdot (4/10) & \text{if } m=2,
	\end{cases}
	\\
	&\leq 
	m \quad \text{(for  $m\geq 2$)}.
	\end{align*}
	This inequality means exactly that $\tZeroStar \leq \frac{4+3m}{2m} \dMinStar$ (for $m\geq 2$),
	which finishes the proof of our claim in the case $m\geq 2$.
	The in particular part of the assertion follows immediately.
	
	Next we check that for $m=1$ and $\bStar\in [\bShStar, \bMaxStar(1)]$,
	\begin{align*}
	\MoveEqLeft[2]
	2(1+m)(2\bStar^{1+m/2} -1) \dMinStar(\bStar,m)^{-m/2} \\
	&\overset{?}{\geq}  -(2+m)(2\bStar^{1+m/2} - 1) \bStar^{-m/2}  
	+ 2m \tZeroStar(\bStar,m).
	\end{align*}
	Plugging in $m=1$ and the definitions of $\dMinStar$ and $\tZeroStar$ we arrive at
	\begin{equation*}
	4^{2/3} 7^{1/3} (2\bStar^{3/2} -1)^{2/3}  \overset{?}{\geq} 
	-3(2\bStar^{3/2} - 1) \bStar^{-1/2}
	+ 2 \cdot4^{2/3}(2\bStar^{3/2} -1)^{2/3},
	\end{equation*}
	which is equivalent to
	\begin{equation*}
	3(2\bStar^{3/2} - 1)^{1/3} 
	\overset{?}{\geq}  4^{2/3}(2 - 7^{1/3}) \bStar^{1/2},
	\end{equation*}
	which after taking the cube of both sides and simplifying reduces to the inequality $\bStar\geq\bShStar$.
	It follows immediately that for $m=1$ and  for every $\bStar\in [\bShStar, \bMaxStar(1)]$ 
	there exists $\dOptStar = \dOptStar(\bStar,1)\in [\dMinStar(1),\dMaxStar(1)]$ 
	satisfying~\eqref{eq:fixed-m-supremum-W-optimal-curve-star};
	moreover, by the very definition, 
	\[
	\dOptStar(\bShStar,1) = \dMinStar(\bShStar, 1).
	\]
	
	Furthermore, the preceding calculation also shows that
	if $m=1$ and $\bStar\in (\bMinStar(1),\bShStar)$, then
	\begin{align*}
	0 &< 2(1+m)(2\bStar^{1+m/2} -1) \dMinStar(\bStar,m)^{-m/2} \\
	& <  -(2+m)(2\bStar^{1+m/2} - 1) \bStar^{-m/2}  
	+ 2m \tZeroStar(\bStar,m).
	\end{align*}
	Thus, for $m=1$ and $\bStar\in (\bMinStar(1),\bShStar)$
	there exists $\dOptStar = \dOptStar(\bStar,1)\in (0,\dMinStar(1))$ satisfying~\eqref{eq:fixed-m-supremum-W-optimal-curve-star}.
\end{proof}

Next we consider the behavior of the function $\XXXspecialbdStar$
on the part of the boundary of the  set $\specialbdStar{m}$ 
which is the interval obtained by considering $\bStar \to \bMaxStar(m) ^-$.

\begin{lemma}
	\label{lem:lower-bound-fixed-m-special-b-c-d-boundary-bMin-star}
	For $m\in\NNone$,
	\begin{align}
	\nonumber
	\dMinStar(\bMaxStar(m), m) &=  \Bigl(\frac{m(2+2m)}{(4+3m)^2}\Bigr)^{2/(2+m)}
	= \bMaxStar(m) \Bigl(\frac{m}{4+3m}\Bigr)^{2/(2+m)}, \\
	\nonumber
	\dMaxStar(\bMaxStar(m),m) &= \bMaxStar,\\
	\label{eq:dOpt-for-bMin-star}
	\dOptStar(\bMaxStar(m),m) &= \bMaxStar(m) \cdot \Bigl( \frac{2+2m}{2+3m}\Bigr)^{2/m}.
	\end{align}
\end{lemma}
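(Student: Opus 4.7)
The plan is to evaluate each of the three quantities at $\bStar = \bMaxStar(m)$ by direct substitution into the respective defining equations~\eqref{eq:def-dMin-star}, \eqref{eq:def-dMax-star}, and~\eqref{eq:fixed-m-supremum-W-optimal-curve-star}. Everything hinges on the single preliminary identity
\[
\bMaxStar(m)^{1+m/2} = \frac{2+2m}{4+3m},
\qquad\text{hence}\qquad
2\bMaxStar(m)^{1+m/2} - 1 = \frac{m}{4+3m},
\]
which follows immediately from~\eqref{eq:def-bMin-bMax-star}. Since the quantity $2\bStar^{1+m/2}-1$ is the only nontrivial ingredient of the denominators in all three defining equations, substituting this value turns each assertion into a straightforward calculation.

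Concretely, substitution into~\eqref{eq:def-dMin-star} yields $\dMinStar(\bMaxStar,m)^{-1-m/2} = (4+3m)^2/(2m(1+m))$, and into~\eqref{eq:def-dMax-star} yields $\dMaxStar(\bMaxStar,m)^{-1-m/2} = (4+3m)/(2(1+m))$. Inverting and rewriting in terms of $\bMaxStar(m) = ((2+2m)/(4+3m))^{2/(2+m)}$ produces both of the first two formulas; in particular the identity $\dMaxStar(\bMaxStar,m) = \bMaxStar(m)$ falls out and is needed for the third assertion.

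For $\dOptStar$, existence at $\bStar = \bMaxStar$ is ensured by Lemma~\ref{lem:dOpt-definition-star}: for $m\geq 2$ the range $(\bMinStar,\bMaxStar]$ is closed on the right, while for $m=1$ one checks numerically that $\bMaxStar(1) \geq \bShStar$, which puts us in the first case of that lemma. By~\eqref{eq:def-t_0-explicit-formula-star} and the previous step, $\tZeroStar(\bMaxStar,m) = \dMaxStar(\bMaxStar,m) = \bMaxStar$. Plugging everything into~\eqref{eq:fixed-m-supremum-W-optimal-curve-star} reduces it to a single linear equation in $\dOptStar^{-m/2}$; after simplifying the right-hand side via the identity $\bMaxStar^{-m/2} = ((4+3m)/(2+2m))\bMaxStar$ the equation collapses to
\[
\frac{2(1+m)}{4+3m}\dOptStar^{-m/2} = \frac{2+3m}{2+2m}\bMaxStar,
\]
which one solves explicitly and then rewrites through $\bMaxStar^{m/2} = (2+2m)/((4+3m)\bMaxStar)$ to rearrange it into the asserted form $\dOptStar(\bMaxStar,m) = \bMaxStar(m) \bigl((2+2m)/(2+3m)\bigr)^{2/m}$.

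There is no conceptual obstacle here — the whole lemma is a routine algebraic verification from definitions. The only thing that requires care is bookkeeping the several fractional powers of $\bMaxStar$ with exponents $\pm m/2$ and $\pm(1+m/2)$; I would write out these conversion identities once at the beginning of the $\dOptStar$ computation to avoid arithmetic slips.
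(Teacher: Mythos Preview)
Your proposal is correct and follows essentially the same route as the paper: both begin with the key identity $2\bMaxStar^{1+m/2}-1 = m/(4+3m)$ from~\eqref{eq:def-bMin-bMax-star}, then substitute into~\eqref{eq:def-dMin-star}, \eqref{eq:def-dMax-star}, note that $\tZeroStar(\bMaxStar)=\dMaxStar(\bMaxStar)=\bMaxStar$, and plug into~\eqref{eq:fixed-m-supremum-W-optimal-curve-star} to solve for $\dOptStar$. Your explicit remark about existence of $\dOptStar(\bMaxStar,m)$ via Lemma~\ref{lem:dOpt-definition-star} (including the $m=1$ check) is a small extra care the paper leaves implicit.
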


\begin{proof}
	We have $\bMaxStar^{1+m/2} = (2+2m)/(4+3m)$, so
	\[
	2\bMaxStar^{1+m/2}  - 1 = 2 \frac{2+2m}{4+3m} - 1 = 
	\frac{m}{4+3m}.
	\]
	The formulas for $\dMinStar(\bMaxStar)$ and $\dMaxStar(\bMaxStar)$ follow immediately.
	Moreover, 	note that $\tZeroStar(\bMaxStar) = \dMaxStar(\bMaxStar) = \bMaxStar$.
	
	Using the preceding observations we can rewrite~\eqref{eq:fixed-m-supremum-W-optimal-curve-star}
	as
	\begin{align*}  
	2(1+m)\frac{m}{4+3m} \dOptStar(\bMaxStar)^{-m/2} 
	&= - (2+m) \frac{m}{4+3m} \bMaxStar^{-m/2}   
	+ 2m \bMaxStar\\
	&= - (2+m) \frac{m}{4+3m} \bMaxStar^{-m/2}   
	+ 2m  \frac{2+2m}{4+3m} \bMaxStar^{-m/2}, 
	\end{align*}
	which yields the claim~\eqref{eq:dOpt-for-bMin-star}.
\end{proof}

\begin{remark}	In particular,
	\begin{equation}
	\label{eq:claim-dOpt-for-bMin-star}
	\dMinStar(\bMaxStar(m),m)  < \dOptStar(\bMaxStar(m),m)  < \dMaxStar(\bMaxStar(m),m).
	\end{equation}	
	Indeed, this is equivalent to 
	\[
	\Bigl( \frac{m}{4+3m}\Bigr)^{2/(2+m)} < \Bigl( \frac{2+2m}{2+3m}\Bigr)^{2/m} < 1.
	\]
	The right-hand side inequality holds true for all $m\in \NN$.
	The left-hand side inequality is satisfied for $m=1$ and for $m\geq 2$ we can write
	\[
	\Bigl( \frac{m}{4+3m}\Bigr)^{2/(2+m)}
	\leq 	\Bigl( \frac{1}{3}\Bigr)^{2/(2+m)} < 	\Bigl( \frac{2}{3}\Bigr)^{2/m} \leq  \Bigl( \frac{2+2m}{2+3m}\Bigr)^{2/m}.
	\]
\end{remark}

We need the following observation
(note that although for $m=1$ and $\bStar\in (\bMinStar(1),\bShStar)$ we have
$\dOptStar(\bStar, 1) < \dMinStar(\bStar, 1)$, the
formula defining $\dOptStar(\bStar,m)$ 
does not change,
so there is no distinciton between the cases $m\geq 2$ and $m=1$ in the formulation).

\begin{lemma}
	\label{lem:tZero-divied-by-dOpt-monotone-star}
	For $m\geq 1$ and 
	$\bStar\in (\bMinStar, \bMaxStar]$ we have
	\[
	2^{2/m} > \frac{\tZeroStar(\bStar)}{\dOptStar(\bStar)} \geq \frac{\tZeroStar(\bMaxStar)}{\dOptStar(\bMaxStar)} 
	=
	\Bigl(\frac{2+3m}{2+2m}\Bigr)^{2/m}.
	\]
	Moreover, the function $\bStar\mapsto \dOptStar(\bStar)$ is increasing,  $\bStar\mapsto \tZeroStar(\bStar)/\dOptStar(\bStar)$ is decreasing and  
	\[
	\lim_{\bStar\to\bMinStar}\frac{\tZeroStar(\bStar)}{\dOptStar(\bStar)}
	= 2^{-2/(2+m)}.
	\]
\end{lemma}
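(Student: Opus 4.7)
The plan is to mirror the argument proving the non-star Lemma~\ref{lem:tZero-divied-by-dOpt-monotone}, with the whole analysis collapsing to one clean identity tying $\tZeroStar$, $\dOptStar$, and $\bStar$ together. First I would rewrite the explicit formula~\eqref{eq:def-t_0-explicit-formula-star} as
\begin{equation*}
m\,\tZeroStar(\bStar,m)^{1+m/2} = 2(1+m)(2\bStar^{1+m/2}-1),
\end{equation*}
substitute it into the defining equation~\eqref{eq:fixed-m-supremum-W-optimal-curve-star}, and divide through by $m\,\tZeroStar$ to obtain the key identity
\begin{equation*}
\Bigl(\frac{\tZeroStar(\bStar,m)}{\dOptStar(\bStar,m)}\Bigr)^{m/2} = 2 - \frac{2+m}{2(1+m)} \Bigl(\frac{\tZeroStar(\bStar,m)}{\bStar}\Bigr)^{m/2}.
\end{equation*}

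Next I would analyze the auxiliary ratio $r(\bStar) \coloneqq \tZeroStar(\bStar,m)/\bStar$. From the explicit formula one computes
\begin{equation*}
r(\bStar)^{1+m/2} = \frac{2(1+m)}{m}\bigl(2 - \bStar^{-1-m/2}\bigr),
\end{equation*}
so $r$ is strictly increasing on $(\bMinStar, \bMaxStar]$; it tends to $0$ as $\bStar\to\bMinStar^+$ (since $\bMinStar^{-1-m/2}=2$ makes the bracket vanish), and equals $1$ at $\bStar=\bMaxStar$ (using $\bMaxStar^{-1-m/2}=(4+3m)/(2+2m)$, which reduces the right-hand side to $1$). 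Feeding this into the key identity immediately gives that $\bStar\mapsto \tZeroStar/\dOptStar$ is strictly decreasing, with its infimum value $((2+3m)/(2+2m))^{2/m}$ attained at $\bStar=\bMaxStar$ (substitute $r=1$) and its supremum/limit value obtained by letting $r\to 0$ on the right-hand side.

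For the monotonicity of $\dOptStar$ itself, I would rearrange the identity as
\begin{equation*}
\dOptStar(\bStar,m) = \tZeroStar(\bStar,m)\Bigl(2 - \frac{2+m}{2(1+m)}\, r(\bStar)^{m/2}\Bigr)^{-2/m}.
\end{equation*}
The first factor is increasing in $\bStar$ by the explicit formula for $\tZeroStar$, and the second factor is also increasing because the bracketed quantity is decreasing (as $r$ is increasing); hence their product is strictly increasing.

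The only subtle point to check carefully is that the key identity is valid on the \emph{entire} range $(\bMinStar, \bMaxStar]$: by Lemma~\ref{lem:dOpt-definition-star}, for $m=1$ and $\bStar\in(\bMinStar(1),\bShStar)$ the number $\dOptStar(\bStar,1)$ still satisfies~\eqref{eq:fixed-m-supremum-W-optimal-curve-star} but lies outside $[\dMinStar,\dMaxStar]$. Since the identity is derived purely algebraically from~\eqref{eq:fixed-m-supremum-W-optimal-curve-star} and~\eqref{eq:def-t_0-explicit-formula-star} and does not use the location of $\dOptStar$ relative to these bounds, no case split is needed and the argument goes through uniformly in $m\geq 1$.
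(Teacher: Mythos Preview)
Your proof is correct and follows essentially the same approach as the paper's: both rewrite the explicit formula for $\tZeroStar$ as $m\,\tZeroStar^{1+m/2}=2(1+m)(2\bStar^{1+m/2}-1)$, substitute into the defining equation~\eqref{eq:fixed-m-supremum-W-optimal-curve-star}, and read off monotonicity and limits of $\tZeroStar/\dOptStar$ via the auxiliary ratio $\tZeroStar/\bStar$; your presentation is just slightly more explicit in isolating the resulting identity. Note that letting $r\to 0$ in your key identity gives the limit $2^{2/m}$, which agrees with the paper's proof and with the strict upper bound in the displayed inequality---the value $2^{-2/(2+m)}$ in the statement is a typo carried over from the non-star lemma.
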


\begin{proof}
	The expression for  $\tZeroStar(\bMaxStar)/\dOptStar(\bMaxStar)$ 
	follows from Lemma~\ref{lem:lower-bound-fixed-m-special-b-c-d-boundary-bMin-star}.
	The definition of $\tZeroStar$ (see~\eqref{eq:def-t_0-explicit-formula-star}) 
	can be rewritten as
	\[
	2(1+m) (2\bStar^{1+m/2}  -1 ) = m  \tZeroStar(\bStar)^{1+m/2}.
	\]
	Hence, if $\bStar\to \bMinStar = 2^{-2/(2+m)}$, then $\tZeroStar(\bStar) \to 0^+$ and $\tZeroStar(\bStar)/\bStar\to 0^+$.
	Moreover, by multiplying both sides by $\bStar^{-1-m/2}$ we see that the function $\bStar\mapsto \tZeroStar(\bStar)/\bStar$ is monotone (the expression decreases when $\bStar$ decreases).
	Substituting the above expression for $2(1+m)(2\bStar^{1+m/2} - 1)$
	into~\eqref{eq:fixed-m-supremum-W-optimal-curve-star} we arrive at:
	\begin{equation*}
	\frac{ \tZeroStar(\bStar)^{1+m/2}}{ \dOptStar(\bStar)^{m/2}}
	= -\frac{(2+m)\tZeroStar(\bStar)^{1+m/2}}{\bStar^{m/2}}   
	+ 2 \tZeroStar(\bStar,m). 
	\end{equation*}
	If we divide both sides by $\tZeroStar(\bStar)$ and let $\bStar\to \bMinStar$, we can conclude that 
	\[
	\lim_{\bStar\to\bMinStar}\frac{\tZeroStar(\bStar)}{\dOptStar(\bStar)}
	= 2^{2/m}.
	\]
	and moreover the convergence is monotone (the expression increases when $\bStar$ decreases).
	In particular, since $\tZeroStar(\bStar)$ decreases when $\bStar$ decreases, $\dOptStar(\bStar)$ also has to  decrease when $\bStar$ decreases. 
\end{proof}

\begin{lemma}
	\label{lem:lower-bound-fixed-m-special-b-c-d-boundary-bMin-part-2-star}
	For $m\in\NNone$ and $\dStar\in \bigl[ \dOptStar(\bMaxStar(m),m), \bMaxStar\bigr]$,
	\begin{equation}
	\label{eq:claim-W-boundary-star}
	\XXXspecialbdStar\bigl(\bMaxStar(m), \dOptStar(\bMaxStar(m),m), m\bigr)
	\geq 
	\XXXspecialbdStar(\bMaxStar(m), \dStar, m) .
	\end{equation}
\end{lemma}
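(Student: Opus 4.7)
The plan is to mirror the proof of Lemma~\ref{lem:lower-bound-fixed-m-special-b-c-d-boundary-bMin-part-2}, but now with $\bStar$ fixed at $\bMaxStar(m)$ and $\dStar$ varying. The stated inequality is equivalent to the monotonicity assertion
\[
\frac{\partial}{\partial \dStar}\XXXspecialbdStar(\bMaxStar,\dStar,m) \leq 0 \qquad \text{for } \dStar \in [\dOptStar(\bMaxStar), \bMaxStar],
\]
so the whole argument reduces to analysing the sign of this partial derivative (note that the interval is nonempty by \eqref{eq:claim-dOpt-for-bMin-star}, and the denominator of $\XXXspecialbdStar$ is positive on it).

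First, I would specialize formula~\eqref{eq:a=1-b=c-would-like-to-optimize-technical-star} at $\bStar=\bMaxStar$, using the identities $\bMaxStar^{1+m/2} = \tfrac{2+2m}{4+3m}$, $2\bMaxStar^{1+m/2}-1 = \tfrac{m}{4+3m}$ and $\tZeroStar(\bMaxStar) = \bMaxStar$ from Lemma~\ref{lem:lower-bound-fixed-m-special-b-c-d-boundary-bMin-star}. The denominator then collapses to
\[
N(\dStar) = \frac{4(1+m)}{(2+m)(4+3m)}\dStar^{-m/2} + \frac{m}{2+m}\dStar + \frac{4}{m}\bMaxStar - \frac{2+m}{m}.
\]
Differentiating $\XXXspecialbdStar(\bMaxStar,\dStar,m) = (1-\dStar)/N(\dStar)$ and collecting terms (the $\dStar$-linear contributions cancel), one finds that $\partial_{\dStar}\XXXspecialbdStar$ has the same sign as
\[
S(\dStar) \coloneqq -\frac{2(1+m)}{4+3m}\dStar^{-m/2} + \frac{2m(1+m)}{(2+m)(4+3m)}\dStar^{-1-m/2} + \frac{2+m}{m} - \frac{4}{m}\bMaxStar - \frac{m}{2+m}.
\]
A further differentiation gives the clean identity $S'(\dStar) = \frac{m(1+m)}{4+3m}\dStar^{-2-m/2}(\dStar-1)$, which is negative on the whole interval since $\bMaxStar<1$. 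Hence $S$ is strictly decreasing, and it suffices to verify $S(\dOptStar(\bMaxStar)) \leq 0$.

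To handle this final endpoint condition I would plug in $\dOptStar(\bMaxStar) = \bMaxStar \bigl(\tfrac{2+2m}{2+3m}\bigr)^{2/m}$ from Lemma~\ref{lem:lower-bound-fixed-m-special-b-c-d-boundary-bMin-star} together with the identities $\bMaxStar^{-1-m/2} = \tfrac{4+3m}{2+2m}$ and, crucially, $\bMaxStar \cdot \tfrac{4+3m}{2+2m} = \bMaxStar^{-m/2}$. After some careful bookkeeping, the two $\bMaxStar$-dependent contributions to $S(\dOptStar(\bMaxStar))$ merge into a single $\bMaxStar^{-m/2}$-term with coefficient $-\tfrac{2+m}{m}$, and the inequality $S(\dOptStar(\bMaxStar)) \leq 0$ reduces to
\[
\frac{m}{2+m}\left[\Bigl(\frac{2+3m}{2+2m}\Bigr)^{(2+m)/m} - 1\right] \leq \frac{2+m}{m}\left[\Bigl(\frac{4+3m}{2+2m}\Bigr)^{m/(2+m)} - 1\right],
\]
which follows at once by concatenating the two inequalities of Lemma~\ref{lem:aux-dOpt-for-bMin} (both separated by the intermediate value $1/2$). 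The only real obstacle is this last simplification: one has to track the several interlocking powers of $\bMaxStar$, $2+2m$, $4+3m$ and $2+3m$ carefully until the expression lands in the precise shape where Lemma~\ref{lem:aux-dOpt-for-bMin} applies directly, but once the algebraic dust settles the conclusion is essentially automatic.
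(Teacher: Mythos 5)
Your proof is correct and takes essentially the same route as the paper: both reduce the claim to the sign of $\partial_{\dStar}\XXXspecialbdStar(\bMaxStar,\dStar,m)$, establish monotonicity of the sign-determining expression on the interval (you via the clean factorization $S'(\dStar)=\tfrac{m(1+m)}{4+3m}\dStar^{-2-m/2}(\dStar-1)<0$, the paper by observing the bracketed term $(1+m/2)\dStar^{-m/2}-(m/2)\dStar^{-1-m/2}$ increases), and then verify the endpoint condition at $\dStar=\dOptStar(\bMaxStar)$ by the same algebraic reduction to Lemma~\ref{lem:aux-dOpt-for-bMin}. The differences are purely presentational.
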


\begin{proof}
	From Lemma~\ref{lem:lower-bound-fixed-m-special-b-c-d-boundary-bMin-star}
	and \eqref{eq:def-t_0-explicit-formula-star}
	we know that $\tZeroStar(\bMaxStar) = \dMaxStar(\bMaxStar) = \bMaxStar$.
	Recall also that $\bMaxStar^{1+m/2} = (2+2m)/(4+3m)$ and
	\[
	\frac{4(1+m)}{m(2+m)}\bigl(2\bMaxStar^{1+m/2}  - 1\bigr) = 
	\frac{4(1+m)}{(2+m)(4+3m)}.
	\]
	It follows that
	\[
	\XXXspecialbdStar(\bMaxStar,\dStar,m)= \frac{1-\dStar}{
		\frac{4(1+m)}{(2+m)(4+3m)}\dStar^{-m/2} +\frac{m}{2+m}\dStar - \frac{2+m}{m} + \frac{4}{m} \bMaxStar}.
	\]
	In order to prove the inequality~\eqref{eq:claim-W-boundary-star} 
	we claim that
	\begin{equation}
	\label{eq:claim-dW-dd-boundary-star}
	\frac{\partial }{\partial \dStar} \XXXspecialbdStar(\bMaxStar, \dStar , m) \leq 0  \qquad\text{ for } \quad \dStar \in [\dOptStar(\bMaxStar), \dMaxStar].
	\end{equation}
	Indeed, 
	$-\frac{\partial }{\partial \dStar} \XXXspecialbdStar(\bMaxStar, \dStar , m)$
	has the same sign as
	\begin{align*}
	\MoveEqLeft[4]
	\frac{4(1+m)}{(2+m)(4+3m)}\dStar^{-m/2} +\frac{m}{2+m}\dStar -\frac{2+m}{m} + \frac{4}{m} \bMaxStar\\
	&\qquad \qquad -(\dStar-1)\cdot\bigl(\frac{4(1+m)}{(2+m)(4+3m)}(-m/2)\dStar^{-1-m/2} +\frac{m}{2+m}\bigr)\\
	&= \frac{4(1+m)}{(2+m)(4+3m)}\Bigl[ (1+m/2)\dStar^{-m/2} 
	- (m/2)\dStar^{-1-m/2} \Bigr]\\
	&\qquad\qquad
	- \frac{2+m}{m} + \frac{4}{m} \bMaxStar+ \frac{m}{2+m}.
	\end{align*}
	Since the expression in the square brackets increases in $\dStar$ (for $\dStar <1$),
	in order to prove \eqref{eq:claim-dW-dd-boundary-star} we need only to check its validity for $\dStar = \dOptStar(\bMaxStar)$.	
	Recalling first~\eqref{eq:dOpt-for-bMin-star}
	and using the formula for $\bMaxStar$ a couple of times we see that
	\begin{align*}
	\MoveEqLeft[4]
	\frac{2(1+m)}{4+3m}\dOptStar(\bMaxStar)^{-m/2} 
	+ \frac{4}{m} \bMaxStar \\
	&=\frac{2(1+m)}{4+3m}\cdot
	\frac{2+3m}{2(1+m)}\bMaxStar^{-m/2} 
	+ \frac{4}{m} \cdot\frac{2(1+m)}{4+3m}\bMaxStar^{-m/2}\\
	&= \bigl(  \frac{2+3m}{4+3m} + \frac{8(1+m)}{m(4+3m)}\bigr)\bMaxStar^{-m/2}\\
	&= \frac{2+m}{m}\bMaxStar^{-m/2} 
	= \frac{2+m}{m}\Bigl(\frac{4+3m}{2+2m}\Bigr)^{m/(2+m)}.
	\end{align*}
	On the other hand, it follows from~\eqref{eq:dOpt-for-bMin-star}
	and the formula for $\bMaxStar$,
	that
	\[
	\dOptStar(\bMaxStar)^{-1-m/2} = \frac{4+3m}{2(1+m)} \Bigl( \frac{2+2m}{2+3m}\Bigr)^{(2+m)/m}
	\]
	and thus
	\begin{equation*}
	-\frac{4(1+m)}{(2+m)(4+3m)}(m/2)\dOptStar(\bMaxStar)^{-1-m/2}
	=-\frac{m}{2+m}\Bigl(\frac{2+3m}{2+2m}\Bigr)^{(2+m)/m} .
	\end{equation*}
	All in all, in order to finish the proof of~\eqref{eq:claim-dW-dd-boundary-star}
	it suffices to check whether
	\[
	\frac{2+m}{m}\Bigl(\frac{4+3m}{2+2m}\Bigr)^{\frac{m}{2+m}} - \frac{2+m}{m} \geq 
	\frac{m}{2+m}\Bigl(\frac{2+3m}{2+2m}\Bigr)^{\frac{2+m}{m}} - \frac{m}{2+m}.
	\]
	This inequality is true by Lemma~\ref{lem:aux-dOpt-for-bMin}.
	This finishes the proof of the lemma.
\end{proof}

\begin{lemma} 
	\label{lem:lower-bound-fixed-m-special-b-c-d-boundary-bSh-star}
	For $m=1$ and $\bStar\in (\bMinStar(1), \bShStar]$,
	\begin{equation}
	\label{eq:claim-W-boundary-bSh-star}
	\XXXspecialbdStar\bigl(\bStar, \dMinStar(\bStar,1), 1\bigr)
	\leq 
	\XXXspecialbdStar\bigl(\bShStar, \dMinStar(\bShStar,1), 1\bigr).
	\end{equation}
\end{lemma}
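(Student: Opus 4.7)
The plan is to set $G(\bStar) := \XXXspecialbdStar(\bStar, \dMinStar(\bStar, 1), 1)$ and show that $G$ is non-decreasing on $(\bMinStar(1), \bShStar]$; this is equivalent to the claim, since then $G$ attains its supremum on the interval at the right endpoint. As a first step I would use the $m=1$ identities $\dMinStar^{3/2} = 4(2\bStar^{3/2}-1)/7$ and $\tZeroStar^{3/2} = 4(2\bStar^{3/2}-1)$ from \eqref{eq:def-dMin-star} and \eqref{eq:def-t_0-explicit-formula-star}, which give $\tZeroStar = 7^{2/3}\dMinStar$ and $(2\bStar^{3/2}-1)\dMinStar^{-1/2} = (7/4)\dMinStar$. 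Plugging these into \eqref{eq:a=1-b=c-would-like-to-optimize-technical-star} and collecting terms should simplify $G$ to
\[
G(\bStar) = \frac{1 - \dMinStar(\bStar)}{-3 + 6\bStar - \mu\dMinStar(\bStar)}, \qquad \mu := 2\cdot 7^{2/3} - 5 > 0.
\]

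Next I would apply the quotient rule (using $\dMinStar'(\bStar) = 8\bStar^{1/2}/(7\dMinStar^{1/2})$ together with the identity $\mu + 3 = 2(7^{2/3}-1)$) to show that, after cancellations, the condition $G'(\bStar) \geq 0$ reduces to
\[
8\bStar^{1/2}(7^{2/3} - 1 - 3\bStar) \geq 21\dMinStar(\bStar)^{1/2}(1 - \dMinStar(\bStar)).
\]

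Finally I would verify this inequality on $(\bMinStar(1), \bShStar]$ via crude numerical bounds. From $1.912 < 7^{1/3} < 1.914$ (checked by cubing) one gets $(2 - 7^{1/3})^3 < 7\cdot 10^{-4}$ and consequently
\[
\bShStar^{3/2} = \frac{27}{54 - 16(2-7^{1/3})^3} < \tfrac{1}{2} + 10^{-4},
\]
so $\bShStar < 0.631$ (meaning the interval for $\bStar$ has width less than $10^{-4}$), $2\bShStar^{3/2} - 1 < 2\cdot 10^{-4}$, and hence $\dMinStar(\bStar,1) \leq \dMinStar(\bShStar, 1) < 3\cdot 10^{-3}$ throughout the interval. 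The LHS is decreasing in $\bStar$ on the interval (the sign of its derivative equals that of $7^{2/3} - 1 - 9\bStar$, which is already negative at $\bStar = 2^{-2/3}$), hence is bounded below by its value at $\bShStar$, which exceeds $4.8$ (using $\bShStar < 0.631$ and $7^{2/3} > 3.658$). Meanwhile the RHS is at most $21\cdot \sqrt{3\cdot 10^{-3}} < 1.2$, so the desired inequality holds with room to spare.

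The main obstacle will be the algebraic bookkeeping in the quotient-rule step: although the cancellations work out cleanly, the intermediate expressions involve several competing terms, and one must use the two key identities in the right order to see that the $\mu\dMinStar'$ contributions collapse. Once the reduction to the explicit inequality is achieved, the numerical verification is straightforward precisely because the interval $(\bMinStar(1), \bShStar]$ is so narrow -- of width less than $10^{-4}$ -- that any reasonable bounds suffice.
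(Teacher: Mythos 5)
Your argument is correct and essentially reproduces the paper's proof: you obtain the same simplified form $(1-\dMinStar)/(6\bStar-3-\mu\dMinStar)$ with $\mu=2\cdot 7^{2/3}-5$ and then prove monotonicity on the interval by differentiation, with the only cosmetic difference that the paper first substitutes $u=\dMinStar(\bStar)$, $\bStar=\bigl(\tfrac{7}{8}u^{3/2}+\tfrac12\bigr)^{2/3}$. Your monotonicity criterion $8\bStar^{1/2}(7^{2/3}-1-3\bStar)\geq 21\dMinStar^{1/2}(1-\dMinStar)$ is algebraically identical to the paper's $-12+4(7^{2/3}-1)(7u^{3/2}+4)^{1/3}-21u^{1/2}\geq 0$ (use $24\bStar^{3/2}=21\dMinStar^{3/2}+12$), and your crude numerical bounds close the argument just as the paper's do.
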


\begin{proof}
	Throughout the proof $m=1$.
	We have $\dMinStar(\bStar) = (\frac{4}{7}(2\bStar^{3/2} - 1))^{2/3}$ \
	and
	$\tZeroStar(\bStar) = (4(2\bStar^{3/2} - 1))^{2/3}$ (cf.\ \eqref{eq:def-dMin-star}, \eqref{eq:def-t_0-explicit-formula-star}).
	Thus
	\begin{align*}
	\XXXspecialbdStar\bigl(\bStar, \dMinStar(\bStar), 1\bigr)
	&=
	\frac{1-\dMinStar}{- 3  + \frac{1}{3}\dMinStar + 6 \bStar +
		\frac{8}{3}(2\bStar^{3/2}  -1)\dMinStar^{-1/2} 
		- 2\tZeroStar(\bStar)}\\
	&=\frac{1-\dMinStar}{6 \bStar - 3 + (\frac{1}{3} + \frac{8}{3}\frac{7}{4} -2 \cdot 7^{2/3})\dMinStar}\\
	&= \frac{1-\dMinStar}{6 \bStar - 3 + (5 -2 \cdot 7^{2/3})\dMinStar}.
	\end{align*}
	Observe that $\lim_{\bStar\to \bMinStar^+}\dMin(\bStar) = 0$
	and $\bStar = (\frac{7}{8}\dMinStar(\bStar)^{3/2} + \frac{1}{2})^{2/3}$.
	Denote $u = \dMinStar(\bStar)$. Our goal is to prove that
	the function 
	\[
	u\mapsto \frac{1-u}{6 (\frac{7}{8}u^{3/2} + \frac{1}{2})^{2/3} - 3 + (5 -2 \cdot 7^{2/3})u}
	\]
	increases on $(0, \dMinStar(\bShStar)]$.
	Its derivative with respect to $u$ has the same sign as
	\begin{align*}
	\MoveEqLeft[6]
	-6 (\tfrac{7}{8}u^{3/2} + \tfrac{1}{2})^{2/3} + 3 - (5 -2 \cdot 7^{2/3})u\\
	\qquad 
	-(1-u)&\Bigl(6 (\tfrac{7}{8}u^{3/2} + \tfrac{1}{2})^{-1/3}\tfrac{7}{8} u^{1/2} + 5 -2 \cdot 7^{2/3}\Bigr)\\
	&=
	-6 (\tfrac{7}{8}u^{3/2} + \tfrac{1}{2})^{2/3}  - 2 + 2 \cdot 7^{2/3}
	-6 (\tfrac{7}{8}u^{3/2} + \tfrac{1}{2})^{-1/3}\tfrac{7}{8} u^{1/2} \\
	& \qquad +6 (\tfrac{7}{8}u^{3/2} + \tfrac{1}{2})^{-1/3}\tfrac{7}{8} u^{3/2} \\
	&=4(\tfrac{7}{8}u^{3/2} + \tfrac{1}{2})^{-1/3}
	\cdot \Bigl[ - 12 + 4(7^{2/3} -1)(7u^{3/2} + 4)^{1/3} -21 u^{1/2}\Bigr].
	\end{align*}
	The expression in the square brackets is positive for $u\in(0,\dMinStar(\bShStar)]$ since it is greater or equal than
	\begin{equation*}
	- 12 + 4(7^{2/3} -1)4^{1/3} -21 u^{1/2}
	\geq 4 - 21 u^{1/2} \geq 4 -21 \dMinStar(\bShStar)^{1/2}
	\geq 2.9,
	\end{equation*}
	where the last inequality follows by plugging in the numerical value of $\dMinStar(\bShStar)$.
	This finishes the proof.
\end{proof}

Recall 
that $\bMinTildeStar(m)=\bMinStar(m)$ for $m\geq 2$
and $\bMinTildeStar(1)=\bShStar > \bMinStar(1)$.

\begin{proposition}
	\label{prop:lower-bound-fixed-m-special-b-c-d-star}
	For $m\in\NNone$, 
	\begin{align*}
	\weakconstrestricted{\Lambda_m^*}{\fclassspecStar{m}} 	
	&= 
	\sup\bigl\{
	\XXXspecialbdStar(\bStar,\dStar,m) : (\bStar,\dStar)\in\specialbdStar{m} \bigr\}\\
	&=
	\sup\bigl\{
	\XXXspecialbdStar(\bStar,\dOptStar(\bStar,m),m) : \bMinTildeStar(m) < \bStar \leq \bMaxStar(m) \bigr\}.
	\end{align*}
\end{proposition}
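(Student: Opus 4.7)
The first equality is immediate from Lemma~\ref{lem:lower-bound-fixed-m-special-b-c-d-alternate-expression-star}, so I focus on the second. Mimicking the strategy of Proposition~\ref{prop:lower-bound-fixed-m-special-b-c-d}, I fix $\dStar$ and minimize the denominator
\[
\XXXspecialbdDenominatorStar(\bStar,\dStar,m) \coloneqq -\tfrac{2+m}{m}+\tfrac{m}{2+m}\dStar+\tfrac{2(2+m)}{m}\bStar+\tfrac{4(1+m)}{m(2+m)}(2\bStar^{1+m/2}-1)\dStar^{-m/2}-2\tZeroStar(\bStar,m)
\]
as a function of $\bStar$. Using the explicit formula \eqref{eq:def-t_0-explicit-formula-star}, which gives $\partial \tZeroStar/\partial \bStar=2\tZeroStar(\bStar,m)\bStar^{m/2}/(2\bStar^{1+m/2}-1)$, a direct computation yields
\[
\frac{\partial \XXXspecialbdDenominatorStar}{\partial \bStar}
= \frac{2\bStar^{m/2}}{m(2\bStar^{1+m/2}-1)}\Bigl[(2+m)(2\bStar^{1+m/2}-1)\bStar^{-m/2}+2(1+m)(2\bStar^{1+m/2}-1)\dStar^{-m/2}-2m\tZeroStar(\bStar,m)\Bigr].
\]
Comparing the bracketed expression with equation~\eqref{eq:fixed-m-supremum-W-optimal-curve-star} and using that $\dStar\mapsto\dStar^{-m/2}$ is strictly decreasing, it follows that $\partial\XXXspecialbdDenominatorStar/\partial\bStar$ vanishes precisely along the curve $\dStar=\dOptStar(\bStar,m)$, is positive for $\dStar<\dOptStar(\bStar,m)$, and is negative for $\dStar>\dOptStar(\bStar,m)$.

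Combining this sign analysis with Lemma~\ref{lem:tZero-divied-by-dOpt-monotone-star}, which asserts that $\bStar\mapsto\dOptStar(\bStar,m)$ is increasing with limit $0$ as $\bStar\to\bMinStar^+$ and maximum $\dOptStar(\bMaxStar,m)$, I locate the minimum of $\XXXspecialbdDenominatorStar$ along each horizontal line. For $m\geq 2$, Lemma~\ref{lem:dOpt-definition-star} places the whole critical curve inside $\specialbdStar{m}$; thus for $\dStar\in(0,\dOptStar(\bMaxStar,m)]$ the minimum is attained at the interior point $\dStar=\dOptStar(\bStar,m)$, while for $\dStar>\dOptStar(\bMaxStar,m)$ the inequality $\dStar\geq\dOptStar(\bStar,m)$ holds throughout $\specialbdStar{m}$, so $\XXXspecialbdDenominatorStar$ is nonincreasing in $\bStar$ and the infimum is attained at the right boundary $\bStar=\bMaxStar$. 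Lemma~\ref{lem:lower-bound-fixed-m-special-b-c-d-boundary-bMin-part-2-star} then shows that the supremum of $\XXXspecialbdStar$ over this right-hand boundary is dominated by the value at $(\bMaxStar,\dOptStar(\bMaxStar,m))$, which is precisely the endpoint of the critical-curve parametrization; this proves the claim for $m\geq 2$.

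The case $m=1$ requires one extra piece because Lemma~\ref{lem:dOpt-definition-star} shows $\dOptStar(\bStar,1)<\dMinStar(\bStar,1)$ for $\bStar\in(\bMinStar(1),\bShStar)$, so in that range the critical curve drops below the lower boundary of $\specialbdStar{1}$. On a horizontal line with $\dStar<\dMinStar(\bShStar,1)=\dOptStar(\bShStar,1)$ the admissible $\bStar$-interval lies entirely in $(\bMinStar(1),\bShStar)$, where $\dStar>\dMinStar(\bStar,1)>\dOptStar(\bStar,1)$; hence $\XXXspecialbdDenominatorStar$ is strictly decreasing in $\bStar$ and the infimum is approached on the lower boundary $\dStar=\dMinStar(\bStar,1)$. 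Lemma~\ref{lem:lower-bound-fixed-m-special-b-c-d-boundary-bSh-star} then asserts that the supremum of $\XXXspecialbdStar$ along this lower boundary is attained at $\bStar=\bShStar$, which coincides with the critical-curve value there (since $\dMinStar(\bShStar,1)=\dOptStar(\bShStar,1)$). For $\dStar\geq\dMinStar(\bShStar,1)$ the argument from the $m\geq 2$ case applies verbatim, and taking the supremum over $\dStar$ yields the advertised parametrization with $\bStar\in(\bMinTildeStar(m),\bMaxStar(m)]$. The main obstacle is the careful derivative computation and the piecewise boundary analysis for $m=1$, where the critical curve leaves the admissible region and one must glue together the interior-critical-point regime with the lower-boundary regime using Lemma~\ref{lem:lower-bound-fixed-m-special-b-c-d-boundary-bSh-star}.
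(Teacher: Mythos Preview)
Your proof is correct and follows essentially the same approach as the paper's: fix $\dStar$, differentiate the denominator in $\bStar$, identify the critical curve $\dStar=\dOptStar(\bStar,m)$ via \eqref{eq:fixed-m-supremum-W-optimal-curve-star}, use Lemma~\ref{lem:tZero-divied-by-dOpt-monotone-star} for monotonicity, and then handle the two boundary regimes with Lemmas~\ref{lem:lower-bound-fixed-m-special-b-c-d-boundary-bMin-part-2-star} and~\ref{lem:lower-bound-fixed-m-special-b-c-d-boundary-bSh-star}. The only cosmetic difference is that the paper's $\XXXspecialbdDenominatorStar$ drops the $\bStar$-independent terms $-\tfrac{2+m}{m}+\tfrac{m}{2+m}\dStar$, which of course does not affect the minimization.
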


\begin{proof}
	Because of Lemma~\ref{lem:lower-bound-fixed-m-special-b-c-d-alternate-expression-star}
	we only need to prove the second part of the assertion.
	Recall from the definition~\eqref{eq:def-bMinTildeStar}
	that $\bMinTildeStar = \bMinStar$ for $m\geq 2$.
	In order to maximize the function $\XXXspecialbdStar(\bStar,\dStar,m)$
	over $\specialbdStar{m}$
	we analyze its behaviour when $\dStar$ is fixed.
	Since the numerator in the formula defining $\XXXspecialbdStar$ does not depend on $\bStar$,
	we would like to  minimize the function 
	\begin{align*}
	\bStar \mapsto \XXXspecialbdDenominatorStar(\bStar) &\coloneqq
	\XXXspecialbdDenominatorStar(\bStar,\dStar, m) \\
	& = \frac{2(2+m)}{m}\bStar +
	\frac{4(1+m)}{m(2+m)}(2\bStar^{1+m/2}  -1)\dStar^{-m/2} 
	- 2\tZeroStar(\bStar,m).
	\end{align*}
	(we treat $\dStar$ and $m$ as fixed parameters and suppress them in notation).
	We proceed in a standard way, by checking when $\frac{\partial }{\partial \bStar} \XXXspecialbdDenominatorStar(\bStar) = 0$ and what happens on the boundary.
	It follows from the formula defining $\tZeroStar$ (see~\eqref{eq:def-t_0-explicit-formula-star}) that
	\begin{equation*}
	\frac{\partial }{\partial \bStar}\tZeroStar(\bStar)
	= \tZeroStar(\bStar) \frac{2}{2+m} \frac{2(1+m/2) \bStar^{m/2}}{2 \bStar^{1+m/2} -1}
	=  \frac{2 \tZeroStar(\bStar)\bStar^{m/2}}{2 \bStar^{1+m/2} -1}
	\end{equation*}
	and thus
	\begin{align*}
	\frac{\partial }{\partial \bStar} \XXXspecialbdDenominatorStar(\bStar)
	&=  \frac{2(2+m)}{m}
	+ \frac{4(1+m)}{m}\bStar^{m/2}  \dStar^{-m/2} 
	-\frac{4\tZeroStar(\bStar,m) \bStar^{m/2}}{2 \bStar^{1+m/2} -1 }\\
	&=   \frac{2\bStar^{m/2} 
	}{m(2 \bStar^{1+m/2} -1)}
	\cdot
	\Bigl( 
	(2+m)(2\bStar^{1+m/2} -1)\bStar^{-m/2} \\
	&\qquad -2m \tZeroStar(\bStar,m)
	+ 2(1+m)(2 \bStar^{1+m/2} -1)\dStar^{-m/2}	\Bigr).
	\end{align*}
	It follows directly from Lemma~\ref{lem:dOpt-definition-star} that 
	$\frac{\partial }{\partial  \bStar} \XXXspecialbdDenominatorStar(\bStar,\dStar,m) = 0$
	if and only if $\dStar = \dOptStar(\bStar)$ (in particular, we must have $\bStar > \bMinTildeStar$),
	$ \frac{\partial }{\partial \bStar} \XXXspecialbdDenominator(\bStar,\dStar) \leq 0$ for $\dStar = \dMaxStar(\bStar,m)$,
	$ \frac{\partial }{\partial \bStar} \XXXspecialbdDenominatorStar(\bStar,\dStar, m) \geq 0$
	if $\dStar = \dMinStar(\bStar)$ and $\bStar >\bMinTildeStar$.
	By Lemma~\ref{lem:tZero-divied-by-dOpt-monotone-star} 
	the function
	$\bStar\mapsto \dOptStar(\bStar)$ is increasing 
	(and by Lemma~\ref{lem:dOpt-definition-star} its graph for $\bStar\in(\bMinTildeStar, \bMaxStar)$ is contained in the set $\specialbdStar{m}$).
	Hence:
	\begin{itemize}
		\item $ \frac{\partial }{\partial \bStar} \XXXspecialbdDenominatorStar(\bStar,\dStar, m) \geq 0$ for $(\bStar,\dStar)\in\specialbdStar{m}$ 
		such that $\bStar > \bMinTildeStar$ and $\dStar \leq \dOptStar(\bStar)$,
		\item $ \frac{\partial }{\partial \bStar} \XXXspecialbdDenominatorStar(\bStar,\dStar,m) \leq 0$ for $(\bStar,\dStar)\in\specialbdStar{m}$  such that $\bStar>\bMinTildeStar$ and $\dStar \geq \dOptStar(\bStar)$,
		\item $ \frac{\partial }{\partial \bStar} \XXXspecialbdDenominatorStar(\bStar,\dStar,m) \leq 0$ for $(\bStar,\dStar)\in\specialbdStar{m}$  such that $\bStar <\bMinTildeStar$.
	\end{itemize}
	Thus on each horizontal line $\dStar= \text{const.} \in (\dOpt(\bMinTildeStar),\dOptStar(\bMaxStar)]$
	the infimum of $\XXXspecialbdDenominatorStar(\bStar)$ is attained when
	$\frac{\partial }{\partial  \bStar} \XXXspecialbdDenominatorStar(\bStar) = 0$.
	On the other hand,
	on each horizontal line $\dStar= \text{const.} \in [\dOptStar(\bMaxStar), \bMaxStar )$
	the infimum of $\XXXspecialbdDenominatorStar(\bStar)$ is attained when $\bStar$ is as large as possible.
	If $m=1$ and $\bMinStar < \bMinTildeStar$, then 
	on each horizontal line $\dStar= \text{const.} \in (0, \dOpt(\bMinTildeStar)]$
	the infimum of $\XXXspecialbdDenominatorStar(\bStar)$ is attained when
	$\bStar$ is as large as possible.
	In order to finish the proof we invoke Lemma~\ref{lem:lower-bound-fixed-m-special-b-c-d-boundary-bMin-part-2-star}
	and, if $m=1$, Lemma~\ref{lem:lower-bound-fixed-m-special-b-c-d-boundary-bSh-star}. 
\end{proof}

\subsection{Proof of Remark~\ref{rem:W-vs-W-star}}

\begin{proof}[Proof of Remark~\ref{rem:W-vs-W-star}]	
	We shall prove that for an appropriate choice of $\bStar=\bStar(b,m)$
	we have $\XXXspecialbd(b,\dOpt(b),m) = \XXXspecialbdStar(\bStar,\dOptStar(\bStar),m)$.
	More precisely, from now on let us denote 
	\[
	\bStarWvsWStar \coloneqq \bStarWvsWStar (b,m) =  \frac{t_0(b,m)}{\dOpt(b,m)}.
	\]
	We claim that this number satisfies the constraint
	\begin{equation}
	\label{eq:W-vs-W-star-claim-0}
	2^{-2/(2+m)} = \bMinStar < \bStarWvsWStar  < \bMaxStar = \Bigl(\frac{2+2m}{4+3m}\Bigr)^{2/(2+m)}
	\end{equation}
	and moreover the following  equalities hold (cf.\ Figure~\ref{fig:star-relations-repeated-in-appendix}):
	\begin{align}
	\label{eq:W-vs-W-star-claim-1}
	\tZeroStar(\bStarWvsWStar) 
	&= \frac{b}{\dOpt(b)},\\
	\label{eq:W-vs-W-star-claim-2}
	\dOptStar(\bStarWvsWStar) 
	&= \frac{1}{\dOpt(b)},
	\end{align}
	and, provided that $\bMin(m) < b < \bMax(m)$ and $\bMinTildeStar < \bStarWvsWStar < \bMaxStar$,
	\begin{equation}
	\label{eq:W-vs-W-star-claim-3}	    
	\XXXspecialbdStar(\bStarWvsWStar, \dOptStar(\bStarWvsWStar), m) 
	= 	\XXXspecialbd(b,\dOpt(b),m).
	\end{equation}
	We proceed with the proofs of these four claims. 
	
	\emph{Proof of Claim~\eqref{eq:W-vs-W-star-claim-0}}.
	This follows directly from Lemma~\ref{lem:tZero-divied-by-dOpt-monotone}. 
	
	\emph{Proof of Claim~\eqref{eq:W-vs-W-star-claim-1}}.
	In order to prove that $ \tZeroStar(\bStarWvsWStar) = b/\dOpt(b)$
	we have to check that
	\[
	-\frac{m}{2+m} + \frac{2(1+m)}{2+m} \bigl(2 \Bigl( \frac{t_0(b)}{\dOpt(b)}\Bigr)^{1+m/2} - 1 \bigr) \Bigl( \frac{b}{\dOpt(b)}\Bigr) ^{-1-m/2} \overset{?}{=} 0
	\]
	(cf.~\eqref{eq:def-t_0-explicit-formula-star})
	or, equivalently,
	\begin{equation}
	\label{eq:equivalent-version-of-some-claim}
	-mb^{1+m/2} + 4(1+m)  t_0(b)^{1+m/2} \overset{?}{=}  2(1+m) \dOpt(b)^{1+m/2}.
	\end{equation}
	Substituting here
	\begin{equation}
	\label{eq:4-1-m-t_0-1-m-2}
	4(1+m)  t_0(b)^{1+m/2} = 2t_0(b) \cdot 2(1+m)t_0(b)^{m/2} = 2t_0(b) \cdot \frac{2+m}{2b^{-m/2} - 1}
	\end{equation}
	(recall \eqref{eq:def-t_0-explicit-formula})
	we obtain exactly the definition of $\dOpt(b)$
	(cf.~\eqref{eq:fixed-m-supremum-W-optimal-curve}).	
	This finishes the proof of the  claim.	
	
	\emph{Proof of Claim~\eqref{eq:W-vs-W-star-claim-2}}.
	In order to prove that $\dOptStar(\bStarWvsWStar)  =1/\dOpt(b)$
	we have to check that
	\begin{align*}
	\MoveEqLeft[4]
	2(1+m)
	\bigl(2 \Bigl(\frac{t_0(b)}{\dOpt(b)}\Bigr)^{1+m/2} -1\bigr)  \Bigl(\frac{1}{\dOpt(b)}\Bigr)^{-m/2} \\
	&\overset{?}{=}
	-(2+m)\bigl(2\Bigl(\frac{t_0(b)}{\dOpt(b)}\Bigr)^{1+m/2} -1\bigr)
	\Bigl(\frac{t_0(b)}{\dOpt(b)}\Bigr)^{-m/2}
	+2m \frac{b}{\dOpt(b)}
	\end{align*}
	(cf.~\eqref{eq:fixed-m-supremum-W-optimal-curve-star}; 
	note that we already substituted the value of $\tZeroStar(\bStarWvsWStar) $ 
	using Claim~\eqref{eq:W-vs-W-star-claim-1};
	note also that although for $m=1$ and $\bStar\in (\bMinStar(1),\bShStar)$ we have
	$\dOptStar(\bStar, 1) < \dMinStar(\bStar, 1)$, the
	formula defining $\dOptStar(\bStar,m)$ 
	does not change,
	so our reasoning is not affected).
	Equivalently,
	\begin{align*}
	\MoveEqLeft[4]
	4(1+m) t_0(b)^{1+m/2} - 2(1+m)\dOpt(b)^{1+m/2} \\
	&\overset{?}{=}
	-2(2+m) t_0(b) +  (2+m)\frac{\dOpt(b)^{1+m/2}}{t_0(b)^{m/2}}
	+2mb.
	\end{align*}
	The left-hand side is equal to $m b^{1+m/2}$ by \eqref{eq:equivalent-version-of-some-claim}
	(i.e., by~\eqref{eq:4-1-m-t_0-1-m-2} and~\eqref{eq:fixed-m-supremum-W-optimal-curve}).
	On the right-hand side we use~\eqref{eq:def-t_0-explicit-formula} to get
	\[
	(2+m)\frac{\dOpt(b)^{1+m/2}}{t_0(b)^{m/2}} = \frac{(2+m)\dOpt(b)^{1+m/2}}{\frac{2+m}{2(1+m)(2b^{-m/2} - 1)}} 
	= 2(1+m)(2b^{-m/2} - 1)\dOpt(b)^{1+m/2}.
	\]
	Hence reduces once again to the definition of $\dOpt(b)$
	(cf.~\eqref{eq:fixed-m-supremum-W-optimal-curve}).	
	This finishes the proof of the claim.

	\begin{figure}[t]
		\begin{tikzpicture}[scale=1, xscale=3.5, yscale=0.5]
		\draw[->] (0, 0) -- (3, 0); 
		\draw[->] (0, -2) -- (3, -2); 
		\draw (2.692582403567252,0) -- (1,-2);
		\draw (2,0) -- (2/2.692582403567252,-2);
		\draw [dashed] (1.5,0) -- (1.5/2.692582403567252,-2);
		\draw [dashed] (1,0) -- (1/2.692582403567252,-2);
		\node[circle,minimum size=3pt,inner sep=0,fill=black,label=above:{$1\vphantom{\dOpt(b,m)t_0(b,m)}$}] at (1,0) {};
		\node[circle,minimum size=3pt,inner sep=0,fill=black,label=above:{$b\vphantom{\dOpt(b,m)t_0(b,m)}$}] at (1.5,0) {};
		\node[circle,minimum size=3pt,inner sep=0,fill=black,label=above:{$t_0(b,m)\vphantom{\dOpt(b,m)}$}] at (2,0) {};		
		\node[circle,minimum size=3pt,inner sep=0,fill=black,label=above:{$\dOpt(b,m)\vphantom{\dOpt(b,m)t_0(b,m)}$}] at (2.692582403567252,0) {};
		\node[circle,minimum size=3pt,inner sep=0,draw=black,fill=white,label=below:{}] at (1/2.692582403567252,-2)  {}; 
		\node[circle,minimum size=3pt,inner sep=0,draw=black,fill=white,label=below:{}] at (1.5/2.692582403567252,-2)  {}; 
		\node[circle,minimum size=3pt,inner sep=0,fill=black,label=below:{$\frac{t_0(b,m)}{\dOpt(b,m)}$}] at (2/2.692582403567252,-2) {};		
		\node[circle,minimum size=3pt,inner sep=0,fill=black,label=below:{$1\vphantom{\frac{t_0(b,m)}{\dOpt(b,m)}}$}] at (1,-2) {};
		
		\end{tikzpicture}
		\caption{Claims~\eqref{eq:W-vs-W-star-claim-0}--\eqref{eq:W-vs-W-star-claim-3} 
			assert that for an appropriate choice of $d$, $\bStar$, and $\dStar$
			we have $\XXXspecialbd(b,d,m) = \XXXspecialbdStar(\bStar,\dStar,m)$.
		}
		\label{fig:star-relations-repeated-in-appendix}
	\end{figure}

	\emph{Proof of Claim~\eqref{eq:W-vs-W-star-claim-3}}.
	Let us start by explicitly stressing that  Lemmas~\ref{lem:dOpt-definition} 
	and
	\ref{lem:dOpt-definition-star}
	imply that $(b, \dOpt(b,m))\in\specialbd{m}$ if $\bMin(m) < b < \bMax$,
	$(\bStarWvsWStar, \dOptStar(\bStarWvsWStar,m))\in\specialbd{m}$ if $\bMinTildeStar(m) < \bStarWvsWStar < \bMaxStar$.
	Using claims \eqref{eq:W-vs-W-star-claim-1}, \eqref{eq:W-vs-W-star-claim-2} we observe that
	\begin{align*}
	\MoveEqLeft[3]\XXXspecialbdStar\bigl(
	\bStarWvsWStar,
	\dOptStar(\bStarWvsWStar),
	m\bigr)
	=\XXXspecialbdStar\Bigl(
	\frac{t_0(b)}{\dOpt(b)},\frac{1}{\dOpt(b)}, m\Bigr)\\
	& = \Bigl(1-\frac{1}{\dOpt(b)}\Bigr)\cdot
	\Bigl[-\frac{2+m}{m}  + \frac{m}{2+m}\frac{1}{\dOpt(b)} + \frac{2(2+m)}{m}\frac{t_0(b)}{\dOpt(b)} \\
	& \qquad\qquad +
	\frac{4(1+m)}{m(2+m)}(2\bigl( \frac{t_0(b)}{\dOpt(b)}\bigr)^{1+m/2}  -1)\bigl(\frac{1}{\dOpt(b)}\bigr)^{-m/2} 
	- 2\frac{b}{\dOpt(b)}\Bigr]^{-1}\\
	& = \Bigl(\dOpt(b) -1\Bigr)\cdot
	\Bigl[-\frac{2+m}{m}\dOpt(b)  + \frac{m}{2+m} 
	+ \frac{2(2+m)}{m}t_0(b) \\
	&\qquad\qquad+
	\frac{4(1+m)}{m(2+m)}(2\bigl( \frac{t_0(b)}{\dOpt(b)}\bigr)^{1+m/2}  -1)\dOpt(b)^{1+m/2} 
	- 2b \Bigr]^{-1}.
	\end{align*}
	Comparing with the formula for $\XXXspecialbd(b,\dOpt(b),m)$,
	we see that in order to prove the claim~\eqref{eq:W-vs-W-star-claim-3}
	we need to check that
	\begin{multline*}
	\frac{2(2+m)}{m}t_0(b) +
	\frac{4(1+m)}{m(2+m)}\bigl(2\Bigl( \frac{t_0(b)}{\dOpt(b)}\Bigr)^{1+m/2}  -1\bigr)\dOpt(b)^{1+m/2} 
	- 2b\\
	\overset{?}{=}
	- \frac{2m}{2+m}b +
	\frac{4(1+m)}{m(2+m)}(2b^{-m/2}  -1)\dOpt(b)^{1+m/2} 
	+2t_0(b),
	\end{multline*}
	which, after multiplying both side by $m(2+m)/2$ and rearranging,
	is equivalent to
	\begin{multline*}
	2(2+m)t_0(b) +	4(1+m) t_0(b)^{1+m/2} - 2(1+m)\dOpt(b)^{1+m/2} -2mb\\
	\overset{?}{=}	 2(1+m)(2b^{-m/2}  -1)\dOpt(b)^{1+m/2} 
	\end{multline*}
	This equality is true by the calculations from the proof of Claim~\eqref{eq:W-vs-W-star-claim-2}.
	This finishes the proof of tha last claim.
	
	With the claims proved, we can finish the proof of the proposition.
	Suppose first that $m\geq 2$, so that $\bMaxTilde = \bMax$ and $\bMinTildeStar = \bMin$.
	By Lemma~\ref{lem:tZero-divied-by-dOpt-monotone},
	$\bStarWvsWStar = \bStarWvsWStar(b)$ can take any value in $(\bMinStar, \bMaxStar)$. 
	Thus Claim~\eqref{eq:W-vs-W-star-claim-3}
	in conjunction with Propositions~\ref{prop:lower-bound-fixed-m-special-b-c-d} 
	and~\ref{prop:lower-bound-fixed-m-special-b-c-d-star}
	proves the assertion of the proposition.
	
	Assume now that $m=1$, i.e., we are in the case when 
	$\bMinStar(1) < \bMinTildeStar(1) = \bShStar{}$
	and 
	$\bSh{} = \bMaxTilde(1)   < \bMax(1)$.
	We claim that $\bStarWvsWStar(\bSh) = \bShStar$.
	Indeed, the definitions of $\bStarWvsWStar$, $t_0$, and $\tZeroStar$,
	the equality $\bSh = 7^{2/3}$, and Claim~\eqref{eq:W-vs-W-star-claim-1} yield
	\begin{align*}
	\bStarWvsWStar(\bSh)
	&= \frac{t_0(\bSh)}{\dOpt(\bSh)} = 
	\frac{(3/4)^2 (2\cdot 7^{-1/3} - 1)^{-2}}{\dOpt(\bSh)},\\
	4^{2/3}(2\bStarWvsWStar(\bSh)^{3/2} -1 )^{2/3}
	=\tZeroStar(\bStarWvsWStar(\bSh))
	&=\frac{\bSh}{\dOpt(\bSh)} = \frac{7^{2/3}}{\dOpt(\bSh)}.
	\end{align*}
	Solving both equations for $1/\dOpt(\bSh)$ and comparing yields
	\[
	(4/3)^2 (2\cdot 7^{-1/3} - 1)^{2}\bStarWvsWStar(\bSh)
	= (4/7)^{2/3}(2\bStarWvsWStar(\bSh)^{3/2} -1 )^{2/3}
	\]
	and after some easy transformations  we arrive at
	\[
	\bStarWvsWStar(\bSh) = \Bigl(\frac{27}{54 - 16(2-7^{1/3})^3}\Bigr)^{2/3},
	\]
	i.e., exactly the definition of $\bShStar$ (cf.\ the definition before~\eqref{eq:def-bMinTildeStar}).
	In order to finish the proof of the proposition in the case $m=1$,
	we apply Claim~\eqref{eq:W-vs-W-star-claim-3} with Propositions~\ref{prop:lower-bound-fixed-m-special-b-c-d} 
	and~\ref{prop:lower-bound-fixed-m-special-b-c-d-star} 
	(as above for $m\geq 2$)
	and use additionally Lemma~\ref{lem:m=1-optimal-curve-far-end}. 
\end{proof}

\section{A lower bound valid for all \texorpdfstring{$m$}{m}}

\label{APP:crunching}

In order to have a reasonably good estimate of $\weakconstrestricted{\Lambda_m}{\fclassspec{m}}$ for all $m\in\NN$ some technical 
 and numerical work is needed.
In this section we present the proof of the following estimate.

\begin{theorem}
	\label{thm:main-lower-bounds-1.34}
	For every $m\in\NN$, $\weakconstrestricted{\Lambda_m}{\fclassspec{m}} \geq 1.34$.
\end{theorem}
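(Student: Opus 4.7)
The proof plan is to split $\NN$ into a finite range handled by direct numerical optimization and a tail handled by an effective version of Corollary~\ref{cor:lower-bound-asymptotic-special-b-c-d-alternate-expression}. Fix the parameters $x = 0.548$ and $y = 1.164$ used in the proof of Theorem~\ref{thm:main-lower-bounds}, and for each $m \in \NN$ set $b(m) \coloneqq e^{2x/m}$ and $d(m) \coloneqq e^{2(x+y)/m}$. As verified in the proof of that corollary, $(b(m),d(m)) \in \specialbd{m}$ for all $m$ sufficiently large, so Lemma~\ref{lem:lower-bound-fixed-m-special-b-c-d-alternate-expression-with-inequality} yields
\[
\weakconstrestricted{\Lambda_m}{\fclassspec{m}} \geq \XXXspecialbd(b(m),d(m),m).
\]

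The task is to show that the right-hand side exceeds $1.34$ for every $m$ past a concrete threshold $M$. To this end I would multiply both the numerator $d(m)-1$ and the denominator of $\XXXspecialbd$ by $m$, perform the substitution $t = e^{2s/m}$ used in the proof of the corollary, and expand $e^{2s/m} = 1 + 2s/m + R(s,m)$ with an explicit Lagrange remainder satisfying $|R(s,m)| \leq 2(s/m)^2 e^{2s/m}$. Each of the four ingredients, namely $m(d(m)-1)$, the two ``Riemann sum'' integrals approximating $2\int_0^x |1 - 2e^s|\, ds$ and $2\int_x^{x+y} |{-1} + 2(2-e^x) e^s|\, ds$, and the coefficient $\dOne(b(m),m)$, contributes an $O(1/m)$ error whose constant can be bounded explicitly. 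Summing these estimates produces an inequality
\[
\XXXspecialbd(b(m),d(m),m) \geq L - C/m
\]
with $L \geq 1.37$ the limit value from Corollary~\ref{cor:lower-bound-asymptotic-special-b-c-d-alternate-expression} and $C$ a computable constant; the choice $M \coloneqq \lceil C/0.03 \rceil$ then settles the tail.

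For each $m \in \{1,2,\ldots,M\}$ I would invoke Lemma~\ref{lem:lower-bound-fixed-m-special-b-c-d-alternate-expression-with-inequality} once more and exhibit a concrete pair $(b_m,d_m) \in \specialbd{m}$ with $\XXXspecialbd(b_m,d_m,m) \geq 1.34$, following the numerical optimization already carried out in Table~\ref{table:small-m} for $m \in \{1,2,3,4\}$ and extending it routinely to the remaining finitely many values via a computer algebra system. The main obstacle will be keeping the constant $C$ small enough for $M$ to remain in a computationally feasible range; in particular, a careful accounting of the near-cancellation between the terms $-\tfrac{2+m}{m} d(m)$ and $\tfrac{4(1+m)}{m(2+m)}(2b(m)^{-m/2}-1)d(m)^{1+m/2}$ in the denominator of $\XXXspecialbd$ is needed, since a naive termwise Taylor bound would inflate $C$ substantially and push $M$ beyond a tractable range.
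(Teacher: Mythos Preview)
Your plan is sound and matches the paper's strategy: split $\NN$ into a finite range handled numerically and a tail handled by an effective Taylor estimate of $\XXXspecialbd(b(m),d(m),m)$. The paper executes this with the simpler choice $b=e^{1/m}$, $d=e^{3/m}$ (i.e.\ $x=\tfrac12$, $y=1$ rather than your $x=0.548$, $y=1.164$), and instead of going back through the integral representation it Taylor-expands the closed formula~\eqref{eq:a=1-b=c-would-like-to-optimize-technical} for $\XXXspecialbd$ directly. The near-cancellation you flag is exactly the issue the paper confronts; it handles it by isolating $u_0(m)\coloneqq m\ln t_0(e^{1/m},m)$, bounding it via the explicit identity $e^{u_0/2}=(2+m)/(2(1+m)\theta)$ with $\theta=2e^{-1/2}-1$, and then packaging everything into a ratio of two explicit degree-four polynomials in $m$. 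This yields the concrete threshold $m\ge 25$ for the tail, with the range $5\le m\le 27$ checked by evaluating $\XXXspecialbd(e^{1/m},e^{3/m},m)$ and $1\le m\le 4$ taken from Table~\ref{table:small-m}. Your integral-based error analysis would work too, but the paper's direct expansion of the closed formula keeps the constants small enough that the numerical range stays modest without any delicate bookkeeping.
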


Note that Remark~\ref{rem:W-vs-W-star} immediately implies the following.

\begin{corollary}
	For every $m\in\NN$, $\weakconstrestricted{\Lambda_m^*}{\fclassspecStar{m}} \geq 1.34$.
\end{corollary}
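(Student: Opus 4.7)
The plan is to combine explicit certifications for small $m$ with a quantitative effective version of the asymptotic argument of Corollary~\ref{cor:lower-bound-asymptotic-special-b-c-d-alternate-expression} for large $m$, exploiting the gap between the asymptotic bound ($\approx 1.37$) and the target $1.34$. For $m \in \{1, 2, 3, 4\}$ I would directly certify the pairs $(b, d)$ listed in Table~\ref{table:small-m}: verify via \eqref{eq:def-bMin-bMax}--\eqref{eq:def-dMax} that they lie in $\specialbd{m}$, then plug into formula~\eqref{eq:a=1-b=c-would-like-to-optimize-technical}. All four tabulated values of $\XXXspecialbd(b, d, m)$ exceed $1.37$, so Lemma~\ref{lem:lower-bound-fixed-m-special-b-c-d-alternate-expression-with-inequality} yields the bound for these $m$.

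For large $m$ I would reuse the parameterization $b(m) = e^{2x/m}$, $d(m) = e^{2(x+y)/m}$ with the specific choice $x = 0.548$, $y = 1.164$ employed in the proof of Theorem~\ref{thm:main-lower-bounds}. The strict inequalities $e^{-y} < 2(2 - e^x) < 3 e^{-y}$ guarantee $(b(m), d(m)) \in \specialbd{m}$ for all $m$ above an explicit threshold. The key quantitative step is to upgrade the convergence established in Corollary~\ref{cor:lower-bound-asymptotic-special-b-c-d-alternate-expression} into an effective estimate of the form $\XXXspecialbd(b(m), d(m), m) \geq L - C/m$, where $L \geq 1.37$ is the limit. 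One would proceed by retracing the limit passage step by step with explicit remainders in each use of $(1 + \alpha/m)^m = e^\alpha + O(1/m)$: the numerator $m(d(m) - 1)$ is controlled by elementary Taylor estimates, and each piece of the denominator can be tracked analogously. Choosing $M$ so that $C/M < 0.03$ gives the bound for all $m \geq M$; the intermediate range $5 \leq m < M$ is then handled by numerical optimization case by case, precisely as for $m \leq 4$, with values expected to remain comfortably above the target (cf.\ Table~\ref{table:small-m} and the monotonicity conjectured in Conjecture~\ref{conj:mine}).

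The main obstacle is controlling the error term arising from the absolute-value integrand $|{-}\tfrac{2+m}{m} + \dOne(b(m),m) t^{m/2}|$ in the denominator of $\XXXspecialbd$, whose interior zero $t_0(b(m), m)$ depends on $m$. The cleanest approach is to split the integral at $t_0(b(m), m)$ and compare with the limit obtained by splitting at the analogous point $s = -\ln(2(2-e^x))$ in the rescaled variable $s = (m/2)\log t$; the two splitting points differ by $O(1/m)$, contributing $O(1/m)$ error terms that must be carefully bounded. The strict sign-change inequalities keep $t_0(b(m), m)$ bounded away from both endpoints $b(m)$ and $d(m)$ in the rescaled coordinate, which is what makes uniform $O(1/m)$ control possible. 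The difficulty is therefore not conceptual but bookkeeping: tracking the explicit constants through all the Taylor remainders, and carrying out the intermediate-$m$ numerical verification with sufficient precision to be rigorously certified.
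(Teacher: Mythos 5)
Your proposal addresses only the operators $\Lambda_m$: every quantity you invoke --- the class $\fclassspec{m}$, the parameter region $\specialbd{m}$, the functional $\XXXspecialbd$ and its formula~\eqref{eq:a=1-b=c-would-like-to-optimize-technical}, the constraints~\eqref{eq:def-bMin-bMax}--\eqref{eq:def-dMax}, Table~\ref{table:small-m}, Lemma~\ref{lem:lower-bound-fixed-m-special-b-c-d-alternate-expression-with-inequality}, the scaling $b(m)=e^{2x/m}$, $d(m)=e^{2(x+y)/m}$, and the integrand $\bigl|-\tfrac{2+m}{m}+\dOne t^{m/2}\bigr|$ --- belongs to the unstarred side of the story. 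The statement, however, concerns the adjoint $\Lambda_m^*$ on the starred class $\fclassspecStar{m}$, and you never explain how your estimate would transfer there. That bridge is the missing idea, and it is also the entire content of the paper's proof of this corollary: by Remark~\ref{rem:W-vs-W-star} one has $\weakconstrestricted{\Lambda_m^*}{\fclassspecStar{m}}=\weakconstrestricted{\Lambda_m}{\fclassspec{m}}$ for every $m\in\NN$, so the bound is an immediate consequence of Theorem~\ref{thm:main-lower-bounds-1.34}, proved just above. Without invoking Remark~\ref{rem:W-vs-W-star} (or, far more laboriously, redoing the whole argument in the starred setting via $\dOneStar$, $\XXXspecialbdStar$, $\dStar(m)=e^{-2(x+y)/(2+m)}$, $\bStar(m)=e^{-2x/(2+m)}$, and Corollary~\ref{cor:lower-bound-asymptotic-special-b-c-d-alternate-expression-star}), your argument, even if completed, proves a different statement.

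As a secondary point, even read as an outline for the unstarred Theorem~\ref{thm:main-lower-bounds-1.34}, your plan leaves unspecified exactly the content the paper supplies. The paper fixes $b=e^{1/m}$, $d=e^{3/m}$, Taylor-expands the denominator of $\XXXspecialbd$ into an explicit quartic $P(m,M)$ with concrete numerical coefficients (Lemma~\ref{lem:technical-m-large-Taylor}), and reduces the claim for $m\geq25$ to the sign of a single polynomial, handling $m\in\{5,\dots,27\}$ numerically. Your proposed effective bound of the form $\XXXspecialbd\geq L-C/m$ with explicit $C$ is precisely what that lemma establishes, and declaring the constant-tracking to be mere bookkeeping is not the same as carrying it out; the margin here is too small to wave at.
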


Due to Lemma~\ref{lem:lower-bound-fixed-m-special-b-c-d-alternate-expression-with-inequality},
we only need to find some values of $b=b(m)$ and $d=d(m)$
which would confirm that $\XXXspecialbd(b(m), d(m),m)$
is fairly large for all $m\in \NN$. 
To this end, we will take
\[
b =b(m)= e^{1/m}, \qquad d = d(m) =e^{3/m}.
\]
Denote
\begin{align*}
\theta &\coloneqq   2 e^{-1/2}  - 1 \approx 0.213,\\
K &\coloneqq 4\theta e^{3/2} \approx 3.819, \\
L &\coloneqq -4\ln(2\theta) \approx 3.412, 
\end{align*}
and 
\begin{align*}
P(m,M) &\coloneqq (2K+2L -10) m^4
+ (10K+6L+2M -45) m^3\\
&\qquad+ (23 K + 4 L +  6 M -87) m^2
+ (24K + 4M - 96) m
+ (9K - 36).
\end{align*}
We need the following technical lemma.

\begin{lemma}
	\label{lem:technical-m-large-Taylor}
	The following holds for $m\geq 4$.
	\begin{enumerate}[a)]
		\item
		\label{item:Taylor-constraints}
		We have $(e^{1/m}, e^{3/m}) \in \specialbd{m}$, i.e., 
		the numbers $b=e^{1/m}$ and $d=e^{3/m}$ satisfy the constraints of Definition~\ref{def:f-class-spec}.
		\item
		\label{item:Taylor-uZero}
		Denote $u_0 = u_0(m) = m\ln(t_0(e^{1/m},m))$.
		Then $u_0(m)\in [1,3]$.
		Moreover, the quantities  $u_0(m)$ and $ u_0(m)/m$  decrease with $m$.
		\item
		\label{item:Taylor-coefficients_of_P}
		If $M\geq 0$ and $m\geq 1$, then $P(m, M) > 0 $.
		\item 
		\label{item:Taylor-estimate}
		If $\delta>0$, $M\in(0,\infty)$, and $m\in\NN$ are such that
		$
		u_0(m)/m \leq \delta$
		and 
		$u_0(m)^2 (1+ e^\delta \delta/3) \leq M$,
		then
		\[
		\XXXspecialbd(e^{1/m},e^{3/m},m) \geq \frac{6m(m+1)(m+3/2)(m+2)}{P(m,M)}.
		\]
	\end{enumerate}
\end{lemma}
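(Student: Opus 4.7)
The plan is to verify the four parts in turn, with the bulk of the work concentrated in part (d).

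For part (a), substitute $b = e^{1/m}$ and $d = e^{3/m}$ into the constraints of Definition~\ref{def:f-class-spec}. The condition $b \in (\bMin(m), \bMax(m))$ reduces via \eqref{eq:def-bMin-bMax} to $3/2 - 1/(2m+2) < e^{1/2} < 2$, which holds for every $m \geq 1$. The condition $d \in (\dMin(b,m), \dMax(b,m))$ reduces via \eqref{eq:def-dMin}--\eqref{eq:def-dMax} and the identity $2b^{-m/2} - 1 = \theta$ to the double inequality $2(m+2)/(m+1) < K < 2(3m+2)/(m+1)$; since the outer quantities tend monotonically to $2$ and $6$ respectively while $K \approx 3.82$, this holds (in fact for every $m \geq 1$, and a fortiori for $m \geq 4$).

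For part (b), the formula \eqref{eq:def-t_0-explicit-formula} gives $t_0(e^{1/m}, m)^{m/2} = (m+2)/(2(m+1)\theta)$, whence
\[
u_0(m) = 2\ln \frac{m+2}{2(m+1)\theta} = \frac{L}{2} + 2\ln \frac{m+2}{m+1}.
\]
Since $(m+2)/(m+1) = 1 + 1/(m+1)$ strictly decreases in $m$, so does $u_0(m)$; and since $u_0(m)$ is positive, $u_0(m)/m$ decreases as well. The range $u_0(m) \in [1,3]$ follows from $L/2 = -2\ln(2\theta) > 1$ and $u_0(1) = L/2 + 2\ln(3/2) < 3$. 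For later use, $\ln(1+x) \leq x$ also gives $u_0(m) \leq L/2 + 2/(m+1)$.

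For part (c), note that $\partial_M P(m,M) = 2m^3 + 6m^2 + 4m \geq 0$, so it suffices to show $P(m,0) > 0$ for $m \geq 1$. Using rigorous bounds $3.8 < K < 3.9$ and $3.4 < L < 3.5$, the coefficients of $m^4$, $m^3$, $m^2$ in $P(m,0)$ exceed $4, 13, 14$, while the linear and constant coefficients are only mildly negative (bounded below by $-5$ and $-2$); hence the $m^2$ term alone absorbs them for $m \geq 1$ and the leading terms give ample margin.

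Part (d) is the heart of the lemma. Substituting into \eqref{eq:a=1-b=c-would-like-to-optimize-technical}, and using $b^{-m/2} = e^{-1/2}$, $d^{1+m/2} = e^{3/m} e^{3/2}$, and $t_0(b,m) = e^{u_0/m}$, the denominator of $\XXXspecialbd(e^{1/m}, e^{3/m}, m)$ becomes
\[
D(m) = \frac{m}{m+2} - \frac{m+2}{m} e^{3/m} - \frac{2m}{m+2} e^{1/m} + \frac{K(m+1)}{m(m+2)} e^{3/m} + 2 e^{u_0/m}.
\]
The target inequality is equivalent to $(e^{3/m} - 1) P(m,M) \geq 6m(m+1)(m+3/2)(m+2)\, D(m)$, so the plan is to bound $D(m)$ above and $e^{3/m} - 1$ below. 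The net coefficient of $e^{3/m}$ in $D(m)$ equals $(K(m+1) - (m+2)^2)/(m(m+2))$, which is negative because $K < 4$, and the coefficient of $e^{1/m}$ is also negative; hence both $e^{3/m}$ and $e^{1/m}$ may be replaced from below by their fourth-order Taylor polynomials (the discarded tails being nonnegative). For the positive term $2e^{u_0/m}$, the inequality $e^y - 1 - y \leq (y^2/2)(1 + (y/3) e^y)$ for $y \geq 0$, combined with $u_0/m \leq \delta$ and $u_0^2(1 + e^\delta \delta/3) \leq M$, yields $e^{u_0/m} \leq 1 + u_0/m + M/(2m^2)$; the residual $u_0$-dependence is then eliminated via $u_0(m) \leq L/2 + 2/(m+1)$ from part (b). Analogously, $e^{3/m} - 1 \geq 3/m + 9/(2m^2) + 9/(2m^3) + 27/(8m^4)$. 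Multiplying the target inequality by $8m^5(m+2)$ clears all fractions, and both sides become polynomials in $m$; the coefficients of $P(m,M)$ are calibrated precisely so that matching powers of $m$ on both sides yields the asserted bound. The main obstacle is this final polynomial bookkeeping: although each step above is routine, keeping track of the $K$, $L$, and $M$ contributions across all five powers of $m$ is delicate, and the precise shape of $P(m,M)$ is essentially forced by this matching.
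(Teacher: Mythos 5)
Parts~(a)--(c) of your proposal are correct and follow routes close to the paper's, with minor stylistic differences: in~(a) you phrase the constraints in terms of $K$ and $\theta$ rather than checking the resulting numerical inequalities directly; in~(b) you derive the explicit formula $u_0(m)=L/2+2\ln\tfrac{m+2}{m+1}$ (equivalent to the paper's~\eqref{eq:def-u_0-formula}) and read off the range, whereas the paper gets $u_0\in[1,3]$ directly from $t_0\in[e^{1/m},e^{3/m}]$; in~(c) you absorb the two mildly negative low-order coefficients into the $m^2$ term, rather than the paper's linear sum bound -- both work.

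In part~(d) there is a genuine gap. The polynomial $P(m,M)$ in the statement is calibrated for \emph{second}-order Taylor polynomials $1+\tfrac{3}{m}+\tfrac{9}{2m^2}$ and $1+\tfrac{1}{m}+\tfrac{1}{2m^2}$ of $e^{3/m}$ and $e^{1/m}$, together with the clearing multiplier $2m^3(m+1)(m+2)$: with those choices, the lower bound for the numerator times $2m^3(m+1)(m+2)$ is \emph{exactly} $6m(m+1)(m+3/2)(m+2)$, and the upper bound for the denominator times $2m^3(m+1)(m+2)$ is \emph{exactly} $P(m,M)$. You instead propose fourth-order Taylor polynomials and a multiplier $8m^5(m+2)$. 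That multiplier does not even clear the $1/(m+1)$ coming from the $4/(m(m+1))$ term in the $t_0$-bound, and with fourth-order expansions the cleared expressions are not $6m(m+1)(m+3/2)(m+2)$ and $P(m,M)$ -- so your assertion that ``the coefficients of $P(m,M)$ are calibrated precisely so that matching powers of $m$ \dots yields the asserted bound'' is false for your choices. Your bounds go in the right directions, so the claim would in fact follow a fortiori from them, but only after one observes that second-order already yields the ratio $6m(m+1)(m+3/2)(m+2)/P(m,M)$ exactly, which is precisely the observation you skip; as written, you would still face an unverified polynomial inequality. Replacing ``fourth-order'' with ``second-order'' and the multiplier with $2m^3(m+1)(m+2)$ closes the gap and recovers the paper's argument.
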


\begin{proof}
	\ref{item:Taylor-constraints}  
	If $b=b(m) = e^{1/m}$ and $d=d(m) = e^{3/m}$, then clearly 
	$1<b <d$, $b^{m/2} = e^{1/2}$, and $d^{m/2}  = e^{3/2}$.
	The constraints of Definition~\ref{def:f-class-spec} are satisfied,
	since they read 
	\begin{equation*}
	-(2+m) + 2(1+m) ( 2 e^{-1/2}  - 1 ) e^{1/2}  <  0  < -(2+m) + 2(1+m) ( 2 e^{-1/2}  - 1 ) e^{3/2} < 2m,
	\end{equation*}
	which follows from 
	\[ 2 - e^{1/2} < 0.5 \leq \frac{2+m}{2(1+m)} \leq 0.75 < 
	2 e  -  e^{3/2}  < 1.25 \leq \frac{2+3m}{2(1+m)}.
	\]
	
	\ref{item:Taylor-uZero} 
	The first assertion follows since 
	$ e^{u_0(m) /m} =  t_0(e^{1/m},m) \in [e^{1/m},e^{3/m}]$.
	The definitions of $t_0 = t_0(b,m)$ and $\dOne = \dOne(b,m)$ (see equation~\eqref{eq:def-t_0-explicit-formula} and Definition~\ref{def:f-class-spec}) imply that
	\begin{equation}
	\label{eq:def-u_0-formula}
	e^{u_0 /2} = t_0(e^{1/m},m)^{m/2} = \frac{2+m}{m\dOne(e^{1/m},m)} = \frac{2+m}{2(1+m)\theta}
	= \frac{1}{2\theta}\Bigl( 1 + \frac{1}{1+m} \Bigr).
	\end{equation}
	Hence $u_0 = u_0(m) = m\ln(t_0(e^{1/m},m))$ decreases with $m$.
	Since this quantity is positive, the first part of the assertion follows as well.
	
	\ref{item:Taylor-coefficients_of_P}
	One can check that for $M\geq 0$ the coefficients in front of $m^4$, $m^3$, $m^2$ in the definition of $P(m,M)$ satisfy 
	\begin{align*}
	2K & +2L -10 > 0,\\
	10K & +6L+2M -45 \geq 10K + 6L -45 >0,\\
	23 K & + 4 L +  6 M -87 \geq 23 K + 4 L  - 87 > 0.
	\end{align*}
	Moreover, for $M\geq 0$ , 
	\begin{equation*}
	59 K + 12 L +  12 M - 238\geq 59 K + 12 L- 238 \geq 28 > |9K - 36| >0.
	\end{equation*}
	Thus, for $M\geq 0$ and $m\geq 1$,
	\begin{align*}
	P(m,M) &\geq
	(2K+2L -10) m
	+ (10K+6L+2M -45) m\\
	&\qquad+ (23 K + 4 L +  6 M -87) m
	+ (24K + 4M - 96) m
	+ (9K - 36)\\
	&= (59 K + 12 L +  12 M - 238)m + (9K - 36) > 0.
	\end{align*}
	
	\ref{item:Taylor-estimate}  
	Our goal is to estimate $\XXXspecialbd(e^{1/m}, e^{3m}, m)$.
	For our values of $b$, $d$ 
	we have $2b^{-m/2} - 1 = 2e^{1-/2} - 1  =\theta$,
	$d^{1+m/2} = e^{3/m + 3/2}$,
	and thus the denominator of $\XXXspecialbd(e^{1/m}, e^{3m}, m)$ (see~\eqref{eq:a=1-b=c-would-like-to-optimize-technical})
	reads
	\begin{align}
	\label{eq:denominator-W-b-e^1/m-d-e^3/m}
	&\frac{m}{2+m} -\frac{2+m}{m} e^{3/m} - \frac{2m}{2+m} e^{1/m} 
	+ 	\frac{4(1+m)}{m(2+m)}\theta e^{3/m + 3/2} + 2t_0(e^{1/m},m)
	\\
	&\quad = \frac{m}{2+m} -\Bigl(\frac{2+m}{m} - K \frac{1+m}{m(2+m)}\Bigr)e^{3/m} - \frac{2m}{2+m} e^{1/m} 
	+ 2t_0(e^{1/m},m).\nonumber
	\end{align}
	The biggest challenge is reasonably estimating the last summand in~\eqref{eq:denominator-W-b-e^1/m-d-e^3/m}, i.e.,  $2 t_0(e^{1/m},m) = 2 e^{u_0(m)/m}$.
	Equation~\eqref{eq:def-u_0-formula} implies that
	\begin{equation*}
	u_0 
	= 2 \ln\bigl(1+\frac{1}{1+m} \bigr) - 2\ln(2\theta)
	\leq \frac{2}{1+m} -2 \ln(2\theta).
	\end{equation*}
	Fix $\delta >0$.  
	If $x\in(0,\delta)$, then -- by Taylor's formula with the Lagrange form of the remainder -- for some $\xi\in (0,\delta)$,
	\[
	e^x = 1 + x + \frac{1}{2}x^2 + \frac{1}{6} e^{\xi} x^3 \leq 1 + x + \frac{1}{2}x^2 (1+ e^\delta \delta/3).
	\]
	Thus, if $u_0/m \leq \delta$,
	then
	\begin{align*}
	2t_0  =  2e^{u_0/m} &\leq 2 +  \frac{2u_0}{m} + \frac{u_0^2}{m^2}(1+ e^\delta \delta/3)\\
	&\leq 2 + \frac{4}{m(1+m)} - \frac{4\ln(2\theta)}{m} + \frac{u_0^2}{m^2}(1+ e^\delta \delta/3)\\
	&\leq  2 + \frac{4}{m(1+m)} + \frac{L}{m} + \frac{M}{m^2}, 
	\end{align*}
	where we recall that $L  = -4\ln(2\theta)$ and $M$ is by assumption a number such that
	$u_0^2 \cdot (1+ e^\delta \delta/3) \leq M$. 
	
	In total, the expression~\eqref{eq:denominator-W-b-e^1/m-d-e^3/m} (i.e., the denominator of $\XXXspecialbd(e^{1/m}, e^{3m}, m)$) can be estimated as follows.
	The coefficient in front of  $e^{3/m}$ is negative, since for $m\geq 4$,
	\[
	\frac{2+m}{m} \geq 1 \geq \frac{K}4 \geq \frac{K}m \geq K \frac{1+m}{m(2+m)}.
	\]
	Thus, using the Taylor expansions of $e^{3/m}$ and $e^{1/m}$,
	as well as the bound for $2t_0(e^{1/m},m)$ obtained above, we get
	
	\begin{align*}
	\MoveEqLeft[6]
	\frac{m}{2+m} -\Bigl(\frac{2+m}{m} - K \frac{1+m}{m(2+m)}\Bigr)e^{3/m}
	- \frac{2m}{2+m} e^{1/m} 
	+ 2t_0(e^{1/m},m)\\
	&\leq 
	\frac{m}{2+m} 
	-\Bigl(\frac{2+m}{m} - K \frac{1+m}{m(2+m)}\Bigr)\Bigl( 1 + \frac{3}{m} + \frac{3^2}{2 m^2}\Bigr)\\
	&\quad - \frac{2m}{2+m} \Bigl( 1 + \frac{1}{m} + \frac{1}{2 m^2}\Bigr) + 2 + \frac{4}{m(1+m)} + \frac{L}{m} + \frac{M}{m^2}.
	\end{align*}
	After multiplying by $2m^3(1+m)(2+m)$, expanding, and simplifying we arrive at
	$P(m,M)$
	(recall that $K$, $L$ are some numerical constants).
	
	In the case of the numerator of~$\XXXspecialbd(e^{1/m}, e^{3m}, m)$ (see~\eqref{eq:a=1-b=c-would-like-to-optimize-technical}), 
	after multiplying by  $2m^3(1+m)(2+m)$ we have
	\begin{align*}
	2m^3(1+m)(2+m)\cdot \bigl(e^{3/m} -1\bigr)
	&\geq 2m^3(1+m)(2+m) \Bigl( \frac{3}{m}  + \frac{3^2}{2m^2}\Bigr) \\
	&= 6m(m+1)(m+3/2)(m+2).
	\end{align*}
	Combining these two inequalities finishes the proof.
\end{proof}

\begin{proof}[Proof of Theorem~\ref{thm:main-lower-bounds-1.34}]
	For $m\in\{1,2,3,4\}$ we numerically maximized the function $\XXXspecialbd(b,d,m)$ above (see Table~\ref{table:small-m}).
	In order to  confirm that $\XXXspecialbd(b(m), d(m),m)$
	is fairly large for \emph{all} $m\in \NN$, $m\geq 5$, 
	we take $b =b(m)= e^{1/m}$ and $d = d(m) =e^{3/m}$.
	By Lemma~\ref{lem:technical-m-large-Taylor}~\ref{item:Taylor-constraints}
	these numbers satisfy the constraints of Definition~\ref{def:f-class-spec}. 
	We claim that $\XXXspecialbd(e^{1/m},e^{3/m},m) \geq 1.34$ for $m\geq 5$.

	Indeed, for $m\in\{5,6,\dots, 27\}$ 
	one can numerically check that we have
	\[
	\XXXspecialbd(e^{1/m},e^{3/m},m) \geq \XXXspecialbd(e^{1/27},e^{3/27},27) \geq 1.35 \geq 1.34.
	\]
	Numerical experiments suggest that the function $m\mapsto \XXXspecialbd(e^{1/m},e^{3/m},m)$ 
	is in fact decreasing in $m$ (and bounded below by $1.34$), 
	but we have no proof of this monotonicity, so we shall use a slightly different approach, based on the estimate from Lemma~\ref{lem:technical-m-large-Taylor}~\ref{item:Taylor-estimate}.
	
	Recall the definition of $u_0 = u_0(m)$ from Lemma~\ref{lem:technical-m-large-Taylor}
	and 
	observe that $u_0(25) \leq 1.79$ and $u_0(25)/25 \leq  0.072$
	(by the explicit formula~\eqref{eq:def-u_0-formula}).
	Take $\delta \coloneqq 0.072$ and $M \coloneqq 3.3$.
	By Lemma~\ref{lem:technical-m-large-Taylor}~\ref{item:Taylor-uZero}
	we have  $u_0(m) \leq 1.79$ and $u_0(m)/m \leq  \delta$ for $m\geq25$.
	It follows that for $m\geq 25$,
	\[
	u_0(m)^2 (1+e^{\delta} \delta/3) \leq 1.79^2(1+e^{0.072} 0.024) \leq 3.29 \leq M,
	\]
	i.e., the assumptions of Lemma~\ref{lem:technical-m-large-Taylor} ~\ref{item:Taylor-estimate}
	are satisfied for $m\geq 25$ (with $\delta$ and $M$ as above).
	Thus,
	\[
	\XXXspecialbd(e^{1/m},e^{3/m},m) \geq \frac{6m(m+1)(m+3/2)(m+2)}{P(m,3.3)}  
	\]
	for $m\geq 25$.
	To finish the proof it suffices to check that
	\[
	6m(m+1)(m+3/2)(m+2) - 1.34 P(m,3.3) \overset{?}{>} 0
	\]
	for $m\geq 25$.
	Denote
	\[
	Q(x) \coloneqq 6x(x+1)(x+3/2)(x+2) - 1.34 P(x,3.3)
	\]
	for $x\in\RR$. This is a fourth-degree  polynomial with explicit, numerical coefficients
	and the leading coefficient is positive since 
	\[
	6\geq 5.99 \geq 1.34(2K + 2L - 10).
	\]
	To finish the proof, one can for example check that $Q$ has four zeros in $(-20,25)$ or check that all the coefficients of the polynomial $y\mapsto Q(25+y)$ are positive; we omit the rest of the details.
\end{proof}

\section{Asymptotic considerations}

\label{APP:interlude}

%
%
%
%
%
%
%
%

Because of the difficulties with 
tackling the supremum from Proposition~\ref{prop:lower-bound-fixed-m}
directly,
one could start with considering the case $m\to\infty$,
since passing to the limit simplifies several aspects of the task.

\begin{proposition}
	\label{prop:lower-bound-asymptotic}
	We have
	\begin{equation}
	\label{eq:lower-bound-asymptotic}
	\liminf_{m\to \infty} \weakconstrestricted{\Lambda_m}{\fclass{m}} 
	\geq 
	\sup\Bigl\{ 
	\frac{\xGain(x,z) + \yGain(y,z)}{ 2e^{x} -x - 2  +  \int_{0}^{y} \bigl| -1 +  z  e^{s}\bigr| ds}
	\Bigr\},
	\end{equation}
	where the supremum is taken over all
	$ x>0$, $y >0$ , $2(2-e^x) \leq z < 2 $, and
	\begin{align*}
	\xGain = \xGain(x,z) &\coloneqq x + \bigl(0 \lor \ln(2e^x - 2)\bigr)\land \bigl(\ln(2e^x-2) - \ln(2-z)\bigr),\\
	\yGain = \yGain(y,z) &\coloneqq y +  0\lor \ln(|-2 + z e^{y}|).
	\end{align*}
\end{proposition}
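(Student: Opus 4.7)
The plan is to apply Proposition~\ref{prop:lower-bound-fixed-m} with parameters $b$, $c$, $d$ chosen as explicit functions of $m$ (depending on fixed auxiliary parameters $x,y,z$), and to read off the claimed lower bound by letting $m\to\infty$. Given $x>0$, $y>0$, and $z\in[2(2-e^x),2)$, the constraints on $z$ ensure that
\[
u = u(x,z) \coloneqq x + \ln(2e^x-2) - \ln(2-z) \geq x
\]
is well-defined and finite. For each sufficiently large $m$ I would take
\[
b_m \coloneqq e^{2x/m}, \qquad c_m \coloneqq e^{2u/m}, \qquad d_m \coloneqq e^{2(u+y)/m},
\]
so that $1 < b_m \leq c_m < d_m$ is an admissible triple. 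The choice of $u$ is dictated by the requirement $\dOne(b_m,c_m,m)\cdot c_m^{m/2}\to z$: from the closed-form formula for $\dOne$ the limit equals $2\bigl(1+e^{x-u}(1-e^x)\bigr)$, and the identity $e^{x-u}=(2-z)/(2e^x-2)$ reduces this to $z$. In particular $\dOne(b_m,c_m,m) \to ze^{-u}$.

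Next I would analyze the numerator and the denominator of the expression in Proposition~\ref{prop:lower-bound-fixed-m} after multiplying each through by $m$. For the first three terms of the denominator, Taylor expansion in $1/m$ produces the cancellation
\[
m\Bigl[\tfrac{m}{2+m}-\tfrac{2+m}{m}b_m+\tfrac{4(1+m)}{m(2+m)}b_m^{1+m/2}\Bigr] \longrightarrow 2(2e^x - x - 2),
\]
while the substitution $t=e^{2s/m}$ in the remaining integral, combined with dominated convergence, yields
\[
m\int_{c_m}^{d_m}\Bigl|-\tfrac{2+m}{m}+\dOne t^{m/2}\Bigr|\,dt \longrightarrow 2\int_0^y|{-1}+ze^s|\,ds.
\]
For the numerator, I would use the identity
\[
m\ln\bGain = \bigl[m\ln b_m \vee m\ln(b_m\alpha_m^{2/(2+m)})\bigr] \wedge m\ln c_m,
\]
where $\alpha_m \coloneqq -1-(2+m)/m+(2(1+m)/m)b_m^{m/2}\to 2(e^x-1)$, to obtain
\[
m\ln\bGain \longrightarrow 2\bigl[x + (0 \vee \ln(2e^x-2)) \wedge (u-x)\bigr] = 2\xGain(x,z),
\]
and then $m(\bGain-1) \to 2\xGain(x,z)$ because $\bGain-1 = \ln\bGain + O(1/m^2)$. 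An entirely parallel computation, based on $\dOne d_m^{m/2} \to ze^y$, gives $m(\dGain-c_m) \to 2\yGain(y,z)$.

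Combining these limits, the ratio from Proposition~\ref{prop:lower-bound-fixed-m} evaluated at $(b_m,c_m,d_m)$ tends to
\[
\frac{\xGain(x,z)+\yGain(y,z)}{2e^x-x-2+\int_0^y|{-1}+ze^s|\,ds},
\]
which is precisely the function being supremized on the right-hand side of~\eqref{eq:lower-bound-asymptotic}; taking the supremum over admissible $(x,y,z)$ finishes the proof. The main obstacle will be the careful bookkeeping of the max-min structure of $\bGain$ and $\dGain$ across the limit so that it reproduces the $0\vee$ and $\wedge$ appearing in $\xGain$ and $\yGain$; the second delicate point is the cancellation of the $m$-linear parts in the first three terms of the denominator (each of which individually grows with $m$), which requires a second-order Taylor expansion in $1/m$. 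The remaining convergences — the integral via dominated convergence, and the passage from $m\ln\bGain$ to $m(\bGain-1)$ — will be essentially routine.
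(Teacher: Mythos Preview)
Your proposal is correct and follows essentially the same approach as the paper: the paper also fixes $x,y,z$, sets $h=\ln(2e^x-2)-\ln(2-z)$ (your $u-x$), takes $b=e^{2x/m}$, $c=e^{2(x+h)/m}$, $d=e^{2(x+h+y)/m}$, and computes the same limits for $m(\bGain-1)$, $m(\dGain-c)$, and $m\|f\|_1$. The only cosmetic difference is that the paper handles the first block of the denominator via the substitution $t=e^{2s/m}$ in $\int_1^b|f|$ (giving $2\int_0^x(2e^s-1)\,ds$ directly) rather than Taylor-expanding the closed form, and it passes from $\bGain$ written as an exponential to $m(\bGain-1)$ in one step rather than via $m\ln\bGain$; both routes are equivalent.
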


\begin{proof}
	Throughout this proof $m\in\NN$ is a parameter tending to infinity 
	and some of the quantities below  depend on $m$,
	but in most places we suppress this dependence in the notation.
	Fix some $x, y >0$ and $z\in\RR$ such that $2(2-e^x) \leq z < 2$.
	Let $h\geq 0$ be defined by the relation $z = 2(1 + e^{-h}(1-e^x))$, i.e.,
	$ h = \ln(2 e^x - 2) - \ln(2-z)$.
	Set
	\begin{equation*}
	b = b(m) = e^{2x/m},\quad
	c = c(m)  =e^{2(x+h)/m},\quad
	d = d(m) = e^{2(x+h+y)/m}.
	\end{equation*}
	Consider a function of the form~\eqref{eq:weak-type-extremals-2}
	with $a=1$ and $1 < b\leq c < d$ as above
	(cf.~the proof of Proposition~\ref{prop:lower-bound-fixed-m}).
	We have 
	$ -\frac{2(1+m)}{m} \longrightarrow -2$ 
	and
	\begin{align*}
	\dOne &= \frac{2(1+m)}{m c^{m/2}} +  \frac{2(1+m)}{m c^{m/2}} \Bigl(\frac{b}{c}\Bigr)^{1+m/2}+ \dZero\Bigl(\frac{b}{c}\Bigr)^{1+m}\\
	& \xrightarrow[m\to\infty]{}
	2 e^{-x-h} + 2 e^{-x-2h} - 2 e^{-2h}
	=  z e^{-x-h}. 
	\end{align*}
	Let $\bGain = \bGain(m)$, $\dGain=\dGain(m)$ be the numbers defined in Proposition~\ref{prop:lower-bound-fixed-m} (for our choice of $b, c, d$),
	so that, by the proof of Proposition~\ref{prop:lower-bound-fixed-m}, 
	\begin{equation*}
	\bigl|\{ t\geq 0 : |\Lambda_m f (t)| \geq 1 \} \bigr| 
	= \bGain - 1 + \dGain - c. 
	\end{equation*}
	Then
	\begin{align*}
	\bGain 
	&=\Bigl[ e^{2x/m}\lor \Bigl[ e^{2x/m}\Bigl(-1 -\frac{2+m}{m} + \frac{2(1+m)}{m} e^x\Bigr)^{2/(2+m)}\Bigr]\Bigr] \land e^{2(x+h)/m} \\
	&=\exp\Bigl(\frac{2x}{m} + \Bigl[0\lor \Bigl[\frac{2}{2+m}\ln\bigl(-1 -\frac{2+m}{m} + \frac{2(1+m)}{m} e^x\bigr)\Bigr]\Bigr]\land \frac{2h}{m} \Bigr) 
	\end{align*}
	so
	\[
	m(\bGain - 1)
	\xrightarrow[m\to\infty]{}
	2x + \bigl[0\lor 2\ln\bigl(2e^x - 2\bigr)\bigr]\land 2h = 2\xGain(x,z).
	\]
	Similarly,
	\begin{align*}
	\dGain - c
	&= e^{2(x+h+y)/m}\lor \Bigl[ e^{2(x+h+y)/m}\Bigl| - 1 - \frac{2+m}{m} + \dOne e^{x+h+y} \Bigr|^{2/(2+m)}\Bigr]
	- e^{2(x+h)/m}\\
	&=\exp\Bigl(\frac{2(x+h+y)}{m} + 0\lor \Bigl[ \frac{2}{2+m}\ln\bigl|-1 -\frac{2+m}{m} + \dOne e^{x+h+y}\bigr|\Bigr] \Bigr) - e^{2(x+h)/m}
	\end{align*}
	so 
	\[
	m(\dGain - c) 
	\xrightarrow[m\to\infty]{} 
	2y + 0\lor 2\ln\bigl|-2 +  z e^{y}\big| = 2\yGain(y,z).
	\]

	We have 
	\begin{align*}
	m \int_0^\infty |f(t)| dt 
	&=m\int_{1}^{\exp(2x/m)} \Bigl| \frac{2+m}{m} + \dZero t^{m/2}\Bigr| dt\\
	&\quad  
	+ m\int_{\exp(2(x+h)/m)}^{\exp(2(x+h+y)/m)} \Bigl| -\frac{2+m}{m} + \dOne t^{m/2}\Bigr| dt\nonumber\\
	&= 2 \int_{0}^{x} \Bigl| \frac{2+m}{m} + \dZero e^{s} \Bigr| e^{2s/m} ds 
	+  2\int_{x+h}^{x+y+h} \Bigl| -\frac{2+m}{m} + \dOne e^{s} \Bigr| e^{2s/m} ds,
	\end{align*}
	where in the last step we substituted $t = e^{2s/m}$.
	For fixed $x, h, y$ we pass to the limit with $m\to\infty$.
	The integrals tend to
	\begin{align*}
	&2\int_{0}^{x} \Bigl| 1 - 2 e^{s}\Bigr| ds    
	=  2\int_{0}^{x} 2 e^{s} -1 ds  
	= 2(2e^{x} -x - 2),\\
	&2 \int_{x+ h }^{x+h+y} \bigl| -1 +  z e^{-x -h}  e^{s}\bigr| ds
	= 2\int_{0 }^{y} \bigl| -1 +  z e^{s}\bigr| ds
	\end{align*}
	respectively.
	It follows that for all $x,y>0$, $h\geq 0$ we have
	\begin{equation*}
	\liminf_{m\to \infty} \weakconstrestricted{\Lambda_m}{\fclass{m}}
	\geq 
	\frac{\xGain(x,z) + \yGain(y,z)}{ 2e^{x} -x - 2  +   \int_{0 }^{y} \bigl| -1 +  z e^{s}\bigr| ds }.
	\end{equation*}
	The assertion is obtained by taking the supremum.
\end{proof}

\begin{remark}
	\label{rem:asymptotic-star-for-the-record}
	For the record, we
	note that one can 
	also prove 
	that
	\begin{equation*}
	\liminf_{m\to \infty} \weakconstrestricted{\Lambda_m^*}{\fclassStar{m}} 
	\\
	\geq 
	\sup\Bigl\{ 
	\frac{\xGain(x,z) + \yGain(y,z)}{ 2e^{x} -x - 2  +  \int_{0}^{y} \bigl| -1 +  z  e^{s}\bigr| ds}
	\Bigr\},
	\end{equation*}
	where the supremum is taken over all
	$ x>0$, $y >0$ , $2(2-e^x) \leq z < 2 $, and
	$\xGain(x,z)$, $\yGain(x,z)$ are as in Proposition~\ref{prop:lower-bound-asymptotic}
	(note that the lower bounds exactly coincide).
	We omit the proof, since we do not use this result in the sequel.
	(And anyway the proof follows the same lines: we fix some $\xStar, \yStar >0$ and $\zStar\in\RR$ such that $2(2-e^{\xStar}) \leq \zStar < 2$,
	define $\hStar\geq 0$ by the relation $\zStar = 2(1 + e^{-\hStar}(1-e^{\xStar}))$,
	and	set
	\begin{align*}
	\dStar &= \dStar(m) = e^{-2(\xStar+\hStar+\yStar)/(2+m)},\\
	\cStar &= \cStar(m)  =e^{-2(\xStar  +\hStar)/(2+m)},\\
	\bStar &= \bStar(m) = e^{-2\xStar/(2+m)}.
	\end{align*}
	The resulting calculations are very similar to the ones in the proof of Proposition~\ref{prop:lower-bound-asymptotic}).
\end{remark}

The next result asserts that
in order to find the value of the supremum on the right-hand side of~\eqref{eq:lower-bound-asymptotic}
one can restrict to a special choice of the variables $x$, $y$, $z$:
\begin{itemize}
	\item $z = 2(2-e^x)$ (in particular, $\xGain(x,z) = x$ in Proposition~\ref{prop:lower-bound-asymptotic}),
	\item $2(2-e^x) \in (0,1]$ (in particular, $s\mapsto -1 + z e^s$ increases and $-1+z e^0 \leq 0$),
	\item $1\leq z e^y \leq 3$ (in particular, $\yGain(y,z) = y$  in Proposition~\ref{prop:lower-bound-asymptotic} and $-1+ze^y \geq 0$).
\end{itemize}
This serves as an motivation 
for the choices we made in Section~\ref{sec:guesstimates}.
It also explains  that the asymptotic estimate from Theorem~\ref{thm:main-lower-bounds} is nearly optimal:
up to numerical details no essentially better lower bound can be found using our approach.

\begin{proposition}
	\label{prop:asymptotic-push}
	The supremum on the right-hand side of~\eqref{eq:lower-bound-asymptotic} 
	is equal to
	\begin{equation*}
	\sup\Bigl\{ 
	\frac{x + y}{2e^x - x -4  - y + 2(2-e^x) (e^y + 1) -2\ln(2(2-e^x))}
	\Bigr\},
	\end{equation*}
	where the supremum is taken over
	$x\in [\ln(3/2), \ln(2))$
	(i.e.,
	$z = 2(2-e^x) \in(0,1]$)
	and $e^{-y} \leq 2(2-e^x) \leq  3e^{-y}$.
	Moreover,
	this supremum is attained on the curve described by the equation
	$e^x = 2(2-e^x) e^y$, $x\in [\ln(3/2), \ln(2))$.
\end{proposition}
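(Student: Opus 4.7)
The proposition has two parts: (i) the three-parameter supremum in~\eqref{eq:lower-bound-asymptotic} equals the two-parameter supremum displayed in the statement; and (ii) this supremum is attained on the curve $e^x = 2(2-e^x)e^y$. The direction ``(i)$\geq$'' is a direct verification: any $(x, y)$ in the restricted set, together with $z \coloneqq 2(2-e^x)$, is admissible in Proposition~\ref{prop:lower-bound-asymptotic}, the constraints $x \geq \ln(3/2)$ and $1 \leq ze^y \leq 3$ force $\xGain(x,z) = x$ and $\yGain(y,z) = y$ (since $2-z = 2e^x-2$ makes the min in the definition of $\xGain$ vanish, while $|{-2+ze^y}|\leq 1$), and a computation of $\int_0^y |{-1+ze^s}|\,ds = z(e^y+1) - 2\ln z - y - 2$ recovers the displayed denominator.

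For the reverse direction in~(i) my plan is a ``push'' argument. The first step is to show that, with $x, y$ fixed, the supremum over $z \in [2(2-e^x), 2)$ is attained at $z = 2(2-e^x)$; this is verified by computing the sign of $\partial F/\partial z$ in each of the regimes produced by the piecewise formulas for $\xGain$, $\yGain$, and the integral in the denominator (with natural break-points at $z = 1$, at $ze^y \in \{1,3\}$, at $ze^s = 1$ on $[0,y]$, and at $z = 0$). After this reduction one must further restrict to $x \in [\ln(3/2), \ln 2)$ and $1 \leq ze^y \leq 3$. The excluded regimes are then handled by comparison with an appropriate ``corner'' of the restricted set: for example, if $x < \ln(3/2)$ then $z \geq 2(2-e^x) > 1$, and one replaces $(x, z)$ by $(\ln(3/2), 1)$ while choosing $y'$ so that $ze^y$ is preserved; verifying that the ratio does not decrease amounts to an explicit algebraic inequality (equivalently, showing that the slope $(N'-N)/(D'-D)$ exceeds $N/D$, where $N,D$ and $N',D'$ are the numerators and denominators before and after).

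Part~(ii) is a short calculus argument on the compact closure of the restricted two-parameter region. Writing $G(x,y) \coloneqq (x+y)/D(x,y)$ with $z = 2(2-e^x)$, the interior critical point conditions $\partial G/\partial y = 0$ and $\partial G/\partial x = 0$ read, respectively,
\[
D = (x+y)(ze^y - 1)
\qquad\text{and}\qquad
D = (x+y)\bigl(-1 - 2e^x e^y + 4e^x/z\bigr),
\]
where the second uses $\partial D/\partial x = 2e^x - 1 - 2e^x(e^y+1) + 4e^x/z$ together with $dz/dx = -2e^x$. Subtracting these and applying the algebraic identity $z + 2e^x = 4$ (a direct consequence of $z = 2(2-e^x)$) reduces the system to the single equation $ze^y = e^x$; hence every interior critical point lies on the curve $\mathcal{C}\colon e^x = 2(2-e^x)e^y$. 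On $\mathcal{C}$, the function $G$ reduces to the one-variable quantity analyzed in the proof of Remark~\ref{rem:exact-asymptotic-bound}, whose maximum on $[\ln(3/2), \ln 2)$ is uniquely attained at $x = \xOptInfty$. A short direct check of $G$ on the four boundary pieces ($x = \ln(3/2)$, $x \to \ln(2)^-$, $ze^y = 1$, $ze^y = 3$) yields values strictly smaller than $(e^{\xOptInfty}-1)^{-1}$, so the supremum is indeed attained at the unique interior critical point, which lies on $\mathcal{C}$.

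The principal obstacle is the case analysis in part~(i): the piecewise formulas for $\xGain$, $\yGain$, and for $\int_0^y |{-1+ze^s}|\,ds$ produce several regimes, each requiring its own derivative sign check or algebraic inequality; moreover the case $x \geq \ln 2$ (in which $z$ can be zero or negative and the integrand takes a different form) requires a separate comparison argument. In contrast, part~(ii) is essentially painless once the identity $z+2e^x = 4$ is noticed.
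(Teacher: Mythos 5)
Your Part (ii) is correct and, in fact, more elegant than the paper's treatment of the corresponding step: the algebra $\partial_y D - \partial_x D = ze^y + 2e^xe^y - 4e^x/z$ combined with the identity $z + 2e^x = 4$ is a clean way to extract the curve $ze^y = e^x$, whereas the paper (in the proof of Lemma~\ref{lem:asymptotic-push-case-0}) obtains the same curve through a case-by-case monotonicity argument along level sets of the numerator. Your ``$\geq$'' direction in Part (i) is also fine.

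The problem is the first step of your ``$\leq$'' direction in Part (i): the claim that, for fixed $(x,y)$, the ratio is maximized over $z\in[2(2-e^x),2)$ at the left endpoint $z = 2(2-e^x)$ is simply false, and the sign check you propose would reveal this rather than confirm it. Take for instance $x = \ln(1.9)$, $y = 1$, so $2(2-e^x) = 0.2$. At $z = 0.2$ one computes $\XXXasymptoticNominator/\XXXasymptoticDenominator \approx 1.112$; at $z = e^{-1}\approx 0.368$ (the boundary between Case 1 and Case 3) the ratio rises to $\approx 1.140$; and at $z = 0.7$ it rises further to $\approx 1.334$ before eventually decreasing again. So for this fixed $(x,y)$ the optimal $z$ is an interior point, not $2(2-e^x)$. (This is consistent with the proposition --- the \emph{global} maximum over $(x,y,z)$ is still attained with $z=2(2-e^x)$, at a different $(x,y)$ --- but it shows your coordinate-wise push in $z$ does not carry through.) The paper's argument pushes in the orthogonal direction: it fixes $z$, parametrizes the level curve $\{(x,y): \XXXasymptoticNominator(x,y,z)=c\}$, and shows that moving along that curve (decreasing $x$, increasing $y$, or vice versa, depending on the case) monotonically decreases $\XXXasymptoticDenominator$, so the optimum for each fixed $z$ lies on a case boundary or on the constraint $z = 2(2-e^x)$. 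The cases then feed into one another and finally into the special case $z = 2(2-e^x)$ handled separately. Your Part (ii) could replace the paper's proof of that terminal step, but a correct reduction from general $z$ to $z=2(2-e^x)$ still needs the paper-style level-curve push (or some other mechanism); the one-variable $z$-push you outline does not work.
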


Although elementary,
the proof is quite lengthy and in itself not that illuminating,
but for what it's worth, 
we present it in the next section.

\section{Detailed proof of Proposition~\ref{prop:asymptotic-push}}

\label{APP:push}

\subsection{Notation}

Throughout this section we work under the standing assumption
that
\begin{equation}
\label{eq:asymptotic-standing-assumption}
x>0, \quad y>0, \quad 2(2-e^x)\leq z \leq 2.
\end{equation}
In particular if $x\leq \ln(3/2)$, then $z\geq 1$;
if $z\leq 1$, then $x\geq \ln(3/2)$.
Recall the definitions of
$\xGain(x,z)$, $\yGain(y,z)$ from Proposition~\ref{prop:lower-bound-asymptotic}
and observe that 
\begin{align*}
\xGain(x,z) &= x + (0 \lor \ln(2e^x - 2))\land (\ln(2e^x-2) - \ln(2-z))\\
&= \begin{cases}
x                          & \text{ if }  x\leq \ln(3/2),\\
x + \ln(2e^x-2) - \ln(2-z) & \text{ if } x\geq \ln(3/2), z \leq 1,\\
x + \ln(2e^x-2)            & \text{ if } x\geq \ln(3/2), z \geq 1.
\end{cases}
\end{align*}
In particular, if  $x\geq \ln(3/2)$ and $z = 2(2-e^x)$, then $\xGain(x,z) = x$.
Furthermore,
\begin{equation*}
\yGain(y,z) = y +  0\lor \ln(|-2 + z e^{y}|) = \begin{cases}
y + \ln(2- ze^{y})      & \text{ if }  z \leq e^{-y},\\
y                       & \text{ if } e^{-y} \leq z \leq 3e^{-y},\\
y + \ln(ze^y - 2)       & \text{ if } z \geq 3e^{-y}.
\end{cases}
\end{equation*}
Denote from here on in
\begin{align*}
\XXXasymptoticNominator(x,y,z) &\coloneqq \xGain(x,z) + \yGain(y,z),\\
\XXXasymptoticDenominator(x,y,z) &\coloneqq 2e^x - x -2 + \int_{0}^{y} | -1 +  z  e^{s}| ds.
\end{align*}
Our goal is to maximize $\XXXasymptoticNominator(x,y,z)/\XXXasymptoticDenominator(x,y,z)$ over all $x$, $y$, $z$ satisfying~\eqref{eq:asymptotic-standing-assumption}.

\subsection{Auxiliary lemmas}

Below we gather some elementary technical lemmas
which will be used 
in the proof of Lemma~\ref{lem:asymptotic-push-case-0} where the special case $z = 2(2-e^x)$ is treated.
Note that all the denominators which appear in the statements of  below are positive
since they correspond to values of  $\XXXasymptoticDenominator(x,y,z)$
(for some $x$, $y$, $z$) and $\XXXasymptoticDenominator$ is positive.

\begin{lemma}
	\label{lem:asymptotic-push-case-0.2.a}
	We have
	\begin{align*}
	\sup\Bigl\{ \frac{y + \ln(2e^y - 2)}{ - y +2(e^y - 1)} : 
	\ln(3/2)\leq y\leq \ln(2) \Bigr\} 
	&\leq 1.1, \\ 
	\sup \Bigl\{ \frac{2 x + \ln(2) + \ln(4 (2 - e^x) e^x - 2)}{2 e^x - 2 x - 2 - \ln(2) + 2 (2 - e^x) (2 e^x - 1)} : 0 < x\leq \ln(3/2) \Bigr\}
	&\leq 1.1. 
	\end{align*}
\end{lemma}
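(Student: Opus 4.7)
Both inequalities reduce to showing the nonnegativity of a smooth one-variable function on a closed interval, and I would handle them by the same general strategy: rewrite each bound $N(t)/D(t)\le 1.1$ as $\phi(t):=1.1\,D(t)-N(t)\ge 0$ (after verifying $D>0$ on the interval in question), locate the critical points of $\phi$ by solving $\phi'(t)=0$, and finally check $\phi$ at those critical points together with the two endpoints.

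For the first supremum, set $\phi_1(y):=1.1\bigl[-y+2(e^y-1)\bigr]-\bigl[y+\ln(2e^y-2)\bigr]=-2.1\,y+2.2\,e^y-2.2-\ln(2e^y-2)$. The denominator $-y+2(e^y-1)$ is positive on $(0,\infty)$ (since $2(e^y-1)>2y>y$), and the argument of the logarithm is positive for $y>\ln 2^{-1}\cdot 2 = \ln 2-\ln 2$, i.e., on $(\ln(3/2),\ln 2]$ certainly; at $y=\ln(3/2)$ it equals $1$, so one handles that endpoint separately. Multiplying $\phi_1'(y)=-2.1+2.2\,e^y-e^y/(e^y-1)$ by the positive factor $e^y-1$ and writing $u=e^y$ turns the critical equation into the quadratic $2.2\,u^2-5.3\,u+2.1=0$, whose roots are $u=1/2$ and $u=21/11$; only $u_*=21/11$ lies in $[3/2,2]$. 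Direct evaluation of $\phi_1$ at $y_*=\ln(21/11)$ and at the two endpoints confirms positivity (the minimum, attained at $y_*$, is approximately $0.04$).

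For the second supremum, define $\phi_2(x):=1.1\bigl[2e^x-2x-2-\ln 2+2(2-e^x)(2e^x-1)\bigr]-\bigl[2x+\ln 2+\ln(4(2-e^x)e^x-2)\bigr]$. I would first verify that the denominator, which simplifies to $12\,e^x-4\,e^{2x}-6-2x-\ln 2$, and the argument $8\,e^x-4\,e^{2x}-2$ of the logarithm are both positive on $(0,\ln(3/2)]$; this follows from elementary evaluations at the endpoints together with monotonicity (both are downward-opening quadratics in $u=e^x$ minus a tame linear term, restricted to $u\in(1,3/2]$). The critical equation $\phi_2'(x)=0$, after multiplication by $8\,e^x-4\,e^{2x}-2$, reduces to a polynomial equation in $u=e^x$ of low degree, solvable in closed form; evaluating $\phi_2$ at the resulting critical point(s) in $(1,3/2]$ and at the endpoints $x=0^+$ and $x=\ln(3/2)$ (where the original ratio equals approximately $1.06$ and $1.00$ respectively) again yields $\phi_2\ge 0$.

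The main obstacle is purely numerical tightness, not any conceptual difficulty: the actual suprema are both close to $1.06$, so each $\phi_i$ at its minimum is a small positive number of size roughly $0.04$--$0.05$, and one must retain enough decimal precision in the arithmetic to avoid spurious sign errors. The lemma is therefore a routine but slightly delicate exercise in single-variable calculus on a closed interval.
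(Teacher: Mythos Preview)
Your proposal is correct and takes essentially the same approach as the paper: defining $\phi_i = 1.1\,D - N$ and locating its minimum via the critical-point equation, which for the first inequality yields exactly the quadratic $2.2u^2 - 5.3u + 2.1 = 0$ with root $u_* = 21/11$ that the paper finds. For the second inequality the paper carries out the factorisation you allude to---$\phi_2'(x)$ factors (after clearing the denominator $2e^x(2-e^x)-1$) as a product of two quadratics in $u=e^x$, with relevant roots $u = 1 + 1/\sqrt{22}$ and $u = (3+\sqrt{5})/4$---but your outline is the same strategy.
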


\begin{proof}
	Set $\alpha \coloneqq 1/10$ and let
\[
g(y) \coloneqq (1+\alpha) (- y +2(e^y - 1)) - (y + \ln(2e^y - 2)).
\]
Our goal is to show that $g(y) \geq 0$ for $y\in [\ln(3/2),\ln(2)]$. 
Since
\begin{equation*}
g'(y) = (1 + \alpha)(-1+2e^y) -1 - \frac{e^y}{e^y-1}
=
\frac{(2e^y - 1)\bigl( (1+\alpha) e^y - (2+\alpha)\bigr)}{e^y-1},
\end{equation*}
$g$ attains its minimum on $[\ln(3/2),\ln(2)]$
at $y_0$ such that $e^{y_0} = (2+\alpha)/(1+\alpha) = 21/11 \in[3/2, 2]$.
We have $2e^{y_0}-2 = 2/(1+\alpha)$ 
and after minor computations we arrive at
\begin{align*}
g(y_0) = g\Bigl(\ln \Bigl(\frac{2+\alpha}{1+\alpha}\Bigr) \Bigr)
&= -(2+\alpha)\ln\Bigl(\frac{2+\alpha}{1+\alpha} \Bigr) + 2 - \ln\Bigl(\frac{2}{1+\alpha} \Bigr)\\
& = -\frac{21}{10} \ln\Bigl(\frac{21}{11}\Bigr) + 2 - \ln\Bigl(\frac{20}{11} \Bigr) > 0.\end{align*}
This finishes the proof of the first part.

The proof of the second part is similar.
Denote
\begin{align*}
h(x) &\coloneqq (1+\alpha)\bigl(2 e^x - 2 x - 2 - \ln(2) + 2 (2 - e^x) (2 e^x - 1)\bigr) \\
&\phantom{\coloneqq {}} - \bigl(2 x + \ln(2) + \ln(4 (2 - e^x) e^x - 2)\bigl).
\end{align*}
Our goal is to show that $h(x) \geq 0$ for $x\in (0, \ln(3/2)]$. 
Differentiating and simplifying yields
\begin{align*}
h'(x)  = \frac{2 (1 - 6 e^x + 4 e^{2 x})\cdot \bigl (2(1+\alpha) e^{2x} - 4 (1+\alpha) e^x + (2+\alpha) \bigr)}{2 e^x ( 2- e^x) -1} 
\end{align*}
The denominator is positive for $x\in (0, \ln(3/2)]$.
The nominator is equal to zero 
if 
\[
e^x = 1 + \sqrt{\frac{\alpha}{2(1+\alpha)}} = 1 +  1/ \sqrt{22}\approx 1.213
\]
or  $e^x = (3+\sqrt{5})/4 \approx 1.309$;
the function $h$ increases on $[0, \ln(1+1/\sqrt{22})]$,
 then decreases, 
 and increases again on $[\ln (3+\sqrt{5})- \ln(4), \ln(3/2)]$.
 It suffices to verify that
  $h(0) > 0$ and  $h(\ln((3+\sqrt{5})/4)) >0$ to finish the proof.
\end{proof}

\begin{lemma}
	\label{lem:asymptotic-push-case-0.2.b}
	We have
	\begin{align*}
	\sup\Bigl\{ \frac{x}{4 e^x - e^{2x} - x - 3} : 
	0 < x\leq \ln(3/2) \Bigr\} &= \frac{\ln(3/2)}{3/4 - \ln(3/2)} \leq 1.18.
	\end{align*}
\end{lemma}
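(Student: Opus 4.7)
The plan is to show that the function
$f(x) \coloneqq x/(4 e^x - e^{2x} - x - 3)$
is strictly increasing on $(0, \ln(3/2)]$, so that its supremum is attained at the right endpoint. Writing $D(x) \coloneqq 4 e^x - e^{2x} - x - 3$, I would first check positivity of the denominator: $D(0) = 0$, $D'(x) = 4 e^x - 2 e^{2x} - 1$, and $D''(x) = 4 e^x(1 - e^x) < 0$ for $x>0$, so $D'$ is decreasing on $[0,\infty)$; since $D'(0) = 1$ and $D'(\ln(3/2)) = 1/2 > 0$, $D$ is strictly increasing on $[0, \ln(3/2)]$ and hence strictly positive on $(0, \ln(3/2)]$, with $D(\ln(3/2)) = 3/4 - \ln(3/2)$.

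For the monotonicity of $f$, note that $f'(x) D(x)^2 = D(x) - x D'(x) \eqqcolon g(x)$. A direct computation yields
\[
g(x) = 4 e^x (1-x) + e^{2x}(2x-1) - 3,
\]
with $g(0) = 4 - 1 - 3 = 0$ and, after cancellation,
\[
g'(x) = -4 x e^x + 4 x e^{2x} = 4 x e^x (e^x - 1),
\]
which is strictly positive for $x>0$. Hence $g(x) > 0$ on $(0, \ln(3/2)]$, so $f$ is strictly increasing there and $\sup f = f(\ln(3/2)) = \ln(3/2)/(3/4 - \ln(3/2))$, which is the claimed closed-form value.

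For the numerical bound, using $\ln(3/2) < 0.4055$ gives $3/4 - \ln(3/2) > 0.3445$, and therefore
\[
\frac{\ln(3/2)}{3/4 - \ln(3/2)} < \frac{0.4055}{0.3445} < 1.178 < 1.18.
\]
There is no real obstacle here; the proof is essentially a clean one-variable calculus argument, and the only piece of algebra that requires a bit of luck is the simplification of $g'(x)$, which collapses to the manifestly nonnegative factor $4 x e^x (e^x-1)$ after the terms involving $e^x$ and $e^{2x}$ cancel in pairs.
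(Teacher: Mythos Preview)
Your proof is correct and at heart equivalent to the paper's: both show that $f(x)=x/D(x)$ is strictly increasing on $(0,\ln(3/2)]$, and your computation $g'(x)=4xe^x(e^x-1)>0$ is exactly $-x\,h''(x)$ for $h(x)=4e^x-e^{2x}-3$, i.e., the concavity of $h$. The paper packages this as the standard fact that a concave function $h$ with $h(0)=0$ has $h(x)/x$ decreasing (equivalently $1/f(x)=h(x)/x-1$ is decreasing), whereas you differentiate $f$ directly; the underlying mechanism is identical.
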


\begin{proof}
We claim that the function under the supremum is increasing for $x\in(0,\ln(3/2)]$ 
and the supremum is attained at $x= \ln(3/2)$.
To this end it suffices to check that the function
$x\mapsto (4e^x - e^{2x} - 3)/x$ is decreasing on $(0,\ln(3/2)]$.
This is indeed true,
since the function $x\mapsto 4e^x - e^{2x} - 3$ is concave on $[0,\infty)$
(as direct differentiation shows) with value $0$ for $x=0$.
This ends the proof.
\end{proof}

\begin{lemma}
	\label{lem:asymptotic-push-case-0.3.a}
	We have
	\begin{equation*}
	\sup \Bigl\{ \frac{2 x - \ln(2-e^x)  +  \ln(2 e^x - 2)}{2 e^x - 2 x - 2\ln(2) - \ln(2-e^x)} : \ln(3/2) \leq x < \ln(2) \Bigr\} \leq 1.3.
	\end{equation*}
\end{lemma}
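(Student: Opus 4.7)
My plan mirrors the strategy of Lemmas~\ref{lem:asymptotic-push-case-0.2.a} and~\ref{lem:asymptotic-push-case-0.2.b}: set $\alpha := 3/10$ and define
\[
g(x) := (1+\alpha)\bigl(2e^x - 2x - 2\ln 2 - \ln(2-e^x)\bigr) - \bigl(2x - \ln(2-e^x) + \ln(2e^x-2)\bigr),
\]
so that the inequality reduces to $g(x) \geq 0$ on $[\ln(3/2), \ln 2)$. The key structural observation is that the coefficient of $-\ln(2-e^x)$ in $g$ equals $(1+\alpha) - 1 = 3/10 > 0$, which forces $g(x) \to +\infty$ as $x \to \ln(2)^-$. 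The value at the other endpoint is $g(\ln(3/2)) = \tfrac{13}{10}(3 - \ln(9/2)) - \ln(9/2) > 0$ by direct computation.

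Next I would differentiate $g$ and multiply through by the positive factor $(2-e^x)(e^x-1)$, reducing $g'(x) = 0$ (with $u := e^x$) to the cubic equation $Q(u) = 0$, where $Q(u) = 26 u^3 - 137 u^2 + 213 u - 92$. The quadratic $Q'(u) = 78 u^2 - 274 u + 213$ has roots $(274 \pm \sqrt{8620})/156 \approx 1.16, 2.35$, both lying outside $[3/2, 2]$, so $Q$ is strictly monotone (in fact strictly decreasing) on $[3/2, 2]$; combined with $Q(3/2) = 7 > 0 > -6 = Q(2)$, this yields a unique root $u_0 \in (3/2, 2)$. Hence $g$ has a unique critical point $x_0 = \ln u_0$ in $(\ln(3/2), \ln 2)$, which must be its global minimum.

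The remaining, and main, task is to verify $g(x_0) \geq 0$. Since $u_0$ is transcendental, the most direct approach is to localize it to a short subinterval by evaluating $Q$ at rational points (for instance, one checks $Q(1.76) > 0 > Q(1.77)$, so $u_0 \in (1.76, 1.77)$), and then to bound $g$ from below on this subinterval by estimating each summand separately using monotonicity of the elementary functions $u \mapsto 2u - 2\ln u$, $u \mapsto -\ln(2-u)$, and $u \mapsto \ln(2u-2)$. The main obstacle is purely numerical: since the actual supremum of the ratio is approximately $1.22$ while the target is $1.3$, there is ample slack for crude bounds to succeed; if a chosen subinterval proves too wide to close the gap, one simply iterates with a narrower one.
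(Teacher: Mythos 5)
Your strategy matches the paper's exactly: same $\alpha = 3/10$, same auxiliary function $g$, same plan of locating the unique interior critical point and checking $g$ is nonnegative there. The boundary values you compute ($g(\ln(3/2)) = \tfrac{13}{10}(3-\ln(9/2)) - \ln(9/2) > 0$ and the blow-up as $x\to\ln(2)^-$) and the cubic $Q(u) = 26u^3 - 137u^2 + 213u - 92$ are all correct, as is the localization $u_0\in(1.76,1.77)$ via sign changes and monotonicity of $Q$.

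However, you miss a decisive simplification that the paper exploits: the numerator of $g'(x)$ does not need to be expanded into an un-factored cubic. Computing $g'$ directly one finds it equals
\[
g'(x) = \frac{(-2u^2+7u-4)\bigl((1+\alpha)u - (2+\alpha)\bigr)}{(2-u)(u-1)}, \qquad u = e^x,
\]
i.e. your $Q(u)$ factors as $Q(u) = (2u^2 - 7u + 4)(13u - 23)$. The quadratic has roots $(7\pm\sqrt{17})/4 \notin [3/2,2]$, and the linear factor vanishes at $u_0 = (2+\alpha)/(1+\alpha) = 23/13$. So $u_0$ is \emph{not} transcendental but a nice rational number (and indeed $23/13\approx 1.7692 \in (1.76,1.77)$, consistent with your bracketing). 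With $u_0 = 23/13$ in hand one simply substitutes and verifies
\[
g\bigl(\ln\tfrac{23}{13}\bigr) = \tfrac{46}{10} - \tfrac{46}{10}\ln\tfrac{23}{13} - \tfrac{26}{10}\ln 2 - \tfrac{3}{10}\ln\tfrac{3}{13} - \ln\tfrac{20}{13} > 0,
\]
a single numerical check of an explicit expression, instead of your proposed interval-arithmetic argument. Your plan would work (there is slack, since the minimum of $g$ is about $0.18$ and $g'$ vanishes at $u_0$, so $g$ is nearly flat on a width-$0.01$ interval), but it replaces a one-line evaluation by an iterated bracketing whose precision you would have to justify. A rational-root check on $Q$ (numerator divides $92 = 4\cdot 23$, denominator divides $26 = 2\cdot 13$) would also have revealed $u_0 = 23/13$; better yet, don't expand the product in the first place.
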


\begin{proof}
Set $\alpha = 3/10$ and denote
\begin{equation*}
g(x) \coloneqq (1+\alpha)\bigl(2 e^x - 2 x - 2\ln(2) - \ln(2-e^x)\bigr)  - \bigl(2 x - \ln(2-e^x)  +  \ln(2 e^x - 2)\bigl).
\end{equation*}
Our goal is to show that $g(x) \geq 0$ for $x\in [\ln(3/2), \ln(2))$. 
Differentiating and simplifying yields
\begin{align*}
g'(x)  = \frac{ (-4 +7 e^x - 2 e^{2 x})\cdot \bigl ((1+\alpha) e^{x} - (2+\alpha) \bigr)}{(2 - e^x) (-1 + e^x)} 
\end{align*}
The denominator and the factor $-4 +7 e^x - 2 e^{2 x}$ are positive for $x\in [\ln(3/2), \ln(2))$.
The second factor in the nominator 
changes sign for $x$ such that $e^x = (2+\alpha)/(1+\alpha) = 23/13 \in [3/2,2)$
(and $g$ attains its minimum on $[\ln(3/2), \ln(2))$ at that point).
Substituting and simplifying yields
\[
g\Bigl( \frac{23}{13} \Bigr) =
 \frac{46}{10}
  - \frac{46}{10} \ln\Bigl( \frac{23}{13} \Bigr) 
  - \frac{26}{10} \ln(2)
    - \frac{3}{10} \ln\Bigl( \frac{3}{13} \Bigr)
      - \ln\Bigl( \frac{20}{13} \Bigr) > 0,
      \]
which finishes the proof.
\end{proof}

\begin{lemma}
	\label{lem:asymptotic-push-case-2.1.1-no-y-x=ln3/2}
	We have
	\begin{equation*}
	\sup \Bigl\{ \frac{2 \ln(3/2) + \ln(2/z)}{4 -  2\ln(3/2) - \ln(2/z) -z} : 1\leq z\leq 2 \Bigr\} \leq 1.1.
	\end{equation*}
\end{lemma}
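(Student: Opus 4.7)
The plan is to rewrite the inequality $\XXXasymptoticNominator/\XXXasymptoticDenominator \leq 1.1$ in the equivalent form $1.1 \XXXasymptoticDenominator(z) - \XXXasymptoticNominator(z) \geq 0$, after first verifying that the denominator is positive on the relevant interval. Concretely, I would introduce
\[
g(z) \coloneqq 1.1 \bigl(4 - 2\ln(3/2) - \ln(2/z) - z\bigr) - \bigl(2\ln(3/2) + \ln(2/z)\bigr),
\]
expand and collect terms using $\ln(2/z) = \ln 2 - \ln z$, which yields the convenient one-variable expression
\[
g(z) = 4.4 - 4.2 \ln(3/2) - 2.1 \ln 2 + 2.1 \ln z - 1.1 z.
\]

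Next I would analyse $g$ on $[1,2]$ by a one-line calculus argument: $g'(z) = 2.1/z - 1.1$ vanishes only at $z = 21/11 \in (1,2)$, and the second derivative $-2.1/z^2$ is negative, so $g$ is strictly concave on $[1,2]$ and attains its minimum at one of the endpoints. A direct numerical check then gives
\[
g(1) = 3.3 - 2.1\ln(9/2) \approx 0.14, \qquad g(2) = 2.2 - 4.2\ln(3/2) \approx 0.50,
\]
both of which are positive, so $g(z) \geq \min\{g(1), g(2)\} > 0$ throughout $[1,2]$.

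Finally, to close the argument I would record that the denominator $4 - 2\ln(3/2) - \ln(2/z) - z$ is indeed positive on $[1,2]$; this is immediate since its derivative $1/z - 1$ is non-positive on $[1,2]$, so the denominator is decreasing and its value at $z=2$ equals $2 - 2\ln(3/2) > 0$. Combining this with $g \geq 0$ yields the desired bound $1.1$ on the supremum. I do not foresee any genuine obstacle: the only care needed is the sign check on the denominator and the elementary numerical evaluation of $g$ at $z=1$, which is the tighter of the two endpoint bounds.
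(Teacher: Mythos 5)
Your proof is correct and takes essentially the same route as the paper: you define the same auxiliary function $g(z) = 1.1\,\XXXasymptoticDenominator - \XXXasymptoticNominator$, observe its concavity via $g''(z) = -2.1/z^2 < 0$, and reduce to the positivity of $g$ at the endpoints $z=1$ and $z=2$. The one addition you make -- explicitly verifying positivity of the denominator on $[1,2]$ -- is a worthwhile bit of bookkeeping the paper handles only by its blanket remark that all such denominators are values of $\XXXasymptoticDenominator$ and hence positive.
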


\begin{proof}
	Set $\alpha = 1/10$ and denote
	\begin{equation*}
	g(z) \coloneqq (1+\alpha)\bigl(4 -  2\ln(3/2) - \ln(2/z) -z\bigr)  - \bigl(2 \ln(3/2) + \ln(2/z)\bigl).
	\end{equation*}
	Our goal is to show that $g(z) \geq 0$ for $z\in [1,2]$. 
	Differentiating  yields
	\begin{align*}
	g'(z)  = (2+\alpha)/z - (1+\alpha),
	\end{align*}
	so $g$ is concave on $[1,2]$. Moreover, 
	\[
	g(1) = \frac{33}{10} + \frac{21\ln(2)}{10} - \frac{21\ln(3)}{5} > 0, \qquad
	g(2) = \frac{11}{5} - \frac{21}{5} \ln \Bigl(\frac{3}{2}\Bigr)> 0,
	\]
	so the proof is finished.
\end{proof}

\subsection{Special case: \texorpdfstring{$z = 2(2-e^x)$}{z=2(2-exp(x))}}

We first handle the special case when $z=2(2-e^x)$.

\begin{lemma}
	\label{lem:asymptotic-push-case-0}
	The value of the supremum
	\[
	\sup \Bigl\{ \frac{\XXXasymptoticNominator(x,y,2(2-e^x))}{\XXXasymptoticDenominator(x,y,2(2-e^x))} : x>0, y>0\Bigr\}
	\]
    does not change if we restrict the range of $(x,y)$ to $x\in[\ln(3/2), \ln(2))$ and $y>0$ such that $e^{-y} \leq 2(2-2e^x) \leq 3e^{-y}$.
    Moreover, the supremum is attained on the curve
     $e^x  +2 e^x e^{y} - 4e^{y} = 0$, where $x\in [\ln(3/2), \ln(2))$.
\end{lemma}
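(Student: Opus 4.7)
The plan is to fix the relation $z = 2(2-e^x)$ and observe that, under this substitution, $2-z = 2e^x - 2$, so $\ln(2e^x - 2) - \ln(2-z) = 0$ and each of the three branches in the definition of $\xGain$ collapses to $\xGain(x, 2(2-e^x)) = x$. The problem therefore reduces to maximizing
\[
\frac{x + \yGain(y, z)}{2e^x - x - 2 + \int_0^y |-1 + ze^s|\, ds}
\]
over $x \in (0, \ln 2)$ and $y > 0$, with $z = 2(2-e^x)$ throughout. The remaining case split is along $ze^y = 1$, $ze^y = 3$ (which select the branch of $\yGain$) and $x = \ln(3/2)$ (equivalently $z = 1$, which determines whether the integrand $|-1 + ze^s|$ changes sign on $[0,y]$ and thus the explicit form of the integral).

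In the \emph{central} region $x \in [\ln(3/2), \ln 2)$, $1 \leq ze^y \leq 3$, the crossover $s = -\ln z$ lies in $(0, y]$, so the integral equals $ze^y + z - 2 - y - 2\ln z$ while $\yGain = y$, and the ratio matches the expression inside the supremum of Corollary~\ref{cor:lower-bound-asymptotic-special-b-c-d-alternate-expression}. Differentiating in $y$ for fixed $x$, as in the proof of Remark~\ref{rem:exact-asymptotic-bound}, the first-order condition reads $ze^y = e^x$, i.e., $e^x + 2e^xe^y - 4e^y = 0$. On this curve $ze^y = e^x \in [3/2, 2) \subset [1,3]$, so the stationary point lies strictly in the interior of the central region; this establishes the optimum there.

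It remains to dismiss each of the four non-central sub-regions: $0 < x \leq \ln(3/2)$ with either $1 \leq ze^y \leq 3$ or $ze^y \geq 3$; and $\ln(3/2) \leq x < \ln 2$ with $ze^y \leq 1$ or $ze^y \geq 3$. For each, I would choose a natural candidate curve $y = y(x)$ -- either the interior $y$-stationary point or a convenient boundary value -- that reduces the ratio to a function of one variable handled by the auxiliary lemmas. Concretely, the substitution $y = x$ in the sub-region $0 < x \leq \ln(3/2)$, $1 \leq ze^y \leq 3$ gives $ze^y = 2(2-e^x)e^x$ and denominator $2(4e^x - e^{2x} - x - 3)$, reducing the ratio to the one bounded by Lemma~\ref{lem:asymptotic-push-case-0.2.b}; the substitution $y = x + \ln 2$ in the sub-region $0 < x \leq \ln(3/2)$, $ze^y \geq 3$ gives $ze^y - 2 = 4(2-e^x)e^x - 2$, reducing to the second supremum in Lemma~\ref{lem:asymptotic-push-case-0.2.a}. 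Lemma~\ref{lem:asymptotic-push-case-0.3.a} handles the sub-region $\ln(3/2) \leq x < \ln 2$ with $ze^y \leq 1$; the first supremum of Lemma~\ref{lem:asymptotic-push-case-0.2.a} together with Lemma~\ref{lem:asymptotic-push-case-2.1.1-no-y-x=ln3/2} handles the remaining sub-region and its boundary at $x = \ln(3/2)$. Each of these lemmas delivers an upper bound of at most $1.3$, which is strictly less than the lower bound $1.37$ already established in Theorem~\ref{thm:main-lower-bounds}, so none of these sub-regions can host the overall supremum.

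The main obstacle is not the bounds themselves but justifying, in each non-central sub-region, that the selected candidate $y = y(x)$ either realizes the $y$-maximum for fixed $x$ or at least majorizes the true maximum. Because the first-order conditions in these sub-regions are transcendental and do not admit closed-form solutions, I would rely on sign analysis of $\partial_y (\XXXasymptoticNominator / \XXXasymptoticDenominator)$ combined with inspection of the boundary values at $ze^y \in \{1, 3\}$ and continuity at the interfaces with the central region. Once this matching is in place, each sub-region contributes a ratio bounded by the corresponding auxiliary lemma, and the proof closes.
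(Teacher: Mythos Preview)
Your proposal has a concrete error and a genuine gap.

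\textbf{The central region.} Your claim that differentiating in $y$ for fixed $x$ gives the first-order condition $ze^y = e^x$ is incorrect. With $z = 2(2-e^x)$ fixed, $\XXXasymptoticNominator = x+y$ and $\partial_y \XXXasymptoticDenominator = -1 + ze^y$, so $\partial_y(\XXXasymptoticNominator/\XXXasymptoticDenominator)=0$ reads $\XXXasymptoticDenominator = (x+y)(ze^y-1)$, a transcendental relation that does \emph{not} reduce to $ze^y = e^x$. (And Remark~\ref{rem:exact-asymptotic-bound} does not derive this condition either; it simply \emph{assumes} $ze^y = e^x$ and defers the justification to the present lemma.) The equation $ze^y = e^x$ arises from a different mechanism: the paper fixes a level set $\XXXasymptoticNominator = c$ (so $y'(x)=-1$) and minimizes $\XXXasymptoticDenominator$ along it; the critical equation becomes $\frac{2e^x}{2-e^x} - 4e^y = 0$, which is $ze^y = e^x$. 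This level-curve method is what pins the supremum to the curve without ever solving for the $y$-optimum at fixed $x$.

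\textbf{Mis-matched lemmas.} Your attribution of the auxiliary lemmas to sub-regions is scrambled. Lemma~\ref{lem:asymptotic-push-case-0.3.a} bounds the ratio on the curve $(2-e^x)e^y = e^x$, along which $ze^y = 2e^x \geq 3$; it therefore handles the sub-region $x\geq\ln(3/2)$, $ze^y \geq 3$ (the paper's Case~3.1), not $ze^y \leq 1$ as you state. The first supremum in Lemma~\ref{lem:asymptotic-push-case-0.2.a} is the boundary value at $x=0$, $z=2$, which belongs to the region $x\leq\ln(3/2)$, not $x\geq\ln(3/2)$. And Lemma~\ref{lem:asymptotic-push-case-2.1.1-no-y-x=ln3/2} is not used in this lemma at all; it treats the general-$z$ problem (Case~2.1.1 of Proposition~\ref{prop:asymptotic-push}) and involves $z$ as a free variable, whereas here $z$ is pinned to $2(2-e^x)$.

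\textbf{The acknowledged gap.} You correctly identify the main obstacle: showing that your candidate $y=y(x)$ majorizes the true $y$-maximum in each non-central region. The paper's level-curve method dissolves this obstacle entirely. Instead of solving $\partial_y(\XXXasymptoticNominator/\XXXasymptoticDenominator)=0$, one parametrizes each level set $\XXXasymptoticNominator=c$ and computes the sign of the derivative of $\XXXasymptoticDenominator$ along it; the resulting expressions factor cleanly (e.g., in Case~1 one gets a factor $2(2-e^x)-ze^y$, in Case~2.1 a factor $4e^x - 2e^y$, etc.), so the flow on each level curve pushes either into the central region or onto one of the special curves where the auxiliary lemmas apply. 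No transcendental first-order conditions need be solved.
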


	Observe that $\xGain(x,2(2-e^x)) = x$ for every $x>0$.
	The proof of Lemma~\ref{lem:asymptotic-push-case-0} is divided into five cases:
	\begin{itemize}
		\item Case 1: $2(2-e^x) \leq e^{-y}$,
		\item Case 2.1: $2(2-e^x) \geq 1$ and $2(2-e^x) \geq 3e^{-y}$,
		\item Case 2.2: $2(2-e^x) \geq 1$ and $2(2-e^x) \leq 3e^{-y}$,
		\item Case 3.1: $e^{-y} \leq 2(2-e^x) \leq 1$ and $2(2-e^x) \geq 3e^{-y}$,
		\item Case 3.2: $e^{-y} \leq 2(2-e^x) \leq 1$ and $2(2-e^x) \leq 3e^{-y}$,
	\end{itemize}
	see~Figure~\ref{fig:asymptotic-case-0-push-the-cases}.
	The general idea is to minimize $\XXXasymptoticDenominator(x, y, 2(2-e^x)$
	along the lines where $\XXXasymptoticNominator(x, y, 2(2-e^x))$ is constant.
	This strategy and its outcome are depicted in Figure~\ref{fig:asymptotic-case-0-push-proof}, to which the Reader will be referred multiple times during the course of the proof.	
	For the sake of brevity, we will omit some standard details.

	\begin{figure}[b]
		\begin{tikzpicture}[scale = 3, xscale=3]
		\draw[->] (0, 0) -- (1.05, 0) node[below] {$x$};
		\draw[->] (0, 0) -- (0, 2.1) node[left] {$y$};
		\node[below] at ({ln(3/2)},0) {$\ln(\tfrac{3}{2})$};
		\node[below] at ({ln(2)},0) {$\ln(2)\vphantom{\tfrac{3}{2}}$};
		\node[left] at (0,{ln(3/2)}) {$\ln(\tfrac{3}{2})$};
		\node[left] at (0,{ln(3)}) {$\ln(3)$};
		\draw[domain=0:2, smooth, variable=\y, MyLineCases]  plot ({ln(3/2)}, {\y});
		\draw[domain=ln(3/2):0.65873, smooth, variable=\x, MyLineCases] plot ({\x}, {-ln(2*(2-e^\x))});
		\draw[domain=0:0.58612, smooth, variable=\x, MyLineCases] plot ({\x}, {-ln(2*(2-e^\x)/3)});
		\node[below] at (0.75,0.4) {Case 1};
		\node[below] at (0.2,0.4) {Case 2.2};
		\node[below] at (0.2,2) {Case 2.1};
		\node[below] at (0.49,2) {Case 3.1};
		\node[below] at (0.49,1.1) {Case 3.2};
		\end{tikzpicture}
		\caption{%
			Proof of Lemma~\ref{lem:asymptotic-push-case-0}.
			The dashed lines, $x = \ln(3/2)$, $3e^{-y} = 2(2-e^x)$, and $e^{-y} = 2(2-e^x)$,
			divide the first quadrant into five sets according to the formulas for
			$\XXXasymptoticNominator$ and $\XXXasymptoticDenominator$.
		}
		\label{fig:asymptotic-case-0-push-the-cases}
	\end{figure}
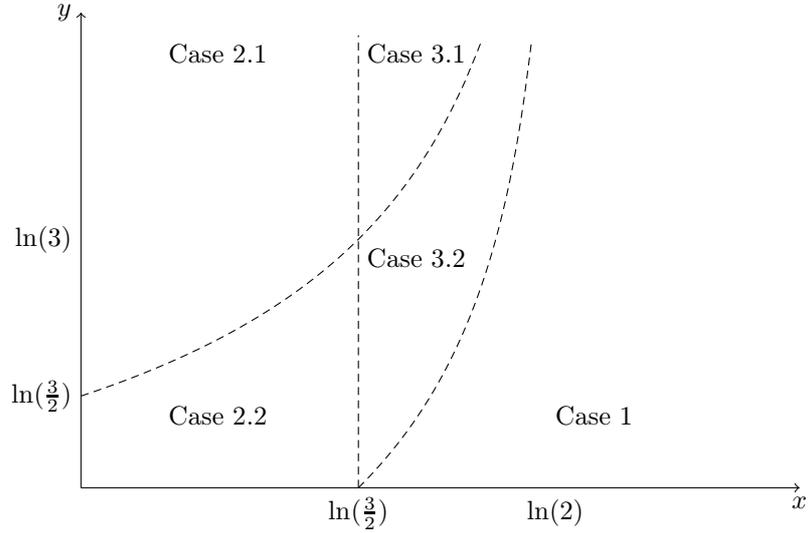
	
	\begin{figure}[b]
		\begin{tikzpicture}[scale=3, xscale=3]
		\draw[->] (0, 0) -- (1.05, 0) node[below] {$x$};
		\draw[->] (0, 0) -- (0, 2.1) node[left] {$y$};
		\node[below] at ({ln(3/2)},0) {$\ln(\tfrac{3}{2})$};
		\node[below] at ({ln(2)},0) {$\ln(2)\vphantom{\tfrac{3}{2}}$};
		\node[left] at (0,{ln(3/2)}) {$\ln(\tfrac{3}{2})$};
		\node[left] at (0,{ln(3)}) {$\ln(3)$};
		\draw[domain=0:2, smooth, variable=\y, MyLineCases]  plot ({ln(3/2)}, {\y});
		\draw[domain=ln(3/2):0.65873, smooth, variable=\x, MyLineCases] plot ({\x}, {-ln(2*(2-e^\x))});
		\draw[domain=0:0.58612, smooth, variable=\x, MyLineCases] plot ({\x}, {-ln(2*(2-e^\x)/3)});
		\draw[domain=ln(3/2):ln(2), smooth, variable=\y, MyLineOpt]  plot (0, {\y});
		\draw[domain=0:ln(3/2), smooth, variable=\x, MyLineOpt]  plot ({\x}, {\x + ln(2)});
		\draw[domain=ln(3):2, smooth, variable=\y, MyLineOpt]  plot ({ln(2*e^\y/(1+e^\y))}, {\y});
		\draw[domain=0:ln(3/2), smooth, variable=\x, MyLineOpt]  plot ({\x}, {\x + ln(2)});
		\draw[domain=0:ln(3/2), smooth, variable=\x, MyLineOpt]  plot ({\x}, {\x});
		\draw[ domain=ln(3/2):0.627, smooth, variable=\x, MyLineOpt]  plot ({\x}, {ln(e^\x/(4-2*e^\x))});
		\draw[domain=0:2, smooth, variable=\y, MyLineHelp]  plot ({ln(2)}, {\y});
		\draw[domain=0:ln(3/2), smooth, variable=\x, MyLineHelp]  plot ({\x}, {ln(3/2)});
		\draw[domain=0:ln(3/2), smooth, variable=\x, MyLineHelp]  plot ({\x}, {ln(3)});
		%
		%
		%
		\draw[MyArrowsR={0.5}, domain=0:0.1, variable=\x, MyLinePush]  plot ({\x}, {0.2-\x});
		\draw[MyArrowsL={0.5},domain=0.1:0.2, variable=\x, MyLinePush]  plot ({\x}, {0.2-\x});
		%
		%
		\draw[MyArrowsL={0.4}, domain=0:0.0925444, variable=\x, MyLinePush]
		plot ({\x}, {ln( (1 + sqrt(1 + 4*(2-e^\x)*e^(0.6 - \x)/2))/(2*(2-e^\x)) )});
		\draw[MyArrowsRR={0.33}{0.66}, domain=0.0925444:0.3, variable=\x, MyLinePush]  plot ({\x}, {0.6-\x});
		\draw[MyArrowsL={0.6}, domain=0.3:ln(3/2), variable=\x, MyLinePush]  plot ({\x}, {0.6-\x});
		\draw[MyArrowsL={0.5}, domain=ln(3/2):0.450667, variable=\x, MyLinePush]  plot ({\x}, {0.6-\x});
		\draw[MyArrowsL={0.5}, domain=0.451:0.455856, variable=\x, MyLinePush]
		plot ({\x}, {ln( (-1 + sqrt(1 - 4*(2-e^\x)*e^(0.6 - \x)/2))/(-2*(2-e^\x)) )});
		%
		%
		\draw[MyArrowsL={0.4}, domain=0:0.271214, variable=\x, MyLinePush]
		plot ({\x}, {ln( (1 + sqrt(1 + 4*(2-e^\x)*e^(1.05 - \x)/2))/(2*(2-e^\x)) )});
		\draw[MyArrowsR={0.5},domain=0.271214:ln(3/2), variable=\x, MyLinePush]  plot ({\x}, {1.05-\x});
		\draw[MyArrowsRL={0.2}{0.7},domain=ln(3/2):0.531906, variable=\x, MyLinePush]  plot ({\x}, {1.05-\x});
		\draw[MyArrowsL={0.5},domain=0.532:0.585392, variable=\x, MyLinePush]
		plot ({\x}, {ln( (-1 + sqrt(1 - 4*(2-e^\x)*e^(1.05 - \x)/2))/(-2*(2-e^\x)) )});
		%
		%
		\draw[MyArrowsRL={0.05}{0.55},domain=0:ln(3/2), variable=\x, MyLinePush]
		plot ({\x}, {ln( (1 + sqrt(1 + 4*(2-e^\x)*(9/2 )*e^(- \x)/2))/(2*(2-e^\x)) )});
		\draw[MyArrowsRL={0.4}{0.81}, domain=ln(3/2):ln(9/5), variable=\x, MyLinePush]  plot ({\x}, {ln(9/2)-\x});
		\draw[MyArrowsLL={0.33}{0.66},domain=0.588:ln(1/2 + sqrt(5/2)), variable=\x, MyLinePush]
		plot ({\x}, {ln( (-1 + sqrt(1 - 4*(2-e^\x)*(9/2)*e^(- \x)/2))/(-2*(2-e^\x)) )});
		%
		%
		%
		\draw[MyArrowsRR={0.33}{0.7},domain=0:ln(3/2), variable=\x, MyLinePush]
		plot ({\x}, {ln( (1 + sqrt(1 + 4*(2-e^\x)*(9/2 + 9/2 )*e^(- \x)/2))/(2*(2-e^\x)) )});
		\draw[MyArrowsRL={0.3}{0.8},domain=ln(3/2):2 *ln(2) + ln(3) - ln(7), variable=\x, MyLinePush]
		plot ({\x}, {ln( (1 + sqrt(1 + 4*(2-e^\x)*(9/2 + 9/2 )*e^(- \x)/2))/(2*(2-e^\x)) )});
		\draw[MyArrowsRL={0.3}{0.81},domain=ln(12/7):ln(36/19), variable=\x, MyLinePush]  plot ({\x}, {ln(9)-\x});
		\draw[MyArrowsLL={0.33}{0.66},domain=0.6392:ln(1/2 + sqrt(19)/2), variable=\x, MyLinePush]
		plot ({\x}, {ln( (-1 + sqrt(1 - 4*(2-e^\x)*(9)*e^(- \x)/2))/(-2*(2-e^\x)) )});
		\end{tikzpicture}
		\caption{%
			Proof of Lemma~\ref{lem:asymptotic-push-case-0} (cf.\ Figure~\ref{fig:asymptotic-case-0-push-the-cases}).
			On the dotted lines 
			$\XXXasymptoticNominator(x,y,2(2-e^x))$ is constant;
			the arrow heads indicate in which direction 
			$\XXXasymptoticDenominator(x,y,2(2-e^x))$ decreases
			and the thick lines indicate where the local minima  can lie.
			Note that the axes have been scaled for readability.	}
		\label{fig:asymptotic-case-0-push-proof}
	\end{figure}
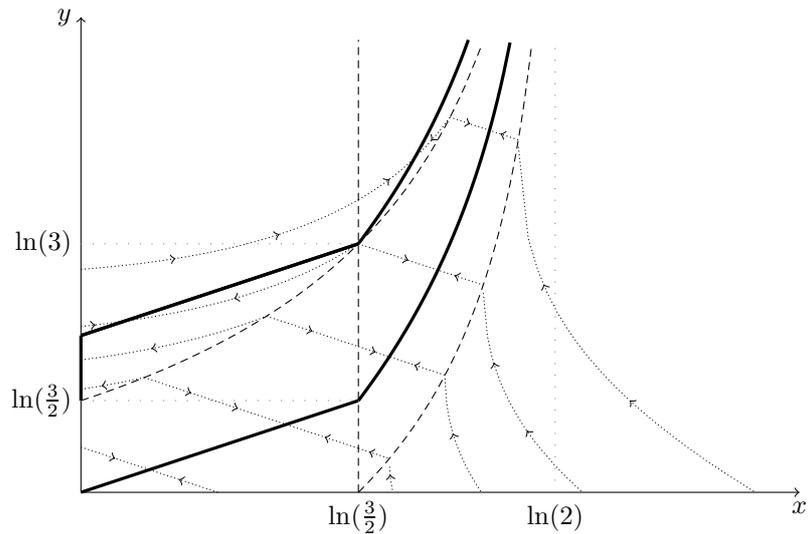

    \begin{proof}[Proof of Lemma~\ref{lem:asymptotic-push-case-0} in Case 1]
	Suppose that $2(2-e^x) \leq e^{-y}$.
	Then in particular $2(2-e^x) < 1$ (since $y>0$), i.e.,  $x> \ln(3/2)$,
	and
	\begin{align*}
	\XXXasymptoticNominator(x,y,2(2-e^x)) &= x + y + \ln(2-2(2-e^x)e^y),\\
	\XXXasymptoticDenominator(x,y,2(2-e^x)) &= 2e^x - x -2 + \int_{0}^{y} 1 -  2(2-e^x)  e^{s} ds\\
	& = 2e^x - x- 2 + y - 2(2-e^x)(e^y-1).
\end{align*}
Consider the implicit curve described by the equation $\XXXasymptoticNominator(x,y,2(2-e^x)) = c$
(for some constant $c$ and $x$, $y$ such as in the current case).
Since $x\mapsto \XXXasymptoticNominator(x,y,2(2-e^x)) $ is strictly increasing, on this implicit curve
	$x=x(y)$ is determined by the value of $y$;
	Actually, an explicit formula for $x(y)$ can be found
	and one easily sees that $x(y)$ is differentiable 
	(if we are in the interior of the set considered in Case~1).
	Differentiating $\XXXasymptoticNominator(x(y),y,2(2-e^{x(y)}))$
	 with respect to $y$ we
	get the condition
	\begin{align}
	\label{eq:derivative-implicit-curve-case-0.1}
	0 &= x'(y) + 1 + \frac{2e^{x(y)} x'(y) e^y - 2(2-e^x) e^y}{2-2(2-e^{x(y)})e^y}
	\\
	&= x'(y) \frac{1 - (2 - e^{x(y)}) e^y + e^{x(y)} e^y}{1 - (2 - e^{x(y)}) e^y} + \frac{1 - 2(2 - e^{x(y)}) e^y}{1 - (2 - e^{x(y)}) e^y}.\nonumber
	\end{align}
	In particular, $x'(y) \leq 0$ (since $2(2-e^{x}) e^y \leq 1$).
	Using \eqref{eq:derivative-implicit-curve-case-0.1} in the third inequality below we get
	\begin{align*}
	\MoveEqLeft[4]
	\frac{\partial}{\partial y} \XXXasymptoticDenominator(x(y), y, 2(2-e^{x(y)})) \\
	&= x'(y)(2e^{x(y)} - 1) + 1 
	+ 2e^{x(y)}x'(y) (e^y - 1) - 2(2-e^{x(y)}) e^y \\
	&= x'(y)(2e^{x(y)}e^y - 1) + 1  - 2(2-e^{x(y)}) e^y \\
	&= \frac{-\bigl(1 - 2(2 - e^{x(y)}) e^y\bigr)(2e^{x(y)}e^y - 1)}{1 - (2 - e^{x(y)}) e^y + e^{x(y)} e^y} + 1  - 2(2-e^{x(y)}) e^y \\	
	&= \frac{-\bigl(1 - 2(2 - e^{x(y)}) e^y\bigr)\bigl(2e^{x(y)}e^y - 1     - 1 + (2 - e^{x(y)}) e^y - e^{x(y)} e^y\bigr)}{1 - (2 - e^{x(y)}) e^y + e^{x(y)} e^y}  \\	
	&= \frac{-\bigl(1 - 2(2 - e^{x(y)}) e^y\bigr)(2 e^y - 2)}{1 - (2 - e^{x(y)}) e^y + e^{x(y)} e^y} ,	
	\end{align*}
	which is negative. 
	Hence, in order to minimize $\XXXasymptoticDenominator(x(y), y, 2(2-e^{x(y)}))$
	(while keeping $\XXXasymptoticNominator(x(y), y, 2(2-e^{x(y)}))$ constant)
	it is best to increase $y$ and decrease $x=x(y)$.
	We can do so without leaving the setting of Case~1
	 until we arrive at the situation where 
	$ 2(2-e^{x(y)}) = e^{-y}$
	(cf.\ Figure~\ref{fig:asymptotic-case-0-push-proof}).
	This reduces Case~1 to the Case~3 which we  consider below.
	\end{proof}

    \begin{proof}[Proof of Lemma~\ref{lem:asymptotic-push-case-0} in Case 2]
	Suppose that $2(2-e^x) \geq 1$.
	This is equivalent to $x\leq \ln(3/2)$ and hence
	\begin{align*}
	\XXXasymptoticDenominator(x,y,2(2-e^x)) &= 2e^x - x -2 + \int_{0}^{y} -1 +  2(2-e^x)  e^{s} ds\\
	& = 2e^x - x- 2 - y + 2(2-e^x)(e^y-1)\\
	&= 4e^x + 4 e^y -2e^{x+y} -x -y - 6. 
	\end{align*}
	We consider two subcases.
	
	\emph{Case~2.1}. Suppose that $2(2-e^x) \geq 1 $ and moreover $2(2-e^x) \geq 3e^{-y}$.
	Then
	\begin{equation*}
	\XXXasymptoticNominator(x,y,2(2-e^x)) = x + y + \ln(2(2-e^x) e^y  -2).
	\end{equation*}
	Consider the implicit curve described by the equation $\XXXasymptoticNominator(x,y,2(2-e^x)) = c$
	(for some constant $c$ and $x$, $y$ such as in the current subcase).
	Since $y\mapsto \XXXasymptoticNominator(x,y,2(2-e^x)) $ is strictly increasing, on this implicit curve
	$y = y(x)$ is determined by the value of $x$;
   an explicit formula for $y(x)$ can be found
		and one sees that $y(x)$ is differentiable 
		(if we are in the interior of the set considered in Case~2.1).
	Differentiating $\XXXasymptoticNominator(x,y(x),2(2-e^{x})) $ with respect to $x$
	yields the condition
	\begin{align}
	\label{eq:derivative-implicit-curve-case-0.2.a}
	0 &= 1 + y'(x) + \frac{-2e^{x} e^{y(x)} + 2(2-e^x) e^{y(x)} y'(x)}{2(2-e^{x})e^{y(x)} - 2}
	\\
	&= 
	\frac{(2-e^{x})e^{y(x)} - 1 -e^{x} e^{y(x)}}{(2-e^{x})e^{y(x)} - 1}
	+ y'(x)\frac{2(2-e^x) e^{y(x)} - 1}{(2-e^{x})e^{y(x)} - 1}
	.\nonumber
	\end{align}
	In particular, $y'(x) \geq 0$
	 (since $2(2-e^{x}) e^{y(x)} \geq 3$
	 and $(2-e^{x})e^{y(x)} - 1 -e^{x} e^{y(x)}
	 = 2(1-e^{x})e^{y(x)} - 1 \geq 0$).
	Using \eqref{eq:derivative-implicit-curve-case-0.2.a} in the third inequality below we get
	\begin{align*}
	\MoveEqLeft[4]
	\frac{\partial}{\partial x} \XXXasymptoticDenominator(x, y(x), 2(2-e^{x})) \\
	&= 2e^{x} - 1  - y'(x)
	- 2e^{x} (e^{y(x)} - 1) + 2(2-e^{x}) e^{y(x)} y'(x) \\
	&=2e^{x} - 1 - 2e^{x} (e^{y(x)} - 1) 
	+ y'(x)\bigl( - 1  + 2(2-e^{x}) e^{y(x)} \bigr) \\
	&=2e^{x} - 1 - 2e^{x} (e^{y(x)} - 1) 
	 - \bigl((2-e^{x})e^{y(x)} - 1 -e^{x} e^{y(x)}\bigr) \\
	 &=4e^x  - 2e^{y(x)},
	\end{align*}
	which is of the same sign as
	\[
	x + \ln(2) - y(x). 
	\]
	It follows that if $x$ increases,
	then $\XXXasymptoticDenominator(x,y(x),2(2-e^x))$ decreases while $x + \ln(2)  \leq  y(x)$ 
	and increases when $x + \ln(2) \geq y(x)$.
	Hence, in order to minimize the function  $\XXXasymptoticDenominator(x, y(x), 2(2-e^{x}))$
	 on the implicit curve where $\XXXasymptoticNominator(x,y(x), 2(2-e^x))$ is constant
	it suffices to consider the following special cases:
	\begin{itemize}
		\item $y = x + \ln(2)$, $x \in (0, \ln(3/2)]$,
		\item $x = 0$ and $y \in [\ln(3/2), \ln(2)]$,
		\item $y = \ln(3/2)$, $y\geq \ln(3)$
	\end{itemize}
(cf.~Figure~\ref{fig:asymptotic-case-0-push-proof};
 note that, depending on the value of $c$,
  the curve $\XXXasymptoticNominator(x,y(x), 2(2-e^x)) = c$
  may have zero, one, or two intersection points
   with the line $x + \ln(2) = y$).
In the two first situations 
the expression for $\XXXasymptoticNominator(x,y(x), 2(2-e^x))/\XXXasymptoticDenominator(x,y(x), 2(2-e^x))$ simplifies and the assertion follows from Lemma~\ref{lem:asymptotic-push-case-0.2.a}.
The third situation reduces to Case~3
 which we consider below.

	\emph{Case~2.2}. Suppose that $2(2-e^x) \geq 1 $ and $2(2-e^x) \leq 3e^{-y}$.
	Then
	\begin{equation*}
	\XXXasymptoticNominator(x,y,2(2-e^x)) = x + y.
	\end{equation*}
	On the lines $x + y = c$ (where $c>0$ is some constant)
	the function
	\[ (x,y)\mapsto \XXXasymptoticDenominator(x,y,2(2-e^x)) =  4e^x + 4 e^y -2e^{x+y} -x -y - 6
	\]
	treated as a function of $x\in[0,c]$ is clearly a convex function symmetric with respect to the midpoint of the interval $[0,c]$.
	Hence, in order to minimize $\XXXasymptoticDenominator(x,y, 2(2-e^x))$
	(while keeping $\XXXasymptoticNominator(x,y,2(2-e^x)$ constant)
	it suffices to consider  the following special cases:
	\begin{itemize}
		\item $x=y$, $x\in(0,\ln(3/2)]$,
		\item $x=\ln(3/2)$, $y\in[\ln(3/2),\ln(3)]$
	\end{itemize}
(cf.~Figure~\ref{fig:asymptotic-case-0-push-proof}).
	In the first situation the expression
	 for $U(x,y, 2(2-e^x))/V(x,y, 2(2-e^x))$ simplifies greatly
	  and Lemma~\ref{lem:asymptotic-push-case-0.2.b} yields the assertion. 
	The second situation reduces to Case~3, which we consider below.
	\end{proof}
	
	\begin{proof}[Proof of Lemma~\ref{lem:asymptotic-push-case-0} in Case 3]
	Suppose that $e^{-y} \leq 2(2-e^x) \leq 1$.
	This implies that $\ln(3/2) \leq x < \ln(2)$
	 and the function $s \mapsto -1 + 2(2-e^x) e^s$ changes sign at $s_0 = s_0(x)= -\ln(2(2-e^x)) \in [0,y]$. 
	Hence
	\begin{align*}
	\XXXasymptoticDenominator(x,y,2(2-e^x)) &= 2e^x - x -2 + 
	\int_{0}^{ s_0} 1 -  2(2-e^x)  e^{s} ds
	+ \int_{ s_0 }^{y} -1 +  2(2-e^x)  e^{s} ds\\
	& = 2e^x - x- 2  + 2s_0  - y  + 2(2-e^x)(e^y - 2e^{s_0} + 1)\\
 	& = 2e^x - x-  4  - 2\ln(2(2-e^x)) - y   + 2(2-e^x)(e^y + 1).
	\end{align*}
	 We consider two subcases. 
	
	\emph{Case~3.1}. Suppose that $e^{-y} \leq 2(2-e^x) \leq 1 $ and moreover $2(2-e^x) \geq 3e^{-y}$.
	Then
	\begin{equation*}
	\XXXasymptoticNominator(x,y,2(2-e^x)) = x + y + \ln(2(2-e^x) e^y  -2).
	\end{equation*}
	Consider the implicit curve described by the equation  $\XXXasymptoticNominator(x,y,2(2-e^x)) = c$
	(for some constant $c$ and $x$, $y$ such as in the current subcase).
	As in Case~2.1 above,
	on this implicit curve
	$y = y(x)$ is determined by the value of $x$ and we have the condition~\eqref{eq:derivative-implicit-curve-case-0.2.a}.
	Using it in the third inequality below we get 
	\begin{align*}
	\MoveEqLeft[4]
	\frac{\partial}{\partial x} \XXXasymptoticDenominator(x, y(x), 2(2-e^{x})) \\
	&= 2e^{x} - 1  + \frac{2e^x}{2-e^x}
	-y'(x)
	- 2e^{x} (e^{y(x)} + 1) + 2(2-e^{x}) e^{y(x)} y'(x) \\
	&=2e^{x} - 1  + \frac{2e^x}{2-e^x} - 2e^{x} (e^{y(x)} + 1) 
	+ y'(x)\bigl( - 1  + 2(2-e^{x}) e^{y(x)} \bigr) \\
	&=2e^{x} - 1 + \frac{2e^x}{2-e^x}- 2e^{x} (e^{y(x)} + 1) 
	- \bigl((2-e^{x})e^{y(x)} - 1 -e^{x} e^{y(x)}\bigr) \\
	&=\frac{2e^x}{2-e^x} - 2e^{y(x)}.
	\end{align*}
	It follows that if $x$ increases,
	then $\XXXasymptoticDenominator(x,y(x),2(2-e^x))$ decreases 
	while $e^x -(2-e^x)e^{y(x)} \leq 0$
	and increases when $e^x - (2-e^x) e^{y(x)}\geq 0$.
	Note that the curve $(2-e^x)e^y =e^x$ with $x\in[\ln(3/2), \ln(2))$
	lies in the region currently considered since $e^x \geq \ln(3/2)$.
	Thus Case~3.1 reduces to the situation:
	\begin{itemize}
		\item $e^x - 2e^{y} + e^x e^{y} = 0$, $x\in [\ln(3/2), \ln(2))$
		\end{itemize}
	(cf.\ Figure~\ref{fig:asymptotic-case-0-push-proof}).
    For such $x$, $y$ the expression for $\XXXasymptoticNominator(x,y, 2(2-e^x))/\XXXasymptoticDenominator(x,y, 2(2-e^x))$ simplifies 
    and Lemma~\ref{lem:asymptotic-push-case-0.3.a} yields the assertion. 
		
	\emph{Case~3.2}. Suppose that $e^{-y} \leq 2(2-e^x) \leq 1 $ and moreover $2(2-e^x) \leq 3e^{-y}$.
	Then
	\begin{equation*}
	\XXXasymptoticNominator(x,y,2(2-e^x)) = x + y
	\end{equation*}
	and clearly the curve described by the equation $\XXXasymptoticNominator(x,y,2(2-e^x)) = c$
	(for some constant $c$ and $x$, $y$ such as in the current subcase) determines $y=y(x)$ as a function of $x$ with $y'(x) = -1$.
	Therefore,
\begin{align*}
\MoveEqLeft[4]
\frac{\partial}{\partial x} \XXXasymptoticDenominator(x, y(x), 2(2-e^{x})) \\
&= 2e^{x} - 1  + \frac{2e^x}{2-e^x}
-(-1)
- 2e^{x} (e^{y(x)} + 1) + 2(2-e^{x}) e^{y(x)}(-1) \\
&=\frac{2e^x}{2-e^x} - 4e^{y(x)},
\end{align*}
which is of the same sign as 
\[
e^x - 2(2-e^x) e^{y(x)} = e^x + 2 e^{x + y(x)} - 4 e ^{y(x)}.
\]
The last quantity clearly increases when $x$ increases (and $y(x)$ decreases, while $x+y(x)$ is constant).
The curve $e^x - 2(2-e^x) e^{y} = 0$ with $x\in [\ln(3/2), \ln(2))$
lies in the currently considered region since 
$e^x \in [3/2, 2) \subset [1,3]$
and we are currently in the subcase where $1 \leq  2(2-e^x)e^y \leq 3$.
 Hence Case~3.1 reduces to one of the situations:
	 \begin{itemize}
	 	\item $e^x + 2 e^x e^{y} - 4 e ^{y} = 0$, $x\in [\ln(3/2), \ln(2))$,
	 	\item $x=\ln(3/2)$, $y\in (0,\ln(3/2)]$
	 \end{itemize}
	 (cf.\ Figure~\ref{fig:asymptotic-case-0-push-proof}).
	 The first situation corresponds directly to the assertion of the lemma,
	 while the second reduces to the already fully solved part of the Case~2.2
	 (the one which was reduced to Lemma~\ref{lem:asymptotic-push-case-0.2.b} and not to Case~3, cf.\ Figure~\ref{fig:asymptotic-case-0-push-proof}).
	 This finishes the proof in Case 3 and thus also the proof of the lemma.
\end{proof}

\subsection{Outline of the proof for general \texorpdfstring{$z$}{z}}

The proof of Proposition~\ref{prop:asymptotic-push} for general $z$ is split into several 
cases and steps
(depending on where the number $z$ lies with respect to the interval $[e^{-y}, 1]$):
\begin{itemize}
	\item Case 1: $z \leq e^{-y}$,
	\item Case 2.1.1: $z \geq 1$ and $z \geq 3e^{-y}$ and $x\geq \ln(3/2)$,
	\item Case 2.1.2: $z \geq 1$ and $z \geq 3e^{-y}$ and $x\leq \ln(3/2)$,
	\item Case 2.2: $z \geq 1$ and $z \leq 3e^{-y}$,
	\item Case 3.1: $e^{-y} \leq z \leq 1$ and $z \geq 3e^{-y}$,
	\item Case 3.2: $e^{-y} \leq z \leq 1$ and $z \leq 3e^{-y}$,
\end{itemize}
which we present in separate subsections below. 
The reasoning is similar to the one presented in the proof of Lemma~\ref{lem:asymptotic-push-case-0}, but slightly more tedious due to one extra variable.
We omit some standard details, but try to illustrate the steps with the help of figures where possible
(note that everywhere the axes have been scaled for readability).

\subsection{Case 1: \texorpdfstring{$z\leq e^{-y}$}{z leq exp(-y)}}
	Suppose that $z\leq e^{-y}$ (i.e., $ze^y \leq 1$).
	The standing assumptions~\eqref{eq:asymptotic-standing-assumption} imply   that
	$
	2(2-e^x) \leq z \leq e^{-y} \leq 1
	$
	and in particular $z\leq 1$, $x\geq \ln(3/2)$.
	Consequently, 
	\begin{align*}
	\XXXasymptoticNominator(x,y,z) &= x + \ln(2e^x-2) - \ln(2-z) + y + \ln(2-ze^y),\\
	\XXXasymptoticDenominator(x,y,z) &= 2e^x - x -2 + \int_{0}^{y} 1 -  z  e^{s} ds\\
	& = 2e^x - x- 2 + y - z(e^y-1).
	\end{align*}
\begin{proof}[Proof of Proposition~\ref{prop:asymptotic-push} in Case 1]
	Fix $z$ and consider the implicit curve described by the equation $\XXXasymptoticNominator(x,y,z) = c$ for some constant $c$. Since $x\mapsto x + \ln(2e^x - 2)$ is strictly increasing, on this implicit curve
	$x=x(y)$ is determined by the value of $y$.
	Differentiating $\XXXasymptoticNominator(x(y),y,z)$ with respect to $y$ we
	get the condition
	\begin{align}
	\label{eq:derivative-implicit-curve-case-1}
	0 = x'(y) + \frac{2e^{x(y)} x'(y)}{2e^{x(y)} - 2} + 1 - \frac{ze^y}{2-ze^y}
	= x'(y) \frac{(2 e^{x(y)}-1)}{e^{x(y)} - 1} + \frac{2(1-ze^y)}{2-ze^y}.
	\end{align}
	In particular, $x'(y)\leq 0$ (since $ze^y \leq 1$).
	Using \eqref{eq:derivative-implicit-curve-case-1} in the second inequality below we get
	\begin{align*}
	\frac{\partial}{\partial y} \XXXasymptoticDenominator(x(y), y, z) 
	&= x'(y)(2e^{x(y)} - 1) + 1 - z e^y \\
	&= \frac{-2(1-ze^y)(e^{x(y)} - 1)}{2-ze^y} + 1 - ze^{y}\\
	&= \frac{(1-ze^y)\bigl(2(2-e^{x(y)}) -ze^y\bigr)}{2-ze^y},
	\end{align*}
	which is negative,  since $ze^{y} \leq 1$ and $2(2-e^{x(y)}) \leq z \leq ze^y$.
	It follows that in Case~1 in order to minimize 
	$\XXXasymptoticDenominator(x(y),y,z) $ on the implicit curve where $\XXXasymptoticNominator(x(y),y,z) $ is constant,
	it is best to increase $y$ and decrease $x=x(y)$.
	For fixed $z$ we can do so (without leaving the setting of Case~1) until we arrive at one of the two possibilities:
	\begin{itemize}
		\item $2(2-e^x) \leq  z = e^{-y} \leq 1$,
		\item $2(2-e^x) = z \leq e^{-y} \leq 1$.
	\end{itemize}
	The former subcase will be treated in Case~3 below (where we assume that $z\in [e^{-y}, 1]$). 	
	In the latter situation we can apply~Lemma~\ref{lem:asymptotic-push-case-0}. 
	\end{proof}

	\subsection{Case 2: \texorpdfstring{$z\geq 1$}{z geq 1}}

	In this subsection we assume that $z\geq 1$. Then
	\begin{equation*}
	\XXXasymptoticDenominator(x,y,z) = 2e^x - x -2 + \int_{0}^{y} - 1 + z  e^{s} ds = 2e^x - x- 2 - y + z(e^y-1).
	\end{equation*}

%
%
\begin{figure}[t]
	\begin{tikzpicture}[scale=3,xscale=3]
	\draw[->] (0, 0) -- (1.25, 0) node[below] {$x$};
	\draw[->] (0, 0) -- (0,1.75) node[left] {$y$};
	\node[left] at ({ln(3/2)},1.7) {Case 2.1.2};
	\node[right] at ({ln(3/2)+0.1},1.7) {Case 2.1.1};
	\node[below] at (0,0) {$0\vphantom{\tfrac{3}{2}}$};
	\node[below] at ({ln(2-\zCaseTwo/2)},0) {$\ln(2-\tfrac{z}{2})\vphantom{\tfrac{3}{2}}$};
	\node[below] at ({ln(3/2)},0) {$\ln(\tfrac{3}{2})$};
	\node[left] at (0,{ln(2/(\zCaseTwo))}) {$\ln(\tfrac2z)$};
	\node[left] at (0,{ln(3/(\zCaseTwo))}) {$\ln(\tfrac3z)$};
	\node[left] at (0,{ln(4/(\zCaseTwo))}) {$\ln(\tfrac4z)$};
	\node[left] at (0,{ln(6/(\zCaseTwo))}) {$\ln(\tfrac6z)$};
	\draw[domain=0:1.7,variable=\y, MyLineCases] plot ({ln(2-\zCaseTwo/2)}, {\y});
	\draw[domain=ln(3/(\zCaseTwo)):1.7, variable=\y, MyLineCases] plot ({ln(3/2)}, {\y});
	\draw[domain=0:ln(3/(\zCaseTwo)), variable=\y, MyLineHelp] plot ({ln(3/2)}, {\y});
	\draw[domain=ln(2-\zCaseTwo/2):1.2, variable=\x, MyLineCases] plot (\x, {ln(3/(\zCaseTwo))});
	\draw[domain=0:ln(2-\zCaseTwo/2), variable=\x, MyLineHelp] plot (\x, {ln(3/(\zCaseTwo))});
	\draw[domain=0:ln(3/2), variable=\x, MyLineHelp] plot (\x, {ln(6/(\zCaseTwo))});
	\draw[domain=ln(3/2):ln(3)-0.15, variable=\x, MyLineOpt] plot ({\x}, {\x + ln(2) - ln(\zCaseTwo)});
	\draw[domain=0:ln(3/2), variable=\x, MyLineHelp] plot ({\x}, {\x + ln(2) - ln(\zCaseTwo)});
	\draw[domain=ln(2-\zCaseTwo/2):ln(3/2), variable=\x, MyLineOpt] plot ({\x}, {\x + ln(4) - ln(\zCaseTwo)});
	\draw[domain=0:ln(2-\zCaseTwo/2), variable=\x, MyLineHelp] plot ({\x}, {\x + ln(4) - ln(\zCaseTwo)});
	\draw[domain=ln(3/(\zCaseTwo)):ln(8/(\zCaseTwo)-2), variable=\y, MyLineOpt] plot ({ln(2-\zCaseTwo/2)}, {\y});
	\draw[MyArrowsR={0.5},
	domain=ln(2- (\zCaseTwo)/2):ln(3/2), variable=\x, MyLinePush]
	plot ({\x}, {ln((1+sqrt(1+\zCaseTwo*(36/(\zCaseTwo))*e^(-\x)))/(\zCaseTwo))});
	\draw[MyArrowsL={0.5},
	domain=ln(2- (\zCaseTwo)/2):ln(3/2), variable=\x, MyLinePush]
	plot ({\x}, {ln((1+sqrt(1+\zCaseTwo*((4-\zCaseTwo)*(4-\zCaseTwo)*(6-2*\zCaseTwo)/(\zCaseTwo))*e^(-\x)))/(\zCaseTwo))});
	\draw[MyArrowsL={0.5}, domain=ln(2- (\zCaseTwo)/2):ln(3/2), variable=\x, MyLinePush]
	plot ({\x}, {ln((1+sqrt(1+\zCaseTwo*(9/2/(\zCaseTwo))*e^(-\x)))/(\zCaseTwo))});
	\draw[MyArrowsL={0.5}, domain=ln(2- (\zCaseTwo)/2):ln(3/2), variable=\x, MyLinePush]
	plot ({\x}, {ln((1+sqrt(1+\zCaseTwo*(7/(\zCaseTwo))*e^(-\x)))/(\zCaseTwo))});
	\draw[MyArrowsL={0.5}, domain=ln(2- (\zCaseTwo)/2):ln(3/2), variable=\x, MyLinePush]
	plot ({\x)}, {ln((1+sqrt(1+\zCaseTwo*(12/(\zCaseTwo))*e^(-\x)))/(\zCaseTwo))});
	\draw[MyArrowsRL={0.25}{0.75}, domain=ln(2- (\zCaseTwo)/2):ln(3/2), variable=\x, MyLinePush]
	plot ({\x}, {ln((1+sqrt(1+\zCaseTwo*(26/(\zCaseTwo))*e^(-\x)))/(\zCaseTwo))});
	\draw[MyArrowsR={0.5}, domain=ln(2- (\zCaseTwo)/2):ln(3/2), variable=\x, MyLinePush]
	plot ({\x}, {ln((1+sqrt(1+\zCaseTwo*(46/(\zCaseTwo))*e^(-\x)))/(\zCaseTwo))});
	\draw[MyArrowsRL={0.25}{0.75}, domain=ln(3/2):ln((1+sqrt(1+2*\zCaseTwo*(36/(\zCaseTwo))/3))/2), variable=\x, MyLinePush]
	plot ({\x}, {ln((1+sqrt(1+\zCaseTwo*(36/(\zCaseTwo))*e^(-\x)/(2*e^\x -2)))/(\zCaseTwo))});
	\draw[MyArrowsRL={0.25}{0.75}, domain=ln(3/2):ln((1+sqrt(1+2*\zCaseTwo*((4-\zCaseTwo)*(4-\zCaseTwo)*(6-2*\zCaseTwo)/(\zCaseTwo))/3))/2), variable=\x, MyLinePush]
	plot ({\x}, {ln((1+sqrt(1+\zCaseTwo*((4-\zCaseTwo)*(4-\zCaseTwo)*(6-2*\zCaseTwo)/(\zCaseTwo))*e^(-\x)/(2*e^\x -2)))/(\zCaseTwo))});
	\draw[MyArrowsRL={0.25}{0.75}, domain=ln(3/2):ln((1+sqrt(1+2*\zCaseTwo*(7/(\zCaseTwo))/3))/2), variable=\x, MyLinePush]
	plot ({\x}, {ln((1+sqrt(1+\zCaseTwo*(7/(\zCaseTwo))*e^(-\x)/(2*e^\x -2)))/(\zCaseTwo))});
	\draw[MyArrowsRL={0.25}{0.75}, domain=ln(3/2):ln((1+sqrt(1+2*\zCaseTwo*(12/(\zCaseTwo))/3))/2), variable=\x, MyLinePush]
	plot ({\x}, {ln((1+sqrt(1+\zCaseTwo*(12/(\zCaseTwo))*e^(-\x)/(2*e^\x -2)))/(\zCaseTwo))});
	\draw[MyArrowsRL={0.25}{0.75}, domain=ln(3/2):ln((1+sqrt(1+2*\zCaseTwo*(26/(\zCaseTwo))/3))/2), variable=\x, MyLinePush]
	plot ({\x}, {ln((1+sqrt(1+\zCaseTwo*(26/(\zCaseTwo))*e^(-\x)/(2*e^\x -2)))/(\zCaseTwo))});
	\draw[MyArrowsRL={0.25}{0.75}, domain=ln(3/2):ln((1+sqrt(1+2*\zCaseTwo*(46/(\zCaseTwo))/3))/2), variable=\x, MyLinePush]
	plot ({\x}, {ln((1+sqrt(1+\zCaseTwo*(46/(\zCaseTwo))*e^(-\x)/(2*e^\x -2)))/(\zCaseTwo))});
	\end{tikzpicture}
	\caption{%
		Proof of Proposition~\ref{prop:asymptotic-push}, 
		Cases 2.1.1 and 2.1.2, where $z \geq 1$.
		On the dotted lines 
		$\XXXasymptoticNominator(x(y),y, z)$ is constant;
		the arrow heads indicate in which direction 
		$\XXXasymptoticDenominator(x(y),y,z)$ decreases
		and
		the thick lines indicate where the local minima can lie.}
	\label{fig:asymptotic-case-2}
\end{figure}
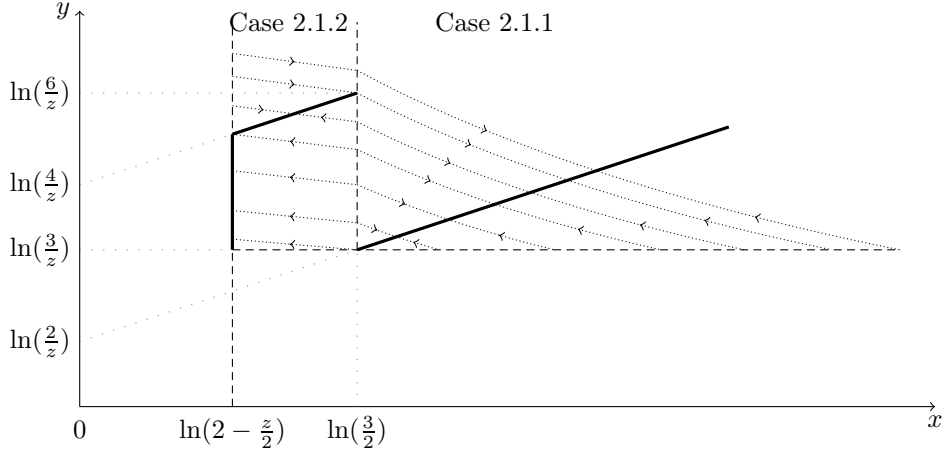

\begin{proof}[Proof of Proposition~\ref{prop:asymptotic-push} in Case 2]
Assume that $z\geq 1$.	
We consider three subcases.

\emph{Case 2.1.1}.
	Suppose that $z \geq 1$ and moreover $z \geq 3e^{-y}$, $x\geq \ln(3/2)$.
	Then in particular $2(2-e^x)\leq 1$
	and consequently
	\begin{equation*}
	\XXXasymptoticNominator(x,y,z) = x + \ln(2e^x-2) + y + \ln(ze^y - 2).
	\end{equation*}
	Fix $z$ and consider the implicit curve described by the equation $\XXXasymptoticNominator(x,y,z) = c$ for some constant $c$. Since $x\mapsto x + \ln(2e^x - 2)$ is strictly increasing, on this implicit curve
	$x=x(y)$ is determined by the value of $y$.
	Differentiating $\XXXasymptoticNominator(x(y), y,z)$ with respect to $y$ we
	get the condition
	\begin{align}
	\label{eq:derivative-implicit-curve-case-2.b}
	0 = x'(y) + \frac{2e^{x(y)} x'(y)}{2e^{x(y)} - 2} + 1 + \frac{ze^y}{ze^y-2}
	= x'(y) \frac{(2 e^{x(y)}-1)}{e^{x(y)} - 1} + \frac{2(ze^y - 1
		)}{ze^y-2}.
	\end{align}
	In particular, $x'(y)\leq 0$.
	Using \eqref{eq:derivative-implicit-curve-case-2.b} in the second inequality below we get
	\begin{align*}
	\frac{\partial}{\partial y} \XXXasymptoticDenominator(x(y), y, z) 
	&= x'(y)(2e^{x(y)} - 1) - 1 + z e^y \\
	&= \frac{-(ze^y-1)(2e^{x(y)} - 2)}{ze^y-2} - 1 + ze^{y}\\
	&= \frac{(ze^y -1)(ze^y - 2e^{x(y)})}{ze^y-2},
	\end{align*}
	which is of the same sign as $ze^y - 2e^{x(y)}$ (since $ze^{y} \geq 3)$.
	It follows that if $y$ increases (and  $x(y)$ decreases),
	then $\XXXasymptoticDenominator(x(y),y,z)$ increases if $y \geq x(y) + \ln(2/z)$, 
	and decreases when $y\leq x(y) + \ln(2/z)$. 
	Hence, in Case~2.1.1 and for fixed $z$, 
	in order to minimize $\XXXasymptoticDenominator$ on the implicit curve where $\XXXasymptoticNominator$ is constant,
	it suffices to consider only the situation:
	\begin{itemize}
		\item $ze^y = 2e^x$, $x\geq \ln(3/2)$, and $z\geq 1$
	\end{itemize}
	(cf.~Figure~\ref{fig:asymptotic-case-2}).
	If $ze^y = 2e^x$, i.e.,  $y = x + \ln(2/z)$, then
	\begin{align*}
	\XXXasymptoticNominator(x,x + \ln(2/z), z)   &= 2x + 2\ln(2e^x-2) + \ln(2/z),\\
	\XXXasymptoticDenominator(x,x+\ln(2/z), z) &= 4e^x - 2x - 2 - \ln(2/z)  - z.
	\end{align*}
	On the implicit curve $\XXXasymptoticNominator(x,x + \ln(2/z), z) = c $,
	$z = z(x)$ is clearly determined by the value of $x$.
	Differentiating  $\XXXasymptoticNominator(x,x + \ln(2/z(x)), z(x))$
	with respect to $x$ we arrive at the condition
	\begin{equation}
	\label{eq:derivative-implicit-curve-case-2.1.1-no-y}
	0 = 2 + \frac{2e^x}{e^x - 1} -\frac{z'(x)}{z(x)} 
	= \frac{4e^x - 2 }{e^x - 1} -\frac{z'(x)}{z(x)}.
	\end{equation}
	In particular, $z'(x) \geq 0$.
	Using~\eqref{eq:derivative-implicit-curve-case-2.1.1-no-y}, we obtain
	\begin{align*}
	\frac{\partial }{\partial x} \XXXasymptoticDenominator(x,x+\ln(2/z(x)), z(x)) 
	&= 4e^x - 2  + \frac{z'(x)}{z(x)} - z'(x)
	\\ 
	&=     4e^x - 2  + \frac{(4 e^x - 2)(1-z(x))}{e^x - 1}\\
	&= \frac{(4 e^x - 2)(e^x -z(x))}{e^x - 1}, 
	\end{align*}
	which is positive since $x >0$ and $z\leq 2$ (due to the standing assumption~\eqref{eq:asymptotic-standing-assumption}).
	Hence, in order to minimize $\XXXasymptoticDenominator(x,x+\ln(2/z(x)), z(x))$
	on the implicit curve where $\XXXasymptoticNominator(x,x + \ln(2/z), z)$ is constant,
	it is best to take $x$ (and thus also $z(x)$) as small as possible.
	We can decrease $x$ and decrease $z(x)$ until we arrive at the situation where either $z=1$ or $x=\ln(3/2)$ (see Figure~\ref{fig:asymptotic-case-2.1.1-no-y}).
	The former will be treated in Case~3 below (where we assume that $z\in [e^{-y}, 1]$). 	
	In the latter situation we can apply~Lemma~\ref{lem:asymptotic-push-case-2.1.1-no-y-x=ln3/2}.
	
	\begin{figure}[b]
		\begin{tikzpicture}[scale=2,xscale=2]
		\draw[->] (0, 0) -- (1,
		0) node[below] {$x$};
		\draw[->] (0, 0) -- (0,2.2) node[left] {$z$};
		\node[below] at ({ln(3/2)},0) {$\ln(\tfrac{3}{2})$};
		\node[left] at (0,{1}) {$1$};
		\node[left] at (0,{2}) {$2$};
		\draw[domain=ln(3/2):0.9, variable=\x, MyLineOpt] plot ({\x}, {1});
		\draw[domain=ln(3/2):0.9, variable=\x, MyLineCases] plot ({\x}, {2});
		\draw[domain=0:ln(3/2), variable=\x, MyLineHelp] plot ({\x}, {1});
		\draw[domain=0:ln(3/2), variable=\x, MyLineHelp] plot ({\x}, {2});
		\draw[domain=0:1, variable=\z, MyLineHelp] plot ({ln(3/2)}, {\z});
		%
		\draw[domain=0:1, variable=\z, MyLineHelp] plot ({ln(3/2)}, {\z});
		\draw[domain=1:2, variable=\z, MyLineOpt] plot ({ln(3/2)}, {\z});
 		\draw[MyArrowsLL={0.4}{0.8}, domain=ln(3/2):0.49764, variable=\x, MyLinePush]
		plot ({\x}, {e^(2*\x)/(2*(3/2)^2)*(2*e^(\x) - 2)^2*2});
		\draw[MyArrowsL={0.75}, domain=ln(3/2):0.442, variable=\x, MyLinePush]
		plot ({\x}, {e^(2*\x)/(3)*(2*e^(\x) - 2)^2*2});
		\draw[MyArrowsLL={0.3}{0.75}, domain=ln(1.73):0.655906, variable=\x, MyLinePush]
		plot ({\x}, {e^(2*\x)/((1.73)^2*(2*1.73 - 2)^2*2)*(2*e^(\x) - 2)^2*2});
		\draw[MyArrowsLL={0.2}{0.7}, domain=ln(2):0.81295, variable=\x, MyLinePush]
		plot ({\x}, {e^(2*\x)/(32)*(2*e^(\x) - 2)^2*2});
		\end{tikzpicture}
		\caption{%
			Proof of Proposition~\ref{prop:asymptotic-push}, Case~2.1.1, after the reduction to $ze^y = 2e^x$,
			i.e., $y = x + \ln(2/z)$. We have the constraints $x\geq \ln(3/2)$, $1\leq z\leq 2$.
			On the dotted lines 
			$\XXXasymptoticNominator(x,x + \ln(2/z), z)$ is constant;
			the arrow heads indicate in which direction 
			$\XXXasymptoticDenominator(x,x + \ln(2/z), z)$ decreases and
			the thick lines indicate where the local minima can lie.
		}
		\label{fig:asymptotic-case-2.1.1-no-y}
	\end{figure}

	\begin{figure}[b]
		\begin{tikzpicture}[scale=2,xscale=2]
		\draw[->] (0, 0) -- (0.6,0) node[below] {$x$};
		\draw[->] (0, 0) -- (0,2.2) node[left] {$z$};
		\node[below] at ({ln(3/2)},0) {$\ln(\tfrac{3}{2})$};
		\node[left] at (0,{1}) {$1$};
		\node[left] at (0,{2}) {$2$};
		\draw[domain=0:ln(3/2), variable=\x, MyLineOpt] plot ({\x}, {(2*(2-e^\x)});
		\draw[domain=0:ln(3/2), variable=\x, MyLineCases] plot ({\x}, 2);
		\draw[domain=0:1, variable=\z, MyLineHelp] plot ({ln(3/2)}, {\z});
		%
		\draw[domain=0:ln(3/2), variable=\x, MyLineHelp] plot ({\x}, {1});
		\draw[domain=0:1, variable=\z, MyLineHelp] plot ({ln(3/2)}, {\z});
		\draw[domain=1:2, variable=\z, MyLineCases] plot ({ln(3/2)}, {\z});
		\draw[MyArrowsL={0.55}, domain=0.117783:0.153153, variable=\x, MyLinePush]
		plot ({\x}, {e^(2*\x)/(7.23214)*(4*e^(\x) - 2)*4});
		\draw[MyArrowsLL={0.3}{0.65}, domain=0.223144:0.302501, variable=\x, MyLinePush]
		plot ({\x}, {e^(2*\x)/(12.5)*(4*e^(\x) - 2)*4});
		\draw[MyArrowsL={0.4}, domain=ln(2-1.25/2):ln(3/2), variable=\x, MyLinePush]
		plot ({\x}, {e^(2*\x)/(21.175)*(4*e^(\x) - 2)*4});
		\end{tikzpicture}
		\caption{%
			Proof of Proposition~\ref{prop:asymptotic-push}, Case~2.1.2, after the reduction to $ze^y = 4e^x$,
			i.e., $y = x + \ln(4/z)$.
			We have the constraints $x\leq \ln(3/2)$, $2\geq z\geq 2(2-e^x) \geq 1$.
			On the dotted lines 
			$\XXXasymptoticNominator(x,x + \ln(4/z), z)$ is constant;
			the arrow heads indicate in which direction 
			$\XXXasymptoticDenominator(x,x + \ln(4/z), z)$ decreases.
			The thick line is $z = 2(2-e^x)$.
		}
		\label{fig:asymptotic-case-2.1.2-no-y}
	\end{figure}

\emph{Case 2.1.2}.
	Suppose that $z \geq 1$ and moreover $z \geq 3e^{-y}$, but $x\leq \ln(3/2)$.
	Then in particular $2(2-e^x)\geq 1$ and we have to remember about the constraint $z\geq 2(2-e^x)$.
    We now have:
	\begin{equation*}
	\XXXasymptoticNominator(x,y,z) = x +  y + \ln(ze^y - 2).
	\end{equation*}
	Fix $z$ and consider the implicit curve described by the equation $\XXXasymptoticNominator(x,y,z) = c$ for some constant $c$. 
	Clearly, on this implicit curve
	$x=x(y)$ is determined by the value of $y$.
	Differentiating with respect to $y$ we
	get the condition
	\begin{align}
	\label{eq:derivative-implicit-curve-case-2.c}
	0 = x'(y)  + 1 + \frac{ze^y}{ze^y-2}
	= x'(y) + \frac{2(ze^y - 1
		)}{ze^y-2}.
	\end{align}
	In particular, $x'(y)\leq 0$.
	Using \eqref{eq:derivative-implicit-curve-case-2.c} in the second inequality below we get
	\begin{align*}
	\frac{\partial}{\partial y} \XXXasymptoticDenominator(x(y), y, z) 
	&= x'(y)(2e^{x(y)} - 1) - 1 + z e^y \\
	&= \frac{-2(ze^y -1)(2e^{x(y)} - 1)}{ze^y-2} - 1 + ze^{y}\\
	&= \frac{(ze^y -1)(ze^y - 4e^{x(y)})}{ze^y-2},
	\end{align*}
	which is of the same sign as $ze^y - 4e^{x(y)}$ (since $ze^{y} \geq 3)$.
	It follows that if $y$ increases (and  $x(y)$ decreases),
	then $\XXXasymptoticDenominator(x(y),y,z)$ decreases while $y\leq x(y) + \ln(4/z)$,
	and increases when $y \geq x(y) + \ln(4/z)$. 
	Hence, in Case~2.1.2 and for fixed $z$,  
	in order to minimize $\XXXasymptoticDenominator(x(y), y,z)$ on the implicit curve where $\XXXasymptoticNominator(x(y),y,z)$ is constant, it suffices to consider:
	\begin{itemize}
		\item $x=\ln(2-z/2)$, $y\in[\ln(3/z), \ln(8/z -2)]$,
		\item $ze^y = 4e^x$,
		\item $x = \ln(3/2)$ and $y\geq \ln(6/z)$
	\end{itemize}
		(cf.~Figure~\ref{fig:asymptotic-case-2}).
		The first situation is treated with Lemma~\ref{lem:asymptotic-push-case-0} and the third reduces to the Case~2.1.1 (cf.\ Figure~\ref{fig:asymptotic-case-2}), which in turn was either resolved by auxiliary lemmas or reduced to Case 3.
	It remains to consider the second situation.
	
	If $ze^y = 4e^x$, then $z$ has to be positive, $y = x + \ln(4/z)$,
	and
	\begin{align*}
	\XXXasymptoticNominator(x,x + \ln(4/z), z)   &= 2x + \ln(4e^x-2) + \ln(4/z),\\
	\XXXasymptoticDenominator(x,x+\ln(4/z), z) &= 6e^x - 2x - 2 - \ln(4/z)  - z.
	\end{align*}
	On the implicit curve $\XXXasymptoticNominator(x,x + \ln(4/z), z) = c $,
	$z = z(x)$ is clearly determined by the value of $x$.
	Differentiating  $\XXXasymptoticNominator(x,x + \ln(4/z(x)), z(x))$
	with respect to $x$ we arrive at the condition
	\begin{equation}
	\label{eq:derivative-implicit-curve-case-2.1.2-no-y}
	0 = 2 + \frac{2e^x}{2e^x - 1} -\frac{z'(x)}{z(x)} 
	= \frac{6e^x - 2 }{2e^x - 1} -\frac{z'(x)}{z(x)}.
	\end{equation}
	In particular, $z'(x) \geq 0$.
	Using~\eqref{eq:derivative-implicit-curve-case-2.1.2-no-y}, we obtain
	\begin{align*}
	\frac{\partial }{\partial x} \XXXasymptoticDenominator(x,x+\ln(4/z(x)), z(x)) 
	&= 6e^x - 2  + \frac{z'(x)}{z(x)} - z'(x)
	\\ 
	&=     6e^x - 2  + \frac{(6 e^x - 2)(1-z(x))}{2e^x - 1}\\
	&= \frac{(6 e^x - 2)(2 e^x -z(x))}{2e^x - 1}, 
	\end{align*}
	which is positive since $x >0$ and $z\leq 2$ (due to the standing assumption~\eqref{eq:asymptotic-standing-assumption}).
	Hence, in order to minimize $\XXXasymptoticDenominator(x,x+\ln(4/z(x)), z(x))$
	on the implicit curve where $\XXXasymptoticNominator(x,x + \ln(4/z), z)$ is constant,
	it is best to take $x$ (and thus also $z(x)$) as small as possible.
	We can decrease $x$ and decrease $z(x)$ until we arrive at $z= 2(2-e^x)$  (see Figure~\ref{fig:asymptotic-case-2.1.2-no-y}), where we can apply~Lemma~\ref{lem:asymptotic-push-case-0}.

\emph{Case 2.2}.
	Suppose that $z\geq 1$ and $z\leq 3e^{-y}$.
	Then $\yGain(y,z) = y$ (since $e^{-y} \leq z\leq 3e^{-y}$) 
	and $\xGain(x,z) = x$ if $x\leq \ln(3/2)$ 
	or $\xGain(x,z) = x + \ln(2e^x-2)$ if $x\geq \ln(3/2)$,
	so neither of those quantities depends on $z$. 
	On the other hand, 
	\begin{equation*}
	\XXXasymptoticDenominator(x,y,z) = 2e^x - x- 2 -y + z(e^y-1)
	\end{equation*}
	is increasing in $z$.
	Therefore, in order to maximize $\XXXasymptoticNominator(x,y,z)/\XXXasymptoticDenominator(x,y,z)$ (for fixed $x$, $y$)
	we want to take $z$ as small as possible, i.e.,
	either $z=1$ or $z=2(2-e^x)$
	(depending on which of these two conditions is more restrictive).	
	The former possibility will be treated in Case~3 below (where we assume that $z\in [e^{-y}, 1]$). 
	In the latter situation we can apply~Lemma~\ref{lem:asymptotic-push-case-0}.
	\end{proof}

\subsection{Case 3: \texorpdfstring{$e^{-y} \leq z \leq 1$}{exp(-y) leq z leq 1}}

In this subsection we assume that $e^{-y} \leq z \leq 1$.
Then $ -\ln(z) \in [0, y]$ 
and
\begin{multline*}
\int_0^y |1-z e^s| ds
= \int_0^{-\ln(z)} 1 - z e^s ds + \int_{-\ln(z)}^y  - 1 + z e^s ds\\
= -2\ln(z) - y + z(e^y - 2e^{-\ln(z)} + 1)
= -2\ln(z) - y + z (e^y + 1) -2.
\end{multline*}
Thus
\begin{equation*}
\XXXasymptoticDenominator(x,y,z)
 = 2e^x - x -4  
   - y + z (e^y + 1) -2\ln(z) .
\end{equation*}

%
%
\begin{figure}[t]
	\begin{tikzpicture}[scale=3,xscale=2.5, yscale=1.25]
	\draw[->] (0, 0) -- ({ln((1+sqrt(1+2*46))/2)-0.3+0.05}, 0) node[below] {$x$};
	\draw[->] (0, 0) -- (0,2.5+0.05) node[left] {$y$};
	\node[below] at (0,0) {$0\vphantom{\tfrac{3}{2}}$};
	\node[below] at ({ln(2-\zCaseThree/2)},0) {$\ln(2-\tfrac{z}{2})\vphantom{\tfrac{3}{2}}$};
	\node[below] at ({ln(3/2)},0) {$\ln(\tfrac{3}{2})$};
    \node[below] at ({ln(3)},0) {$\ln(3)\vphantom{\tfrac{3}{2}}$};
   	\node[left] at (0,{ln(1/(\zCaseThree))}) {$\ln(\tfrac1z)$};
   	\node[left] at (0,{ln(2/(\zCaseThree) -1/2)}) {$\ln(\tfrac2z - \tfrac12)$};
	\node[left] at (0,{ln(2/(\zCaseThree))}) {$\ln(\tfrac2z)$};
	\node[left] at (0,{ln(3/(\zCaseThree))}) {$\ln(\tfrac3z)$};
	\draw[domain=ln(1/(\zCaseThree)):2.5,variable=\y, MyLineCases] plot ({ln(2-\zCaseThree/2)}, {\y});
	\draw[domain=0:ln(1/(\zCaseThree)),variable=\y, MyLineHelp] plot ({ln(2-\zCaseThree/2)}, {\y});
	\draw[domain=0:ln(3/(\zCaseThree)), variable=\y, MyLineHelp] plot ({ln(3/2)}, {\y});
	\draw[domain=ln(2-\zCaseThree/2):{ln((1+sqrt(1+2*46))/2)-0.3}, variable=\x, MyLineCases] plot (\x, {ln(1/(\zCaseThree))});
	\draw[domain=0:ln(2-\zCaseThree/2), variable=\x, MyLineHelp] plot (\x, {ln(1/(\zCaseThree))});
	\draw[domain=0:ln(2-\zCaseThree/2), variable=\x, MyLineHelp] plot (\x, {ln(2/(\zCaseThree) -1/2)});
	\draw[domain=ln(2-\zCaseThree/2):{ln((1+sqrt(1+2*46))/2)-0.3}, variable=\x, MyLineCases] plot (\x, {ln(3/(\zCaseThree))});
	\draw[domain=0:ln(2-\zCaseThree/2), variable=\x, MyLineHelp] plot (\x, {ln(3/(\zCaseThree))});
	\draw[domain=ln(2-\zCaseThree/2):ln(3), variable=\x, MyLineOpt] plot ({\x}, {\x - ln(\zCaseThree)});
	\draw[domain=0:ln(2-\zCaseThree/2), variable=\x, MyLineHelp] plot ({\x}, {\x - ln(\zCaseThree)});
	\draw[domain=ln(2-\zCaseThree/2):ln(3)-0.1, variable=\x, MyLineOpt] plot ({\x}, {\x + ln(2) - ln(\zCaseThree)});
	\draw[domain=0:ln(2-\zCaseThree/2), variable=\x, MyLineHelp] plot ({\x}, {\x + ln(2) - ln(\zCaseThree)});
	\draw[domain=ln(1/(\zCaseThree)):ln(2/(\zCaseThree) -1/2), variable=\y, MyLineOpt] plot ({ln(2-(\zCaseThree)/2)}, {\y});
	\draw[domain=ln(3/(\zCaseThree)):ln(4/(\zCaseThree) -1), variable=\y, MyLineOpt] plot ({ln(2-(\zCaseThree)/2)}, {\y});			
	%
	\draw[MyArrowsL={0.5},
	domain=ln(2-(\zCaseThree)/2):ln((1+sqrt(1+2*3.5))/2), variable=\x, MyLinePush]
	plot ({\x}, { -\x - ln(2*e^\x - 2) + ln(2-\zCaseThree) + ln(3.5/(\zCaseThree)/(2-\zCaseThree))});
	\draw[MyArrowsL={0.5},
	domain=ln(2-(\zCaseThree)/2):ln((1+sqrt(1+2*(4-\zCaseThree)*(4-\zCaseThree)*(2-\zCaseThree)/4))/2), variable=\x, MyLinePush]
	plot ({\x}, { -\x - ln(2*e^\x - 2) + ln(2-\zCaseThree) + ln((4-\zCaseThree)*(4-\zCaseThree)*(2-\zCaseThree)/4/(\zCaseThree)/(2-\zCaseThree))});
	\draw[MyArrowsL={0.5},
	domain=ln(2-(\zCaseThree)/2):ln((1+sqrt(1+2*3.5))/2), variable=\x, MyLinePush]
	plot ({\x}, { -\x - ln(2*e^\x - 2) + ln(2-\zCaseThree) + ln(3.5/(\zCaseThree)/(2-\zCaseThree))});
	\draw[MyArrowsRLL={0.15}{0.475}{0.75},
	domain=ln(2-(\zCaseThree)/2):ln((1+sqrt(1+2*6))/2), variable=\x, MyLinePush]
	plot ({\x}, { -\x - ln(2*e^\x - 2) + ln(2-\zCaseThree) + ln(6/(\zCaseThree)/(2-\zCaseThree))});
    \draw[MyArrowsRLL={0.25}{0.55}{0.8}, domain=ln((1+sqrt(1+2*(3*(4-\zCaseThree)*(2-\zCaseThree)/2)/3))/2):ln((1+sqrt(1+2*3*(4-\zCaseThree)*(2-\zCaseThree)/2))/2), variable=\x, MyLinePush]
    plot ({\x}, { -\x - ln(2*e^\x - 2) + ln(2-\zCaseThree) + ln(3*(4-\zCaseThree)*(2-\zCaseThree)/2/(\zCaseThree)/(2-\zCaseThree))});
    \draw[MyArrowsRLL={0.2}{0.5}{0.8}, domain=ln((1+sqrt(1+2*10/3))/2):ln((1+sqrt(1+2*10))/2), variable=\x, MyLinePush]
    plot ({\x}, { -\x - ln(2*e^\x - 2) + ln(2-\zCaseThree) + ln(10/(\zCaseThree)/(2-\zCaseThree))});		
    \draw[MyArrowsRLL={0.15}{0.45}{0.8}, domain=ln((1+sqrt(1+2*((4-\zCaseThree)^2*(2-\zCaseThree)^2/2)/3))/2):ln((1+sqrt(1+2*(4-\zCaseThree)^2*(2-\zCaseThree)^2/2))/2), variable=\x, MyLinePush]
    plot ({\x}, { -\x - ln(2*e^\x - 2) + ln(2-\zCaseThree) + ln((4-\zCaseThree)^2*(2-\zCaseThree)^2/2/(\zCaseThree)/(2-\zCaseThree))});
    \draw[MyArrowsRLL={0.1}{0.4}{0.7}, domain=ln((1+sqrt(1+2*18/3))/2):ln((1+sqrt(1+2*18))/2), variable=\x, MyLinePush]
    plot ({\x}, { -\x - ln(2*e^\x - 2) + ln(2-\zCaseThree) + ln(18/(\zCaseThree)/(2-\zCaseThree))});
    \draw[MyArrowsRLL={0.05}{0.35}{0.6}, domain=ln((1+sqrt(1+2*26/3))/2):ln((1+sqrt(1+2*46))/2)-0.3, variable=\x, MyLinePush]
   plot ({\x}, { -\x - ln(2*e^\x - 2) + ln(2-\zCaseThree) + ln(26/(\zCaseThree)/(2-\zCaseThree))});
   \draw[MyArrowsLL={0.35}{0.65}, domain=ln((1+sqrt(1+2*36/3))/2):ln((1+sqrt(1+2*46))/2)-0.3, variable=\x, MyLinePush]
   plot ({\x}, { -\x - ln(2*e^\x - 2) + ln(2-\zCaseThree) + ln(36/(\zCaseThree)/(2-\zCaseThree))});
     \draw[MyArrowsL={0.65},  domain=ln((1+sqrt(1+2*46/3))/2):ln((1+sqrt(1+2*46))/2)-0.3, variable=\x, MyLinePush]
     plot ({\x}, { -\x - ln(2*e^\x - 2) + ln(2-\zCaseThree) + ln(46/(\zCaseThree)/(2-\zCaseThree))}); 
	\draw[MyArrowsL={0.5}, domain=ln(2-\zCaseThree/2):ln((1+sqrt(1+2*10/3))/2), variable=\x, MyLinePush]
	plot ({\x}, {ln((1+sqrt(1+\zCaseThree*(2-\zCaseThree)*(10/(\zCaseThree)/(2-\zCaseThree))*e^(-\x)/(2*e^\x -2)))/(\zCaseThree))});
	\draw[MyArrowsL={0.5},
	 domain=ln(2-\zCaseThree/2):ln((1+sqrt(1+2*\zCaseThree*((4-\zCaseThree)^2*(2-\zCaseThree)^2/2/(\zCaseThree))/3))/2), variable=\x, MyLinePush]
	plot ({\x}, {ln((1+sqrt(1+\zCaseThree*(2-\zCaseThree)*((4-\zCaseThree)^2*(2-\zCaseThree)/2/(\zCaseThree))*e^(-\x)/(2*e^\x -2)))/(\zCaseThree))});
	\draw[MyArrowsRL={0.1}{0.5}, domain=ln(2-\zCaseThree/2):ln((1+sqrt(1+2*18/3))/2), variable=\x, MyLinePush]
	plot ({\x}, {ln((1+sqrt(1+\zCaseThree*(2-\zCaseThree)*(18/(\zCaseThree)/(2-\zCaseThree))*e^(-\x)/(2*e^\x -2)))/(\zCaseThree))});
	\draw[MyArrowsRL={0.15}{0.5}, domain=ln(2-\zCaseThree/2):ln((1+sqrt(1+2*26/3))/2), variable=\x, MyLinePush]
	plot ({\x}, {ln((1+sqrt(1+\zCaseThree*(2-\zCaseThree)*(26/(\zCaseThree)/(2-\zCaseThree))*e^(-\x)/(2*e^\x -2)))/(\zCaseThree))});
	\draw[MyArrowsRL={0.175}{0.5},
	domain=ln(2-\zCaseThree/2):ln(3), variable=\x, MyLinePush]
	plot ({\x}, {ln((1+sqrt(1+\zCaseThree*(2-\zCaseThree)*(36/(\zCaseThree)/(2-\zCaseThree))*e^(-\x)/(2*e^\x -2)))/(\zCaseThree))});	
	\draw[MyArrowsRL={0.2}{0.5}, domain=ln(2-\zCaseThree/2):ln((1+sqrt(1+2*46/3))/2), variable=\x, MyLinePush]
	plot ({\x}, {ln((1+sqrt(1+\zCaseThree*(2-\zCaseThree)*(46/(\zCaseThree)/(2-\zCaseThree))*e^(-\x)/(2*e^\x -2)))/(\zCaseThree))});
	\node[left] at ({ln((1+sqrt(1+2*46))/2)-0.3},1.95) {Case 3.1};
	\node[left, fill=white] at ({ln((1+sqrt(1+2*46))/2)-0.3},1.7) {Case 3.2};
	\end{tikzpicture}
	\caption{%
		Proof of Proposition~\ref{prop:asymptotic-push}, 
		Case 3, where $3e^{-y} \leq z \leq 1$.
		On the dotted lines 
		$\XXXasymptoticNominator(x,y(x), z)$ is constant;
		the arrow heads indicate in which direction 
		$\XXXasymptoticDenominator(x,y(x),z)$ decreases and
		the thick lines indicate where the local minima can lie.
	}
	\label{fig:asymptotic-case-3}
\end{figure}

\begin{proof}[Proof of Proposition~\ref{prop:asymptotic-push} in Case 3]
We consider two subcases.

\emph{Case 3.1}
	Suppose that $e^{-y} \leq z \leq 1$ and moreover $z\geq 3e^{-y}$,
	i.e., $3e^{-y} \leq z \leq 1$.
	The standing assumption~\eqref{eq:asymptotic-standing-assumption} implies in particular that
	$x\geq \ln(3/2)$ and
	\begin{equation*}
	\XXXasymptoticNominator(x,y,z) = x + \ln(2e^x-2) - \ln(2-z) + y + \ln(ze^{y} - 2) .
	\end{equation*}
	Fix $z$ and consider the implicit curve described by the equation $\XXXasymptoticNominator(x,y,z) = c$ for some constant $c$. 
	Clearly, on this implicit curve
	$y=y(x)$ is determined by the value of $x$.
	Differentiating $\XXXasymptoticNominator(x,y(x),z)$ with respect to $x$ we
	get the condition
	\begin{align}
	\label{eq:derivative-implicit-curve-case-3.b}
	0 = 1 + \frac{2e^{x}}{2e^{x} - 2} + y'(x)\bigl(1 + \frac{ze^{y(x)}}{ze^{y(x)} - 2}\bigr) =   \frac{2e^{x} - 1}{e^{x} - 1} + y'(x) \frac{2(ze^{y(x)} -1)}{ze^{y(x)} - 2}.
	\end{align}
	In particular, $y'(x)\leq 0$.
	Using \eqref{eq:derivative-implicit-curve-case-3.b} in the second inequality below we get
	\begin{align*}
	\frac{\partial}{\partial x} \XXXasymptoticDenominator(x, y(x), z) 
	&= 2e^{x} - 1  + y'(x)(-1  + z e^{y(x)}) \\
	&= 2e^{x} - 1 - \frac{(2e^x -1)(ze^{y(x)} - 2)}{2e^x - 2}\\
	&= \frac{(2e^x - 1)(2e^x - z e^{y(x)})}{2e^x  -2},
	\end{align*}
	which is of the same sign as $2e^x - ze^{y(x)}$.
	It follows that if $x$ increases (and consequently $y(x)$ decreases),
	then $\XXXasymptoticDenominator(x,y(x),z)$ decreases while $x \leq y(x) + \ln(z/2)$,
	and then increases when $x \geq y(x) + \ln(z/2)$.
	Hence, in Case~3.1 and for fixed $z$,  
	in order to minimize $\XXXasymptoticDenominator(x, y(x),z)$ on the implicit curve where $\XXXasymptoticNominator(x,y(x),z)$ is constant, it suffices to consider:
	\begin{itemize}
		\item $x=\ln(2-z/2)$, $y\in[\ln(3/z), \ln(4/z -1)]$,
		\item $ze^y = 2e^x$, $x\geq \ln(3)$.
	\end{itemize}
(cf.~Figure~\ref{fig:asymptotic-case-3}).
			The first situation is treated with Lemma~\ref{lem:asymptotic-push-case-0}. 
			It remains to deal with the case when $ze^y = 2e^x$.
			Under this assumption we have  $y = x + \ln(2/z)$, and
				\begin{align*}
				\XXXasymptoticNominator(x,x + \ln(2/z), z)   &= 2x + 2\ln(2e^x-2) - \ln(2-z) + \ln(2) - \ln(z),\\
				\XXXasymptoticDenominator(x,x+\ln(2/z), z) &= 4e^x - 2x - 4 -\ln(2)  + z -\ln(z).
				\end{align*}
				On the implicit curve $\XXXasymptoticNominator(x,x + \ln(2/z), z) = c $,
				$x=x(z)$ is determined by the value of $z$
				(since $x\mapsto x +\ln(2e^x -2)$ is increasing).
				Differentiating  $\XXXasymptoticNominator(x(z),x(z) + \ln(2/z), z)$
				with respect to $z$ we arrive at the condition
				\begin{equation}
				\label{eq:derivative-implicit-curve-case-3.1-no-y}
				0 = \Bigl(2 + \frac{2e^x}{e^x - 1}\Bigr) x'(z) + \frac{1}{2-z} - \frac{1}{z} = \frac{4e^x - 2 }{e^x - 1}x'(z) +\frac{2(z-1)}{(2-z)z}.
				\end{equation}
				In particular, $x'(z) \geq 0$ (since $0<z\leq 1$).
				Using~\eqref{eq:derivative-implicit-curve-case-3.1-no-y}, we can write
				\begin{align*}
				\frac{\partial }{\partial z} \XXXasymptoticDenominator(x(z),x(z)+\ln(2/z), z) 
				&=(4e^{x(z)} -2) x'(z) + 1 - \frac{1}{z}\\
				&=   \frac{2(1-z)(e^{x(z)} -1)}{(2-z)z} + \frac{z-1}{z}\\
				&= \frac{(1-z)( z - 2(2-e^{x(z)}))}{(2-z)z}, 
				\end{align*}
				which is positive since $0<z\leq 1$ and  
				we work under the standing assumption~\eqref{eq:asymptotic-standing-assumption}.
				Hence, in order to minimize $\XXXasymptoticDenominator(x,x+\ln(2/z(x)), z(x))$
				on the implicit curve where $\XXXasymptoticNominator(x,x + \ln(2/z), z)$ is constant,
				it is best to take $z$ (and thus also $x(z)$) as small as possible.
				We can decrease $z$ and decrease $x(z)$ until we arrive at the situation where $2(2-e^x)  = z$ (see Figure~\ref{fig:asymptotic-case-3.1-no-y}).
				We can thus apply Lemma~\ref{lem:asymptotic-push-case-0}.
			    This ends the proof in Case~3.1.

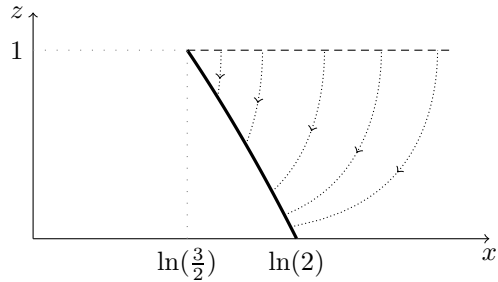
\begin{figure}[b]
	\begin{tikzpicture}[scale=2.5,xscale=2]
	\draw[->] (0, 0) -- (1.2,
	 0) node[below] {$x$};
	\draw[->] (0, 0) -- (0,1.2) node[left] {$z$};
	\node[below] at ({ln(3/2)},0) {$\ln(\tfrac{3}{2})$};
	\node[below] at ({ln(2)},0) {$\ln(2)\vphantom{\tfrac{3}{2}}$};
	\node[left] at (0,{1}) {$1$};
	\draw[domain=ln(3/2):ln(2), variable=\x, MyLineOpt] plot ({\x}, {(2*(2-e^\x)});
	\draw[domain=ln(3/2):1.1, variable=\x, MyLineCases] plot ({\x}, {1});
	\draw[domain=0:ln(3/2), variable=\x, MyLineHelp] plot ({\x}, {1});
	\draw[domain=0:1, variable=\z, MyLineHelp] plot ({ln(3/2)}, {\z});
	 \draw[MyArrowsL={0.5}, domain=2*(2- 13/8):1, variable=\z, MyLinePush]
	 plot ({ln(  (1 + sqrt(1+sqrt(   2*845/96*(2-\z)*\z  ))  )/2)}, {\z});
	 \draw[MyArrowsL={0.5}, domain=2*(2- 7/4):1, variable=\z, MyLinePush]
	 plot ({ln(  (1 + sqrt(1+sqrt(   2*147/8*(2-\z)*\z  ))  )/2)}, {\z});
	 \draw[MyArrowsL={0.5}, domain=2*(2- 15/8):1, variable=\z, MyLinePush]
	 plot ({ln(  (1 + sqrt(1+sqrt(   2*1575/32*(2-\z)*\z  ))  )/2)}, {\z});
	 \draw[MyArrowsL={0.5}, domain=0.12512:1, variable=\z, MyLinePush]
	 	 plot ({ln(  (1 + sqrt(1+sqrt(   2*112.5*(2-\z)*\z  ))  )/2)}, {\z});
    0.06208975195438
    \draw[MyArrowsL={0.5}, domain= 0.06208975:1, variable=\z, MyLinePush]
    plot ({ln(  (1 + sqrt(1+sqrt(   2*242*(2-\z)*\z  ))  )/2)}, {\z});
	\end{tikzpicture}
	\caption{%
		Proof of Proposition~\ref{prop:asymptotic-push}, Case~3.1, after the reduction to $ze^y = 2e^x$,
		i.e., $y = x + \ln(2/z)$. We have the constraints $x\geq \ln(3/2)$, $2(2-e^x) \leq z \leq 1$, $z>0$.
				On the dotted lines 
				$\XXXasymptoticNominator(x,x + \ln(2/z), z)$ is constant;
				the arrow heads indicate in which direction 
			$\XXXasymptoticDenominator(x,x + \ln(2/z), z)$ decreases.
			The thick line is $z = 2(2-e^x)$. 
	}
	\label{fig:asymptotic-case-3.1-no-y}
\end{figure}

\emph{Case 3.2}
	Suppose that $e^{-y} \leq z \leq 1$ and moreover $z\leq 3e^{-y}$.
	The standing assumptions imply in particular that
	$x\geq \ln(3/2)$ and
	\begin{equation*}
	\XXXasymptoticNominator(x,y,z) = x + \ln(2e^x-2) - \ln(2-z) + y.	
	\end{equation*}
	Fix $z$ and consider the implicit curve described by the equation $\XXXasymptoticNominator(x,y,z) = c$ for some constant $c$. 
	Clearly, on this implicit curve
	$y=y(x)$ is determined by the value of $x$.
	Differentiating with respect to $x$ we
	get the condition
	\begin{align}
	\label{eq:derivative-implicit-curve-case-3.a}
	0 = 1 + \frac{2e^{x}}{2e^{x} - 2} + y'(x) =   \frac{2e^{x} - 1}{e^{x} - 1} + y'(x).
	\end{align}
	In particular, $y'(x)\leq 0$.
	Using \eqref{eq:derivative-implicit-curve-case-3.a} in the second inequality below we get
	\begin{align*}
	\frac{\partial}{\partial x} \XXXasymptoticDenominator (x, y(x), z) 
	&= 2e^{x} - 1  + y'(x)(-1  + z e^{y(x)}) \\
	&= 2e^{x} - 1 + \frac{(2e^x -1)(1-ze^{y(x)})}{e^x - 1}\\
	&= \frac{(2e^x - 1)(e^x - z e^{y(x)})}{e^x  -1},
	\end{align*}
	which is of the same sign as $e^x - ze^{y(x)}$.
	It follows that if $x$ increases (and consequently $y(x)$ decreases),
	then $\XXXasymptoticDenominator(x,y(x),z)$ decreases while $x \leq y(x) + \ln(z)$,
	and then increases when $x \geq y(x) + \ln(z)$.
	Hence, in Case~3.2 and for fixed $z$,  
	in order to minimize $\XXXasymptoticDenominator(x, y(x),z)$ on the implicit curve where $\XXXasymptoticNominator(x,y(x),z)$ is constant, it suffices to consider:
	\begin{itemize}
		\item $x=\ln(2-z/2)$, $y\in[\ln(1/z), \ln(2/z -1/2)]$,
		\item $ze^y = e^x$, $x\in [\ln(2-z/2), \ln(3)]$,
		\item $y = \ln(3/z)$, $x  \geq \ln(3)$
	\end{itemize}
	(see Figure~\ref{fig:asymptotic-case-3}).
	The first situation is treated with Lemma~\ref{lem:asymptotic-push-case-0}.
	The third situation reduces to the already fully solved Case~3.1 (cf.\ Figure~\ref{fig:asymptotic-case-3}).
	It remains to deal with the case when $ze^y =  e^x$.
	Under this assumption we have  $y = x  -\ln(z)$, and
	\begin{align*}
	\XXXasymptoticNominator(x,x - \ln(z), z)   &= 2x + \ln(2e^x-2) - \ln(2-z)  - \ln(z),\\
	\XXXasymptoticDenominator(x,x -\ln(z), z) &= 3e^x - 2x - 4   + z -\ln(z).
	\end{align*}
	On the implicit curve $\XXXasymptoticNominator(x,x - \ln(z), z) = c $,
	$x=x(z)$ is clearly determined by the value of $z$
	(since $x\mapsto x +\ln(2e^x -2)$ is increasing).
	Differentiating  $\XXXasymptoticNominator(x(z),x(z) - \ln(z), z)$
	with respect to $z$ we arrive at the condition
	\begin{equation}
	\label{eq:derivative-implicit-curve-case-3.2-no-y}
	0 = \Bigl(2 + \frac{e^x}{e^x - 1}\Bigr) x'(z) + \frac{1}{2-z} - \frac{1}{z} = \frac{3e^x - 2 }{e^x - 1}x'(z) +\frac{2(z-1)}{(2-z)z}.
	\end{equation}
	In particular, $x'(z) \geq 0$ (since $0<z\leq 1$).
	Using~\eqref{eq:derivative-implicit-curve-case-3.2-no-y} we can write
	\begin{align*}
	\frac{\partial }{\partial z} \XXXasymptoticDenominator(x(z),x(z)-\ln(z), z) 
	&=(3e^{x(z)} -2) x'(z) + 1 - \frac{1}{z}\\
	&=   \frac{2(1-z)(e^{x(z)} -1)}{(2-z)z} + \frac{z-1}{z}\\
	&= \frac{(1-z)\bigl( z - 2(2-e^{x(z)})\bigr)}{(2-z)z}, 
	\end{align*}
	which is positive since $0<z\leq 1$ 
	and we work under the standing assumption~\eqref{eq:asymptotic-standing-assumption}.
	Hence, in order to minimize $\XXXasymptoticDenominator(x(z),x(z)-\ln(z), z)$
	on the implicit curve where $\XXXasymptoticNominator(x(z),x(z) - \ln(z), z)$ is constant,
	it is best to take $z$ (and thus also $x(z)$) as small as possible.
	We can decrease $z$ and decrease $x(z)$ until we arrive at the situation where $2(2-e^x)  = z$
	 (the situation is similar to the one depicted in Figure~\ref{fig:asymptotic-case-3.1-no-y}).
	We can thus apply Lemma~\ref{lem:asymptotic-push-case-0}.
	This ends the proof in Case~3.2 and the consideration of Case~3, and  thus the whole proof of Proposition~\ref{prop:asymptotic-push}.
\end{proof}


\bibliographystyle{amsplain}
\bibliography{literature}
\end{document}